\newcommand{\R}{\mathbb{R}}
\newcommand{\N}{\mathbb{N}}
\newcommand{\Q}{\mathbb{Q}}
\newcommand{\Z}{\mathbb{Z}}
\newcommand{\C}{\mathbb{C}}
\newcommand{\be}{\begin{equation*}}
\newcommand{\ee}{\end{equation*}}
\newcommand{\bel}{\begin{equation}}
\newcommand{\eel}{\end{equation}}
\newcommand{\bee}{\begin{eqnarray*}}
\newcommand{\eee}{\end{eqnarray*}}
\newcommand{\eps}{\varepsilon}
\newcommand{\dbar}{\overline{\partial}}
\newtheorem{thm}{Theorem}[chapter]
\newtheorem{lem}[thm]{Lemma}
\newtheorem{prop}[thm]{Proposition}
\newtheorem{dfn}[thm]{Definition}
\newenvironment{dedication}
  {\clearpage           
   \thispagestyle{empty}
   \vspace*{\stretch{1}}
   \itshape          
   \raggedleft        
  }
  {\par 
   \vspace{\stretch{2}} 
   \clearpage      
  }
\begin{document}

\pagenumbering{gobble}

\begin{titlepage}

\center{\huge{Scuola Normale Superiore}}
\bigskip \bigskip \bigskip

\begin{figure}[h]
\centering
\includegraphics[width=4cm]{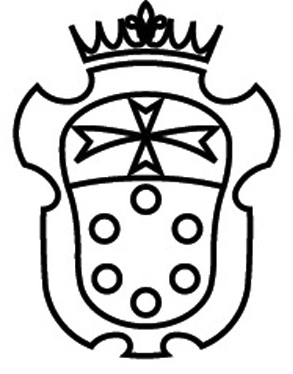}
\end{figure}

\center{Tesi di Perfezionamento}

\center{\huge{\textbf{Matrix Schr\"odinger Operators \\ and Weighted Bergman Kernels}}}

\center{\ \\ \ \\ \ \\ \ \\ \ \\
Perfezionando: \\
Gian Maria Dall'Ara \\
\ \\
Relatore: \\
Prof. Fulvio Ricci
\ \\ \ \\ \ \\ 

}

\end{titlepage}

\begin{dedication}

\end{dedication}

\begin{dedication}
Ai miei genitori
\end{dedication}

\begin{dedication}

\end{dedication}

\tableofcontents

\chapter*{Introduction}
\addcontentsline{toc}{chapter}{Introduction}
\pagenumbering{roman}

The aim of the present thesis is twofold: to study the problem of discreteness of the spectrum of Schr\"odinger operators with matrix-valued potentials in $\R^d$ (Chapter \ref{m-ch}), and to prove new pointwise bounds for weighted Bergman kernels in $\C^n$ (chapters \ref{w-ch} and \ref{C2-ch}). These two themes are connected by the observation that the weighted Bergman kernel may be effectively studied by means of the analysis of the weighted Kohn Laplacian, which in turn is unitarily equivalent to a Sch\"odinger operator with matrix-valued potential. 

The Kohn Laplacian is a key differential operator in complex analysis, and here we will deal with its weighted variant. It was Berndtsson to use for the first time the observation above to deduce results in complex analysis from known facts in mathematical physics \cite{berndtsson}. Later, the same idea was successfully exploited by Christ \cite{christ}, Fu and Straube \cite{fu-straube}, \cite{fu-straube-2}, Christ and Fu \cite{christ-fu}, and Haslinger \cite{haslinger-magn}. All of these papers concern one-dimensional complex analysis, while our goal is to extend part of these technology to several variables, a problem raised by Christ in \cite{christ}.\newline

Let us begin with the first part. Given a magnetic potential $A:\R^d\rightarrow\R^d$ and an electric potential $V:\R^d\rightarrow \R$, the corresponding Schr\"odinger operator is formally defined as follows:\be
\mathcal{H}_{A,V}\psi:=-\Delta_A\psi + V\psi,
\ee 
where $\Delta_A\psi:=\sum_{k=1}^d\left(\frac{\partial}{\partial x_k}-iA_k\right)\left(\frac{\partial}{\partial x_k}-iA_k\right)\psi$. Under appropriate assumptions on $A$ and $V$ (see \cite{avron-herbst-simon} for precise statements), one can extend $\mathcal{H}_{A,V}$ as a self-adjoint operator on $L^2(\R^d)$. It is a classical problem in mathematical physics to study how the properties of the spectrum of this operator depend on $V$ and $A$. A particularly interesting property, which may or may not hold, is discreteness of the spectrum or, in quantum physics terminology, quantization of energy. When the potential $V$ is non-negative, this turns out to be equivalent \cite{iwatsuka} to the existence of a function $\mu:\R^d\rightarrow[0,+\infty]$ such that $\lim_{x\rightarrow\infty}\mu(x)=+\infty$ and \bel\label{intro-coerc}
(\mathcal{H}_{V,A}\psi,\psi)=\int_{\R^d}|\nabla_A\psi|^2+\int_{\R^d}V|\psi|^2\geq \int_{\R^d}\mu^2 |\psi|^2\qquad\forall \psi.
\eel
The estimate above obviously holds for $\mu^2=V\geq0$ Therefore if \be\lim_{x\rightarrow\infty}V(x)=+\infty,\ee then the spectrum of $\mathcal{H}_{V,A}$ is discrete. Nevertheless this condition is not necessary for discreteness of the spectrum: a weaker necessary and sufficient condition was found in 1953 by Molchanov and significantly refined in 2005 by Maz'ya and Shubin (see Section \ref{mash-sec} for details in the case $A=0$ and $V\geq0$, and the paper \cite{ko-ma-sh} for the magnetic case, still with $V\geq0$). 

In Chapter \ref{m-ch} we study the problem of discreteness of the spectrum for Schr\"odinger operators with matrix-valued potentials, which from now on will be simply called matrix Schr\"odinger operators. These are natural generalizations of ordinary Schr\"odinger operators obtained by replacing the Hilbert space $L^2(\R^d)$ with $L^2(\R^d,\C^m)$, that is by considering vector-valued wave-functions. The electric potential is defined as a mapping \be V:\R^d\rightarrow H_m,\ee where $H_m$ is the space of $m\times m$ Hermitian matrices, while magnetic potentials are defined as before. The corresponding matrix Schr\"odinger operator is formally defined in analogy with the scalar case:\be
\mathcal{H}_{A,V}\psi:=-\Delta_A\psi + V\psi,
\ee where now $\psi$ is $\C^m$-valued, $\Delta_A$ acts diagonally, i.e., componentwise, and $V\psi$ is the pointwise matrix-times-vector product. Since $V$ is Hermitian, $\mathcal{H}_{A,V}$ is formally self-adjoint. 

When $A=0$ and $V$ is locally integrable and the matrix $V(x)$ is non-negative at each $x\in\R^d$, it is easy to extend $\mathcal{H}_V\equiv\mathcal{H}_{V,0}$ to a densely-defined self-adjoint operator on $L^2(\R^d,\C^m)$. The operators $\mathcal{H}_V$ were already considered in the literature: see, e.g., \cite{frank-lieb-seiringer}.

The first original result we present (Theorem \ref{gen-thm}) is a characterization of discreteness of the spectrum of $\mathcal{H}_V$ when $V$ is in a matrix-valued Muckenhoupt $A_\infty$ class, that we introduce for this purpose. A by-product of our definition of the appropriate $A_\infty$ class is that a fairly natural generalization of Maz'ya-Shubin characterization to matrix Schr\"odinger operators cannot hold. We also discuss the relation of our Muckenhoupt class of matrices with the matrix-valued $A_2$ class introduced by Treil and Volberg in the context of vector-valued harmonic analysis.

The other results of Chapter \ref{m-ch} originate from the comparison of matrix and scalar Schr\"odinger operators. If $A\in H_m$, we denote by $\lambda(A)$ the smallest eigenvalue of $A$. If $V:\R^d\rightarrow H_m$ is a non-negative and locally integrable matrix potential, we have the scalar potential $\lambda(V)$, and it is easy to see that:\bel\label{intro-impl}
\mathcal{H}_{\lambda(V)}\text{ has discrete spectrum}\quad\Longrightarrow\quad \mathcal{H}_V\text{ has discrete spectrum}.
\eel
The reverse implication fails rather dramatically in general. In fact, Theorem \ref{osc2-thm} gives a sufficient condition for discreteness of the spectrum, which is satisfied by certain matrix-valued potentials $V$ such that $V(x)$ has rank $1$ for every $x$. If $m\geq 2$, we have $\lambda(V)\equiv0$ for these potentials, and hence $\mathcal{H}_{\lambda(V)}=-\Delta$ is well-known not to have discrete spectrum. The sufficient condition of Theorem \ref{osc2-thm} is formulated in terms of a new concept of oscillation for mappings associating to each point $x\in\R^d$ a linear subspace of $\C^m$ (i.e., measurable vector-bundles of non-constant rank). 

The last result of the first part (Theorem \ref{pol-thm}) is somewhat complementary to Theorem \ref{osc2-thm}, and states that the reverse implication of \eqref{intro-impl} holds for $2\times 2$ potentials with real polynomial entries, showing that under a rigidity hypothesis on the potential the problem of discreteness of the spectrum may be reduced to a scalar problem.\newline

In the second part of the thesis (chapters \ref{w-ch} and \ref{C2-ch}) the basic datum is a weight\be
\varphi:\C^n\longrightarrow\R,
\ee
and the principal objects of study are:\begin{enumerate}
\item the weighted Bergman space $A^2(\C^n,\varphi)$, i.e., the closed subspace of the Hilbert space \be
L^2(\C^n,\varphi):=\left\{f:\C^n\rightarrow\C\ \text{ such that }\ \int_{\C^n}|f|^2e^{-2\varphi}<+\infty\right\}
\ee consisting of holomorphic functions, and the corresponding orthogonal projector $B_\varphi: L^2(\C^n,\varphi)\rightarrow A^2(\C^n,\varphi)$,
\item the weighted $\dbar$ problem, that is, the problem of finding a function $f\in L^2(\C^n,\varphi)$ such that \be
\dbar f=u,
\ee where $u\in L^2_{(0,1)}(\C^n,\varphi)$ (i.e., the space of $(0,1)$-forms with coefficients in $L^2(\C^n,\varphi)$) is such that $\dbar u=0$,
\item the weighted Kohn Laplacian $\Box_\varphi$, which is a self-adjoint second order operator on $L^2_{(0,1)}(\C^n,\varphi)$, arising naturally in the study of the weighted $\dbar$ problem.
\end{enumerate}

Bergman operators, $\dbar$ problems and Kohn Laplacians have a long history. Their analysis started with the works of H\"ormander, Bergman and Kohn among others, and we invite the reader to consult the bibliography of \cite{chen-shaw} for a rich historical background.

The three objects listed above are strongly interrelated. In fact, our main goal is to prove pointwise bounds for the integral kernel $B_\varphi(z,w)$ of the weighted Bergman projector $B_\varphi$. By an adaptation due to Delin \cite{delin} of an argument of Kerzman \cite{kerzman}, it is enough to prove pointwise estimates on the solutions of the weighted $\dbar$ problem that are orthogonal to $A^2(\C^n,\varphi)$. These bounds may in turn be obtained from a careful study of the Kohn Laplacian $\Box_\varphi$. 

This brings our attention on $\Box_\varphi$ and, as anticipated at the beginning of the introduction, $\Box_\varphi$ is unitarily equivalent to a scalar multiple of a matrix Schr\"odinger operator $\mathcal{H}_{V,A}$ acting on $L^2(\C^n,\C^n)$, whose magnetic and electric potentials depend on $\varphi$. 

This observation was exploited in one-complex dimension by Christ, who proved in\cite{christ} that if $\varphi:\C\rightarrow\R$ is subharmonic and satisfies some mild regularity and uniformity assumptions, then \be
|B_\varphi(z,w)|\leq Ce^{\varphi(z)+\varphi(w)}\frac{e^{-\eps d_0(z,w)}}{\rho_0(z)\rho_0(w)}\qquad\forall z,w\in\C,
\ee
where \bel\label{intro-rho}
\rho_0(z):=\sup\left\{r>0:\ \int_{|w-z|\leq r}\Delta\varphi\leq 1\right\},
\eel
and $d_0$ is the Riemannian distance on $\C\equiv\R^2$ associated to the metric $\rho_0(z)^{-2}(dx^2+dy^2)$. Christ's proof appeals to Agmon theory, a powerful tool developed to establish exponential decay of eigenfunctions of Schr\"odinger operators \cite{agmon}.

In trying to extend this approach to several complex variables, one has to face two difficulties:\begin{enumerate}
\item $\Box_\varphi$ is no longer an ordinary Schr\"odinger operator, but a matrix Schr\"odinger operator, 
\item the electrical potential $V$ is no longer non-negative.
\end{enumerate}
The most serious obstacle is the second one, and to avoid it we have to follow a different route. Before describing it, it is worth mentioning that Delin obtained in \cite{delin} a generalization of Christ's result in several variables under the assumption that $\varphi$ be uniformly strictly plurisubharmonic. Unfortunately, Delin's result does not apply to weakly plurisubharmonic weights.

Coming back to our approach, we say that $\Box_\varphi$ is $\mu$-coercive if\be
(\Box_\varphi u,u)_\varphi\geq \int_{\C^n}\mu^2 |u|^2e^{-2\varphi}\qquad\forall u\in L^2_{(0,1)}(\C^n,\varphi),
\ee where $(\cdot,\cdot)_\varphi$ is the scalar product in $L^2_{(0,1)}(\C^n,\varphi)$, and $\mu:\C^n\rightarrow[0,+\infty]$. This condition should be compared with \eqref{intro-coerc}.

In Chapter \ref{w-ch} we prove (Theorem \ref{bergman-thm}) that if $\varphi:\C^n\rightarrow\R$ is a plurisubharmonic weight such that $\Box_\varphi$ is $\kappa^{-1}$-coercive, and $\varphi$ and $\kappa$ meet some additional mild restrictions, then we have the following estimate for the weighted Bergman kernel:\bel\label{intro-mu}
|B_\varphi(z,w)|\leq Ce^{\varphi(z)+\varphi(w)}\frac{\kappa(z)}{\rho(z)}\frac{e^{-\eps d_\kappa(z,w)}}{\rho(z)^n\rho(w)^n}\qquad\forall z,w\in\C^n,
\eel
where \bel\label{intro-rho2}
\rho(z):=\sup\left\{r>0:\ \max_{|w-z|\leq r}\Delta\varphi(w)\leq r^{-2}\right\},
\eel
and $d_\kappa$ is the Riemannian distance associated to the metric \be\kappa(z)^{-2}\sum_{j=1}^n(dx_j^2+dy_j^2).\ee
Notice that \eqref{intro-rho} is better than \eqref{intro-rho2}, in the sense that when $n=1$ $\rho_0$ is larger than $\rho$. This difference comes from the use of $L^\infty$ rather than $L^1$ bounds in our arguments. We do not consider this a serious limitation, since $\rho_0$ and $\rho$ are comparable when the weight $\varphi$ is a polynomial, a particularly interesting case often used as a model of the more general finite-type case.

The second important thing to observe is that the distance $d_0$ in Christ's estimate is replaced by $d_\kappa$ in our estimate, and that a factor $\frac{\kappa(z)}{\rho(z)}$ appears. 

If $\Box_\varphi$ is $c\rho^{-1}$-coercive (for some $c>0$), which is the case when the eigenvalues of the complex Hessian of $\varphi$ are comparable (Theorem \ref{bergman-comp}), then our result is the natural generalization of Christ's, that is\be
|B_\varphi(z,w)|\leq Ce^{\varphi(z)+\varphi(w)}\frac{e^{-\eps d_\rho(z,w)}}{\rho(z)^n\rho(w)^n}\qquad\forall z,w\in\C^n.
\ee

In general, $\Box_\varphi$ is not $c\rho^{-1}$-coercive, and Theorem \ref{bergman-thm} is essentially a conditional result giving a non-trivial estimate for $B_\varphi(z,w)$ whenever one knows that $\Box_\varphi$ is $\mu$-coercive.

It is then natural to consider the problem of establishing $\mu$-coercivity of $\Box_\varphi$ for weights in a given class. In Chapter \ref{C2-ch} we study this problem for a class of polynomial weights in $\C^2$, which we name \emph{homogeneous model weights}, since they are homogeneous with respect to certain non necessarily isotropic dilations (see Section \ref{model-sec} for the precise definition). Theorem \ref{model-thm} states that if $\varphi:\C^2\rightarrow[0,+\infty)$ is a homogeneous model weight, then $\Box_\varphi$ is $\mu$-coercive, where \be
\mu(z,w)=c(1+|z|^\sigma+|w|^\tau).
\ee Here $c$ is a positive constant depending on $\varphi$ and $\sigma$ and $\tau$ are non-negative real numbers which can be easily computed from the Newton diagram of $\varphi$.

The proof of this result is based both on explicit computations and on a more conceptual tool: a \emph{holomorphic uncertainty principle} (Lemma \ref{hol-unc}) which is again inspired by the relation of weighted Kohn Laplacians and matrix Schr\"odinger operators.

\chapter*{Notation}
\addcontentsline{toc}{chapter}{Notation}

Let us introduce some notation concerning matrices that will be used throughout the work.

The symbol $(v,w)$ denotes the Hermitian scalar product of $v,w\in\C^m$, while $x\cdot y$ is the euclidean scalar product of $x,y\in\R^m$.

In this thesis we often deal with matrices and matrix-valued functions. We denote by $H_m$ ($m\in\N$) the real vector space of Hermitian matrices, i.e. $A=(A_{jk})_{j,k=1}^m\in H_d$ if and only if $A_{jk}\in \C$ for every $j,k$ and $A_{jk}=\overline{A_{kj}}$. The matrix $A\in H_m$ acts as a self-adjoint linear operator on $\C^m$, i.e., \be
(Av,w)=(v,Aw)\qquad\forall v,w\in\C^m,
\ee and by the spectral theorem there exists an orthonormal basis $v_1,\dots,v_m$ of $\C^m$ consisting of eigenvectors of $A$. The eigenvalues $\lambda_1,\dots,\lambda_m$ are real, the smallest one is \be
\lambda(A):=\min_{|v|=1}(Av,v),
\ee and the largest one is \be
\mu(A):=\max_{|v|=1}(Av,v).
\ee
The operator norm of $A$, defined as usual as $||A||:=\max_{|v|\leq 1}|Av|$, is easily seen to be equal to $\mu(A)$.

A function $V:E\rightarrow H_m$ ($E\subseteq\R^d$) is said to be (locally) integrable if its pointwise operator norm $||V||$ is (locally) integrable. If it is integrable $\int_EV$ is the element of $H_m$ defined by the identity\be
\left(\left(\int_EV\right)v,w\right)=\int_E(Vv,w)\qquad\forall v,w\in\C^m.
\ee

\begin{dedication}

\end{dedication}

\chapter*{Acknowledgements}
\addcontentsline{toc}{chapter}{Acknowledgements}

\par I would like to express my deep gratitude to Fulvio Ricci, who guided me through this beautiful part of mathematics and without whom I would have surely been lost. 

I am also very grateful to Paolo Ciatti, who introduced me to harmonic analysis while I was an undergraduate student in Padua and dedicated to me a lot of his time, and to Alexander Nagel, who has been a wonderful host (together with Yvonne, of course) and taught me a lot of mathematics during my two visits to Madison, Wisconsin. 

Let me try to write an (incomplete) list of the people that shared with me a bit of their mathematical knowledge in the last years: Dario Trevisan, Francesco Ghiraldin, Bozhidar Velichkov, Berardo Ruffini, Samuele Mongodi, Alessio Martini, Laura Cremaschi, and all the guys of the ``Seminario Informale di Analisi". Talking with them helped me to overcome many of the difficulties emerged in my research. 

Finally, I would like to thank all the people that have been close to me during these years in Pisa: my family (my mother Ivana, my father Franco, my uncle Ezio, and my aunt Marzia), Bert\'imene, my good friends Ivano and Alessio, Michele, Luchino and Nika, Luca Lopez, Francesco and Massimo Taronna, Ruggero Roydelberg, Paolo Natali, Levacci and Gaia, Giorgio Lando, and my neighbor Biancamaria.  

\begin{dedication}

\end{dedication}

\chapter{Matrix Schr\"odinger operators}\label{m-ch}
\pagenumbering{arabic}

In this chapter we present our results concerning matrix Schr\"odinger operators. Sections 1.1 through 1.5 contain a few basic facts about them and the statement of the Maz'ya-Shubin characterization of discreteness of the spectrum for scalar Schr\"odinger operators. Sections 1.6 through 1.16 contain our original results.

\section{Magnetic potentials, gradients\\ and Laplacians}\label{mag-sec}

A \emph{magnetic potential} on $\R^d$ is, for our purposes, any $C^1$ vector field\be
A:\R^d\rightarrow\R^d.
\ee
The \emph{magnetic gradient} (relative to $A$) of a $C^1$ scalar function $f:\R^d\rightarrow\C$ is defined as\be
\nabla_Af:=\left(\frac{\partial f}{\partial x_1}-iA_1f,\dots,\frac{\partial f}{\partial x_d}-iA_df\right).
\ee Notice that $\nabla_Af$ is $\C^d$-valued. If $A=0$, then $\nabla_A$ coincides with the ordinary Euclidean gradient. 

The formal adjoint $\nabla_A^*$ of the magnetic gradient is the divergence-like differential operator that takes a $C^1$ vector-valued function $F:\R^d\rightarrow\C^d$ into the scalar function\be
\nabla_A^*F=\sum_{k=1}^d\left(-\frac{\partial}{\partial x_k}+iA_k\right)F_k.
\ee
Of course, we have the identity $\int_{\R^d}\nabla_A f\cdot \overline{F}=\int_{\R^d} f\overline{\nabla_A^*F}$, whenever $f$ and $F$ are $C^1$, and either $f$ or $F$ is compactly supported.

In analogy with the definition of the ordinary Laplacian as the composition of divergence and gradient, we define the \emph{magnetic Laplacian} of a $C^2$ scalar function $f:\R^d\rightarrow\C$:\bee
\Delta_Af&:=&-\nabla_A^*\nabla_Af\\
&=&-\sum_{k=1}^d\left(-\frac{\partial}{\partial x_k}+iA_k\right)\left(\frac{\partial}{\partial x_k}-iA_k\right)f\\
&=&\Delta f-2i\sum_{k=1}^dA_k\frac{\partial f}{\partial x_k}-i\text{div}(A)f-|A|^2f,
\eee where $\text{div}(A)=\sum_{j=1}^d\frac{\partial A_j}{\partial x_j}$ denotes the ordinary divergence of $A$. An integration by parts yields the identity\be
-\int_{\R^d}\Delta_Af\overline{f}=\int_{\R^d}|\nabla_Af|^2=-\int_{\R^d}f\overline{\Delta_Af}\qquad\forall f\in C^2_c(\R^d),
\ee  showing in particular that $\Delta_A$ is formally self-adjoint and non-positive.

Of course, one can define $\nabla_A$, $\nabla_A^*$ and $\Delta_A$ distributionally on less regular functions, but in this section we are only interested in formal definitions.
\newline

The next lemma contains an important basic fact about magnetic Laplacians: the \emph{diamagnetic inequality}.

\begin{lem}\label{mag-diamag}If $f\in C^1(\R^d)$ then \be
|\nabla_A f(x)|\geq |\nabla|f|(x)|\qquad\text{a.e. } x\in\R^d.
\ee 
\end{lem}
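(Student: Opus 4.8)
The plan is to split the argument according to whether $f$ vanishes or not, treating separately the open set $\Omega:=\{x\in\R^d:\ f(x)\neq0\}$ and the closed set $Z:=\{f=0\}$.

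On $\Omega$ I would work with $|f|=\sqrt{f\overline f}$, which is there of class $C^1$. Differentiating $|f|^2=f\overline f$ gives $|f|\,\nabla|f|=\mathrm{Re}(\overline f\,\nabla f)$, and since $A$ is real-valued and $|f|^2$ is real one has $\mathrm{Re}(\overline f\,(-iA_kf))=\mathrm{Re}(-iA_k|f|^2)=0$, so in fact $|f|\,\nabla|f|=\mathrm{Re}(\overline f\,\nabla_A f)$ on $\Omega$. Applying Cauchy--Schwarz componentwise, $|\mathrm{Re}(\overline f\,(\nabla_A f)_k)|\le|f|\,|(\nabla_A f)_k|$, and summing the squares over $k=1,\dots,d$ yields $|f|^2\,|\nabla|f||^2\le|f|^2\,|\nabla_A f|^2$; dividing by $|f|^2>0$ proves the inequality at every point of $\Omega$. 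An equivalent route, which also pins down the equality case, is to pick locally a $C^1$ branch $\theta$ of $\arg f$, write $f=|f|e^{i\theta}$, and compute directly that $|\nabla_A f|^2=|\nabla|f||^2+|f|^2\,|\nabla\theta-A|^2$.

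For a.e.\ point of $Z$ I would instead argue that $\nabla|f|=0$ there, so that the inequality becomes the trivial $0\le|\nabla_A f|$. The key observation is that $|f|$ is locally Lipschitz, hence differentiable a.e.\ by Rademacher's theorem, and at a differentiability point $x\in Z$ the nonnegative function $|f|$ attains a minimum, forcing $\nabla|f|(x)=0$; since the non-differentiability set of $|f|$ is Lebesgue-null, $\nabla|f|=0$ at a.e.\ point of $Z$. (Alternatively, $Z\cap\{\nabla f\neq0\}$ is a countable union of $C^1$ hypersurfaces by the implicit function theorem, hence null, while on $Z\cap\{\nabla f=0\}$ one has $\nabla_A f=\nabla f=0$.)

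I expect the only real subtlety to be this last step — showing that $|f|$ has vanishing gradient almost everywhere on the zero set — because $|f|$ need not be classically differentiable at points where $f=0$, and one has to invoke Rademacher's theorem (or the implicit function theorem) to handle this null-dominated set; the computation on $\Omega$ is completely elementary. Putting the two cases together gives $|\nabla_A f(x)|\ge|\nabla|f|(x)|$ for a.e.\ $x\in\R^d$.
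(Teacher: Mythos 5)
Your proof is correct and follows essentially the same route as the paper: split into $\{f\neq 0\}$, where the polar/Cauchy--Schwarz computation gives the bound pointwise, and the zero set, where the inequality is handled separately using a.e.\ differentiability of $|f|$. The only small variation is on $Z=\{f=0\}$: you observe that $|f|$ has a minimum there so $\nabla|f|=0$ a.e.\ on $Z$, while the paper instead uses the general bound $|\nabla|f|(x)|\le|\nabla f(x)|$ together with $\nabla_A f=\nabla f$ when $f(x)=0$ --- both are valid, and yours is arguably slightly more direct.
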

Notice that $|f|$ is Lipschitz and thus $\nabla |f|$ exists almost everywhere by Rademacher's theorem.

\begin{proof} If $\nabla|f|(x)$ exists, then $|\nabla |f|(x)|\leq |\nabla f(x)|$. If $f(x)=0$ then $\nabla_Af(x)=\nabla f(x)$ and the proof is complete.

If $f(x)\neq0$, in a neighborhood of $x$ we can write $f=re^{i\theta}$, where $r$ and $\theta$ are smooth and real-valued. We have
\bee
|\nabla_A f|^2&=&\sum_{k=1}^d\left|\left(\frac{\partial }{\partial x_k}-iA_k\right) (re^{i\theta})\right|^2\\
&=&\sum_{k=1}^d\left|\left(\frac{\partial r}{\partial x_k}+ir\frac{\partial \theta}{\partial x_k}-irA_k\right)e^{i\theta}\right|^2\\
&=&\sum_{k=1}^d\left|\frac{\partial r}{\partial x_k}\right|^2+r^2\left|\frac{\partial \theta}{\partial x_k}-A_k\right|^2\\
&\geq&\sum_{k=1}^d\left|\frac{\partial r}{\partial x_k}\right|^2=|\nabla |f||^2.
\eee\end{proof}

\section{Magnetic matrix Schr\"odinger operators}\label{schrod-sec}

A \emph{magnetic matrix Schr\"odinger operator} acts on $\C^m$-valued functions defined on a Euclidean space $\R^d$, and is determined by the following two data:\begin{enumerate}
\item an $m\times m$ Hermitian matrix-valued \emph{electric potential} \be
V:\R^d\rightarrow H_m,
\ee which we assume to be locally integrable,

\item a $C^1$ \emph{magnetic potential} \be
A:\R^d\rightarrow\R^d.
\ee
\end{enumerate}

The formal expression of the magnetic matrix Schr\"odinger operator is the following:\be
\mathcal{H}_{V,A}\psi:=-\Delta_A\psi+V\psi\qquad\forall \psi\in C^2(\R^d,\C^m).
\ee
Here $\Delta_A$ acts diagonally, i.e., componentwise, on $\psi$, and $V$ acts by pointwise matrix multiplication. Explicitly, \be
\mathcal{H}_{V,A}\psi=\left(-\Delta_A\psi_k+\sum_{\ell=1}^mV_{k\ell}\psi_\ell \right)_{k=1}^m,
\ee
or in other words, $\mathcal{H}_{V,A}$ is the matrix differential operator:\be
\mathcal{H}_{V,A}=\begin{bmatrix}
-\Delta_A+V_{11} &\cdots& V_{1m}\\
\\
\vdots&\ddots&\vdots\\
\\
V_{m1} &\cdots& -\Delta_A+V_{mm}
\end{bmatrix}.
\ee

Since $V$ is Hermitian at every point and $\Delta_A$ is formally self-adjoint, $\mathcal{H}_{V,A}$ is formally self-adjoint too: \be
\int_{\R^d}(\mathcal{H}_{V,A}\psi,\phi)=\int_{\R^d}(\psi,\mathcal{H}_{V,A}\phi) \qquad\forall \psi,\phi\in C^2_c(\R^d,\C^m),
\ee where $(\cdot,\cdot)$ is the hermitian scalar product in $\C^m$. Notice that the integrals are absolutely convergent because $V$ is locally integrable.

We now define the \emph{energy functional} associated to $\mathcal{H}_{V,A}$, at least for $\psi\in C^2_c(\R^d,\C^m)$: \bel\label{schrod-energy}
\mathcal{E}_{V,A}(\psi):=\int_{\R^d}(\mathcal{H}_{V,A}\psi,\psi)=\int_{\R^d}|\nabla_A\psi|^2+\int_{\R^d}(V\psi,\psi),
\eel
where $|\nabla_A\psi|^2=\sum_{k=1}^m|\nabla_A\psi_k|^2$. The first term of the right-hand side of \eqref{schrod-energy} is called the \emph{kinetic energy}, while the second is the \emph{potential energy} of $\psi$. Notice that $(V\psi,\psi)$ is the pointwise evaluation of the quadratic form associated to $V$ on $\psi$. 

If $m=1$, $\mathcal{H}_{V,A}$ is the usual magnetic Schr\"odinger operator $-\Delta_A+V$ with scalar potential $V$, and the energy takes the form $\int_{\R^d}|\nabla_A\psi|^2+\int_{\R^d}V|\psi|^2$.

The case $m\geq2$ can not be reduced to the scalar one in general, unless the matrices $V(x)$ ($x\in\R^d$) can be simultaneously diagonalized.

In this section we introduced magnetic matrix Schr\"odinger operators in a formal way, that is on domains of smooth (vector-valued) functions. In the next section we show how to extend them to self-adjoint operators in the easier case when $V$ is non-negative and $A=0$. 

\section{Extending $\mathcal{H}_{V,0}$ to a self-adjoint operator\\ when $V$ is non-negative}\label{ext-sec}

We assume throughout this section that \be
V:\R^d\longrightarrow H_m
\ee 
is locally integrable and that $V\geq0$ everywhere, i.e.,\be
(V(x)v,v)\geq0\qquad\forall v\in\C^m,\ \forall x\in\R^d.
\ee 
We begin by extending the domain of $\mathcal{E}_{V,0}$, which we simply denote by $\mathcal{E}_V$. Analogously, we replace the symbol $\mathcal{H}_{V,0}$ with $\mathcal{H}_{V}$.

Let us introduce the linear space
\be
\mathcal{D}(\mathcal{E}_V):=\left\{\psi\in L^2(\R^d,\C^m):\ \frac{\partial\psi}{\partial x_j}\in L^2(\R^d,\C^m) \ \forall j, \quad(V\psi,\psi)\in L^1(\R^d)\right\},
\ee where $\frac{\partial}{\partial x_j}$ is to be interpreted componentwise and in the sense of distributions. If $\psi\in\mathcal{D}(\mathcal{E}_V)$, then\be\mathcal{E}_V(\psi):=\int_{\R^d}|\nabla\psi|^2+\int_{\R^d}(V\psi,\psi)\ee is well-defined. This clearly extends the definition given in Section \ref{schrod-sec}.

\begin{prop}\label{ext-hilb}
The expression $||\psi||_V:=\sqrt{\mathcal{E}_V(\psi)+||\psi||^2}$, where $||\cdot||$ is the norm of $L^2(\R^d,\C^m)$, is a Hilbert space norm on $\mathcal{D}(\mathcal{E}_V)$. 
\end{prop}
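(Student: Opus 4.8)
The plan is to verify the three defining properties of a norm (positive-definiteness, homogeneity, triangle inequality) and then completeness. Homogeneity is immediate from the obvious quadratic scaling $\mathcal{E}_V(\lambda\psi)=|\lambda|^2\mathcal{E}_V(\psi)$ and $\|\lambda\psi\|^2=|\lambda|^2\|\psi\|^2$. Positive-definiteness is also easy: since $V\geq 0$ everywhere we have $\mathcal{E}_V(\psi)\geq 0$, so $\|\psi\|_V^2\geq\|\psi\|^2$, and thus $\|\psi\|_V=0$ forces $\psi=0$ in $L^2(\R^d,\C^m)$. For the triangle inequality, I would observe that $\mathcal{E}_V$ is the quadratic form of a genuine (semi-definite) inner product on $\mathcal{D}(\mathcal{E}_V)$, namely $\langle\psi,\phi\rangle_{\mathcal{E}_V}:=\int_{\R^d}\nabla\psi\cdot\overline{\nabla\phi}+\int_{\R^d}(V\psi,\phi)$, where the second integrand is absolutely integrable by Cauchy--Schwarz applied pointwise to the positive-semidefinite form $V(x)$ together with the assumption $(V\psi,\psi),(V\phi,\phi)\in L^1$. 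Hence $\|\cdot\|_V^2=\langle\cdot,\cdot\rangle_{\mathcal{E}_V}+\langle\cdot,\cdot\rangle_{L^2}$ is the quadratic form of an inner product, and the triangle inequality follows formally.

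The substantive part is completeness. Let $(\psi_k)$ be a $\|\cdot\|_V$-Cauchy sequence. Then it is Cauchy in $L^2(\R^d,\C^m)$, so $\psi_k\to\psi$ in $L^2$ for some $\psi$; moreover each $\partial\psi_k/\partial x_j$ is Cauchy in $L^2(\R^d,\C^m)$, hence converges to some $g_j\in L^2$, and since distributional differentiation is continuous on $L^2$ against test functions, $g_j=\partial\psi/\partial x_j$; in particular $\nabla\psi_k\to\nabla\psi$ in $L^2$. It remains to handle the potential term: I must show $(V\psi,\psi)\in L^1$ and $\mathcal{E}_V(\psi_k)\to\mathcal{E}_V(\psi)$, equivalently $\int(V\psi_k,\psi_k)\to\int(V\psi,\psi)$. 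The natural device is to pass to a subsequence with $\psi_k\to\psi$ a.e., so that $(V\psi_k,\psi_k)\to(V\psi,\psi)$ pointwise a.e.; Fatou's lemma then gives $\int(V\psi,\psi)\leq\liminf\int(V\psi_k,\psi_k)<\infty$, so $\psi\in\mathcal{D}(\mathcal{E}_V)$. For the convergence of the potential energies, I would write $|(V^{1/2}(\psi_k-\psi))(x)|^2=(V(\psi_k-\psi),\psi_k-\psi)(x)$ using the pointwise square root of the positive semidefinite matrix $V(x)$, note that $V^{1/2}\psi_k$ is Cauchy in $L^2(\R^d,\C^m)$ because $\int(V(\psi_k-\psi_\ell),\psi_k-\psi_\ell)=\mathcal{E}_V(\psi_k-\psi_\ell)-\int|\nabla(\psi_k-\psi_\ell)|^2\to 0$, hence $V^{1/2}\psi_k$ converges in $L^2$, and its limit must be $V^{1/2}\psi$ by the a.e. convergence along the subsequence. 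Then $\int(V\psi_k,\psi_k)=\|V^{1/2}\psi_k\|^2\to\|V^{1/2}\psi\|^2=\int(V\psi,\psi)$, and combined with $\int|\nabla\psi_k|^2\to\int|\nabla\psi|^2$ we get $\mathcal{E}_V(\psi_k)\to\mathcal{E}_V(\psi)$, hence $\|\psi_k-\psi\|_V\to 0$ (a standard subsequence argument upgrades this from the extracted subsequence to the whole Cauchy sequence).

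The main obstacle is the completeness argument, and within it the care needed to make sense of and control the potential term $\int(V\psi,\psi)$: one must justify introducing the measurable pointwise square root $V^{1/2}(x)$, check its measurability, and verify that $V^{1/2}\psi_k$ is genuinely Cauchy in $L^2$ rather than merely bounded — the identity $\mathcal{E}_V(\psi_k-\psi_\ell)=\int|\nabla(\psi_k-\psi_\ell)|^2+\|V^{1/2}(\psi_k-\psi_\ell)\|^2$ is what makes this work, since the left side and the gradient term on the right are both controlled by Cauchyness in $\|\cdot\|_V$. Everything else is routine Hilbert-space bookkeeping.
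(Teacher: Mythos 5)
Your proof is correct and follows essentially the same route as the paper: recognize that $\|\cdot\|_V$ comes from an inner product, and for completeness use the non-negativity of $V$ to split $\|\psi_k-\psi_\ell\|_V^2$ into three non-negative pieces, each of which must separately be Cauchy; you merely spell out the details (the $V^{1/2}$ device) that the paper dismisses as standard. One small wrinkle: your closing phrase ``$\mathcal{E}_V(\psi_k)\to\mathcal{E}_V(\psi)$, hence $\|\psi_k-\psi\|_V\to 0$'' is not a valid implication on its own, but the strong $L^2$ convergence of $\psi_k$, $\nabla\psi_k$, and $V^{1/2}\psi_k$ that you established just before already gives $\|\psi_k-\psi\|_V\to 0$ directly, so nothing is actually missing.
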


\begin{proof}
$||\psi||_V$ is easily seen to be a norm coming from a scalar product.

Completeness is obtained observing that $\psi_\ell$ is a Cauchy sequence with respect to $||\cdot||_V$ if and only if $\psi_\ell$ and $\frac{\partial\psi_\ell}{\partial x_j}$ are Cauchy sequences in $L^2(\R^d,\C^m)$, and $(V\psi_\ell,\psi_\ell)$ is a Cauchy sequence in $L^1(\R^d)$ (one needs to use the fact that $V$ is non-negative). \end{proof}

\begin{prop}\label{ext-dense}
The subspace $C^\infty_c(\R^d,\C^m)$ is dense in $(\mathcal{D}(\mathcal{E}_V),||\cdot||_V)$.  
\end{prop}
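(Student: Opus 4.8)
The plan is to establish density by the standard two-step scheme: first truncation (to reduce to compactly supported elements of $\mathcal{D}(\mathcal{E}_V)$), then mollification (to smooth out a compactly supported element). The point throughout is to control not just the $L^2$ norm of $\psi$ and its derivatives, but also the weighted term $\int_{\R^d}(V\psi,\psi)$, and this is where the non-negativity of $V$ does the work.

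First I would fix a cutoff: choose $\chi\in C^\infty_c(\R^d)$ with $\chi\equiv 1$ on $|x|\leq 1$, $0\leq\chi\leq 1$, $|\nabla\chi|\leq 2$, and set $\chi_R(x):=\chi(x/R)$. Given $\psi\in\mathcal{D}(\mathcal{E}_V)$, I would show $\chi_R\psi\to\psi$ in $\|\cdot\|_V$ as $R\to\infty$. For the $L^2$ part and the $L^2$ part of the derivatives this is routine dominated convergence, using $\frac{\partial}{\partial x_j}(\chi_R\psi)=\chi_R\frac{\partial\psi}{\partial x_j}+\frac{1}{R}(\partial_j\chi)(x/R)\psi$ and the bound on $\nabla\chi$. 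For the potential term, pointwise one has $(V(\chi_R\psi),\chi_R\psi)=\chi_R^2\,(V\psi,\psi)\leq(V\psi,\psi)\in L^1$, using here that $V(x)\geq 0$ so that the quadratic form is monotone under multiplication by the scalar $\chi_R^2\in[0,1]$; since $\chi_R^2\to 1$ pointwise, dominated convergence gives $\int(V(\chi_R\psi-\psi),\chi_R\psi-\psi)\to 0$ — more carefully, expand $(V(\chi_R\psi-\psi),\chi_R\psi-\psi)=(1-\chi_R)^2(V\psi,\psi)\leq(V\psi,\psi)$ and conclude. Thus it suffices to approximate a compactly supported $\psi\in\mathcal{D}(\mathcal{E}_V)$.

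Next, for such a $\psi$ (supported in a ball $B$) I would mollify: $\psi_\eps:=\psi*\eta_\eps$ with a standard mollifier $\eta_\eps$. Then $\psi_\eps\in C^\infty_c$, supported in a fixed slightly larger ball $B'$ for $\eps$ small, and $\psi_\eps\to\psi$, $\frac{\partial\psi_\eps}{\partial x_j}=(\frac{\partial\psi}{\partial x_j})*\eta_\eps\to\frac{\partial\psi}{\partial x_j}$ in $L^2$. The remaining point is $\int_{B'}(V(\psi_\eps-\psi),\psi_\eps-\psi)\to 0$. Since $V$ is only locally integrable, I cannot bound this by $\|V\|_{L^\infty}\|\psi_\eps-\psi\|_{L^2}^2$. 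Instead I would split $V=V\mathbf{1}_{\{\|V\|\leq M\}}+V\mathbf{1}_{\{\|V\|>M\}}=:V_M'+V_M''$. On the first piece, $(V_M'w,w)\leq M|w|^2$, so that contribution is $\leq M\|\psi_\eps-\psi\|_{L^2(B')}^2\to 0$ for each fixed $M$. For the tail, I would use $(V_M''w,w)\leq 2(V_M''\psi_\eps,\psi_\eps)+2(V_M''(\psi_\eps-\psi),\psi_\eps-\psi)$ — no; rather, I would bound $(V_M''(\psi_\eps-\psi),\psi_\eps-\psi)\leq 2(V_M''\psi_\eps,\psi_\eps)+2(V_M''\psi,\psi)$ using the elementary inequality $(A(a-b),a-b)\leq 2(Aa,a)+2(Ab,b)$ valid for $A\geq 0$ (polarization plus $2|(Aa,b)|\leq (Aa,a)+(Ab,b)$). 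The term $\int(V_M''\psi,\psi)=\int_{\{\|V\|>M\}}(V\psi,\psi)$ is the tail of an $L^1$ function, hence $\to 0$ as $M\to\infty$, uniformly in $\eps$. For $\int(V_M''\psi_\eps,\psi_\eps)$ I would bound $(V_M''\psi_\eps,\psi_\eps)\leq\|V\|\,|\psi_\eps|^2$ and use that $\|\psi_\eps\|_{L^\infty}\leq C(\psi)$ is bounded independently of $\eps$ (convolution with an $L^1$ kernel of an $L^\infty$... but $\psi$ need not be bounded). This last subtlety is the real obstacle, and I would handle it as follows: approximate $\psi$ first in $\|\cdot\|_V$ by a bounded compactly supported element of $\mathcal{D}(\mathcal{E}_V)$ — e.g. truncate the values, $\psi^{(N)}:=\psi\cdot\min(1,N/|\psi|)$, which is a bounded Lipschitz-in-value modification with $|\psi^{(N)}|\leq|\psi|$, $|\nabla\psi^{(N)}|\leq|\nabla\psi|$ a.e., and $(V\psi^{(N)},\psi^{(N)})\leq(V\psi,\psi)$ since $\psi^{(N)}=t(x)\psi(x)$ with $t(x)\in[0,1]$ — so dominated convergence gives $\psi^{(N)}\to\psi$ in $\|\cdot\|_V$. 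Having reduced to bounded, compactly supported $\psi$, the mollification estimate above closes: $\|\psi_\eps\|_{L^\infty}\leq\|\psi\|_{L^\infty}$, so $\int(V_M''\psi_\eps,\psi_\eps)\leq\|\psi\|_{L^\infty}^2\int_{B'}\|V\|\mathbf{1}_{\{\|V\|>M\}}\to 0$ as $M\to\infty$ uniformly in $\eps$, by absolute continuity of the integral of the locally integrable function $\|V\|$.

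Assembling: given $\delta>0$, pick $N$ large so that $\|\psi-\psi^{(N)}\|_V<\delta$; then pick $M$ large so that the tail terms for $\psi^{(N)}$ are $<\delta^2$ uniformly in $\eps$; then pick $\eps$ small so that the $L^2$ pieces (the function, its derivatives, and the $V_M'$-term) are each $<\delta^2$; the resulting $\psi^{(N)}_\eps\in C^\infty_c(\R^d,\C^m)$ satisfies $\|\psi^{(N)}_\eps-\psi\|_V\lesssim\delta$. I expect the main obstacle to be exactly the interplay between mollification and the merely locally integrable potential: one must pass to bounded representatives before mollifying, and one must repeatedly invoke the monotonicity $(V(t\psi),t\psi)=t^2(V\psi,\psi)$ for $t\in[0,1]$ together with the polarization bound $(A(a-b),a-b)\leq 2(Aa,a)+2(Ab,b)$, both of which use $V\geq 0$ crucially.
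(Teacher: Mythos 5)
Your proposal is correct and follows essentially the same three-step scheme as the paper: spatial truncation by a cutoff, truncation in value ($\psi\mapsto\psi\cdot\min(1,N/|\psi|)$, which is the paper's $\psi_R$), and mollification of a bounded compactly supported element, with non-negativity of $V$ doing the work in the dominated-convergence steps. The only difference is that you flesh out the mollification step (splitting $V$ at level $M$), which the paper leaves as "standard"; the underlying argument is the same.
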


\begin{proof}
This can be seen in three steps.
\begin{enumerate}
\item[(a)] Compactly supported elements of $\mathcal{D}(\mathcal{E}_V)$ are dense: if $\psi\in \mathcal{D}(\mathcal{E}_V)$ and $\eta\in C^\infty_c(\R^d)$ is such that $\eta(0)=1$, then $\eta(\eps x)\psi(x)$ converges to $\psi$ in $\mathcal{D}(\mathcal{E}_V)$ when $\eps\rightarrow0$.
\item[(b)] Bounded elements of $\mathcal{D}(\mathcal{E}_V)$ are dense: if $\psi\in \mathcal{D}(\mathcal{E}_V)$, then defining $\psi_R(x):=\psi(x)$ when $|\psi(x)|\leq R$, and $\psi_R(x):=R\frac{\psi(x)}{|\psi(x)|}$ otherwise, one has the convergence $\psi_R\rightarrow \psi$ in $\mathcal{D}(\mathcal{E}_V)$, by the chain rule for Sobolev spaces and dominated convergence.
\item[(c)] Smooth compactly supported elements of $\mathcal{D}(\mathcal{E}_V)$ are dense: if $\psi\in \mathcal{D}(\mathcal{E}_V)$ is bounded and compactly supported and $\{\varphi_\eps\}_{\eps>0}$ is a scalar approximate identity, then $\varphi_\eps*\psi\rightarrow\psi$ in $\mathcal{D}(\mathcal{E}_V)$.\end{enumerate}
The omitted details are standard.
\end{proof}

\begin{prop}\label{ext-prop}
The operator\be
\mathcal{H}_V:=-\Delta+V,\ee 
where $\Delta$ acts componentwise and distributionally, defined on the dense domain
\be
\mathcal{D}(\mathcal{H}_V):=\{\psi\in \mathcal{D}(\mathcal{E}_V):\ -\Delta\psi+V\psi\in L^2(\R^d,\C^m)\}\subseteq L^2(\R^d,\C^m),
\ee is self-adjoint. Moreover, \bel\label{ext-quad}
\int_{\R^d}(\mathcal{H}_V\psi,\psi)=\mathcal{E}_V(\psi)\qquad\forall \psi\in \mathcal{D}(\mathcal{H}_V), 
\eel and in particular $\mathcal{H}_V$ is non-negative.
\end{prop}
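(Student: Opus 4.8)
The plan is to realize $\mathcal{H}_V$ as the self-adjoint operator canonically associated to the closed, densely defined, non-negative quadratic form $(\mathcal{E}_V,\mathcal{D}(\mathcal{E}_V))$ via the first representation theorem of Kato. Propositions \ref{ext-hilb} and \ref{ext-dense} already supply exactly the hypotheses needed: $\mathcal{E}_V$ is non-negative, and $\mathcal{D}(\mathcal{E}_V)$ is complete in the form norm $\|\cdot\|_V$ (closedness) and contains $C^\infty_c(\R^d,\C^m)$, hence is dense in $L^2(\R^d,\C^m)$. Kato's theorem then produces a unique self-adjoint operator $\widetilde{\mathcal{H}}$ with $\mathcal{D}(\widetilde{\mathcal{H}})\subseteq\mathcal{D}(\mathcal{E}_V)$, characterized by: $\psi\in\mathcal{D}(\widetilde{\mathcal{H}})$ with $\widetilde{\mathcal{H}}\psi=\chi$ if and only if $\psi\in\mathcal{D}(\mathcal{E}_V)$ and the polarized form $\mathcal{E}_V(\psi,\phi)=\int_{\R^d}\bigl(\nabla\psi\cdot\overline{\nabla\phi}+(V\psi,\phi)\bigr)$ equals $(\chi,\phi)$ for all $\phi\in\mathcal{D}(\mathcal{E}_V)$. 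Non-negativity of $\widetilde{\mathcal{H}}$ and the identity $(\widetilde{\mathcal{H}}\psi,\psi)=\mathcal{E}_V(\psi)$ are then automatic from the construction, so \eqref{ext-quad} will follow once we identify $\widetilde{\mathcal{H}}$ with $\mathcal{H}_V$.

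The core of the argument is therefore the identification $\widetilde{\mathcal{H}}=\mathcal{H}_V=-\Delta+V$ on the stated domain. First I would check the inclusion $\mathcal{D}(\widetilde{\mathcal{H}})\subseteq\mathcal{D}(\mathcal{H}_V)$ with $\widetilde{\mathcal{H}}\psi=-\Delta\psi+V\psi$: if $\psi\in\mathcal{D}(\widetilde{\mathcal{H}})$, testing the form identity against $\phi\in C^\infty_c(\R^d,\C^m)$ gives $\int(\nabla\psi\cdot\overline{\nabla\phi}+(V\psi,\phi))=(\widetilde{\mathcal{H}}\psi,\phi)$, which is precisely the statement that $-\Delta\psi+V\psi=\widetilde{\mathcal{H}}\psi$ in the sense of distributions (here one uses that $V\psi\in L^1_{\mathrm{loc}}$, since $\psi\in\mathcal{D}(\mathcal{E}_V)$ and $V$ is locally integrable, to make sense of the pairing); hence $-\Delta\psi+V\psi\in L^2$ and $\psi\in\mathcal{D}(\mathcal{H}_V)$. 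For the reverse inclusion $\mathcal{D}(\mathcal{H}_V)\subseteq\mathcal{D}(\widetilde{\mathcal{H}})$, take $\psi\in\mathcal{D}(\mathcal{H}_V)$ and set $\chi:=-\Delta\psi+V\psi\in L^2$; I must show $\mathcal{E}_V(\psi,\phi)=(\chi,\phi)$ for all $\phi\in\mathcal{D}(\mathcal{E}_V)$. This holds for $\phi\in C^\infty_c$ by the definition of the distributional Laplacian together with $\int(V\psi,\phi)=\int(\psi,V\phi)$ (Hermiticity of $V$, integrals absolutely convergent), and then extends to all of $\mathcal{D}(\mathcal{E}_V)$ by density (Proposition \ref{ext-dense}) — using that both sides are continuous in $\phi$ with respect to $\|\cdot\|_V$: the term $\int\nabla\psi\cdot\overline{\nabla\phi}$ and $(\chi,\phi)$ are clearly continuous, while $\int(V\psi,\phi)$ is controlled by Cauchy–Schwarz for the non-negative form $V$, $|\int(V\psi,\phi)|\le(\int(V\psi,\psi))^{1/2}(\int(V\phi,\phi))^{1/2}\le\mathcal{E}_V(\psi)^{1/2}\|\phi\|_V$. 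Since both the graph of $\widetilde{\mathcal{H}}$ and that of $\mathcal{H}_V$ are determined by this variational identity, they coincide; in particular $\mathcal{H}_V$ is self-adjoint.

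The step I expect to require the most care is the density-extension in the reverse inclusion, specifically justifying that $\int(V\psi,\phi)$ passes to the limit along sequences $\phi_\ell\to\phi$ in $\|\cdot\|_V$ when $V$ is merely locally integrable and possibly unbounded — this is exactly where the non-negativity of $V$ is essential (it gives the Cauchy–Schwarz bound above and is why $\|\psi\|_V$ controls $\int(V\psi,\psi)$), mirroring the role it played in the completeness proof of Proposition \ref{ext-hilb}. A secondary subtlety is ensuring the distributional computation $\langle-\Delta\psi,\phi\rangle=\int\nabla\psi\cdot\overline{\nabla\phi}$ is legitimate for $\psi\in\mathcal{D}(\mathcal{E}_V)$ (true since $\psi$ and $\nabla\psi$ are both in $L^2$, so $\psi\in H^1$ locally) and that $V\psi$ is a genuine locally integrable function so that all pairings are honest integrals. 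Everything else — invoking Kato's representation theorem, and reading off non-negativity and \eqref{ext-quad} — is formal once the variational characterization of both operators is in hand.
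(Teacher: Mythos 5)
Your proposal is correct and takes essentially the same route as the paper: the paper carries out the Friedrichs/Lax--Milgram construction by hand (Riesz lemma applied to $\psi\mapsto\int(\psi,\phi)$ on the form space, yielding a bounded self-adjoint inverse $N=(1+\mathcal{H}_V)^{-1}$), whereas you invoke Kato's first representation theorem, which is proved by exactly that argument, and then verify the two domain inclusions to identify the abstract operator with $-\Delta+V$. The care you take with the density-extension step — Cauchy--Schwarz for the pointwise form $V$ giving continuity of $\int(V\psi,\phi)$ in the $\|\cdot\|_V$-norm, and the $L^1_{\mathrm{loc}}$ check on $V\psi$ — is precisely what is implicit in the paper's passage from test functions to general $\psi\in\mathcal{D}(\mathcal{E}_V)$.
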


\begin{proof} The proof follows the standard construction of the Friedrich's extension.

Fix $\phi\in L^2(\R^d,\C^m)$ and consider the linear functional $\psi\mapsto \int_{\R^d}(\psi,\phi)$ on the space $\mathcal{D}(\mathcal{E}_V)$. Since\be
\left|\int_{\R^d}(\psi,\phi)\right|\leq ||\psi||_V\cdot ||\phi|| \qquad\forall \psi\in \mathcal{D}(\mathcal{E}_V),
\ee by Proposition \ref{ext-hilb} and the Riesz Lemma, there exists a unique $N(\phi)\in \mathcal{D}(\mathcal{E}_V)$ such that \bel\label{ext-riesz}
\int_{\R^d}(\psi,\phi) = \int_{\R^d}(\psi,N(\phi))+ \sum_{j=1}^d\int_{\R^d} \left(\frac{\partial \psi}{\partial x_j},\frac{\partial N(\phi)}{\partial x_j}\right) + \int_{\R^d} (V\psi,N(\phi))
\eel for every $\psi\in \mathcal{D}(\mathcal{E}_V)$. Moreover, \be
||N(\phi)||_V\leq ||\phi||.
\ee

Since \eqref{ext-riesz} holds for every $\psi\in C^\infty_c(\R^d,\C^m)$, we see that $N(\phi)\in \mathcal{D}(\mathcal{H}_V)$ and that \be
 N(\phi)-\Delta N(\phi)+V N(\phi)=\phi,
\ee 
in the sense of distributions.

This proves that the operator $1+\mathcal{H}_V$ defined on $\mathcal{D}(\mathcal{H}_V)$ is surjective with a bounded right inverse $N$. Notice that $1+\mathcal{H}_V$ is injective too: if $\phi'\in \mathcal{D}(\mathcal{E}_V)$ and $\phi'+\mathcal{H}_V\phi'=0$, then \be
\int_{\R^d}(\psi,\phi')+ \sum_{j=1}^d\int_{\R^d} \left(\frac{\partial \psi}{\partial x_j},\frac{\partial\phi'}{\partial x_j}\right) + \int_{\R^d} (V\psi,\phi')=0
\ee for every $\psi\in C^\infty_c(\R^d,\C^m)$. By Proposition \ref{ext-dense}, this implies $||\phi'||_V=0$ and hence $\phi'=0$.

Evaluating \eqref{ext-riesz} in $\psi=N(\phi')$ one immediately sees that $N$ is self-adjoint. Hence its inverse $1+\mathcal{H}_V$ and $\mathcal{H}_V$ are self-adjoint.

Identity \eqref{ext-quad} is just a rewriting of \eqref{ext-riesz}.
\end{proof}

\section{The problem of discreteness of the spectrum}\label{schrodpp-sec}

We continue to assume that $V:\R^d\rightarrow H_m$ is non-negative and locally integrable. Thanks to Proposition \ref{ext-prop}, we have the non-negative self-adjoint operator $\mathcal{H}_V$, to which spectral theory may be applied.

We consider the following problem: \emph{for which potentials $V$, does $\mathcal{H}_V$ have discrete spectrum?} Here we adopt a common terminology saying that a self-adjoint operator has discrete spectrum if its spectrum is a discrete subset of $\R$ consisting of eigenvalues of finite multiplicity. 

We have a classical characterization of discreteness of the spectrum, whose proof is identical to the one in the scalar case, as can be found, e.g., in \cite[pp. 190-191]{ko-sh}.

\begin{prop}\label{schrodpp-prop}
$\mathcal{H}_V$ has discrete spectrum if and only if for every $\eps>0$ there is $R<+\infty$ such that 
\bel\label{schrodpp-cond}
\int_{|x|\geq R}|\psi|^2\leq \eps\cdot \mathcal{E}_V(\psi)\qquad\forall \psi\in \mathcal{D}(\mathcal{E}_V).
\eel
\end{prop}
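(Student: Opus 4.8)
The plan is to prove this by the standard Rellich-type compactness argument, adapted to the matrix setting, following the scalar case in \cite{ko-sh}. The key is to show that $\mathcal{H}_V$ has discrete spectrum if and only if the embedding of $(\mathcal{D}(\mathcal{E}_V),||\cdot||_V)$ into $L^2(\R^d,\C^m)$ is compact, and then to translate this compactness into the uniform tail estimate \eqref{schrodpp-cond}. For the first equivalence, I would recall that $\mathcal{H}_V$ is a non-negative self-adjoint operator with $1+\mathcal{H}_V$ invertible and bounded inverse $N$ (Proposition \ref{ext-prop}); by the spectral theorem, $\mathcal{H}_V$ has discrete spectrum precisely when $N=(1+\mathcal{H}_V)^{-1}$ is a compact operator on $L^2(\R^d,\C^m)$. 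Since $N$ maps $L^2$ boundedly into $(\mathcal{D}(\mathcal{E}_V),||\cdot||_V)$ (indeed $||N\phi||_V\leq||\phi||$), compactness of $N$ on $L^2$ is equivalent to compactness of the inclusion $\iota:(\mathcal{D}(\mathcal{E}_V),||\cdot||_V)\hookrightarrow L^2(\R^d,\C^m)$.

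Next I would show that compactness of $\iota$ is equivalent to \eqref{schrodpp-cond}. For the direction \eqref{schrodpp-cond} $\Rightarrow$ compactness: given a bounded sequence $\{\psi_\ell\}$ in $\mathcal{D}(\mathcal{E}_V)$, the $L^2$ norms of $\psi_\ell$ and of the componentwise derivatives $\partial\psi_\ell/\partial x_j$ are uniformly bounded, so on each ball $B_R$ the classical Rellich--Kondrachov theorem (applied componentwise, i.e.\ to each of the $m$ scalar components) gives a subsequence converging in $L^2(B_R,\C^m)$; a diagonal argument over $R\to\infty$ produces a subsequence that is $L^2$-Cauchy on every ball. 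The tails are controlled uniformly by \eqref{schrodpp-cond}: $\int_{|x|\ge R}|\psi_\ell|^2\le\eps\,\mathcal{E}_V(\psi_\ell)\le\eps\,C$, so the subsequence is globally $L^2$-Cauchy, proving $\iota$ compact. For the converse, if \eqref{schrodpp-cond} fails, then there is $\eps_0>0$ and a sequence $R_\ell\to\infty$ together with $\psi_\ell\in\mathcal{D}(\mathcal{E}_V)$ with $\int_{|x|\ge R_\ell}|\psi_\ell|^2>\eps_0\,\mathcal{E}_V(\psi_\ell)$; normalizing $\mathcal{E}_V(\psi_\ell)+||\psi_\ell||^2=1$ (so $\{\psi_\ell\}$ is bounded in $\mathcal{D}(\mathcal{E}_V)$) one checks that the mass $\int_{|x|\ge R_\ell}|\psi_\ell|^2$ stays bounded below, so $\{\psi_\ell\}$ cannot have an $L^2$-convergent subsequence (any limit would have to vanish by the escaping support, contradicting the lower bound on norm), hence $\iota$ is not compact.

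The main obstacle is purely bookkeeping rather than conceptual: one must be careful that the Rellich embedding and the chain/truncation manipulations are done componentwise and that the potential term $\int(V\psi,\psi)$, which is only in $L^1$ and not controlled on balls, never needs to be estimated locally --- it only enters through the global quantity $\mathcal{E}_V(\psi)$ and through the fact that $V\ge0$, which guarantees $\int_{B_R}|\nabla\psi|^2\le\mathcal{E}_V(\psi)$ and hence the uniform $H^1_{\mathrm{loc}}$ bounds needed for Rellich. Since $V\ge0$ makes every term in $\mathcal{E}_V$ nonnegative, all the scalar-case estimates go through verbatim with $|\cdot|$ denoting the $\C^m$-norm; no simultaneous diagonalization of the $V(x)$ is required. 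I would therefore present the argument essentially as in the scalar reference, remarking only that each step is applied to the $\C^m$-valued functions componentwise.
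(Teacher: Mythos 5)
Your overall strategy --- discreteness of the spectrum $\Leftrightarrow$ compactness of $(1+\mathcal{H}_V)^{-1}$ $\Leftrightarrow$ compactness of the inclusion $\iota\colon(\mathcal{D}(\mathcal{E}_V),||\cdot||_V)\hookrightarrow L^2(\R^d,\C^m)$ $\Leftrightarrow$ uniform tail estimate --- is exactly the argument the paper has in mind when it cites \cite{ko-sh} and calls the proof identical to the scalar case, and the forward chain (tail estimate plus componentwise Rellich--Kondrachov gives compactness of $\iota$, hence a compact resolvent, hence discrete spectrum) is fine as written.

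The converse step, however, has a genuine gap. From $\int_{|x|\ge R_\ell}|\psi_\ell|^2>\eps_0\,\mathcal{E}_V(\psi_\ell)$ together with the normalization $||\psi_\ell||_V=1$ one can only conclude $||\psi_\ell||^2\geq\eps_0/(1+\eps_0)$; the \emph{tail} mass $\int_{|x|\ge R_\ell}|\psi_\ell|^2$ need not stay bounded below, because $\mathcal{E}_V(\psi_\ell)$ is perfectly free to tend to $0$, in which case the displayed inequality gives no lower bound on the tail at all. So the step ``one checks that the mass stays bounded below, so any $L^2$-limit would vanish by the escaping support'' asserts something you have not established. The repair: suppose $\iota$ were compact and extract $\psi_{\ell_k}\to\psi$ strongly in $L^2$. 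Then $||\psi||^2\geq\eps_0/(1+\eps_0)>0$. On the other hand strong $L^2$-convergence forces $\int_{|x|\ge R_{\ell_k}}|\psi_{\ell_k}|^2\to 0$ (bound the tail by $2||\psi_{\ell_k}-\psi||^2+2\int_{|x|\ge R_{\ell_k}}|\psi|^2$, both terms vanishing), so $\mathcal{E}_V(\psi_{\ell_k})\to 0$; since the form is closed, hence lower semicontinuous along $L^2$-convergent sequences, $\mathcal{E}_V(\psi)=0$. But $\mathcal{E}_V(\psi)=0$ forces $\nabla\psi=0$ a.e., so $\psi$ is constant, and a non-zero constant does not lie in $L^2(\R^d,\C^m)$; thus $\psi=0$, a contradiction. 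Note that this last observation --- the kernel of the form $\mathcal{E}_V$ is trivial precisely because $L^2$ constants on $\R^d$ are zero --- is an essential ingredient which your proposal uses silently but does not name. Without it the proposition would actually be false: a zero eigenvector of $\mathcal{H}_V$ would violate \eqref{schrodpp-cond} outright even though the spectrum could still be discrete.
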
 

Observe that, thanks to Proposition \ref{ext-dense}, it is enough to verify \eqref{schrodpp-cond} for every $\psi\in C^\infty_c(\R^d,\C^m)$.

The following statement is a consequence of Proposition \ref{schrodpp-prop}.

\begin{prop}\label{schrodpp-lambda}
Let $V:\R^d\rightarrow H_m$ be non-negative and locally integrable. If $\mathcal{H}_{\lambda(V)}$ has discrete spectrum, then $\mathcal{H}_V$ has discrete spectrum too.
\end{prop}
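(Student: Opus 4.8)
The plan is to reduce the coercivity-type estimate \eqref{schrodpp-cond} for $\mathcal{H}_V$ to the corresponding estimate for $\mathcal{H}_{\lambda(V)}$, which holds by hypothesis and Proposition \ref{schrodpp-prop}. The bridge is a pointwise comparison of energies: I claim that for every $\psi\in C^\infty_c(\R^d,\C^m)$ one has $\mathcal{E}_V(\psi)\geq\mathcal{E}_{\lambda(V)}(|\psi|)$, where $|\psi|:=(\sum_{k=1}^m|\psi_k|^2)^{1/2}$.

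First I would dispose of the potential term. By the very definition of $\lambda$, $(V(x)v,v)\geq\lambda(V(x))|v|^2$ for all $v\in\C^m$, hence $(V\psi,\psi)\geq\lambda(V)|\psi|^2$ pointwise, and in particular $\lambda(V)|\psi|^2\in L^1(\R^d)$. For the kinetic term, the key step is the elementary pointwise bound $|\nabla|\psi||\leq|\nabla\psi|$, where $|\nabla\psi|^2=\sum_k|\nabla\psi_k|^2$: on $\{\psi\neq 0\}$ one has $\partial_j|\psi|=|\psi|^{-1}\mathrm{Re}\sum_k\overline{\psi_k}\,\partial_j\psi_k$, and Cauchy--Schwarz in the index $k$ gives $|\partial_j|\psi||^2\leq\sum_k|\partial_j\psi_k|^2$; summing over $j$ yields the bound on $\{\psi\neq 0\}$, while on $\{\psi=0\}$ the gradient of $|\psi|$ vanishes a.e.\ by the standard Sobolev fact. (This is the natural vector-valued companion of the diamagnetic inequality of Lemma \ref{mag-diamag}.) Consequently $\int|\nabla\psi|^2\geq\int|\nabla|\psi||^2$, and adding the potential term gives
\be
\mathcal{E}_V(\psi)\geq\int_{\R^d}|\nabla|\psi||^2+\int_{\R^d}\lambda(V)|\psi|^2=\mathcal{E}_{\lambda(V)}(|\psi|).
\ee

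Finally I would conclude. Since $\psi\in C^\infty_c(\R^d,\C^m)$, the function $|\psi|$ is Lipschitz with compact support, so $|\psi|\in L^2$, $\nabla|\psi|\in L^2$, and (by the bound above) $\lambda(V)|\psi|^2\in L^1$; hence $|\psi|\in\mathcal{D}(\mathcal{E}_{\lambda(V)})$. Given $\eps>0$, the discreteness of the spectrum of $\mathcal{H}_{\lambda(V)}$ together with Proposition \ref{schrodpp-prop} furnishes $R<+\infty$ such that $\int_{|x|\geq R}|u|^2\leq\eps\,\mathcal{E}_{\lambda(V)}(u)$ for all $u\in\mathcal{D}(\mathcal{E}_{\lambda(V)})$. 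Applying this to $u=|\psi|$ and using the energy comparison yields $\int_{|x|\geq R}|\psi|^2=\int_{|x|\geq R}|u|^2\leq\eps\,\mathcal{E}_{\lambda(V)}(|\psi|)\leq\eps\,\mathcal{E}_V(\psi)$ for every $\psi\in C^\infty_c(\R^d,\C^m)$. By the remark following Proposition \ref{schrodpp-prop}, verifying \eqref{schrodpp-cond} on $C^\infty_c(\R^d,\C^m)$ suffices, so $\mathcal{H}_V$ has discrete spectrum.

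I do not anticipate a serious obstacle here: the only mildly delicate points are the a.e.\ validity of $|\nabla|\psi||\leq|\nabla\psi|$ including on the zero set of $\psi$, and checking that $|\psi|$ is an admissible test function for $\mathcal{E}_{\lambda(V)}$ — both are routine, and working with $\psi\in C^\infty_c$ makes them essentially immediate.
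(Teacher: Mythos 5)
Your proof is correct, but it follows a genuinely different route from the paper's. The paper applies the scalar coercivity estimate of Proposition \ref{schrodpp-prop} directly to each complex-valued component $\psi_j$ of $\psi$ and then sums over $j$, relying only on the linear, additive identities $|\psi|^2=\sum_j|\psi_j|^2$ and $|\nabla\psi|^2=\sum_j|\nabla\psi_j|^2$ together with the pointwise bound $(V\psi,\psi)\geq\lambda(V)|\psi|^2=\sum_j\lambda(V)|\psi_j|^2$; no further real-analytic input is needed. You instead pass to the single scalar function $|\psi|$ and invoke a Kato/diamagnetic-type inequality $|\nabla|\psi||\leq|\nabla\psi|$, which entails the additional (routine but nontrivial) observation that $\nabla|\psi|$ vanishes a.e.\ on $\{\psi=0\}$. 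Your version is the natural one when a magnetic potential is present, since then the components of $\psi$ do not decouple and one is forced to work with the modulus; in the present $A=0$ setting the paper's componentwise argument is the more economical of the two. Both approaches yield the same estimate $\int_{|x|\geq R}|\psi|^2\leq\eps\,\mathcal{E}_V(\psi)$ for $\psi\in C^\infty_c(\R^d,\C^m)$.
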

Proposition \ref{ex-counter} will show that the converse implication fails in general.

Recall that $\lambda(V):=\min_{u\in\C^m:\ |u|=1}(Vu,u)$ is the minimal eigenvalue of $V$, and therefore it is a non-negative and locally integrable scalar potential.

\begin{proof}
By Proposition \ref{schrodpp-prop}, for every $\eps>0$ there exists $R<+\infty$ such that \bel\label{schrodpp-scalar}
\int_{|x|\geq R}|\phi|^2\leq \eps\left(\int_{\R^d}|\nabla \phi|^2+\int_{\R^d}\lambda(V)|\phi|^2\right)\qquad\forall \phi\in C^\infty_c(\R^d).
\eel
Given $\psi\in C^\infty_c(\R^d,\C^m)$, we apply \eqref{schrodpp-scalar} to $\phi=\psi_j$ ($j=1,\dots,m$):\bee
\int_{|x|\geq R}|\psi|^2&=&\sum_{j=1}^m\int_{|x|\geq R}|\psi_j|^2\\
&\leq& \eps\left(\sum_{j=1}^m\int_{\R^d}|\nabla \psi_j|^2+\sum_{j=1}^m\int_{\R^d}\lambda(V)|\psi_j|^2\right)\\
&\leq&\eps\left(\int_{\R^d}|\nabla \psi|^2+\int_{\R^d}(V\psi,\psi)\right).
\eee
By Proposition \ref{schrodpp-prop}, the matrix Schr\"odinger operator $\mathcal{H}_V$ has discrete spectrum.
\end{proof}

\section{The Maz'ya-Shubin characterization of \\ discreteness of the spectrum in the scalar case}\label{mash-sec}

The problem of discreteness of the spectrum of $\mathcal{H}_V$ has been deeply studied in the scalar case ($m=1$). Molchanov obtained in \cite{molcanov} a necessary and sufficient condition, and Maz'ya and Shubin significantly improved it in \cite{ma-sh}. It is a slightly simplified version of their result that we present now.

In order to state it, we recall the notion of \emph{Wiener capacity} from classical potential theory. If $d\geq 3$ and $F\subseteq\R^d$ is compact, we define its capacity as follows:
\bel\label{mash-cap3}
\text{Cap}(F):=\inf\left\{\int_{\R^d}|\nabla \eta|^2\colon \eta\in C^\infty_c(\R^d,[0,1]) \text{ s. t. }\eta=1 \text{ on } F\right\}.
\eel 
Unfortunately, one needs to modify the definition when $d<3$. If $F\subseteq Q$ is compact and $Q\subseteq\R^2$ is a square, we define the \emph{capacity of $F$ relative to $Q$}:
\bel\label{mash-cap2}
\text{Cap}(F,Q):=\inf\left\{\int_{\R^2}|\nabla \eta|^2\colon \eta\in C^\infty_c(2Q,[0,1]) \text{ s. t. }\eta=1 \text{ on } F\right\}.
\eel 
Here $2Q$ denotes the square with same center as $Q$ and double side length.

Finally, if $F\subseteq Q$ is compact and $Q\subseteq \R$ is an interval of length $\ell$, we put $\text{Cap}(F,Q)=0$ if $F=\varnothing$ and $\text{Cap}(F,Q)=\ell^{-1}$ if $F\neq\varnothing$.

If $Q$ is a cube in $\R^d$ ($d\geq3$) and $\gamma>0$, we denote by $\mathcal{N}_\gamma(Q)$ the \emph{set of $\gamma$-negligible subsets of $Q$}, that is, the collection of compact subsets $F$ of $Q$ such that $\text{Cap}(F)\leq\gamma \text{Cap}(Q)$. If $d=1$ or $2$, one has to replace $\text{Cap}(F)$ with $\text{Cap}(F,Q)$. In particular, if $d=1$ and $\gamma<1$, then $\mathcal{N}_\gamma(Q)=\{\varnothing\}$.

We denote by $Q(x,\ell)$ the cube centered at $x\in\R^d$ with sides parallel to the axes and of length $\ell$.

\begin{thm}\label{mash-thm} Let $V\in L^1_{\text{loc}}(\R^d)$ be non-negative. 

Assume that there exists $\gamma>0$ such that\bel\label{mash-cond}
\lim_{x\rightarrow\infty}\  \inf_{F\in \mathcal{N}_\gamma(Q(x,\ell))}\int_{Q(x,\ell)\setminus F}V(y)dy=+\infty\qquad\forall \ell>0.
\eel Then the scalar Schr\"odinger operator $\mathcal{H}_V$ has discrete spectrum. 

Viceversa, if $\mathcal{H}_V$ has discrete spectrum then \eqref{mash-cond} holds for any $\gamma<1$.
\end{thm}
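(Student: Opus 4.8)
The Maz'ya--Shubin theorem being stated is the classical scalar result, so I will only sketch the two directions. The key technical ingredient, for both, is the following Poincaré-type inequality with capacity: there is a dimensional constant $C_d$ such that for every cube $Q$ of side $\ell$, every compact $F\subseteq Q$, and every $u\in C^\infty(\overline{Q})$,
\bel\label{plan-poincare}
\int_{Q}|u|^2\leq C_d\left(\ell^2\int_{Q}|\nabla u|^2+\frac{\ell^{d}}{\text{Cap}(F)}\int_{Q\setminus F}|u|^2\right)\qquad(d\geq3),
\eel
with the obvious modifications (using $\text{Cap}(F,Q)$) when $d=1,2$. This is proved by the usual compactness/contradiction argument on the unit cube combined with scaling: if it failed there would be a sequence $u_k$, bounded in $H^1(Q)$, with $\int|u_k|^2=1$, $\int|\nabla u_k|^2\to0$, and the capacity term tending to $0$; a subsequence converges in $L^2$ to a constant $c$ with $|c|^2=1$, but $\int_{Q\setminus F_k}|u_k|^2\to0$ together with a capacity lower bound on $|\{|u_k|>|c|/2\}\cap(Q\setminus F_k)|$ forces $c=0$, a contradiction. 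I would state \eqref{plan-poincare} as a lemma and either prove it in this form or cite \cite{ma-sh}.

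\textbf{Sufficiency.} Assume \eqref{mash-cond} holds for some $\gamma>0$. Fix $\eps>0$; I must produce $R$ so that $\int_{|x|\geq R}|\psi|^2\leq\eps\,\mathcal{E}_V(\psi)$ for all $\psi\in C^\infty_c$, which suffices by Proposition \ref{schrodpp-prop}. Choose $\ell$ so that $C_d\ell^2<\eps/2$. By hypothesis there is $R$ such that for $\text{dist}(x,0)\geq R-\ell\sqrt d$ one has $\inf_{F\in\mathcal{N}_\gamma(Q(x,\ell))}\int_{Q(x,\ell)\setminus F}V\geq 2C_d\ell^{d}/(\gamma\,\text{Cap}(Q(x,\ell))\cdot\eps)\cdot(\text{something})$ — concretely, large enough that $C_d\ell^{d}/(\text{effective capacity})\leq\eps/2$ after using the definition of $\mathcal{N}_\gamma$. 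Tile $\{|x|\geq R\}$ by such cubes $Q_i$ with bounded overlap. On each $Q_i$, apply \eqref{plan-poincare} with the near-optimal $F$: for any compact $F\subseteq Q_i$, either $F\in\mathcal{N}_\gamma(Q_i)$, in which case $\int_{Q_i\setminus F}V$ is huge so $\int_{Q_i\setminus F}|u|^2\leq (\text{huge})^{-1}\int_{Q_i}V|u|^2$, or $\text{Cap}(F)>\gamma\,\text{Cap}(Q_i)$, in which case the capacity denominator in \eqref{plan-poincare} is controlled by $\gamma^{-1}\text{Cap}(Q_i)\asymp\gamma^{-1}\ell^{d-2}$, killing the $\ell^{d}$; balancing the two cases gives $\int_{Q_i}|\psi|^2\leq\frac{\eps}{2}\int_{Q_i}|\nabla\psi|^2+\frac{\eps}{2}\int_{Q_i}V|\psi|^2$. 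Summing over $i$ with bounded overlap yields $\int_{|x|\geq R}|\psi|^2\leq\eps\,\mathcal{E}_V(\psi)$.

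\textbf{Necessity.} Suppose $\mathcal{H}_V$ has discrete spectrum; fix $\gamma<1$ and $\ell>0$, and suppose for contradiction \eqref{mash-cond} fails, so there is a sequence $x_k\to\infty$ and compact sets $F_k\in\mathcal{N}_\gamma(Q(x_k,\ell))$ with $\int_{Q(x_k,\ell)\setminus F_k}V\leq M<\infty$. From the definition of $\text{Cap}(F_k)\leq\gamma\,\text{Cap}(Q_k)$ with $\gamma<1$, pick $\eta_k\in C^\infty_c$ supported in $2Q(x_k,\ell)$ (or all of $\R^d$ for $d\geq3$), equal to $1$ on $F_k$, with $\int|\nabla\eta_k|^2$ close to $\text{Cap}(F_k)$; set $\psi_k:=(1-\eta_k)\,\chi_k$ where $\chi_k$ is a fixed bump adapted to $Q(x_k,\ell)$. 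Then $\psi_k$ is supported near $x_k$, its $L^2$-norm is bounded below (because $\gamma<1$ forces $\eta_k$ to be $<1$ on a set of definite measure), while $\mathcal{E}_V(\psi_k)=\int|\nabla\psi_k|^2+\int V|\psi_k|^2$ stays bounded: the gradient term is controlled by $1+\text{Cap}(F_k)\lesssim 1$ (since $d\geq3$ gives $\text{Cap}(Q_k)\asymp\ell^{d-2}$, and for $d\leq2$ one uses the relative capacity bound), and the potential term is $\leq\int_{Q_k\setminus F_k}V|\chi_k|^2\leq M$ because $\psi_k$ vanishes on $F_k$. Since $x_k\to\infty$, the $\psi_k/\|\psi_k\|$ have disjoint supports (after passing to a subsequence) and bounded energy — contradicting the criterion of Proposition \ref{schrodpp-prop}, which would force $\int_{|x|\geq R}|\psi_k/\|\psi_k\|\,|^2\to0$. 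Hence \eqref{mash-cond} must hold.

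\textbf{Main obstacle.} The delicate point is the capacitary Poincaré inequality \eqref{plan-poincare} and, in the sufficiency proof, the correct balancing of the two alternatives for $F$ so that the constant genuinely depends only on $d$, $\ell$, $\gamma$ and the lower bound coming from \eqref{mash-cond} — in particular tracking how $\text{Cap}(Q(x,\ell))$ scales and why $\gamma$ may be taken as small as one likes in the sufficiency direction but must be allowed up to $1$ in the necessity direction. The low-dimensional cases $d=1,2$, where one works with relative capacity on $2Q$, require separate but entirely parallel bookkeeping.
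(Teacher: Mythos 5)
The paper does not prove Theorem \ref{mash-thm}: it is quoted from Maz'ya--Shubin \cite{ma-sh} (and Molchanov \cite{molcanov}), and the paper explicitly says ``Theorem \ref{mash-thm} and its proof are rather complicated''. So I am comparing your sketch to the classical proof, and to the way the same ideas reappear later in the paper (in the proofs of Theorem \ref{gen-thm}(ii), Theorem \ref{osc2-thm}, and Proposition \ref{pol-key}).

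Your general route --- a capacitary Poincar\'e inequality plus a dichotomy for sufficiency, equilibrium-potential test functions for necessity --- is the right one, but there is a genuine gap in the sufficiency direction. You write: ``for any compact $F\subseteq Q_i$, either $F\in\mathcal{N}_\gamma(Q_i)$, in which case $\int_{Q_i\setminus F}V$ is huge so $\int_{Q_i\setminus F}|u|^2\leq (\text{huge})^{-1}\int_{Q_i}V|u|^2$''. That implication is false for a general $F$: a large $\int_{Q\setminus F}V$ says nothing about $\int_{Q\setminus F}|u|^2$ unless you also know that $|u|^2$ is bounded below pointwise on $Q\setminus F$. The argument only works for the \emph{specific} set $F:=\{x\in Q:\ |u(x)|^2\leq \frac{1}{4|Q|}\int_Q|u|^2\}$, for which $|u|^2>\frac{1}{4|Q|}\int_Q|u|^2$ on $Q\setminus F$, giving $\int_Q|u|^2\leq \frac{4|Q|}{\int_{Q\setminus F}V}\int_{Q}V|u|^2$ (note also that the correct conclusion bounds $\int_Q|u|^2$, not $\int_{Q\setminus F}|u|^2$, and carries the $|Q|$ factor). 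You then need a form of the capacitary Poincar\'e inequality tailored to exactly this sublevel set; the version you stated as \eqref{plan-poincare} is more general but the clean statement you actually use when $\text{Cap}(F)$ is large is $\int_Q|u|^2\leq C_d\frac{|Q|}{\text{Cap}(F)}\int_Q|\nabla u|^2$ (Lemma 2.2 of \cite{ko-ma-sh}, which the paper uses in Section \ref{osc3-sec}). Fixing this means replacing ``near-optimal $F$, for any compact $F$'' with ``the sublevel set $F$ of $u$'' and running precisely the two cases as the paper does there. Your necessity sketch is closer to correct but skates over the quantitative lower bound $\int_Q(1-\eta_k)^2\gtrsim(1-\gamma)^2|Q|$, which is not automatic for an approximate minimizer $\eta_k$ and is the content of Lemma 3.1 of \cite{ko-ma-sh} for the equilibrium potential (stated in the paper as Lemma \ref{key-eqpot}); without it, ``bounded below'' is unjustified, and this is exactly where $\gamma<1$ enters.
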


Observe that if $d=1$ the infimum disappears and the characterization is simply in terms of the integrals $\int_{x-\frac{\ell}{2}}^{x+\frac{\ell}{2}}V(y)dy$.

Theorem \ref{mash-thm} and its proof are rather complicated because the hypothesis on $V$ is very weak. Stronger assumptions on the potential may produce characterizations that are easier to check. A particularly interesting example is that of $A_{\infty,\text{loc}}$ potentials. We recall the definition.

\begin{dfn}\label{mash-Ainfloc} A function $V\in L^1_{\text{loc}}(\R^d)$ lies in the class $A_{\infty,\text{loc}}(\R^d)$ if it satisfies one of the following two equivalent properties: 
\begin{enumerate}
\item[\emph{(i)}] there exist $\ell_0,\delta,c>0$ such that
\bel\label{mash-Ainfloc-1}
\left|\left\{x\in Q: V(x)\geq \frac{\delta}{|Q|}\int_QV\right\}\right|\geq c|Q|
\eel 
holds for every cube $Q$ with side length less than or equal to $\ell_0$.
\item[\emph{(ii)}] there exist $\alpha,\beta\in(0,1)$ and $\ell_0>0$ such that for every cube $Q$ with side length less than or equal to $\ell_0$ the following holds:
\bel\label{mash-Ainfloc-2}
\forall A\subseteq Q\text{ measurable }\colon\quad|A|\geq\alpha|Q|\Longrightarrow \int_A V\geq\beta \int_Q V.
\eel 
\end{enumerate}
\end{dfn}
The reader may refer to \cite[Ch. 9]{grafakos} for the proof of the equivalence of (i) and (ii) in the case of $A_\infty$. The proof for the local version of this class is analogous. 

\begin{thm}\label{mash-Ainf} Assume that $V\in A_{\infty,\text{loc}}(\R^d)$. 

If there exists $\ell>0$ such that\bel\label{mash-Ainf-cond}
\lim_{x\rightarrow\infty}\  \int_{Q(x,\ell)}V(y)dy=+\infty,
\eel then the scalar Schr\"odinger operator $\mathcal{H}_V$ has discrete spectrum. 
\end{thm}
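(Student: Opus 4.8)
The plan is to deduce Theorem \ref{mash-Ainf} from the Maz'ya--Shubin criterion (Theorem \ref{mash-thm}): it suffices to produce a single $\gamma>0$ for which condition \eqref{mash-cond} holds for every $\ell>0$. The two inputs are the $A_{\infty,\mathrm{loc}}$ hypothesis on $V$ and the classical \emph{isocapacitary inequality}, which says that a set of small Wiener capacity has small Lebesgue measure. Concretely, in dimension $d\ge 3$ the Sobolev embedding $W^{1,2}\hookrightarrow L^{2d/(d-2)}$ applied to an admissible cutoff $\eta$ for $F$ gives $|F|^{(d-2)/d}\le C_d\,\mathrm{Cap}(F)$, while $\mathrm{Cap}(Q(x,\ell))=\kappa_d\,\ell^{d-2}$ by scaling; hence there is a function $\omega(\gamma)$ with $\omega(\gamma)\to 0$ as $\gamma\to 0$ such that $F\in\mathcal{N}_\gamma(Q(x,\ell))$ forces $|F|\le\omega(\gamma)\,|Q(x,\ell)|$. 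The same conclusion holds for $d=2$ using the relative capacity and a logarithmic substitute for the Sobolev inequality, and it is vacuous for $d=1$, where $\mathcal{N}_\gamma(Q)=\{\varnothing\}$ whenever $\gamma<1$.

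With this in hand the heart of the argument is short. First I would recall the standard fact (see the reference to \cite[Ch. 9]{grafakos} already used above) that an $A_{\infty,\mathrm{loc}}$ weight is locally doubling and that, after possibly shrinking $\ell_0$, property (ii) of Definition \ref{mash-Ainfloc} can be arranged to hold with $\alpha$ as small as we please (and a correspondingly smaller $\beta$). Then I would fix $\gamma>0$ so small that $\omega(\gamma)<1-\alpha$. For any cube $Q=Q(x,\ell)$ with $\ell\le\ell_0$ and any $F\in\mathcal{N}_\gamma(Q)$ we then have $|Q\setminus F|\ge(1-\omega(\gamma))|Q|\ge\alpha|Q|$, so property (ii) applied to the measurable set $A=Q\setminus F$ gives $\int_{Q\setminus F}V\ge\beta\int_Q V$. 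Taking the infimum over $F$, \eqref{mash-cond} for this $\ell$ reduces to the statement that $\int_{Q(x,\ell)}V\to+\infty$ as $x\to\infty$.

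It remains to upgrade the hypothesis, which is assumed for a single value of $\ell$, to all $\ell>0$. For larger $\ell$ this is immediate from the monotonicity of $\ell\mapsto\int_{Q(x,\ell)}V$; for smaller $\ell$ it follows by iterating the local doubling inequality (covering the given cube by boundedly many smaller cubes and chaining). Finally, for scales $\ell>\ell_0$, where Definition \ref{mash-Ainfloc} does not apply to $Q(x,\ell)$ directly, I would partition $Q(x,\ell)$ into congruent subcubes $R_i$ of side $\le\ell_0$: the bound $|F|\le\omega(\gamma)|Q(x,\ell)|$ forces $|R_i\setminus F|\ge\alpha|R_i|$ for all but a small fraction of the indices, local doubling makes the quantities $\int_{R_i}V$ mutually comparable so that the exceptional indices carry at most, say, half of $\int_{Q(x,\ell)}V$, and property (ii) on the remaining $R_i$ yields $\int_{Q(x,\ell)\setminus F}V\ge\tfrac{\beta}{2}\int_{Q(x,\ell)}V\to+\infty$.

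I expect the main obstacle to be precisely this last point --- controlling the infimum over negligible $F$ at scales above $\ell_0$ --- together with the care needed to quote the isocapacitary inequality in the correct scale-invariant form in dimensions $1$ and $2$; everything else is a direct combination of Theorem \ref{mash-thm} with the reverse-H\"older property of $A_{\infty,\mathrm{loc}}$ weights.
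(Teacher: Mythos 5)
Your route --- deducing the result from the Maz'ya--Shubin criterion (Theorem \ref{mash-thm}) together with the isocapacitary comparison between Wiener capacity and Lebesgue measure --- is exactly the one the paper names in its one-sentence proof and then declines to write out. The paper's \emph{detailed} argument is instead the special case $m=1$ of Theorem \ref{gen-thm}, which is self-contained and more elementary: it works directly from the discreteness criterion of Proposition \ref{schrodpp-prop}, the local doubling of Proposition \ref{matrixAinf-doubling}, and a dyadic Poincar\'e estimate, bypassing the heavy Maz'ya--Shubin theorem entirely. Both routes are legitimate; yours leans on the deep result \cite{ma-sh}, while the paper's has the advantage of being lighter and generalizing without change to matrix potentials.

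There is, however, a gap in your treatment of scales $\ell>\ell_0$ --- and you are right to flag it as the main obstacle. The claim that the exceptional subcubes carry at most half of $\int_{Q(x,\ell)}V$ requires the ratios $\int_{R_i}V/\int_{R_j}V$ to be bounded uniformly in $\ell$, but chaining the half-side doubling of Proposition \ref{matrixAinf-doubling} through a string of subcubes of length comparable to $\ell/\ell_0$ produces a comparability constant that blows up with $\ell$; since Theorem \ref{mash-thm} demands a single $\gamma$ valid for every $\ell$, the ``half the mass'' conclusion fails once $\ell$ is large. The repair is to drop the half-mass claim altogether: once $\gamma$ is small enough that $\omega(\gamma)<1-\alpha$, at least one subcube $R_{i_0}$ is good, and then $\int_{Q(x,\ell)\setminus F}V\geq\beta\int_{R_{i_0}}V\geq\beta\,c(\ell)\int_{Q(x,\ell)}V$ with $c(\ell)>0$ depending only on $\ell$, $\ell_0$ and the doubling constant. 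Since \eqref{mash-cond} asks only for divergence at each fixed $\ell$, the $\ell$-dependence of $c(\ell)$ is harmless. One smaller point: there is no need to shrink $\alpha$ --- the given $\alpha$ in Definition \ref{mash-Ainfloc}(ii) already suffices once $\omega(\gamma)<1-\alpha$, and the assertion that $\alpha$ can be taken arbitrarily small is not contained in Definition \ref{mash-Ainfloc} as stated and would require an additional argument.
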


Of course \eqref{mash-Ainf-cond} is also necessary for discreteness of spectrum, even without assuming $V\in A_{\infty,\text{loc}}(\R^d)$, as a consequence of Theorem \ref{mash-thm}. 

\begin{proof}
One can use Theorem \ref{mash-thm} and the comparison of Wiener capacity and Lebesgue measure. We omit the details, because this is a particular case of Theorem \ref{gen-thm}. 
\end{proof}

\section{Matrix-valued analogues of $A_{\infty,loc}(\R^d)$}\label{matrixAinf-sec}

This section and the next one are devoted to the proof of a generalization of Theorem \ref{mash-Ainf} to matrix-valued potentials.
The first problem we have to solve is to define an appropriate matrix-valued $A_{\infty,\text{loc}}$ class. Looking at Definition \ref{mash-Ainfloc}, one easily realizes that both condition (i) and condition (ii) may be generalized to the matrix-valued setting without any typographical correction, if one interprets the symbol $\geq$ applied to Hermitian matrices as the inequality between the corresponding quadratic forms: if $V,W\in H_m$ then\be
V\geq W \quad\Longleftrightarrow \quad (Vv,v)\geq (Wv,v)\quad\forall v\in\C^m. 
\ee The integral gives no problem: if $V:\R^d\rightarrow H_m$ is locally integrable and $F\subseteq\R^d$ is compact, then $\int_FV$ is a well defined element of $H_m$, and it is non-negative if $V$ is everywhere non-negative. Therefore we have two definitions.

\begin{dfn}\label{matrixAinf-1} A locally integrable and everywhere non-negative function\newline $V:\R^d\rightarrow H_m$ lies in the class $A_{\infty,loc}(\R^d,H_m)$ if there exist $\ell_0,\delta,c>0$ such that
\bel\label{matrixAinf-ineq-1}
\left|\left\{x\in Q: V(x)\geq \frac{\delta}{|Q|}\int_QV\right\}\right|\geq c|Q|
\eel 
holds for every cube $Q$ with side length less than or equal to $\ell_0$.
\end{dfn}

\begin{dfn}\label{matrixAinf-2} A locally integrable and everywhere non-negative function $V:\R^d\rightarrow H_m$ lies in the class $\widetilde{A_{\infty,loc}}(\R^d,H_m)$ if there exist $\alpha,\beta\in(0,1)$ and $\ell_0>0$ such that for every cube $Q$ with side length less than or equal to $\ell_0$ the following holds:
\bel\label{matrixAinf-ineq-2}
\forall A\subseteq Q\text{ measurable }\colon\quad|A|\geq\alpha|Q|\Longrightarrow \int_A V\geq\beta \int_Q V.
\eel 
\end{dfn}

The next result may be surprising. 

\begin{prop}\label{matrixAinf-strict}
\begin{enumerate}
\item[\emph{(i)}] If $W$ satisfies Definition \ref{matrixAinf-1} with parameters $\ell_0$, $c$ and $\delta$, then it satisfies Definition \ref{matrixAinf-2} with parameters $\ell_0$, $\alpha=1-c/2$ and $\beta=c\delta/2$.
\item[\emph{(ii)}] If $m\geq2$, the class $A_{\infty,loc}(\R^d,H_m)$ is strictly smaller than $\widetilde{A_{\infty,loc}}(\R^d,H_m)$.
\end{enumerate}
\end{prop}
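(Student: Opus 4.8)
The plan is to prove the two parts separately. For part (i), the goal is a pointwise-to-integral passage: starting from the ``large set'' condition \eqref{matrixAinf-ineq-1} I want to deduce the ``almost-all-mass'' condition \eqref{matrixAinf-ineq-2}. Fix a cube $Q$ with side length at most $\ell_0$ and set $G:=\{x\in Q:\ V(x)\geq\frac{\delta}{|Q|}\int_Q V\}$, so that $|G|\geq c|Q|$ by hypothesis. Given a measurable $A\subseteq Q$ with $|A|\geq\alpha|Q|=(1-c/2)|Q|$, I note that $|A\cap G|\geq|A|+|G|-|Q|\geq (c/2)|Q|$. Since $V\geq0$ everywhere, $\int_A V\geq\int_{A\cap G}V$ in the sense of quadratic forms; and on $A\cap G$ we have the pointwise matrix inequality $V(x)\geq\frac{\delta}{|Q|}\int_Q V$, so integrating over $A\cap G$ gives $\int_{A\cap G}V\geq\frac{\delta|A\cap G|}{|Q|}\int_Q V\geq\frac{c\delta}{2}\int_Q V=\beta\int_Q V$. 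Chaining these inequalities (all of which respect the Loewner order because we only ever add non-negative matrices or multiply by non-negative scalars) yields $\int_A V\geq\beta\int_Q V$, which is exactly \eqref{matrixAinf-ineq-2}. This step is essentially the scalar argument and the only thing to be careful about is that order relations between Hermitian matrices behave well under integration and under multiplication by non-negative scalars, which they do.

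For part (ii), part (i) already shows $A_{\infty,loc}(\R^d,H_m)\subseteq\widetilde{A_{\infty,loc}}(\R^d,H_m)$, so it remains to produce, for $m\geq2$, a potential in the larger class that is not in the smaller one. The natural idea is to build a rank-one matrix potential whose ``direction'' varies with $x$: take $V(x)=P(x)$, a projection (or a rank-one positive matrix $v(x)v(x)^{*}$) where the unit vector $v(x)$ rotates. The point is that \eqref{matrixAinf-ineq-2} is a statement about \emph{averages} of $V$ over measurable subsets of $Q$, and averages of rotating rank-one matrices tend to be uniformly elliptic (their smallest eigenvalue is bounded below), so the condition is easy to satisfy; whereas \eqref{matrixAinf-ineq-1} demands that on a fixed proportion of $Q$ the \emph{pointwise} matrix $V(x)$ dominate $\frac{\delta}{|Q|}\int_Q V$, and a rank-one matrix can never dominate a positive multiple of a full-rank matrix --- $v v^{*}\geq\delta' I$ forces $\delta'\le0$ unless $m=1$. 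So if the average $\frac1{|Q|}\int_Q V$ is uniformly elliptic on all small cubes, the set in \eqref{matrixAinf-ineq-1} is empty for every $\delta>0$, and Definition \ref{matrixAinf-1} fails. Concretely I would work in $m=2$, $d=1$, and set $v(x)=(\cos(\pi x),\sin(\pi x))$ with $V(x)=v(x)v(x)^{T}$ (extending to general $d$ by letting $V$ depend only on $x_1$), then verify: (a) for every interval $Q$ the average $\frac1{|Q|}\int_Q V$ has both eigenvalues in $[c_1,c_2]$ with $0<c_1\le c_2$ uniform over all $Q$ of length $\le\ell_0$ --- this is a direct computation with $\int\cos^2$, $\int\sin^2$, $\int\cos\sin$ over an interval, where the off-diagonal and the diagonal gap stay controlled once the interval is not too long; (b) the same uniform ellipticity of averages plus $\int_A V\le\int_Q V$ (Loewner order, $V\ge0$) gives \eqref{matrixAinf-ineq-2} with explicit $\alpha,\beta$, say by bounding $\int_A V\ge\int_Q V-\int_{Q\setminus A}V$ and $\int_{Q\setminus A}V\le c_2|Q\setminus A|\,I\le\frac{c_2}{c_1}\frac{|Q\setminus A|}{|Q|}\int_Q V$; (c) since each $V(x)$ is rank one and the average is elliptic, the set in \eqref{matrixAinf-ineq-1} is empty, so $V\notin A_{\infty,loc}(\R^d,H_m)$.

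The main obstacle is the uniform-ellipticity-of-averages computation in step (a) for part (ii): one must choose the rotation rate and the threshold $\ell_0$ so that over every interval of length $\le\ell_0$ the averaged matrix stays a definite distance from being singular, and then make the two constants $c_1,c_2$ genuinely uniform (not degenerating as the interval shrinks to a point, where the average is rank one!). This is the delicate point --- for very short intervals the average is close to a single rank-one matrix, so in fact one must \emph{not} allow arbitrarily short cubes, i.e. one should check that Definition \ref{matrixAinf-2}'s quantifier is only over cubes of side $\le\ell_0$ but with no lower bound, which means step (b) as I phrased it is too naive. The correct fix is to observe that \eqref{matrixAinf-ineq-2} with a \emph{fixed} pair $\alpha,\beta$ is required for \emph{all} small cubes including tiny ones, so the honest construction likely needs $v(x)$ to rotate on every scale --- e.g. $v(x)=(\cos(x/|x|^{?}),\dots)$ or a lacunary-type superposition --- or, more simply, one replaces the threshold-based verification of \eqref{matrixAinf-ineq-2} by a scale-invariant argument. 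I expect the cleanest route is: show that $V=vv^{*}$ with $v$ \emph{any} measurable unit-vector field automatically satisfies \eqref{matrixAinf-ineq-2} with universal constants whenever $v$ is such that on no small cube is $v$ essentially constant, and that this non-degeneracy can be arranged while keeping every $V(x)$ rank one; the failure of \eqref{matrixAinf-ineq-1} is then immediate from the rank count. Pinning down the right non-degeneracy hypothesis on $v$ so that (b) holds uniformly down to all scales is where the real work lies.
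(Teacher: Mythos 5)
Part (i) is correct and is the same one-line argument as the paper: intersect $A$ with the set from \eqref{matrixAinf-ineq-1}, estimate its measure, and integrate the pointwise matrix inequality over that intersection (all steps respect the Loewner order since $V\geq 0$).

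Part (ii) has the right idea but a genuine gap, one you yourself flag. The negative half is fine: if $V(x)$ has rank $<m$ everywhere while $\int_Q V$ is positive definite, then $V(x)\geq\frac{\delta}{|Q|}\int_Q V$ has no solutions for any $\delta>0$, so \eqref{matrixAinf-ineq-1} fails; this is precisely the paper's argument, phrased via $\det V\equiv 0$ versus $\int_Q V>0$. The gap is in showing that your candidate lies in $\widetilde{A_{\infty,loc}}$. Your route is ``uniform ellipticity of $\frac{1}{|Q|}\int_Q V$ over all small cubes $Q$,'' but that quantity degenerates as $|Q|\to 0$ for any rank-one field with $v$ Lipschitz (the average approaches a single rank-one matrix), so there is no uniform $c_1>0$, and step (b) collapses exactly as you suspected. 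You then propose fixes (scale-by-scale oscillation, lacunary superpositions, a ``right non-degeneracy hypothesis'') but leave them unresolved; as written the membership in $\widetilde{A_{\infty,loc}}$ is not established.

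The paper sidesteps this entirely with a scalarization trick you did not find: choose $W$ with polynomial entries (e.g.\ $W_0(x)=\begin{bmatrix}1&x\\ x&x^2\end{bmatrix}$, so $\det W_0\equiv 0$ but $(W_0 u,u)\not\equiv 0$ for every $u\neq 0$). Then each scalar function $(Wu,u)$ is a non-negative polynomial of degree bounded independently of $u$, and non-negative polynomials of a fixed degree satisfy the $A_\infty$ condition (ii) of Definition \ref{mash-Ainfloc} with \emph{uniform} constants --- this is the Ricci--Stein result cited in the paper. Combined with the identity $\int_Q (Wu,u)=\bigl(\bigl(\int_Q W\bigr)u,u\bigr)$, the scalar estimate $\int_A (Wu,u)\geq\beta\int_Q(Wu,u)$ for all $u$ is exactly the matrix inequality $\int_A W\geq\beta\int_Q W$. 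This replaces a scale-uniform ellipticity statement (false for your example) with a degree-uniform $A_\infty$ statement (true and cited), and positive-definiteness of $\int_Q W$ then follows for each $Q$ from the non-degeneracy of $W$ (no $u$ with $(Wu,u)\equiv 0$), without needing uniformity over $Q$ at all. If you want to salvage your trigonometric example you would need an analogue of the Ricci--Stein bound for trigonometric polynomials of bounded degree; that is also true, but it must be invoked, and the polynomial example is simply cleaner.
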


\begin{proof}
(i) If $A\subseteq Q$, $Q$ has side length less than or equal to $\ell_0$, and $|A|\geq \left(1-\frac{c}{2}\right)|Q|$, the intersection of $A$ with the set on the left of \eqref{matrixAinf-ineq-1} has measure $\geq \frac{c}{2}|Q|$, and thus $\int_AW\geq \frac{c\delta}{2}\int_QW$. 

(ii) Consider the set $\mathcal{W}_{d,m}$ of everywhere non-negative functions $W:\R^d\rightarrow H_m$ satisfying the following properties:\begin{enumerate}
\item[(a)] the entries of $W$ are polynomials,
\item[(b)] $\det(W)\equiv0$,
\item[(c)] there is no $u\in \C^m\setminus\{0\}$ for which $(Wu,u)$ is identically zero.
\end{enumerate} 
Notice that $\mathcal{W}_{d,1}=\varnothing$, but $\mathcal{W}_{d,m}\neq\varnothing$ when $m\geq2$. An example of an element of $\mathcal{W}_{1,2}$ is $W_0(x)= \begin{bmatrix}
    1 & x \\
    x & x^2 \\
  \end{bmatrix}, $ and analogous examples may be immediately exhibited for any $d$ whenever $m\geq2$.

We claim that $\mathcal{W}_{d,m}\subseteq\widetilde{A_{\infty,loc}}(\R^d,H_m)\setminus A_{\infty,loc}(\R^d,H_m)$.

Let us prove the claim. Scalar non-negative polynomials are in $A_\infty(\R^n)$ with constants depending only on the degree (see \cite[Section $2$]{ricci-stein}). Thus we can apply \eqref{mash-Ainfloc-2} to the family $\{(Wu,u)\}_{u\in\C^m}$. Recalling that $\int_Q(Wu,u)=\left(\left(\int_QW\right)u,u\right)$, this shows that any matrix-valued polynomial is in $\widetilde{A_{\infty,loc}}(\R^d,H_m)$. 

Next, observe that if $W\in\mathcal{W}_{d,m}$, then $\int_QW>0$ for every cube $Q$. In fact, if this was not the case, there would be $u\in\C^m\setminus\{0\}$ such that $\int_Q(Wu,u)=0$. The non-negativity of $W$ would then force $(Wu,u)$ to vanish identically on $Q$, and hence on $\R^d$, because it is a polynomial. This contradicts property (c) above. Since $\det(W)\equiv0$ and $\int_QW>0$ on every cube, $W$ cannot satisfy \eqref{matrixAinf-ineq-1}, thus proving the claim.\end{proof}

We conclude this section proving a doubling property of elements of $A_{\infty,loc}(\R^d,H_m)$. 

\begin{prop}\label{matrixAinf-doubling}
Given $W\in A_{\infty,loc}(\R^d,H_m)$ and $\ell>0$, there exists a constant $D$ such that the following doubling property holds: if $Q$ is a cube with side length less than or equal to $\ell$ and $Q'\subseteq Q$ has half the side length of $Q$, then \bel\label{matrixAinf-doubling-0}
\int_QV\leq D\int_{Q'}V.
\eel
\end{prop}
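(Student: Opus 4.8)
The plan is to derive the doubling inequality \eqref{matrixAinf-doubling-0} from the defining condition \eqref{matrixAinf-ineq-1} by a quadratic-form argument, reducing everything to a one-dimensional (scalar) statement at the level of individual vectors $u \in \C^m$, since $V \geq W$ between Hermitian matrices is precisely the statement that $(Vu,u) \geq (Wu,u)$ for all $u$. The first step is to fix the scale: take $Q$ with side length $\leq \ell$, let $Q'$ be a sub-cube of half the side, and let $\ell_0, \delta, c$ be the $A_{\infty,loc}$ parameters of $W$. If $\ell \leq \ell_0$ we are in the regime where \eqref{matrixAinf-ineq-1} applies directly to $Q$; otherwise we first subdivide $Q$ into a bounded number $N = N(d,\ell,\ell_0)$ of congruent sub-cubes of side $\leq \ell_0$ and argue on each, so there is no loss in assuming $\ell \leq \ell_0$ from the outset, with the final constant $D$ absorbing the overhead of this subdivision.

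Next I would bootstrap \eqref{matrixAinf-ineq-1} into a genuine scalar $A_{\infty}$-type reverse inequality: by Proposition \ref{matrixAinf-strict}(i), $W$ also satisfies Definition \ref{matrixAinf-2}, i.e. for every cube $Q''$ of side $\leq \ell_0$ and every measurable $A \subseteq Q''$ with $|A| \geq \alpha |Q''|$ one has $\int_A W \geq \beta \int_{Q''} W$, with $\alpha = 1 - c/2$ and $\beta = c\delta/2$. Now apply this with $Q'' = Q$ and $A = Q'$ — but $|Q'| = 2^{-d}|Q|$, which is in general much smaller than $\alpha|Q|$, so a single application does not suffice. The remedy is iteration: the condition \eqref{matrixAinf-ineq-2} can be iterated to show that $\int_A W \geq \beta' \int_Q W$ for any $A$ with $|A| \geq \theta|Q|$ where $\theta \in (0,1)$ is arbitrary and $\beta' = \beta'(\theta, \alpha, \beta)$, by a standard good-$\lambda$ / exhaustion argument: cover $Q$ by finitely many nested or disjoint pieces on which the large-measure hypothesis can be met, and multiply the resulting estimates. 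With $\theta = 2^{-d}$ this yields $\int_{Q'} W \geq \beta' \int_Q W$, and testing this against each unit vector $u$ gives exactly \eqref{matrixAinf-doubling-0} with $D = 1/\beta'$, since the inequality holds as an inequality between Hermitian matrices.

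One subtlety I want to flag: \eqref{matrixAinf-ineq-2} is stated for sub-sets of a fixed cube $Q''$, and in the iteration one must be careful that all the auxiliary cubes produced still have side length $\leq \ell_0$ — this is automatic here because we reduced to $\ell \leq \ell_0$ and all cubes in play are contained in $Q$, hence no larger. The main obstacle is precisely this iteration step: turning the ``$|A| \geq \alpha|Q|$'' hypothesis of \eqref{matrixAinf-ineq-2} (with $\alpha$ close to $1$) into a usable bound for the much smaller set $Q'$. The cleanest way is probably to argue by contradiction on a maximal-function / stopping-time decomposition of $Q \setminus Q'$, or alternatively to note that $A_{\infty,loc}$ implies a local doubling property for the scalar measures $(Wu,u)\,dx$ with constants uniform in $u$ (this is classical for scalar $A_\infty$ weights, and the point of working vectorwise is that the constants depend only on $\alpha,\beta$, not on $u$), and then pass back to matrices. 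Either route is routine once set up; the rest of the argument is bookkeeping of constants.
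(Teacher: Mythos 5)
Your approach is essentially the same as the paper's: reduce the Hermitian inequality $\int_Q V\leq D\int_{Q'}V$ to a scalar statement by testing against unit vectors $u\in\C^m$, upgrade Definition \ref{matrixAinf-1} to Definition \ref{matrixAinf-2} via Proposition \ref{matrixAinf-strict}(i), and then iterate the $(\alpha,\beta)$-condition \eqref{matrixAinf-ineq-2} along a chain of nested sub-cubes from $Q'$ up to $Q$. That iteration is exactly the paper's core computation (a chain $Q'=Q_0\subseteq\cdots\subseteq Q_N=Q$ with $|Q_k|\ge\alpha|Q_{k+1}|$ gives $\int_QV\le\beta^{-N}\int_{Q'}V$ at once). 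Your alternative route of observing that $(Wu,u)\in A_{\infty,\mathrm{loc}}(\R^d)$ with constants uniform in $u$ and then quoting the classical scalar local doubling is a nice and slightly slicker packaging of the same mechanism.

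Two spots deserve tightening. First, the intermediate claim that $|A|\ge\theta|Q|$ forces $\int_AW\ge\beta'\int_QW$ for \emph{arbitrary} measurable $A$ and every $\theta\in(0,1)$ is stronger than what a naive iteration of \eqref{matrixAinf-ineq-2} produces; a single application of the contrapositive only handles $\theta$ close to $1$, and pushing to small $\theta$ for arbitrary $A$ requires the full symmetric characterization of $A_\infty$ (via reverse H\"older or equivalent), not merely a ``good-$\lambda$'' iteration. Fortunately you only apply it to $A=Q'$, a sub-cube, and for sub-cubes the nested-chain argument the paper uses delivers the conclusion directly; so this overreach is harmless but should be removed or replaced with the sub-cube chain. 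Second, and more substantively, the reduction to $\ell\le\ell_0$ by ``subdivide $Q$ into pieces of side $\le\ell_0$ and argue on each'' does not go through as stated: after subdividing, the pieces $P_i$ bear no a priori relation to the given $Q'$, so $\int_{P_i}V$ cannot be compared to $\int_{Q'}V$ by the small-scale result alone. What is missing is a chaining step through \emph{adjacent} small cubes, which is precisely how the paper passes the threshold $\ell_0$; once one fixes a bounded-length path of adjacent small cubes from each $P_i$ to a small cube inside $Q'$ and compounds the small-scale doubling along it, the overhead constant is bounded and the argument closes.
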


\begin{proof} If $W$ satisfies Definition \ref{matrixAinf-1} with parameters $\ell_0$, $c$ and $\delta$, by part (i) of Proposition \ref{matrixAinf-strict}, it also satisfies Definition \ref{matrixAinf-2} with parameters $\ell_0$, $\alpha$ and $\beta$. If $Q$ and $Q'$ are as in the statement, one can find a sequence $Q_0\subseteq Q_1\cdots\subseteq Q_N$ of nested cubes such that $Q_0=Q'$, $Q_N=Q$ and $|Q_k|\geq \alpha|Q_{k+1}|$ ($k=0,\dots,N-1$), with $N$ depending on $\alpha$ and the dimension $d$. Using \eqref{matrixAinf-ineq-2}, we get
\bel\label{matrixAinf-doubling-1}
\int_QV\leq \beta^{-N}\int_{Q'}V.
\eel
 for every $Q$ with side length less than or equal to $\ell_0$.

To go beyond the threshold $\ell_0$, one can split any cube $Q$ with side length less than or equal to $\ell$ in $2^N$ cubes of side length less than or equal to $\ell_0/2$, and use \eqref{matrixAinf-doubling-1} to see that if $Q'$ and $Q''$ are two of the small cubes that are adjacent, then $\int_{Q'}V\leq \beta^{-N}\int_{Q''}V$. This clearly implies \eqref{matrixAinf-doubling-0}.

\end{proof}

\section{The generalization of Theorem \ref{mash-Ainf}\\ to matrix-valued potentials}\label{gen-sec}

We finally state and prove our generalization of Theorem \ref{mash-Ainf}.

If $A\in H_m$, we denote the minimal eigenvalue of $A$ by $\lambda(A)$. 

\begin{thm}\label{gen-thm} Let $V:\R^d\rightarrow H_m$ be locally integrable and non-negative. Consider the following conditions:
\begin{enumerate}
\item[\emph{(a)}] $\mathcal{H}_V$ has discrete spectrum,
\item[\emph{(b)}] for every $\ell>0$ we have \be
\lim_{x\rightarrow\infty}\lambda \left(\int_{Q(x,\ell)}V\right)=+\infty,
\ee
\item[\emph{(c)}] there exists $\ell_1>0$ such that \bel\label{gen-singleell}
\lim_{x\rightarrow\infty}\lambda \left(\int_{Q(x,\ell_1)}V\right)=+\infty.
\eel
\end{enumerate}
Then:\begin{enumerate}
\item[\emph{(i)}] We have the implications \emph{(a)}$\Rightarrow$\emph{(b)}$\Rightarrow$\emph{(c)}. 

\item[\emph{(ii)}] If $V\in A_{\infty,loc}(\R^d,H_m)$, then \emph{(c)}$\Rightarrow$\emph{(a)}.

\item[\emph{(iii)}] For every $d$ and $m\geq2$, there exists $V_0\in \widetilde{A_{\infty,loc}}(\R^d,H_m)$ such that $\mathcal{H}_{V_0}$ has not discrete spectrum, but \emph{(b)} holds.

\end{enumerate}
\end{thm}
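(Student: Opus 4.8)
For part (i), the implication (b)$\Rightarrow$(c) is trivial. For (a)$\Rightarrow$(b), the plan is to argue by contrapositive: if (b) fails, there is $\ell>0$, a constant $C$, and a sequence $x_k\to\infty$ with $\lambda\bigl(\int_{Q(x_k,\ell)}V\bigr)\leq C$. By the variational characterization of $\lambda$, pick unit vectors $u_k\in\C^m$ with $\bigl(\int_{Q(x_k,\ell)}V\,u_k,u_k\bigr)\leq C$. Test the Persson-type criterion of Proposition \ref{schrodpp-prop} against the functions $\psi_k(x)=\eta((x-x_k)/\ell)\,u_k$ for a fixed bump $\eta\in C^\infty_c$: the kinetic energy $\int|\nabla\psi_k|^2$ is a fixed constant (independent of $k$ since we just translate), the potential energy $\int(V\psi_k,\psi_k)$ is bounded by a constant times $C$ plus lower-order contributions controlled by local integrability and doubling, while $\int_{|x|\geq R}|\psi_k|^2$ stays bounded below by a fixed positive constant once $|x_k|$ is large. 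This violates \eqref{schrodpp-cond}, so $\mathcal{H}_V$ does not have discrete spectrum.

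For part (ii), assume $V\in A_{\infty,loc}(\R^d,H_m)$ and that (c) holds for some $\ell_1$. The plan is to adapt the scalar Maz'ya--Shubin/$A_\infty$ mechanism, using the matrix doubling Proposition \ref{matrixAinf-doubling} and Proposition \ref{matrixAinf-strict}(i) to replace the scalar ingredients. The key is a local Poincar\'e-type inequality: on a cube $Q=Q(x,\ell_1)$, one shows
\be
\int_Q|\psi|^2 \leq C\Bigl(\ell_1^2\int_Q|\nabla\psi|^2 + \lambda\Bigl(\int_QV\Bigr)^{-1}\!\!\int_Q(V\psi,\psi)\Bigr)\cdot\Bigl(\text{something}\Bigr),
\ee
or more precisely, one wants to dominate $\int_Q|\psi|^2$ by $\eps(\ell_1)\bigl(\int_Q|\nabla\psi|^2+\int_Q(V\psi,\psi)\bigr)$ with $\eps(\ell_1)\to 0$ as $\lambda(\int_QV)\to\infty$. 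To get this: decompose $\psi$ on $Q$ as its average $\psi_Q$ plus a mean-zero part; the mean-zero part is handled by the ordinary Poincar\'e inequality, contributing $C\ell_1^2\int_Q|\nabla\psi|^2$. For the constant part $\psi_Q\in\C^m$, use that $A_{\infty,loc}$ via \eqref{matrixAinf-ineq-1} gives a set $E\subseteq Q$ with $|E|\geq c|Q|$ on which $V(x)\geq\frac{\delta}{|Q|}\int_QV$ in the matrix sense, so $(V(x)\psi_Q,\psi_Q)\geq\frac{\delta}{|Q|}\bigl((\int_QV)\psi_Q,\psi_Q\bigr)\geq\frac{\delta}{|Q|}\lambda(\int_QV)|\psi_Q|^2$ on $E$; integrating over $E$ and then relating $\int_E(V\psi,\psi)$ to $\int_Q(V\psi,\psi)$ (being careful that $\psi\neq\psi_Q$, so one splits $\psi=\psi_Q+(\psi-\psi_Q)$ inside the quadratic form and absorbs the cross and remainder terms using again Poincar\'e and the doubling of $\int V$) yields $|\psi_Q|^2\leq C\lambda(\int_QV)^{-1}\bigl(\int_Q(V\psi,\psi)+\int_Q|\nabla\psi|^2\bigr)$ plus controlled errors. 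Summing this estimate over a tiling of $\{|x|\geq R\}$ by cubes of side $\ell_1$, using (c) to make the coefficient uniformly small for $R$ large, and the finite-overlap of the gradient terms, produces exactly \eqref{schrodpp-cond}; then Proposition \ref{schrodpp-prop} gives discreteness.

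For part (iii), take $W_0(x)=\begin{bmatrix}1&x_1\\x_1&x_1^2\end{bmatrix}$ in the first coordinate (a rank-one example, suitably embedded for general $d$ and $m$) as in the proof of Proposition \ref{matrixAinf-strict}, which already shows $W_0\in\widetilde{A_{\infty,loc}}\setminus A_{\infty,loc}$. The plan is: first check (b) by an explicit computation of $\int_{Q(x,\ell)}W_0$, whose entries are polynomials in the center with leading behavior forcing $\lambda\bigl(\int_{Q(x,\ell)}W_0\bigr)\to\infty$ as $x\to\infty$ (the averaged matrix becomes strictly positive-definite with smallest eigenvalue growing, since averaging a rank-one matrix $vv^*$ with $v=v(x)$ varying over the cube produces a genuinely nondegenerate matrix whose least eigenvalue grows with $|x|$). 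Then show $\mathcal{H}_{W_0}$ does \emph{not} have discrete spectrum: since $\det W_0\equiv 0$, at each point $W_0(x)$ annihilates a unit vector $n(x)$; one builds a sequence $\psi_k$ concentrating near infinity along which $(W_0\psi_k,\psi_k)$ is small — the delicate point is that $n(x)$ is \emph{not} constant, so one cannot simply take $\psi_k=\chi_k\,n$, and the gradient of $n$ would then contribute to the kinetic energy. The fix, and the main obstacle of part (iii), is to exploit that for the explicit $W_0$ the null direction $n(x)$ varies \emph{slowly} at infinity (its derivative decays like $|x_1|^{-1}$), so choosing $\psi_k$ supported on large cubes $Q(x_k,\ell_k)$ with $\ell_k$ growing slowly relative to $|x_k|$ makes $\int|\nabla\psi_k|^2$ (including the $|\nabla n|^2$ contribution) tend to $0$ while $\int|\psi_k|^2$ stays of order $1$ and $\int(W_0\psi_k,\psi_k)$ also tends to $0$; this violates \eqref{schrodpp-cond}. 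Balancing these scales — large enough cubes that the quadratic-form term and the gradient term both vanish, small enough that $\psi_k$ still escapes to infinity — is where the real work lies, but it is a direct, if somewhat fiddly, computation with the explicit matrix.
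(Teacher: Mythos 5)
Your outline of part (i) is essentially the paper's argument (test the Persson criterion against translated bumps $\eta(\cdot-x)u$ and then minimize over unit vectors $u$). Parts (ii) and (iii), however, each contain a genuine gap.

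In part (ii), the proposed decomposition $\psi=\psi_Q+(\psi-\psi_Q)$ on a cube does not close. To pass from $|\psi_Q|^2\lesssim\lambda(\int_QV)^{-1}\int_E(V\psi_Q,\psi_Q)$ to a bound by $\mathcal{E}_V(\psi)$ you must expand $(V\psi_Q,\psi_Q)=(V\psi,\psi)-2\Re(V\psi,\psi-\psi_Q)+(V(\psi-\psi_Q),\psi-\psi_Q)$, and the last term involves the (locally integrable but otherwise unbounded) potential $V$ paired with the oscillation $\psi-\psi_Q$; Poincar\'e controls $\int_Q|\psi-\psi_Q|^2$ but says nothing about $\int_Q(V(\psi-\psi_Q),\psi-\psi_Q)$, and the matrix doubling of $\int_QV$ is an integral, not pointwise, statement. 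The paper avoids this trap entirely: instead of splitting $\psi$, it integrates the $V$-free pointwise inequality $|\psi(x)|^2\le 2|\psi(x)-\psi(y)|^2+2|\psi(y)|^2$ over $(x,y)\in Q'\times E(Q')$, where $E(Q')$ is the large-measure $A_\infty$ set on which $V$ is pointwise bounded below in the matrix order, and handles the scales via a dyadic/doubling chain so that the Poincar\'e factor (the coefficient of $\int|\nabla\psi|^2$) can be made arbitrarily small. Your version, at fixed cube side $\ell_1$, leaves a fixed coefficient $C\ell_1^2$ on the kinetic term, which is not enough to apply Proposition~\ref{schrodpp-prop} with arbitrary $\eps$.

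In part (iii), the explicit computation you announce actually disproves your choice. For $W_0(x)=\begin{bmatrix}1&x\\x&x^2\end{bmatrix}$ on $\R$ one has $\det\bigl(\int_{x-\ell/2}^{x+\ell/2}W_0\bigr)=\ell^4/12$ (constant in $x$) and $\operatorname{tr}\bigl(\int_{x-\ell/2}^{x+\ell/2}W_0\bigr)\approx\ell x^2$, so the smallest eigenvalue behaves like $\ell^3/(12x^2)\to0$, not $+\infty$: condition (b) \emph{fails} for $W_0$. Heuristically, the null vector $(-x,1)/\sqrt{1+x^2}$ of $W_0(x)$ turns more and more slowly as $|x|\to\infty$, so averaging $W_0$ over a fixed-size cube does \emph{not} produce a uniformly positive-definite matrix at infinity. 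The paper's example includes a compensating scalar factor: $V_0(x)=x^4\begin{bmatrix}1&x\\x&x^2\end{bmatrix}$; then $\det(\int V_0)\sim\ell^4x^8/12$ while $\operatorname{tr}(\int V_0)\sim\ell x^6$, giving $\lambda\gtrsim\ell^3x^2\to\infty$, and the non-discreteness follows from testing with $\psi_x(y)=\eta(y-x)(-y,1)$, which $V_0$ annihilates pointwise so the potential energy vanishes exactly (no delicate balancing of scales is needed). Your proposal identifies the right family of rank-one examples but omits the polynomial multiplier that is precisely what makes condition (b) hold.
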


\begin{proof}

(i) The implication (b)$\Rightarrow$(c) is obvious. 

Assume that $\mathcal{H}_V$ has discrete spectrum. Fix $\ell>0$ and let $\eta\in C^\infty_c(\R^d,[0,1])$ be non-trivial and identically $0$ outside $Q(0,\ell)$. If $x\in \R^d$ and $u\in \C^m$ has norm $1$, we put 
\be\eta_{x,u}(y):=\eta\left(y-x\right)u.\ee Fix $\eps>0$. By Proposition \ref{schrodpp-prop}, there is $R$ such that \eqref{schrodpp-cond} holds.

 If $Q(x,\ell)\subseteq \{|y|\geq R\}$,\bee
\int_{\R^d}\eta^2=\int_{\R^d}|\eta_{x,u}|^2&\leq&\eps\left(\int_{\R^d}|\nabla\eta_{x,u}|^2+\int_{\R^d}(V\eta_{x,u},\eta_{x,u})\right)\\
&\leq &\eps\left(\int_{\R^d}|\nabla\eta|^2+\int_{Q(x,\ell)}(Vu,u)\right).
\eee If $\eps_0$ is such that $\eps_0\int_{\R^d}|\nabla\eta|^2\leq\frac{1}{2}\int_{\R^d}\eta^2$ and $\eps\leq \eps_0$ this implies \be
\int_{\R^n}\eta^2\leq 2\eps\int_{Q(x,\ell)}(Vu,u)=2\eps\left(\left(\int_{Q(x,\ell)}V\right)u,u\right).
\ee Taking the minimum as $u$ varies on the unit sphere of $\C^m$, we get \be\lambda\left(\int_{Q(x,\ell)}V\right)\geq(2\eps)^{-1}\int_{\R^n}\eta^2.\ee
By the arbitrariness of $\eps$, we get the thesis.\newline

(ii) If $Q$ is a cube of side $\ell$, we define the non-negative matrix \be M(Q):=\ell^{2-n}\int_QV.\ee It will be useful to introduce the collections of dyadic cubes\be
\mathcal{D}_N:=\left\{2^Nx+[0,2^N]^d:\ x\in \Z^d\right\}\qquad(N\in\Z).
\ee
By Proposition \ref{matrixAinf-doubling}, there exists a constant $D<+\infty$ such that if $Q_1\subseteq Q_2$ and $Q_j\in\mathcal{D}_{N_j}$($j=1,2$), we have the bound\bel\label{gen-doubling}
M(Q_2)\leq D^{N_2-N_1}M(Q_1).
\eel

Fix $N_0\in \Z$ such that $2^{N_0}\geq \ell_1$. Given $N\in\N$, by assumption there exists $R<+\infty$ such that if $Q\in \mathcal{D}_{N_0}$ intersects $\{|y|\geq R\}$, we have $M(Q)\geq D^N\mathbb{I}_d$. Let $Q$ be any such cube. By \eqref{gen-doubling} we have\be
M(Q')\geq \mathbb{I}_m \qquad\forall Q'\in \mathcal{D}_{N_0-N}: Q'\subseteq Q.
\ee Fix now $Q'\subseteq Q$ such that $Q'\in \mathcal{D}_{N_0-N}$. If $N$ is so large that $2^{N_0-N}\leq \ell_0$, \eqref{matrixAinf-ineq-1} tells us that
\bel\label{gen-Q'bound}
V(x)\geq \frac{\delta}{|Q'|}\int_{Q'}V=\delta 4^{N-N_0}M(Q')\geq \delta 4^{N-N_0} \mathbb{I}_d,
\eel on a set $E(Q')\subseteq Q'$ of measure $\geq c|Q'|$. If $\psi\in C^\infty_c(\R^d,\C^m)$, we integrate the trivial inequality \be
|\psi(x)|^2\leq 2|\psi(x)-\psi(y)|^2+2|\psi(y)|^2
\ee as $(x,y)$ varies in $Q'\times E(Q')$. We get\be
|E(Q')|\int_{Q'}|\psi|^2\leq 2\int_{Q'\times E(Q')}|\psi(x)-\psi(y)|^2dxdy+ 2|Q'|\int_{E(Q')}|\psi|^2.
\ee Using \eqref{gen-Q'bound}, the lower bound on $|E(Q')|$ and Poincar\'e inequality \be
\int_{Q'\times Q'}|\psi(x)-\psi(y)|^2\leq C_d\ell_{Q'}^2|Q'|\int_{Q'}|\nabla\psi|^2,
\ee we find\bee
c\int_{Q'}|\psi|^2&\leq& 2C_d\ell_{Q'}^2\int_{Q'}|\nabla\psi|^2+2\delta^{-1}4^{N_0-N}\int_{E(Q')}(V\psi,\psi)\\
&\leq& 2C_d4^{N_0-N}\int_{Q'}|\nabla\psi|^2+2\delta^{-1}4^{N_0-N}\int_{Q'}(V\psi,\psi).
\eee In the second line we used the non-negativity of $V$.
Summing over all the cubes $Q'\in\mathcal{D}_{N_0-N}$ contained in $Q$, and then over all $Q\in\mathcal{D}_{N_0}$ intersecting $\{|y|\geq R\}$, we obtain  \be
\int_{|y|\geq R}|\psi|^2\leq 2c^{-1}4^{N_0-N}\max\{\delta^{-1},C_d\}\mathcal{E}_V(\psi).
\ee By the arbitrariness of $N$, we conclude that condition \eqref{schrodpp-cond} of Proposition \ref{schrodpp-prop} holds for every $\psi\in C^\infty(\R^d,\C^m)$.

(iii) We begin by proving the statement when $d=1$ and $m=2$. In this case we claim that $V_0(x)= \begin{bmatrix}
    x^4 & x^5 \\
    x^5 & x^6 \\
  \end{bmatrix}=x^4\begin{bmatrix}
    1 & x \\
    x & x^2 \\
  \end{bmatrix}$ is one of the potentials we are looking for. 
  
Let $\eta$ be a non-trivial test function identically equal to $1$ on $[-1,1]$, and define the vector-valued function $\psi_x(y):=\eta(y-x)(-y,1)$ ($x\in\R$). Notice that\bel\label{gen-L2}
\int_\R|\psi_x|^2\geq \int_{|x-y|\leq1}(y^2+1)\geq cx^2,
\eel
where $c>0$ is independent of $x$. Since $V_0(y)$ annihilates the vector $(-y,1)$ for every $y\in\R$, we also have\bel\label{gen-energy}
\mathcal{E}_{V_0}(\psi_x)=\int_\R|\eta'(y-x)(-y,1)+\eta(y-x)(-1,0)|^2\leq Cx^2,
\eel
for every large enough $x$, where $C$ is independent of $x$. Letting $x$ tend to $\infty$ and taking into account the bounds \eqref{gen-L2} and \eqref{gen-energy}, we see that condition \eqref{schrodpp-cond} of Proposition \ref{schrodpp-prop} cannot hold. This proves that $\mathcal{H}_{V_0}$ has not discrete spectrum.

To complete the proof of (iii) in the case $d=1$ and $m=2$, we estimate for every $x\in\R$ and $\ell>0$ the following quantities:\bee
 \det\left(\int_{x-\ell/2}^{x+\ell/2}V_0(y)dy\right) &=&
  \det
  \begin{bmatrix}
   \frac{(x+\ell/2)^5-(x-\ell/2)^5}{5} & \frac{(x+\ell/2)^6-(x-\ell/2)^6}{6} \\
    \frac{(x+\ell/2)^6-(x-\ell/2)^6}{6}  &  \frac{(x+\ell/2)^7-(x-\ell/2)^7}{7} \\
  \end{bmatrix}=\frac{\ell^4x^8}{12}\\&+&\text{terms of lower degree in $x$}, \\
 \text{tr}\left(\int_{x-\ell/2}^{x+\ell/2}V_0(y)dy\right) &=& \ell x^6+\text{terms of lower degree in $x$}.
  \eee
Since $\det(A)\leq \lambda(A)\text{tr}(A)$ for every non-negative $2\times 2$ matrix $A$, we conclude that 
\bel\label{gen-diverge}\liminf_{x\rightarrow\pm\infty} \frac{\lambda\left(\int_{x-\ell/2}^{x+\ell/2}V_0(y)dy\right)}{x^2}\geq\frac{\ell^3}{12},\eel which is stronger than condition (b) of the statement. This completes the proof of (iii) in the case under consideration.\newline

If $d\geq 1$ and $m=2$, we consider the potential $V_d(x)=\sum_{j=1}^dV_0(x_j)$. Notice that $H_{V_d}$ is the sum of the $n$ pairwise commuting operators obtained by letting $H_{V_d}$ act on each variable separately. By spectral theory, $H_{V_d}$ has not discrete spectrum. To see that condition (b) is satisfied by $V_d$, we use the concavity of $\lambda$: \be
\lambda(A+B)\geq\lambda(A)+\lambda(B)\qquad\forall A,B\geq0,\ee which follows from the fact that $\lambda$ is the infimum of the family of linear functionals $\{A\mapsto(Au,u)\}_{|u|=1}$. 
For every $x\in\R^d$ and $\ell>0$, we have\bee
 \lambda\left(\int_{Q(x,\ell)}V_d(y)dy\right) &=&  \lambda\left(\ell^{d-1}\sum_{j=1}^d\int_{x_j-\ell/2}^{x_j+\ell/2}V_d(y)dy\right) \\
 &\geq & \sum_{j=1}^d\ell^{d-1}\lambda\left(\int_{x_j-\ell/2}^{x_j+\ell/2}V_d(y)dy\right).
 \eee  By \eqref{gen-diverge}, we conclude that
\be\liminf_{x\rightarrow\infty} \frac{\lambda\left(\int_{Q(x,\ell)}V_d(y)dy\right)}{|x|^2}\geq c_d\ell^{d+2}.\ee

\par For the remaining case $d\geq 1$, $m\geq 3$, we put $V_{d,m}(x)=\begin{bmatrix}
  V_d(x) & \mathbb{O}_{2,m-2} \\
   \mathbb{O}_{m-2,2}  &  |x|^2\mathbb{I}_{m-2} \\
  \end{bmatrix}$, where $\mathbb{O}$ and $\mathbb{I}$ are the zero and identity matrices of the dimensions indicated by the subscripts. We omit the elementary verification that $H_{V_{d,m}}$ has not discrete spectrum, and that $V_{d,m}$ satisfies condition (b).

\end{proof}

\section{A natural extension of the Maz'ya-Shubin condition is not sufficient when $m\geq2$}\label{nomash-sec}

Our discussion until this point leaves open the possibility that a natural extension of Theorem \ref{mash-thm} may hold for matrix-valued potentials. In particular, the statements of Theorem \ref{mash-thm} and Theorem \ref{gen-thm} could suggest the conjecture that a necessary and sufficient condition for discreteness of the spectrum of $\mathcal{H}_V$ may be the validity of\bel\label{nomash-wrong}
\lim_{x\rightarrow\infty}\ \inf_{F\in\mathcal{N}_\gamma(Q(x,\ell))}\lambda\left(\int_{Q(x,\ell)\setminus F} V(y)dy\right)=+\infty\qquad\forall \ell>0,
\eel 
for some, and hence every, $\gamma\in(0,1)$.

This condition is indeed necessary. This can be seen adapting the necessity argument of \cite{ma-sh}, in the same spirit of our proof of the implication (i)$\Rightarrow$(ii) of Theorem \ref{gen-thm}. More precisely, one can take the argument of page $931$ of \cite{ma-sh} and replace the scalar test function $\chi_\delta (1-P_F)$ with the vector-valued test function $\chi_\delta (1-P_F)v$, where $v\in\C^m$ has norm $1$ and is otherwise arbitrary. Carrying out their computations and minimizing in $v$ at the end as in the aforementioned proof of Theorem \ref{gen-thm}, one sees that if the vector-valued operator $\mathcal{H}_V$ has discrete spectrum, then $V$ satisfies condition \eqref{nomash-wrong}. We omit the details since they are not interesting for us, in view of the fact that the converse does not hold. 

By part (iii) of Theorem \ref{gen-thm} we have seen that for every $d\geq1$ and $m\geq2$, there is a non-negative $m\times m$ polynomial potential $W_{d,m}$ on $\R^d$ such that $H_{W_{d,m}}$ has not discrete spectrum, $W_{d,m}\in \widetilde{A_{\infty,loc}}(\R^d,H_m)$, and $\lim_{x\rightarrow\infty}\lambda\left(\int_{Q(x,\ell)}W_{n,d}\right)=+\infty$ for every $\ell>0$. We are going to show that such a potential $W_{d,m}$ satisfies \eqref{nomash-wrong} for $\gamma$ small enough.

Assume that $d\geq3$. If $F\in\mathcal{N}_\gamma(Q(x,\ell))$, we can use the comparison between Lebesgue measure and capacity
\be |F|\leq C_d\text{Cap}(F)^\frac{d}{d-2},\ee
and the fact that $\text{Cap}(Q(x,\ell))=\ell^{d-2}\text{Cap}(Q(0,1))$, to conclude that \be|F|\leq C_d\gamma^\frac{d}{d-2}|Q(x,\ell)|.\ee If $\gamma$ is small, Definition \ref{matrixAinf-2} implies that $\int_{Q(x,\ell)\setminus F}W_{d,m}\geq \beta\int_{Q(x,\ell)}W_{d,m}$ for some $\beta>0$ depending on $W_{d,m}$, and hence \be
\lim_{x\rightarrow\infty}\ \inf_{F\in\mathcal{N}_\gamma(Q(x,\ell))}\lambda\left(\int_{Q(x,\ell)\setminus F}W_{d,m}\right)\geq \beta\cdot\lim_{x\rightarrow\infty}\lambda\left(\int_{Q(x,\ell)}W_{d,m}\right)=+\infty,\ee as we wanted. We omit the elementary observations needed to cover the cases $d=1$ and $d=2$.

\section{Relation of $A_{\infty,\text{loc}}(\R^d,H_m)$\\ with the Treil-Volberg matrix $A_2$ class}\label{tv-sec}

In \cite{treil-volberg}, Treil and Volberg introduced a Muckenhoupt $A_2$ class of non-negative matrix-valued functions. In this subsection we prove that the local version of this class is contained in $A_{\infty,loc}(\R^d,H_m)$. 

\begin{dfn}[cf. \cite{treil-volberg} ]\label{treil-volberg-def}
A non-negative function $W:\R^d\rightarrow H_m$ belongs to $A_{2,loc}(\R^d,H_m)$ if it is almost everywhere invertible, both $W$ and $W^{-1}$ are locally integrable and there exists $\ell_0>0$ such that\be
[W]_{A_{2,\text{loc}}}:=\sup_Q \left|\left| \left(\frac{1}{|Q|}\int_Q W\right)^{\frac{1}{2}}\left(\frac{1}{|Q|}\int_Q W^{-1}\right)^\frac{1}{2} \right|\right|_{op}<+\infty,
\ee where $Q$ varies over all cubes with side length less than or equal to $\ell_0$.
\end{dfn}

\begin{prop}
$A_{2,loc}(\R^d,H_m)\subseteq A_{\infty,loc}(\R^d,H_m)$.
\end{prop}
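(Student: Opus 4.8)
The plan is to show that membership in the Treil--Volberg class $A_{2,loc}$ forces the quantitative scalar-type condition \eqref{matrixAinf-ineq-1} of Definition \ref{matrixAinf-1}. The natural strategy is to reduce the matrix statement to a family of scalar statements by testing against unit vectors, using that $A_2$-type control in the matrix sense is equivalent to uniform $A_2$-type control of the scalar functions $(Wu,u)$ after a suitable normalization. Concretely, fix a cube $Q$ with side length at most $\ell_0$ and set $A_Q:=\frac{1}{|Q|}\int_Q W$ and $B_Q:=\frac{1}{|Q|}\int_Q W^{-1}$. The hypothesis says $\|A_Q^{1/2}B_Q^{1/2}\|_{op}\leq [W]_{A_{2,loc}}=:K$, equivalently $A_Q^{1/2}B_Q A_Q^{1/2}\leq K^2\,\mathbb{I}_m$, i.e. $B_Q\leq K^2 A_Q^{-1}$.

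\textbf{Key steps.} First I would record the pointwise/averaged Cauchy--Schwarz inequality: for any unit vector $u\in\C^m$ and a.e.\ $x$, $|u|^2=|W(x)^{1/2}W(x)^{-1/2}u|^2$, hence by Cauchy--Schwarz and Jensen one gets, after integrating over $Q$, a lower bound of the form $1\leq (A_Q u,u)^{1/2}(B_Q u,u)^{1/2}$ — actually the cleaner route is: $1=\frac{1}{|Q|}\int_Q (W^{1/2}W^{-1/2}u,u)\le \big(\frac{1}{|Q|}\int_Q(Wu,u)\big)^{1/2}\big(\frac{1}{|Q|}\int_Q(W^{-1}u,u)\big)^{1/2}$, so $(B_Qu,u)\ge (A_Qu,u)^{-1}$; this shows each scalar function $w_u(x):=(W(x)u,u)$ has averaged reciprocal bounded below. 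Combined with $B_Q\le K^2A_Q^{-1}$, which gives $(B_Qu,u)\le K^2(A_Q^{-1}u,u)$, and a further use of $(A_Q^{-1}u,u)\ge (A_Qu,u)^{-1}$, one extracts a two-sided comparison. Second, the crucial consequence I want is a reverse-H\"older / $A_\infty$-type estimate for each $w_u$ with constants uniform in $u$: the matrix $A_2$ bound is well known (Treil--Volberg) to be equivalent to a uniform scalar $A_2$ bound $\sup_u\, [w_u]_{A_2(Q\text{'s of side}\le\ell_0)}\le K^2$, and a scalar $A_2$ weight satisfies \eqref{mash-Ainfloc-1} with $\delta,c$ depending only on its $A_2$ constant. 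Third, I would promote these scalar statements back to the matrix statement \eqref{matrixAinf-ineq-1} by a covering/compactness argument on the unit sphere of $\C^m$: the set $\{x\in Q: W(x)\geq \tfrac{\delta}{|Q|}\int_Q W\}$ equals $\bigcap_{|u|=1}\{x: w_u(x)\ge \delta\, (A_Qu,u)\}$, so I need the scalar superlevel sets to occupy a fixed fraction of $Q$ \emph{simultaneously}; this is handled by noting that it suffices to check the inequality on an $\varepsilon$-net of the sphere (with a slightly worse $\delta$), and intersecting finitely many sets each of measure $\ge c|Q|$, using that the complement of each has measure $\le (1-c)|Q|$ and that a finite net of controlled cardinality (depending only on $m$) keeps $\sum(1-c)<1$ after adjusting constants — or, more robustly, first pass to the reverse-H\"older formulation of the scalar $A_\infty$ condition, which tensorizes better.

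\textbf{Main obstacle.} The delicate point is precisely the passage from ``each scalar function $(Wu,u)$ is a scalar $A_\infty$ weight with uniform constants'' to ``the matrix $W$ satisfies \eqref{matrixAinf-ineq-1}'', because a finite intersection of sets of measure $\ge c|Q|$ can be empty. The honest fix is to use the \emph{reverse-H\"older inequality} form: a scalar $A_2$ (indeed $A_\infty$) weight $w$ satisfies $\big(\frac{1}{|Q|}\int_Q w^{1+\varepsilon}\big)^{1/(1+\varepsilon)}\le C\frac{1}{|Q|}\int_Q w$ with $\varepsilon, C$ depending only on $[w]_{A_2}$; applied to $w_u$ uniformly in $u$, this yields by a standard argument that $\{w_u\ge \delta\, \langle w_u\rangle_Q\}$ has measure $\ge c|Q|$ with $\delta,c$ \emph{independent of $u$}, and — more importantly — one can run a direct argument: if $\|W(x)-A_Q\|$ were large (relative to $A_Q$) on a set of measure $>(1-c)|Q|$ for every choice of net point, the uniform reverse-H\"older bound would be violated. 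I expect the cleanest writeup instead avoids nets entirely by invoking the known equivalence (in Treil--Volberg) between the matrix $A_2$ condition and a matrix reverse-H\"older / matrix $A_\infty$ condition, from which \eqref{matrixAinf-ineq-1} is immediate by the same scalar computation applied to the quadratic form; I would cite \cite{treil-volberg} for that equivalence and keep the remaining verification short.
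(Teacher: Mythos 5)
Your approach --- reduce to the scalar sections $w_u=(Wu,u)$, get uniform scalar $A_\infty$, and then recombine --- is genuinely different from the paper's, and as you yourself flag, it has a real gap at the recombination step that your sketch does not close. The paper avoids scalar reduction altogether: since $W(x)>0$ a.e., the inequality $W(x)\geq \tfrac{\delta}{|Q|}\int_Q W$ is \emph{equivalent} to $\bigl\|(\tfrac{\delta}{|Q|}\int_Q W)^{1/2}W(x)^{-1/2}\bigr\|_{op}\leq 1$, so the measure of the bad set is controlled by Chebyshev applied to $\int_Q\bigl\|(\tfrac{\delta}{|Q|}\int_Q W)^{1/2}W(x)^{-1/2}\bigr\|_{op}^2\,dx$; this in turn is bounded by $\delta\, m\,[W]^2_{A_{2,\text{loc}}}|Q|$ via the Treil--Volberg inequality $\int_Q\|U\|_{op}\leq m\,\|\int_Q U\|_{op}$ for non-negative $U$, and choosing $\delta$ small gives \eqref{matrixAinf-ineq-1} outright --- no nets, no reverse-H\"older, no intersection of superlevel sets.

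Beyond the unresolved intersection problem, two specific points in your sketch are shaky. First, the claim that the Treil--Volberg matrix $A_2$ bound is equivalent to $\sup_{|u|=1}[w_u]_{A_2}\lesssim 1$ is not in \cite{treil-volberg} and does not follow as you suggest: pointwise Cauchy--Schwarz gives $1/w_u\leq (W^{-1}u,u)$, hence $[w_u]_{A_2}\leq (A_Qu,u)(B_Qu,u)$ with $A_Q,B_Q$ the averages of $W, W^{-1}$ on $Q$, and the hypothesis $B_Q\leq K^2A_Q^{-1}$ bounds this only by $K^2(A_Qu,u)(A_Q^{-1}u,u)$, a quantity that degrades with the condition number of $A_Q$. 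So the uniform scalar $A_2$ of the sections is, at best, a nontrivial claim needing its own proof. Second, the ``cleanest writeup'' --- citing \cite{treil-volberg} for a matrix reverse-H\"older / matrix $A_\infty$ equivalence --- is circular: the class $A_{\infty,loc}(\R^d,H_m)$ and condition \eqref{matrixAinf-ineq-1} are introduced in this very paper, and no such equivalence appears in \cite{treil-volberg}. Finally, the $\varepsilon$-net fallback would require a pointwise upper bound on $\|W(x)\|$ relative to $\|A_Q\|$ to pass from a finite net to the full sphere, which the hypotheses do not provide. The paper's direct matrix-level Chebyshev argument is thus both simpler and, crucially, complete where your route is not.
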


\begin{proof} Fix $W\in A_{2,loc}(\R^d,H_m)$. We prove that $W$ satisfies Definition \ref{matrixAinf-1} estimating the measure of the complement of the set appearing in \eqref{matrixAinf-ineq-1}. If $A$ and $B$ are two non-negative matrices, $A\ngeq B$ is not equivalent to $A<B$ (except in the case $d=1$), but we can use the fact that when $A$ is invertible, $A\geq B$ if and only if $||B^{\frac{1}{2}}A^{-\frac{1}{2}}||_{op}\leq 1$ (Lemma $V.1.7$ of \cite{bhatia}). Since $W(x)>0$ almost everywhere, this allows to write (for $Q$ any cube of side $\leq \ell_0$)\bee
&&\left|\left\{x\in Q: W(x)\ngeq \frac{\delta}{|Q|}\int_QW\right\}\right|=\left|\left\{x\in Q: \left|\left|\left(\frac{\delta}{|Q|}\int_QW\right)^{\frac{1}{2}}W(x)^{-\frac{1}{2}}\right|\right|_{op}>1\right\}\right|\\
&\leq& \int_Q\left|\left|\left(\frac{\delta}{|Q|}\int_QW\right)^{\frac{1}{2}}W(x)^{-\frac{1}{2}}\right|\right|_{op}^2dx\\
&=& \delta\int_Q\left|\left|\left(\frac{1}{|Q|}\int_QW\right)^{\frac{1}{2}}W(x)^{-1}\left(\frac{1}{|Q|}\int_QW\right)^{\frac{1}{2}}\right|\right|_{op}dx,
\eee where the last equality follows from the identity $||A^*A||_{op}=||A||^2$ which holds for any matrix $A$. Since $\int_Q||U||_{op}\leq m||\int_QU||_{op}$ for any non-negative and integrable $U:Q\rightarrow H_m$ (Lemma $3.1$ of \cite{treil-volberg}), we find\bee
&&\delta\int_Q\left|\left|\left(\frac{1}{|Q|}\int_QW\right)^{\frac{1}{2}}W(x)^{-1}\left(\frac{1}{|Q|}\int_QW\right)^{\frac{1}{2}}\right|\right|_{op}dx\\
&\leq& \delta m|Q|\left|\left|\left(\frac{1}{|Q|}\int_QW\right)^{\frac{1}{2}}\frac{1}{|Q|}\int_QW(x)^{-1}dx\left(\frac{1}{|Q|}\int_QW\right)^{\frac{1}{2}}\right|\right|_{op}\\
&=&\delta m|Q|\left|\left|\left(\frac{1}{|Q|}\int_QW\right)^{\frac{1}{2}}\left(\frac{1}{|Q|}\int_QW(x)^{-1}dx\right)^{\frac{1}{2}}\right|\right|_{op}^2\leq \delta m[W]_{A_{2,loc}}^2|Q|.
\eee
Putting everything together, we conclude that \be
\left|\left\{x\in Q: W(x)\geq \frac{\delta}{|Q|}\int_QW\right\}\right|> (1-\delta m[W]_{A_{2,loc}}^2)|Q|.
\ee When $\delta$ is small enough, this is inequality \eqref{matrixAinf-ineq-1}.
\end{proof}

It is worth noticing that matrix-valued $A_p$ classes for $p\notin\{2,\infty\}$ were introduced in \cite{volberg}. The definition is rather different from the one of $A_2$ and it may be interesting to investigate how they are related to $A_{\infty,loc}(\R^d,H_m)$.

\section{Oscillation of subspace-valued mappings}\label{osc-sec} 

The goal of this section is to define a notion of oscillation for mappings defined on $\R^d$ and whose values are linear subspaces of $\C^m$. This will allow us to formulate in Section \ref{osc2-sec} a new sufficient condition for discreteness of the spectrum of matrix Schr\"odinger operators.

We denote by $\mathcal{V}(m)$ the set of non-trivial linear subspaces of $\C^m$. 

A mapping \be\mathcal{S}:\R^d\longrightarrow \mathcal{V}(m)\ee is said to be measurable if there exist finitely many measurable mappings \be v_1,\dots,v_k:\R^d\rightarrow \C^m\ee such that $\mathcal{S}(x)$ is the span of $\{v_1(x),\dots,v_k(x)\}$ for every $x\in\R^d$.

If $\mathcal{S}$ is as above and $Q\subseteq\R^d$ is a cube, we introduce the set of \emph{unit sections} of $\mathcal{S}$ on $Q$:\be
\Gamma(Q,\mathcal{S}):=\{v:Q\rightarrow \C^m \text{ meas.}\colon v(x)\in\mathcal{S}(x)\text{ and } |v(x)|=1\quad \text{a.e. } x\in Q\}.
\ee 
The measurability in $x$ of $\mathcal{S}(x)$ guarantees that there are plenty of unit sections. 

Notice that the terminology is consistent with the usual one in geometry, if we identify $\mathcal{S}$ with the vector bundle $\{(x,v)\in\R^d\times\C^m:\ v\in \mathcal{S}(x)\}$.

We are ready to give our key definition.

\begin{dfn}\label{osc-dfn} Let $\mathcal{S}:\R^d\longrightarrow \mathcal{V}(m)$ be measurable. The oscillation of $\mathcal{S}$ on a cube $Q$ is the following quantity:
\bel
\omega(Q,\mathcal{S}):=\inf_{v\in\Gamma(Q,\mathcal{S})}\sqrt{\frac{1}{|Q|}\int_Q\left|v(y)-\frac{1}{|Q|}\int_Qv\right|^2dy}.
\eel 
\end{dfn}

Notice that \be
\sqrt{\frac{1}{|Q|}\int_Q\left|v(y)-\frac{1}{|Q|}\int_Qv\right|^2dy}=\inf_{b\in\C^m}\sqrt{\frac{1}{|Q|}\int_Q\left|v(y)-b\right|^2dy}
\ee and hence $\omega(Q,\mathcal{S})$ is the distance in $L^2(Q,\C^m)$ between the set $\Gamma(Q,\mathcal{S})$ and the set of constant vector fields. 

By using the fact that $|v|\equiv1$, it is also easy to verify that 
\bel\label{osc-id}
\omega(Q,\mathcal{S})^2= 1-\sup_{v\in\Gamma(Q,\mathcal{S})}\left|\frac{1}{|Q|}\int_Qv\right|^2.
\eel It is then obvious that $\omega(Q,\mathcal{S})\in[0,1]$. The following proposition gives us an idea of which feature of $\mathcal{S}$ on $Q$ is measured by $\omega(Q,\mathcal{S})$.

\begin{prop}\label{osc-prop}\begin{enumerate}
\item[\emph{(i)}] $\omega(Q,\mathcal{S})=0$ if and only if there is a subset $F\subseteq Q$ such that $|Q\setminus F|=0$ and\bel\label{osc-intersec}
\cap_{x\in F}\mathcal{S}(x)\neq \{0\},
\eel i.e., there exists $u\in\C^m\setminus \{0\}$ such that $u\in \mathcal{S}(x)$ for almost every $x\in Q$.

\item[\emph{(ii)}] If $T:Q\rightarrow Q$ is bijective and such that, for every $E\subseteq Q$, $T^{\leftarrow}(E)$ is measurable if and only if $E$ is measurable and $|T^{\leftarrow}(E)|=|E|$, then \bel\label{osc-rearrang}
\omega(Q,\mathcal{S}\circ T)=\omega(Q,\mathcal{S}).
\eel
\end{enumerate}
\end{prop}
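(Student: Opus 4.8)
\textbf{Proof plan for Proposition \ref{osc-prop}.}

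For part (i), I would exploit identity \eqref{osc-id}, which says $\omega(Q,\mathcal{S})^2 = 1 - \sup_{v\in\Gamma(Q,\mathcal{S})}\left|\frac{1}{|Q|}\int_Q v\right|^2$. Thus $\omega(Q,\mathcal{S}) = 0$ is equivalent to the existence of a sequence $v_n\in\Gamma(Q,\mathcal{S})$ with $\left|\frac{1}{|Q|}\int_Q v_n\right|\to 1$. For the ``if'' direction, if $u\in\C^m\setminus\{0\}$ lies in $\mathcal{S}(x)$ for a.e.\ $x\in Q$, then $v(x):=u/|u|$ is a unit section whose average has modulus $1$, so $\omega(Q,\mathcal{S})=0$. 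For the ``only if'' direction, suppose $\omega(Q,\mathcal{S})=0$ and pick $v_n\in\Gamma(Q,\mathcal{S})$ with averages $b_n:=\frac{1}{|Q|}\int_Q v_n$ satisfying $|b_n|\to 1$; passing to a subsequence, $b_n\to b$ with $|b|=1$. Since $|v_n|\equiv 1$, we have $\frac{1}{|Q|}\int_Q|v_n-b_n|^2 = 1 - |b_n|^2\to 0$, hence $v_n\to b$ in $L^2(Q,\C^m)$; along a further subsequence $v_n(x)\to b$ for a.e.\ $x\in Q$. Since each $v_n(x)\in\mathcal{S}(x)$, I need $\mathcal{S}(x)$ to be closed (it is a finite-dimensional subspace) so that the limit $b\in\mathcal{S}(x)$ for a.e.\ $x$; taking $u=b$ and $F$ the full-measure set where this convergence and membership hold finishes this direction.

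For part (ii), I would argue that the hypotheses on $T$ make it an invertible measure-preserving transformation of $Q$, so that $v\mapsto v\circ T$ is a bijection from $\Gamma(Q,\mathcal{S})$ onto $\Gamma(Q,\mathcal{S}\circ T)$: indeed $v\in\Gamma(Q,\mathcal{S})$ means $v(x)\in\mathcal{S}(x)$ a.e., hence $(v\circ T)(x) = v(Tx)\in\mathcal{S}(Tx) = (\mathcal{S}\circ T)(x)$ for a.e.\ $x$ (using that $T$ preserves null sets), and $|v\circ T|\equiv 1$; measurability of $v\circ T$ follows from the measurability hypothesis on $T$, and the same applies to the inverse map using $T^{-1}$. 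Then the change-of-variables identity $\int_Q g\circ T = \int_Q g$ for nonnegative measurable $g$ (which follows from the measure-preserving property) gives
\be
\frac{1}{|Q|}\int_Q (v\circ T) = \frac{1}{|Q|}\int_Q v,
\ee
so plugging into the form \eqref{osc-id} yields $\sup_{w\in\Gamma(Q,\mathcal{S}\circ T)}\left|\frac{1}{|Q|}\int_Q w\right|^2 = \sup_{v\in\Gamma(Q,\mathcal{S})}\left|\frac{1}{|Q|}\int_Q v\right|^2$, hence $\omega(Q,\mathcal{S}\circ T) = \omega(Q,\mathcal{S})$.

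I expect the only delicate point to be in part (i): extracting a pointwise a.e.\ convergent subsequence from $L^2$ convergence and then invoking closedness of the fibers $\mathcal{S}(x)$ to conclude that the common limit $b$ actually lies in $\mathcal{S}(x)$ for a.e.\ $x$. One must also take care that the exceptional null sets from the various subsequence extractions, from $|v_n|\equiv 1$ a.e., and from $v_n(x)\in\mathcal{S}(x)$ a.e.\ can be combined into a single null set whose complement is the set $F$ in the statement. Everything else is a direct manipulation of the identity \eqref{osc-id} and standard change-of-variables for measure-preserving maps.
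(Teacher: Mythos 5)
Your proof is correct, and both parts share the paper's overall strategy: for (ii) you make exactly the same observation (composition by $T$ is a bijection of the sets of unit sections that leaves the averages unchanged); for (i) both proofs reduce to showing that a maximizing sequence of unit sections converges pointwise a.e.\ to a fixed unit vector $b$, and then use that each fiber $\mathcal{S}(x)$ is a closed subspace to conclude $b\in\mathcal{S}(x)$ a.e. The only genuine difference is how the pointwise a.e.\ convergence is produced. The paper arranges for $\frac{1}{|Q|}\int_Q\Re(v_k,u_0)\geq 1-k^{-2}$, runs a Borel--Cantelli argument on the sublevel sets $\{x\colon\Re(v_k(x),u_0)\leq 1-1/n\}$, and then uses strict convexity of the unit sphere to upgrade $\Re(v_k(x),u_0)\to 1$ to $v_k(x)\to u_0$. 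You instead exploit the clean identity $\frac{1}{|Q|}\int_Q|v_n-b_n|^2=1-|b_n|^2$ (which is the same computation underlying \eqref{osc-id}), so that $v_n\to b$ in $L^2(Q,\C^m)$ falls out immediately, after which a standard subsequence extraction gives a.e.\ convergence. Your route bypasses both the explicit Borel--Cantelli count and the strict-convexity step, so it is marginally shorter and cleaner; the paper's route makes the quantitative mechanism more visible but is otherwise equivalent. Your care about combining the countably many null exceptional sets into the final $F$ is exactly the right thing to flag.
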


Part (i) tells us that the quantity $\omega(Q,\mathcal{S})$ does not see non-generic oscillations: if, as $x$ varies in $Q$, $\mathcal{S}(x)$ rotates around a fixed axis, then $\omega(Q,\mathcal{S})=0$.

Part (ii) says that the oscillation introduced above is rearrangement-invariant. Notice that $\omega(\mathcal{S},Q)$ only depends on the values of $\mathcal{S}$ on $Q$ and therefore the left-hand side of \eqref{osc-rearrang} is meaningful.

\begin{proof} (i) The non-trivial direction is the \emph{only if}. If $\omega(Q,\mathcal{S})=0$, by \eqref{osc-id} there is a sequence $\{v_k\}_{k\in\N}\subseteq \Gamma(Q,\mathcal{S})$ such that $\lim_{k\rightarrow+\infty}\left|\frac{1}{|Q|}\int_Qv_k\right|=1$. Passing to a subsequence, we can suppose that there is $u_0$ in the unit sphere of $\C^m$ to which $\left\{\frac{1}{|Q|}\int_Qv_k\right\}_{k\in\N}$ converges as fast as we like. In particular, we can assume that \bel\label{osc-k^2}
\frac{1}{|Q|}\int_Q\Re (v_k,u_0)\geq 1-\frac{1}{k^2}\qquad\forall k\in\N,
\eel where $\Re(v_k,u_0)$ denotes the real part of the scalar product, which is pointwise $\leq 1$ by the Cauchy-Schwarz inequality. Consider the sets \be A_{k,n}:=\{x\in Q: \Re(v_k(x),u_0)> 1-1/n\} \qquad (k,n\in\N).\ee It is easy to see that \eqref{osc-k^2} implies the inequality $|Q\setminus A_{k,n}|\leq \frac{n}{k^2}|Q|$. Since $\sum_{k=1}^{+\infty}|Q\setminus A_{k,n}|<+\infty$ for every $n$, the Borel-Cantelli Lemma gives\be
|\cup_{\ell\in \N}\cap_{k\geq \ell}A_{k,n}|=|Q|\qquad\forall n,
\ee and hence \be
|\cap_{n\in\N}\cup_{\ell\in \N}\cap_{k\geq \ell}A_{k,n}|=|Q|.
\ee Unravelling the notation, this means that there is a set of full measure $F$ such that $\Re(v_k(x),u_0)$ converges to $1$ at every $x\in F$. Since $|u_0|=|v_k|\equiv1$, the strict convexity of the sphere implies that $v_k(x)$ converges to $u_0$ at every $x\in F$. Since we may assume that $v_k(x)\in\mathcal{S}(x)$ for every $k$ and every $x\in F$, we conclude that $u_0\in\cap_{x\in F}\mathcal{S}(x)$, as we wanted.\newline 
(ii) It immediately follows from Definition \ref{osc-dfn} and the elementary fact that if $g$ is a measurable function defined on $Q$, then $\int_Qg\circ T=\int_Qg$.
\end{proof}

The following proposition complements Proposition \ref{osc-prop}. It states that if $\cap_{x\in Q}\mathcal{S}(x)=\{0\}$ in a certain quantitative sense, then $\omega(Q,\mathcal{S})$ has an explicit lower bound.

\begin{prop}\label{osc-prop-2}\begin{enumerate}
\item[\emph{(i)}] Let $\mathcal{S}:\R^d\rightarrow\mathcal{V}(m)$ be measurable and let $Q$ be a cube. Assume that for some $N\in\N$ we have the partition 
\be Q=\cup_{j=1}^NA_j,\ee 
where $A_j$ is measurable for every $j$. Put \be
\eta:=\inf_{j=1,\dots,N}\frac{|A_j|}{|Q|}.
\ee 
Assume moreover that there is a $\delta>0$ such that the following property holds for each $j$: for every $x\in A_j$ and $v\in \mathcal{S}(x)$, there exists $k\neq j$ such that
\bel\label{osc-angle}
|(v,w)|\leq(1-\delta)|v||w| \qquad\forall w\in\mathcal{S}(y),\ \forall y\in A_k.
\eel 
Then 
\bel\label{osc-lower-bound}
\omega(Q,\mathcal{S})\geq \sqrt{\delta\eta}.
\eel

\item[\emph{(ii)}] If $\mathcal{S}$ is constant on each $A_j$, i.e., $\mathcal{S}(x)=\mathcal{S}_j$ for every $x\in A_j$, then the property above is satisfied for some $\delta>0$ if and only if $\cap_{j=1}^N\mathcal{S}_j=\{0\}$. The value of $\delta$ may be chosen to depend only on the subspaces $\{\mathcal{S}_j\}_{j=1}^N$.
\end{enumerate}
\end{prop}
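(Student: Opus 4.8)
\textbf{Proof plan for Proposition~\ref{osc-prop-2}.}

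The plan is to prove part (i) directly from the identity \eqref{osc-id}, and then derive part (ii) from a compactness argument on the (finite) configuration of subspaces. For part (i), fix an arbitrary unit section $v\in\Gamma(Q,\mathcal{S})$ and set $b:=\frac{1}{|Q|}\int_Q v$; by \eqref{osc-id} it suffices to show $|b|^2\leq 1-\delta\eta$. The key observation is that the hypothesis \eqref{osc-angle} forces $v$ to point in ``almost orthogonal'' directions on different pieces of the partition, so its average cannot have full length. Concretely, I would argue as follows. For each $j$, on $A_j$ we have $v(x)\in\mathcal{S}(x)$ and, by \eqref{osc-angle}, there is some $k=k(j)\neq j$ such that $|(v(x),w)|\leq (1-\delta)$ for all unit $w\in\mathcal{S}(y)$, $y\in A_k$ — in particular $|(v(x),v(y))|\leq 1-\delta$ for all $x\in A_j$, $y\in A_k$. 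I would then estimate
\be
|b|^2=\frac{1}{|Q|^2}\int_Q\int_Q (v(x),v(y))\,dx\,dy\leq \frac{1}{|Q|^2}\int_Q\int_Q |(v(x),v(y))|\,dx\,dy,
\ee
and split the double integral over the pairs of pieces $A_i\times A_j$. On the diagonal and on ``unrelated'' pairs I would use the trivial bound $|(v(x),v(y))|\leq 1$, while on each pair $A_j\times A_{k(j)}$ I would use the improved bound $1-\delta$. Since each $A_j$ contributes a pair of relative measure at least $\eta^2$ with the $1-\delta$ bound (and the pair $A_{k(j)}\times A_j$ as well, by symmetry of the absolute value), a careful bookkeeping of how many times the factor $(1-\delta)$ can be guaranteed yields $|b|^2\leq 1-\delta\cdot(\text{total improved measure})\leq 1-\delta\eta$; taking the infimum over $v\in\Gamma(Q,\mathcal{S})$ gives \eqref{osc-lower-bound}. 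The one subtlety to be careful about is double-counting: the map $j\mapsto k(j)$ need not be an involution, so I would reduce to counting, for each index $i$, the total measure of $\{x\in A_i\}\times\{y: i\in\{k(j):\ A_j\ni y\} \text{ or } \dots\}$ conservatively, keeping only the contribution $\sum_j |A_j|\cdot|A_{k(j)}|/|Q|^2\geq \sum_j \eta\cdot |A_j|/|Q|=\eta$, which suffices.

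For part (ii), when $\mathcal{S}$ is constant equal to $\mathcal{S}_j$ on $A_j$, the property in (i) reads: for every $j$ and every $v\in\mathcal{S}_j$ there exists $k\neq j$ with $|(v,w)|\leq(1-\delta)|v||w|$ for all $w\in\mathcal{S}_k$. The ``only if'' direction is immediate: if $0\neq u\in\cap_{j=1}^N\mathcal{S}_j$, then taking $v=u\in\mathcal{S}_j$ and $w=u\in\mathcal{S}_k$ gives $|(v,w)|=|u|^2=|v||w|$, contradicting the inequality for any $\delta>0$. For the ``if'' direction, suppose $\cap_{j=1}^N\mathcal{S}_j=\{0\}$. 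I would define, for unit $v$,
\be
f_j(v):=\min_{k\neq j}\ \max_{w\in\mathcal{S}_k,\,|w|=1}|(v,w)|,
\ee
a continuous function on the unit sphere of $\mathcal{S}_j$ (the inner max is a continuous function of $v$ by compactness of the unit sphere of each $\mathcal{S}_k$, and the outer min of finitely many continuous functions is continuous). I claim $f_j(v)<1$ for every unit $v\in\mathcal{S}_j$: if $f_j(v)=1$, then for every $k\neq j$ there is a unit $w_k\in\mathcal{S}_k$ with $|(v,w_k)|=1$, forcing $w_k=e^{i\theta_k}v$, hence $v\in\mathcal{S}_k$ for all $k\neq j$, and trivially $v\in\mathcal{S}_j$, so $v\in\cap_k\mathcal{S}_k=\{0\}$, a contradiction. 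By compactness of the unit sphere of $\mathcal{S}_j$, $\sup f_j=:1-\delta_j<1$; then $\delta:=\min_j\delta_j>0$ works, and depends only on $\{\mathcal{S}_j\}_{j=1}^N$ as claimed.

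The main obstacle I anticipate is the combinatorial bookkeeping in part (i): getting the clean constant $\delta\eta$ (rather than something like $\delta\eta^2$ or $\delta\eta/N$) requires organizing the double-integral estimate so that each piece $A_j$ is ``charged'' exactly once against its partner $A_{k(j)}$, using $|A_{k(j)}|\geq\eta|Q|$ to convert one factor of relative measure into $\eta$ while keeping the other factor $|A_j|/|Q|$ to be summed to $1$. Everything else — the identity for $|b|^2$, the Cauchy–Schwarz-type rigidity used in (ii), and the continuity/compactness argument — is routine.
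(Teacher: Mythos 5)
Your proof is correct and takes essentially the same route as the paper's: part~(i) is the same estimate of $\left|\frac{1}{|Q|}\int_Q v\right|^2$ via the hypothesis, which gains a factor $(1-\delta)$ on a set of $y$'s of measure $\geq\eta|Q|$ for each fixed $x$ (the paper organizes the double integral as $\sum_j\left|\int_{A_j}u\right|^2+\sum_j\int_{A_j}\int_{A_j^c}$, you put the absolute value inside first and estimate pointwise in $x$, but the numerics are identical), and part~(ii) is the same compactness argument, phrased by you as an extremum of a continuous function on the sphere rather than via a convergent subsequence. One small inaccuracy to watch: in \eqref{osc-angle} the index $k$ may depend on $x$ (and on $v(x)$), not only on the piece $A_j$, so $k=k(j)$ should be $k=k(x)$ and your bookkeeping sum $\sum_j|A_j|\,|A_{k(j)}|$ should read $\sum_j\int_{A_j}|A_{k(x)}|\,dx$ --- which still gives $\geq\eta|Q|^2$ pointwise, so the estimate and the constant $\delta\eta$ are unaffected.
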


\begin{proof}
(i): Let $u\in\Gamma(Q,\mathcal{S})$. Then \bel\label{osc-angle-est}
\left|\int_Qu\right|^2=\left|\sum_{j=1}^N\int_{A_j}u\right|^2=\sum_{j=1}^N\left|\int_{A_j}u\right|^2+\sum_{j=1}^N\int_{A_j}\int_{A_j^c}(u(x),u(y))dydx.
\eel 
Fix $x\in A_j$ such that $u(x)\in\mathcal{S}(x)$ (almost every $x$ has this property). By the hypothesis, there exists $k$ such that \eqref{osc-angle} holds. Since $|A_k|\geq\eta|Q|$, we have\bee
\left|\int_{A_j^c}(u(x),u(y))dy\right|&\leq&\left|\int_{(A_j\cup A_k)^c}(u(x),u(y))dy\right|+\left|\int_{A_k}(u(x),u(y))dy\right|\\
&\leq& (|Q|-|A_j|-|A_k|)+|A_k|(1-\delta)\\
&=&(|Q|-|A_j|)-|A_k|\delta\leq (|Q|-|A_j|)-\delta\eta|Q|,
\eee where in the second line we used the Cauchy-Schwarz inequality and \eqref{osc-angle}. Identity \eqref{osc-angle-est} then gives\bee
\left|\int_Qu\right|^2&\leq& \sum_{j=1}^N|A_j|^2+\sum_{j=1}^N |A_j|(|Q|-|A_j|)-\sum_{j=1}^N|A_j|\delta\eta|Q|\\
&=&\left(\sum_{j=1}^N|A_j|\right)|Q|(1-\delta\eta)=(1-\delta\eta)|Q|^2.
\eee Recalling \eqref{osc-id} and the arbitrariness of $u\in \Gamma(Q,\mathcal{S})$, we get \eqref{osc-lower-bound}.
\ \newline

(ii): If $\cap_{j=1}^N\mathcal{S}_j\neq\{0\}$, one can take a non-zero vector $v$ in the intersection to see that \eqref{osc-angle} cannot be satisfied for any $\delta>0$. 

Now assume that the property does not hold for a given $\delta>0$. In this case there is $j_\delta$ and $v_\delta\in\mathcal{S}_{j_\delta}$ of unit norm such that for every $k\neq j_\delta$, there exists $w_{\delta,k}\in\mathcal{S}_k$ of unit norm satisfying \bel\label{osc-contrad}
|(v_\delta,w_{\delta,k})|>(1-\delta).
\eel If the property does not hold for any $\delta$, extracting a subsequence $\delta_n$ such that $\delta_n\rightarrow0$ we can assume that $j_{\delta_n}\equiv j_0$, $v_{\delta_n}$ converges to $v$, and $w_{\delta_n,k}$ converges to $w_k$ for every $k\neq j_0$. Passing to the limit in \eqref{osc-contrad}, we find $|(v,w_k)|=1$. Since $|v|=|w_k|=1$, the equality condition for the Cauchy-Schwarz inequality tells us that $w_k=e^{i\phi_k}v$ for some $\phi_k\in\R$. Hence $v\in \cap_{j=1}^N\mathcal{S}_j$, i.e., the intersection is not trivial.

The statement about the value of $\delta$ follows by observing that in the above argument we never used the sets $A_j$. \end{proof}

By Proposition \ref{schrodpp-prop}, we know that the discrete spectrum property has to do with the behavior of the potential at infinity. For this reason, we introduce an asymptotic variant of Definition \ref{osc-dfn}.

\begin{dfn}\label{osc-asympt} Let $\mathcal{S}:\R^d\rightarrow\mathcal{V}(m)$ be measurable. Given $\ell>0$, the asymptotic oscillation at scale $\ell$ is defined as follows:\be
\omega_{\infty}(\ell,\mathcal{S}):=\liminf_{x\rightarrow\infty,\ x\in\Z^d}\omega(Q(x\ell, \ell),\mathcal{S}).
\ee
\end{dfn}


\section{A sufficient condition for discreteness of the spectrum}\label{osc2-sec}

\begin{thm}\label{osc2-thm}
Let $V:\R^d\rightarrow H_m$ be an everywhere non-negative and locally integrable potential. Assume that there are two measurable mappings\be
\mathcal{S},\mathcal{L}:\R^d\longrightarrow\mathcal{V}(m),
\ee
such that the following properties hold:\begin{enumerate}
\item[\emph{(a)}] for every $x\in\R^d$ we have the $V(x)$-invariant orthogonal decomposition \be
\C^m=\mathcal{S}(x)\oplus\mathcal{L}(x).
\ee 
\item[\emph{(b)}] $\lim_{x\rightarrow\infty}\lambda(V(x)_{|\mathcal{L}(x)})=+\infty$,
\item[\emph{(c)}] $\limsup_{\ell\rightarrow0+}\ell^{-1}\omega_\infty(\ell,\mathcal{S})=+\infty$.
\end{enumerate}

Then $\mathcal{H}_V$ has discrete spectrum.
\end{thm}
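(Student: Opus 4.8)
The plan is to verify the criterion of Proposition~\ref{schrodpp-prop}: given $\eps>0$ one must exhibit $R<+\infty$ such that $\int_{|y|\ge R}|\psi|^2\le\eps\,\mathcal{E}_V(\psi)$ for every $\psi\in C^\infty_c(\R^d,\C^m)$. To this end I would fix a scale $\ell>0$ (to be chosen only at the very end, by means of hypothesis (c)) and tile $\R^d$ by the cubes $Q(x\ell,\ell)$, $x\in\Z^d$. Given a test function $\psi$, split it pointwise along the orthogonal decomposition of (a): $\psi(x)=v(x)+w(x)$ with $v(x)=P_{\mathcal{S}(x)}\psi(x)\in\mathcal{S}(x)$ and $w(x)=P_{\mathcal{L}(x)}\psi(x)\in\mathcal{L}(x)$ the orthogonal projections, so that $|\psi|^2=|v|^2+|w|^2$ everywhere. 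The $V(x)$-invariance in (a) gives $(V\psi,\psi)=(Vv,v)+(Vw,w)\ge(Vw,w)\ge\lambda(V(x)_{|\mathcal{L}(x)})\,|w(x)|^2$ at every point, so by (b): for every $\Lambda>0$ there is $R_1$ with $\int_Q|w|^2\le\Lambda^{-1}\int_Q(V\psi,\psi)$ for each cube $Q\subseteq\{|y|\ge R_1\}$. Thus the $w$-component is harmless, and the real task is to estimate $\int_Q|v|^2$ on far-away cubes.

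The core is an \emph{oscillation estimate} on a fixed cube $Q$. First, for any $b\in\C^m$,
\be
\frac{1}{|Q|}\int_Q|P_{\mathcal{S}(x)}b|^2\,dx\le\sqrt{1-\omega(Q,\mathcal{S})^2}\;|b|^2 :
\ee
the unit section $u(x):=P_{\mathcal{S}(x)}b/|P_{\mathcal{S}(x)}b|$ (chosen arbitrarily where $P_{\mathcal{S}(x)}b=0$) satisfies $(u(x),b)=|P_{\mathcal{S}(x)}b|$ pointwise, so $\bigl|\frac{1}{|Q|}\int_Q u\bigr|\ge\frac{1}{|b|}\frac{1}{|Q|}\int_Q|P_{\mathcal{S}(x)}b|\,dx$; plugging this into \eqref{osc-id} and then using $|P_{\mathcal{S}(x)}b|^2\le|b|\,|P_{\mathcal{S}(x)}b|$ gives the claim. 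Now set $\bar v:=|Q|^{-1}\int_Q v$. Since $v(x)\perp\mathcal{L}(x)$, one has $|\bar v|^2=|Q|^{-1}\int_Q(v(x),P_{\mathcal{S}(x)}\bar v)\,dx$, and Cauchy--Schwarz together with the displayed estimate (applied to $b=\bar v$) give $|Q|\,|\bar v|^2\le\sqrt{1-\omega(Q,\mathcal{S})^2}\int_Q|v|^2$. Combining this with the identity $\int_Q|v|^2=|Q|\,|\bar v|^2+\int_Q|v-\bar v|^2$ and the elementary bound $1-\sqrt{1-t}\ge t/2$ on $[0,1]$ yields the self-improved inequality
\be
\int_Q|v|^2\le\frac{2}{\omega(Q,\mathcal{S})^2}\int_Q|v-\bar v|^2 .
\ee
Finally, writing $v-\bar v=(\psi-\bar\psi)-(w-\bar w)$ and invoking Poincar\'e's inequality on a cube of side $\ell$ (componentwise, with a dimensional constant $C_d$) gives $\int_Q|v-\bar v|^2\le 2C_d\ell^2\int_Q|\nabla\psi|^2+2\int_Q|w|^2$, hence, for every cube $Q$ of the tiling,
\be
\int_Q|v|^2\le\frac{4C_d\ell^2}{\omega(Q,\mathcal{S})^2}\int_Q|\nabla\psi|^2+\frac{4}{\omega(Q,\mathcal{S})^2}\int_Q|w|^2 .
\ee

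It remains to sum and to fix the parameters. Using (c), choose $\ell$ so that $\kappa:=\omega_\infty(\ell,\mathcal{S})>0$ and $16C_d\,(\ell^{-1}\kappa)^{-2}<\eps/2$. With this $\ell$ now fixed, the definition of $\omega_\infty$ as a $\liminf$ together with (b) furnish $R<+\infty$ such that every cube $Q(x\ell,\ell)$ meeting $\{|y|\ge R\}$ — all of which lie in $\{|y|\ge R-\ell\sqrt d\}$ — satisfies $\omega(Q,\mathcal{S})\ge\kappa/2$ and $\lambda(V(x)_{|\mathcal{L}(x)})\ge\Lambda$ on it, with $\Lambda$ taken so large that $(16\kappa^{-2}+1)\Lambda^{-1}<\eps/2$. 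On each such cube the last two displays and the bound on $\int_Q|w|^2$ combine into $\int_Q|\psi|^2\le 16C_d\ell^2\kappa^{-2}\int_Q|\nabla\psi|^2+(16\kappa^{-2}+1)\Lambda^{-1}\int_Q(V\psi,\psi)$; summing over these essentially disjoint cubes and using $|\nabla\psi|^2\ge0$, $(V\psi,\psi)\ge0$ gives $\int_{|y|\ge R}|\psi|^2\le\frac{\eps}{2}\int_{\R^d}|\nabla\psi|^2+\frac{\eps}{2}\int_{\R^d}(V\psi,\psi)\le\eps\,\mathcal{E}_V(\psi)$, which is the condition of Proposition~\ref{schrodpp-prop}. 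The step I expect to be the main obstacle is precisely this oscillation estimate, and in particular getting the power $\omega(Q,\mathcal{S})^{-2}$ — not a higher one — in the denominator: a naive bound for $|\bar v|$ only yields $\omega^{-4}$, which is useless here because the resulting coefficient $\ell^2\omega^{-4}=\ell^{-2}(\ell^{-1}\omega)^{-4}$ cannot be made small through hypothesis (c). Moving $|Q|\,|\bar v|^2$ back to the left-hand side, which keeps the exponent at $\omega^{-2}$, is what lets the argument close.
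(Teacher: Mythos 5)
Your proof is correct and follows a genuinely different route from the paper's. The paper instead writes $\psi=\phi u$ with $\phi=|\psi|$ and $u=\psi/|\psi|$ on $\{\psi\neq0\}$, converts the energy of $\psi$ into the energy of the scalar $\phi$ against an effective potential $\widetilde V=(Vu,u)+|\nabla u|^2$, and then proves the per-cube estimate by excising an exceptional set where $\phi$ is small via Wiener capacity, further distinguishing whether the angular part $u$ stays close to unit sections of $\mathcal{S}$ in $L^2$ average --- a case analysis in the spirit of Maz'ya--Shubin. Your decomposition $\psi=v+w=P_{\mathcal{S}(\cdot)}\psi+P_{\mathcal{L}(\cdot)}\psi$ is defined everywhere (no singularity at zeros of $\psi$), so that machinery disappears, and the weight of the argument falls on the clean inequality
\be
\int_Q |v|^2 \leq \frac{2}{\omega(Q,\mathcal{S})^2}\int_Q |v - \bar v|^2
\ee
for measurable sections $v$ of $\mathcal{S}$ on a cube $Q$, which plays the role of a Poincar\'e inequality for sections with $\omega(Q,\mathcal{S})^{-1}$ in place of the side length. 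Your derivation of it is correct, and your remark about securing $\omega^{-2}$ rather than a higher power is accurate: keeping the exact Pythagorean identity $\int_Q|v|^2=|Q||\bar v|^2+\int_Q|v-\bar v|^2$ together with the elementary bound $1-\sqrt{1-t}\geq t/2$ is precisely what makes the resulting coefficient $\ell^2\omega^{-2}$ controllable through hypothesis (c). Both proofs use the same criterion (Proposition~\ref{schrodpp-prop}) and invoke (b) and (c) in the same way; the difference lies entirely in the per-cube estimate, where your approach is shorter and entirely elementary, bypassing capacity altogether.
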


Before proving the theorem, we discuss a bit the hypotheses and the idea of the proof.\newline

The $V(x)$-invariance of hypothesis (a) means that $V(x)\mathcal{S}(x)\subseteq \mathcal{S}(x)$ and $V(x)\mathcal{L}(x)\subseteq \mathcal{L}(x)$ for every $x\in\R^d$. In other words, $\mathcal{S}(x)$ and $\mathcal{L}(x)$ are direct sums of eigenspaces of $V(x)$. By hypothesis (b) the minimal eigenvalue of the Hermitian matrix $V(x)$ restricted to $\mathcal{L}(x)$ is required to diverge at $\infty$. Finally, hypothesis (c) puts no constraint on the eigenvalues of $V(x)$ on $\mathcal{S}(x)$, but it states that $\mathcal{S}$ oscillates at $\infty$ or, more precisely, the asymptotic oscillation $\omega_\infty(\ell,\mathcal{S})$ does not decay too fast when $\ell$ tends to $0$. 

Succinctly, Theorem \ref{osc2-thm} gives a sufficient condition for discreteness of the spectrum that holds for potentials whose minimal eigenvalue $\lambda(V)$ does not necessarily diverge at infinity, if this is compensated by the oscillation of the ``small" eigenspaces. That some property of this kind should be required is revealed by the obvious example $V(x)=\begin{bmatrix}
0 & 0\\
0 & \mu(x)
\end{bmatrix}$: no matter how large $\mu:\R^d\rightarrow[0,+\infty)$ is, the operator $\mathcal{H}_V$ does not have discrete spectrum, because it is the direct sum of the two scalar operators $-\Delta$ and $-\Delta+\mu$, the first of which does not have discrete spectrum.\newline

By Proposition \ref{schrodpp-prop}, the proof of Theorem \ref{osc2-thm} reduces to showing that for every $\eps>0$ there exists $R<+\infty$ such that\bel\label{osc2-tail}
\int_{|x|\geq R}|\psi|^2\leq \eps \left(\int_{\R^d}|\nabla\psi|^2+\int_{\R^d}(V\psi,\psi)\right)\qquad\forall \psi\in C^\infty_c(\R^d;\C^m).
\eel 
The heuristics behind our argument is as follows: \eqref{osc2-tail} fails only if there are vector-valued wave functions $\psi$ localized far from the origin and with small energy $\mathcal{E}_V(\psi)$. But if the potential energy $\int_{\R^d}(V\psi,\psi)$ is small, then in some average sense the vector $\psi(x)$ has to be close to $\mathcal{S}(x)$, because of condition (b). By condition (c), if this happens then $\psi$ has to oscillate a lot, and thus the kinetic energy $\int_{\R^d}|\nabla\psi|^2$ has to be large. Therefore in any case the energy of $\psi$ is large and \eqref{osc2-tail} cannot fail.

\section{Proof of Theorem \ref{osc2-thm}}\label{osc3-sec}

Fix $\psi\in C^\infty_c(\R^d;\C^m)$ and $\eps>0$. Our goal is to find $R<+\infty$ depending only on $\eps$ such that \eqref{osc2-tail} holds.\newline

First of all we decompose $\psi$ in its radial and angular part, writing 
\be\psi=\phi u,\ee 
where $\phi=|\psi|$ is scalar and $u$ has unit norm. Notice that $u=\frac{\psi}{|\psi|}$ on $\Omega:=\{\psi\neq0\}$ and  may be arbitrarily defined on $\Omega^c$: this ambiguity will not affect the rest of the proof. Both $\phi$ and $u$ are smooth on $\Omega$. Differentiating the identity $(u,u)\equiv1$ we find that $u$ and $\frac{\partial u}{\partial x_j}$ are orthogonal on $\Omega$, where we have\be
\left|\frac{\partial\psi}{\partial x_j}\right|^2=\left|\frac{\partial\phi}{\partial x_j} u+\phi\frac{\partial u}{\partial x_j}\right|^2=\left|\frac{\partial\phi}{\partial x_j}u\right|^2+\left|\phi\frac{\partial u}{\partial x_j}\right|^2=\left|\frac{\partial\phi}{\partial x_j}\right|^2+\phi^2\left|\frac{\partial u}{\partial x_j}\right|^2.
\ee 
Since $\nabla\psi(x)=0$ at almost every $x$ such that $\psi(x)=0$ and the same holds for $\phi$,  we can write \bel\label{osc3-Veff}
\int_{\R^d}|\nabla\psi|^2+\int_{\R^d}(V\psi,\psi)=\int_{\R^d}|\nabla\phi|^2+\int_{\R^d}\widetilde{V}\phi^2,
\eel
where \be
\widetilde{V}:=(Vu,u)+|\nabla u|^2 \quad\text{on} \quad\Omega\qquad \text{and}\quad \widetilde{V}:=0 \quad\text{on}\quad \Omega^c.
\ee 
Identity \eqref{osc3-Veff} allows to interpret the energy of a vector-valued function with respect to a matrix potential as the energy of its radial part with respect to an ``effective potential" in which the angular part is incorporated. The potential $\widetilde{V}$ is not locally integrable in general, but $\int\widetilde{V}\phi^2$ is finite, as a consequence of the computations above.

By assumption (c), we can choose $\ell>0$ such that $\ell^{-1}\omega_\infty(\ell,\mathcal{S})\geq \eps^{-1}$, and by assumption (b), we can then choose $R$ so that for every cube $Q$ with side length $\ell$ intersecting $\{|y|\geq R\}$, we have  \bel\label{osc3-large}
(V(x)v,v)\geq \ell^{-2}|v|^2\qquad\forall v\in \mathcal{L}(x),\ \forall x\in Q.
\eel 
Enlarging $R$ if necessary, we can assume that for every such cube $Q$ we have\bel\label{osc3-omega}
\omega(Q,\mathcal{S})\geq \frac{\omega_\infty(\ell,\mathcal{S})}{2}.
\eel We are going to prove that:
\bel\label{osc3-est}
\ell^{-2}\omega_\infty(\ell,\mathcal{S})^2\int_Q\phi^2\leq C_d\left( \int_Q|\nabla\phi|^2+\int_Q\widetilde{V}\phi^2\right),
\eel
for every cube $Q$ with side length $\ell$ intersecting $\{|y|\geq R\}$. Recalling identity \eqref{osc3-Veff} and our choice of $\ell$, we see that this implies \eqref{osc2-tail}, as we wanted. We can turn to the proof of \eqref{osc3-est}.\newline 

To analyze separately the contributions of the two terms on the right-hand side of \eqref{osc3-est}, we consider the compact set\be
F:=\left\{x\in Q: \phi(x)^2\leq\frac{1}{4|Q|}\int_Q\phi^2\right\}.
\ee Let $c_d$ be a small constant depending only on $d$ and to be fixed later: we split our analysis depending on whether the capacity of $F$ is smaller or larger than $c_d\cdot\ell^{d-2}\omega_\infty(\ell,\mathcal{S})^2$.\newline

\par If $\text{Cap}(F)\geq c_d\cdot\ell^{d-2}\omega_\infty(\ell,\mathcal{S})^2$, we can apply Lemma $2.2$ of \cite{ko-ma-sh} that in particular states that $\int_Q\phi^2\leq C_d\frac{|Q|}{\text{Cap}(F)}\int_Q|\nabla\phi|^2$. We obtain
\be
c_d\cdot\ell^{-2}\omega_\infty(\ell,\mathcal{S})^2\int_Q\phi^2\leq\frac{\text{Cap}(F)}{|Q|}\int_Q\phi^2 \leq C_d\int_Q|\nabla\phi|^2, 
\ee which implies \eqref{osc3-est}. 

\par If $\text{Cap}(F)\leq c_d\cdot\ell^{d-2}\omega_\infty(\ell,\mathcal{S})^2$, we define \be
\mathcal{S}_{\text{unit}}(x):=\{v\in\C^m: v\in \mathcal{S}(x)\text{ and }|v|=1\}
\ee
and we split the analysis into two further sub-cases, depending this time on whether the quantity \bel\label{osc3-intdist}
\frac{1}{|Q|}\int_{Q\setminus F}\text{dist}(u(x),\mathcal{S}_{\text{unit}}(x))^2dx
\eel is smaller or larger than $c_d\cdot\omega_\infty(\ell,\mathcal{S})^2$. The integral \eqref{osc3-intdist} measures in $L^2$ average and on $Q\setminus F$ how far $u$ is from the distribution of subspaces $\mathcal{S}$. Notice that if $d=1$, taking $c_d\leq1/2$, we have $F=\varnothing$ and part of what follows becomes more elementary.

If \eqref{osc3-intdist} $\geq c_d\cdot\omega_\infty(\ell,\mathcal{S})^2$, we write \be
u(x)=u'(x)+u''(x),\quad u'(x)\in\mathcal{S}(x),\ u''(x)\in\mathcal{L}(x),\qquad\forall x\in Q.
\ee
By the invariance under $V$ of $\mathcal{S}$ and $\mathcal{L}$ and the non-negativity of $V$, we have \be
(Vu,u)=(Vu',u')+(Vu'',u'')\geq (Vu'',u'')\geq \ell^{-2}|u''|^2\qquad\text{ on }Q.
\ee The last inequality follows from \eqref{osc3-large}. The elementary inequality \be
|u''(x)|^2\geq\frac{1}{2}\text{dist}(u(x),\mathcal{S}_{\text{unit}}(x))^2
\ee allows to estimate\bee
&&\int_Q\widetilde{V}\phi^2\geq \int_{Q\setminus F}(Vu,u)\phi^2 \geq \frac{1}{4|Q|}\int_{Q\setminus F}(Vu,u)\cdot\int_Q\phi^2 \\
&\geq& \frac{\ell^{-2}}{8}\frac{1}{|Q|}\int_{Q\setminus F}\text{dist}(u(x),\mathcal{S}_{\text{unit}}(x))^2dx\cdot\int_Q\phi^2\geq \frac{c_d}{8}\ell^{-2}\omega_\infty(\ell,\mathcal{S})^2\int_Q\phi^2.
\eee In the first line we used the fact that $F$ contains the zero set of $\psi$, and hence $\widetilde{V}\geq (Vu,u)$ on $Q\setminus F$. Inequality \eqref{osc3-est} is proved if \eqref{osc3-intdist} $\geq c_d\cdot\omega_\infty(\ell,\mathcal{S})^2$.

The remaining case is: \eqref{osc3-intdist} $\leq c_d\cdot\omega_\infty(\ell,\mathcal{S})^2$. We are going to prove that if $c_d$ is small enough, then \bel\label{osc3-grad}
\frac{1}{|Q|}\int_{Q\setminus F}|\nabla u|^2\geq\frac{\ell^{-2}\omega_\infty(\ell,\mathcal{S})^2}{C_d}.
\eel
Since $\int_Q\widetilde{V}\phi^2\geq \frac{1}{4|Q|}\int_{Q\setminus F}|\nabla u|^2\int_Q\phi^2$, this concludes the proof of Theorem \ref{osc2-thm}.

By the measurability of $\mathcal{S}$ and the hypothesis on \eqref{osc3-intdist}, we can find $v\in\Gamma(Q,\mathcal{S})$ such that \bel\label{osc3-v} 
\frac{1}{|Q|}\int_{Q\setminus F}|u-v|^2dx\leq c_d\cdot\omega_\infty(\ell,\mathcal{S})^2.
\eel Moreover, since we are under the assumption that $F$ has small capacity, there exists $\eta\in C^\infty_c(\R^d,[0,1])$ such that $\eta\equiv1$ on $F$ (and $\eta\equiv0$ on $(2Q)^c$ if $d=2$) and \bel\label{osc3-grad-eta}
\int_{\R^d}|\nabla\eta|^2\leq 2c_d\cdot\ell^{d-2}\omega_\infty(\ell,\mathcal{S})^2.
\eel By H\"older inequality and Sobolev embedding, we have\bel\label{osc3-L2}
\frac{1}{|Q|}\int_Q\eta^2\leq C_dc_d\cdot\omega_\infty(\ell,\mathcal{S})^2.
\eel 
Recalling \eqref{osc3-omega}, we have\bee
\frac{\omega_\infty(\ell,\mathcal{S})}{2}&\leq& \omega(Q,\mathcal{S})\leq \sqrt{\frac{1}{|Q|}\int_Q\left|v-\frac{1}{|Q|}\int_Q v\right|^2}\\
&\leq &\sqrt{\frac{1}{|Q|}\int_Q\left|(1-\eta)v-\frac{1}{|Q|}\int_Q (1-\eta)v\right|^2}+\sqrt{\frac{1}{|Q|}\int_Q\left|\eta v-\frac{1}{|Q|}\int \eta v\right|^2}\\
&\leq &\sqrt{\frac{1}{|Q|}\int_Q\left|(1-\eta)v-\frac{1}{|Q|}\int_Q (1-\eta)v\right|^2}+\sqrt{\frac{1}{|Q|}\int_Q\eta^2}.
\eee In the last line we used the fact that $w\mapsto w-\frac{1}{|Q|}\int_Qw$ is a projection operator.
Applying \eqref{osc3-L2} and choosing $c_d$ small enough, we can absorb the last term on the left and obtain\be
\frac{\omega_\infty(\ell,\mathcal{S})}{4}\leq\sqrt{\frac{1}{|Q|}\int_Q\left|(1-\eta)v-\frac{1}{|Q|}\int_Q (1-\eta)v\right|^2}.\ee We can then proceed to estimate\bee
&&\sqrt{\frac{1}{|Q|}\int_Q\left|(1-\eta)v-\frac{1}{|Q|}\int_Q (1-\eta)v\right|^2} \\
&\leq& \sqrt{\frac{1}{|Q|}\int_Q\left|(1-\eta)(v-u)-\frac{1}{|Q|}\int_Q (1-\eta)(v-u)\right|^2}\\
&+&\sqrt{\frac{1}{|Q|}\int_Q\left|(1-\eta)u-\frac{1}{|Q|}\int_Q (1-\eta)u\right|^2}\\ 
&\leq& \sqrt{\frac{1}{|Q|}\int_{Q\setminus F}|v-u|^2}+C_d\ell\sqrt{\frac{1}{|Q|}\int_Q|\nabla[(1-\eta)u]|^2}.
\eee In the last line we used the fact that $\eta=1$ on $F$ and Poincar\'e inequality. To justify its application, one can notice that $u$ is in the Sobolev space $H^1(\mathring{Q}\setminus F,\C^m)$, because $u$ is smooth on $\Omega=\{\psi\neq0\}$ and $\overline{Q\setminus F}\cap\Omega^c=\varnothing$, hence $(1-\eta)u\in H^1(\mathring{Q},\C^m)$.

Using \eqref{osc3-v} and Leibnitz rule, \bee
\frac{\omega_\infty(\ell,\mathcal{S})}{4} &\leq& \sqrt{c_d}\cdot\omega_\infty(\ell,\mathcal{S})+C_d\ell\sqrt{\frac{1}{|Q|}\int_Q|\nabla\eta|^2}+C_d\ell\sqrt{\frac{1}{|Q|}\int_{Q\setminus F}|\nabla u|^2}\\ 
&\leq& C_d\sqrt{c_d}\cdot\omega_\infty(\ell,\mathcal{S})+C_d\ell\sqrt{\frac{1}{|Q|}\int_{Q\setminus F}|\nabla u|^2},
\eee where in the last inequality we used \eqref{osc3-grad-eta}. If $c_d$ is appropriately small, we finally obtain \eqref{osc3-grad}. The proof is concluded.

\section{Examples of potentials satisfying\\ the conditions of Theorem \ref{osc2-thm}}\label{ex-sec}

Since the statement of Theorem \ref{osc2-thm} is a bit complicated, we devote this section to showing how to build examples of potentials satisfying its assumptions.\newline

We say that a collection $\mathcal{Q}$ of cubes with sides parallel to the axes is a \emph{good grid} if it is a partition of $\R^d$ up to sets of measure $0$, and there exists a sequence of positive numbers $\{\ell_k\}_{k\in\N}$ such that:\begin{enumerate}
\item $\lim_{k\rightarrow+\infty}\ell_k=0$, 
\item for every $k\in\N$ there exists $R(k)<+\infty$ such that for every $x\in\Z^d\cap\{|y|\geq R(k)\}$ the cube $Q(\ell_k x,\ell_k)$ may be written as a finite union of cubes of $\mathcal{Q}$.
\end{enumerate}
The expression \emph{partition up to sets of measure zero} means that $\cup_{Q\in\mathcal{Q}}Q=\R^d$ and $Q\cap Q'$ has measure $0$ for every $Q,Q'\in\mathcal{Q}$.

If $\mathcal{T}:Q(0,1)\rightarrow \mathcal{V}(m)$ is measurable and $Q=Q(x,\ell)\subseteq \R^d$, we put\be
\mathcal{T}_Q(y):=\mathcal{S}\left(\frac{y-x}{\ell}\right)\qquad\forall y\in Q.
\ee

\begin{dfn}\label{ex-dfn}
Assume that we have a good grid $\mathcal{Q}$, a measurable mapping \be\mathcal{T}:Q(0,1)\rightarrow \mathcal{V}(m),\ee and a locally integrable function \be
\nu:\R^d\rightarrow [0,+\infty).
\ee 
We build a potential $V:\R^d\rightarrow H_m$ defining it separately on every cube $Q\in\mathcal{Q}$. If $x\in Q\in\mathcal{Q}$ and $v\in \mathcal{T}_Q(x)$, we define \be
V(x)v:=0,
\ee while if $v\in \mathcal{T}_Q(x)^\perp$, we define \be
V(x)v:=\nu(x)v.
\ee
Observe that the resulting matrix $V(x)$ is Hermitian non-negative, and that $V$ is locally integrable. To be precise, $V$ is well-defined only almost-everywhere, but what follows is unaffected by this ambiguity.
\end{dfn}

\begin{prop}\label{ex-prop} If $\omega(Q(0,1),\mathcal{T})>0$ and \bel\label{ex-lim}
\lim_{x\rightarrow\infty}\nu(x)=+\infty,
\eel
then the potential $V$ given by Definition \ref{ex-dfn} satisfies the hypotheses of Theorem \ref{osc2-thm}.
\end{prop}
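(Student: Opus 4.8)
The plan is to exhibit measurable maps $\mathcal S,\mathcal L:\R^d\to\mathcal V(m)$ that make Theorem \ref{osc2-thm} applicable, and the right choice is already dictated by Definition \ref{ex-dfn}: for $x$ in the grid cube $Q=Q(c_Q,\ell_Q)\in\mathcal Q$ set $\mathcal S(x):=\mathcal T_Q(x)$ and $\mathcal L(x):=\mathcal T_Q(x)^\perp$. (We tacitly assume $\mathcal T(z)\subsetneq\C^m$ for a.e.\ $z$, which holds in the cases of interest and is what guarantees that $\mathcal L$ takes values in $\mathcal V(m)$.) Both maps are measurable: $\mathcal T$ is spanned by finitely many measurable vector fields on $Q(0,1)$, and their cube-by-cube affine pullbacks form finitely many measurable vector fields on $\R^d$ spanning $\mathcal S$ (the gluing is over the countable family $\mathcal Q$), while the orthogonal complement of a measurable subspace-valued map is again measurable. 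By construction $V(x)$ annihilates $\mathcal S(x)$ and acts as $\nu(x)\,\mathrm{Id}$ on $\mathcal L(x)$, so $\C^m=\mathcal S(x)\oplus\mathcal L(x)$ is a $V(x)$-invariant orthogonal decomposition: this is hypothesis (a). Moreover $V(x)_{|\mathcal L(x)}=\nu(x)\,\mathrm{Id}_{\mathcal L(x)}$, hence $\lambda\bigl(V(x)_{|\mathcal L(x)}\bigr)=\nu(x)\to+\infty$ by \eqref{ex-lim}: this is hypothesis (b).

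Everything then reduces to hypothesis (c). Put $\omega_0:=\omega(Q(0,1),\mathcal T)>0$. Two elementary facts are needed. First, since the restriction of $\mathcal S$ to a grid cube $Q$ is $\mathcal T$ transported by the affine bijection $y\mapsto(y-c_Q)/\ell_Q$ of $Q$ onto $Q(0,1)$, and since $\omega$ is defined through normalized integrals, the change of variables (analogous to the one in the proof of Proposition \ref{osc-prop}(ii)) gives $\omega(Q,\mathcal S)=\omega_0$ for every $Q\in\mathcal Q$. Second, if a cube $Q$ is, up to a null set, the disjoint union of grid cubes $Q'_1,\dots,Q'_M\in\mathcal Q$, then for every $v\in\Gamma(Q,\mathcal S)$, using that the mean of $v$ over $Q'_i$ minimizes the $L^2(Q'_i)$-distance of $v$ to the constants,
\begin{align*}
\int_Q\Bigl|v-\frac{1}{|Q|}\!\int_Q v\Bigr|^2
&=\sum_{i=1}^M\int_{Q'_i}\Bigl|v-\frac{1}{|Q|}\!\int_Q v\Bigr|^2
\ \ge\ \sum_{i=1}^M\int_{Q'_i}\Bigl|v-\frac{1}{|Q'_i|}\!\int_{Q'_i}v\Bigr|^2\\
&\ge\ \sum_{i=1}^M|Q'_i|\,\omega(Q'_i,\mathcal S)^2=|Q|\,\omega_0^2,
\end{align*}
so that $\omega(Q,\mathcal S)\ge\omega_0$.

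Now I invoke the good grid structure. Let $\{\ell_k\}_{k\in\N}$ and $R(k)$ be as in its definition. For $k\in\N$ and $x\in\Z^d$ with $|x|\ge R(k)$, the cube $Q(\ell_k x,\ell_k)$ is a finite union of cubes of $\mathcal Q$, hence $\omega(Q(\ell_k x,\ell_k),\mathcal S)\ge\omega_0$ by the inequality just proved; consequently
\be
\omega_\infty(\ell_k,\mathcal S)=\liminf_{x\to\infty,\ x\in\Z^d}\omega(Q(\ell_k x,\ell_k),\mathcal S)\ge\omega_0.
\ee
Since $\ell_k\to0^+$ and $\omega_0>0$, this gives $\limsup_{\ell\to0^+}\ell^{-1}\omega_\infty(\ell,\mathcal S)\ge\lim_{k\to\infty}\ell_k^{-1}\omega_0=+\infty$, which is hypothesis (c). Theorem \ref{osc2-thm} then yields that $\mathcal H_V$ has discrete spectrum.

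No step here is genuinely hard: the substance is entirely in verifying (c). The one point deserving real attention is the compatibility between the grid, through which $\mathcal S$ inherits the oscillation $\omega_0$ of $\mathcal T$ on each cube, and the scales $\ell_k$ along which $\omega_\infty$ is computed — this is exactly what the two clauses in the definition of \emph{good grid} are engineered to provide, and it is the reason one can only control (c) along a sequence $\ell_k\to0^+$, which is all that (c) demands. The superadditivity-type bound $\omega(Q,\mathcal S)\ge\omega_0$ for cubes that are unions of grid cubes, trivial though it is, is the technical core; the measurability of $\mathcal S$ and $\mathcal L$ is a minor routine point.
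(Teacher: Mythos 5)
Your proof is correct, and the overall architecture matches the paper's: take $\mathcal S:=\mathcal T_Q$ on each $Q\in\mathcal Q$ and $\mathcal L:=\mathcal S^\perp$, after which hypotheses (a) and (b) of Theorem \ref{osc2-thm} are immediate, and all the work is in hypothesis (c), handled by the good-grid structure along the sequence $\ell_k\to0^+$. Where you diverge from the paper is in the key oscillation bound on the cubes $Q(x\ell_k,\ell_k)$. The paper asserts the equality $\omega(Q(x\ell_k,\ell_k),\mathcal S)=\omega(Q(0,1),\mathcal T)$ by loosely invoking ``rearrangement and rescaling'' and Proposition \ref{osc-prop}(ii); but a finite union of grid cubes, each carrying a rescaled copy of $\mathcal T$, is not literally $\mathcal T$ composed with a measure-preserving bijection of a single rescaled cube (the component maps expand, so no single measure-preserving $T$ matches them), so that invocation is not a clean application of \ref{osc-prop}(ii) as stated. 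You instead prove only the inequality $\omega(Q,\mathcal S)\ge\omega_0$ for $Q$ a finite union of grid cubes, via a genuine (and fully elementary) monotonicity: since the mean of $v$ over $Q_i'$ minimizes the $L^2(Q_i')$-distance to constants, the quadratic functional defining $\omega(Q,\cdot)^2$ dominates the weighted sum of the cube-wise ones. This inequality is exactly what (c) requires, and your route to it is more rigorous than the paper's while being no longer. (For the record, the paper's equality is in fact true — one can see it from identity \eqref{osc-id} by expressing $\frac{1}{|Q|}\int_Q v$ as a convex combination of averages over the $Q_i'$ and applying Jensen — but writing it down cleanly takes more than the paper offers.) The parenthetical caveat you raise about $\mathcal T(z)\subsetneq\C^m$ is a legitimate point that the paper also glosses over; it does not affect the substance.
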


\begin{proof}
We define $\mathcal{S}:\R^d\rightarrow \mathcal{V}(m)$ putting $\mathcal{S}:=\mathcal{T}_Q$ on $Q$ (with the same harmless ambiguity as in Definition \ref{ex-dfn}), and $\mathcal{L}:\R^d\rightarrow \mathcal{V}(m)$ putting $\mathcal{L}(x):=\mathcal{S}(x)^\perp$ for every $x\in\R^d$. Condition (a) of Theorem \ref{osc2-thm} is trivially satisfied. Condition (b) immediately follows from the observation that \be\lambda(V(x)_{|\mathcal{L}(x)})=\nu(x),\ee and the limit \eqref{ex-lim}.

Now recall the definition of a good grid given above and pick $k\in\N$. By property (2) of the good grid $\mathcal{Q}$ and Definition \ref{ex-dfn}, the restrictions of $\mathcal{S}$ on every cube $Q(x\ell_k,\ell_k)$ with $x\in\Z^d\cap\{|y|\geq R(k)\}$, are obtained from $\mathcal{T}$ by rearrangement and rescaling. By part (ii) of Proposition \ref{osc-prop}, we have\be
\omega_\infty(\ell_k,\mathcal{S})=\liminf_{x\rightarrow \infty}\omega(Q(x\ell_k,\ell_k),\mathcal{S})=\omega(Q(0,1),\mathcal{T}).
\ee Therefore \bee
\limsup_{\ell\rightarrow0+}\frac{\omega_\infty(\ell,\mathcal{S})}{\ell}&\geq& \lim_{k\rightarrow+\infty}\frac{\omega_\infty(\ell_k,\mathcal{S})}{\ell_k}\\
&=&\lim_{k\rightarrow+\infty}\frac{\omega(Q(0,1),\mathcal{T})}{\ell_k}=+\infty,
\eee where we used the hypothesis $\omega(Q(0,1),\mathcal{T})>0$.
\end{proof}

By part (i) of Proposition \ref{osc-prop}, it is easy to produce mappings $\mathcal{T}:Q(0,1)\rightarrow \mathcal{V}(m)$ such that $\omega(Q(0,1),\mathcal{T})>0$. An interesting example is obtained splitting $Q(0,1)$ in $m$ measurable pieces of positive measure and defining $\mathcal{T}$ as the $j$-th coordinate hyperplane on the $j$-th piece. Since the intersection of the $m$ coordinate hyperplanes is trivial, $\omega(Q(0,1),\mathcal{T})>0$ and we get the following strong counterexample to the converse of Proposition \ref{schrodpp-lambda}.

\begin{prop}\label{ex-counter}
For every $d\geq1$ and $m\geq2$, there are locally integrable non-negative potentials $V:\R^d\rightarrow H_m$ such that: \begin{enumerate}
\item[(i)] $V$ has everywhere rank $1$,
\item[(ii)] the matrix Schr\"odinger operator $\mathcal{H}_V$ has discrete spectrum.
\end{enumerate}
\end{prop}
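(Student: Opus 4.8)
The plan is to realize $V$ through the construction of Definition \ref{ex-dfn}, choosing the data $(\mathcal{Q},\mathcal{T},\nu)$ so that the resulting potential is forced to have rank $1$ everywhere and so that Proposition \ref{ex-prop} applies. The substance of the statement is then entirely contained in Theorem \ref{osc2-thm}; everything else is bookkeeping.

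Concretely, I would first fix any good grid $\mathcal{Q}$ on $\R^d$ (such grids plainly exist: for instance partition $\R^d$ into the shells $T_0=\{\|x\|_\infty<1\}$ and $T_j=\{j\le\|x\|_\infty<j+1\}$ for $j\ge1$, and inside $T_j$ use the dyadic cubes of side $2^{-j}$; with $\ell_k=2^{-k}$ one checks properties (1)--(2) of a good grid). Next I would split $Q(0,1)$ into $m$ measurable pieces $E_1,\dots,E_m$, each of positive measure, and define $\mathcal{T}\colon Q(0,1)\to\mathcal{V}(m)$ by setting $\mathcal{T}(x)$ equal to the coordinate hyperplane $\{v\in\C^m:\ v_j=0\}$ whenever $x\in E_j$. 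Since $m\ge2$, each of these is a non-trivial subspace of $\C^m$, and $\mathcal{T}$ is measurable in the sense of Section \ref{osc-sec}. Finally I would pick any continuous $\nu\colon\R^d\to(0,+\infty)$ with $\lim_{x\to\infty}\nu(x)=+\infty$, e.g.\ $\nu(x)=1+|x|^2$.

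Feeding $(\mathcal{Q},\mathcal{T},\nu)$ into Definition \ref{ex-dfn} produces a potential $V$ which, on the interior of each cube $Q\in\mathcal{Q}$, annihilates the $(m-1)$-dimensional subspace $\mathcal{T}_Q(x)$ and acts as multiplication by $\nu(x)>0$ on its one-dimensional orthogonal complement; hence $V(x)$ has rank exactly $1$ there. The cubes of $\mathcal{Q}$ overlap only on a null set, and on that set (where, as already noted in Definition \ref{ex-dfn}, $V$ is not canonically determined) I would simply set $V(x):=\nu(x)\,e_1e_1^{*}$, which is again rank $1$; this affects neither the operator $\mathcal{H}_V$ nor local integrability. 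This settles (i).

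For (ii) it remains to verify the hypotheses of Proposition \ref{ex-prop}, namely $\nu(x)\to+\infty$ (true by construction) and $\omega(Q(0,1),\mathcal{T})>0$. For the latter, observe that any full-measure subset $F\subseteq Q(0,1)$ must meet each $E_j$, because $|E_j|>0$; picking $x_j\in F\cap E_j$ we get $\bigcap_{x\in F}\mathcal{T}(x)\subseteq\bigcap_{j=1}^m\{v\in\C^m:\ v_j=0\}=\{0\}$, so by part (i) of Proposition \ref{osc-prop} no full-measure $F$ can make the intersection non-trivial, i.e.\ $\omega(Q(0,1),\mathcal{T})>0$. Proposition \ref{ex-prop} then guarantees that $V$ satisfies the assumptions of Theorem \ref{osc2-thm}, so $\mathcal{H}_V$ has discrete spectrum. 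The only points demanding a little care are the explicit exhibition of a good grid and the harmless resolution of the a.e.\ ambiguity in $V$; the conceptual weight of the proof rests on Theorem \ref{osc2-thm} together with Propositions \ref{osc-prop} and \ref{ex-prop}.
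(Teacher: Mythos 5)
Your proposal is correct and follows essentially the same route as the paper: the paper itself presents Proposition \ref{ex-counter} as an immediate consequence of the construction in Definition \ref{ex-dfn} and Proposition \ref{ex-prop}, using exactly the $\mathcal{T}$ that assigns the $j$-th coordinate hyperplane to each of $m$ measurable pieces of positive measure in $Q(0,1)$, and invoking Proposition \ref{osc-prop}(i) for the positivity of the oscillation. The explicit exhibition of a good grid via dyadic shells, the choice $\nu(x)=1+|x|^2$, and the patching of the null-set ambiguity so that $V$ has rank $1$ \emph{everywhere} (not merely a.e.) are useful details that the paper leaves implicit, but they do not change the underlying argument.
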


\section{A converse to Proposition \ref{schrodpp-lambda}}\label{pol-sec}

In the previous sections we have seen that the converse of Proposition \ref{schrodpp-lambda} fails rather dramatically: there are matrix-valued potentials $V$ such that $\lambda(V)\equiv0$ and hence $\mathcal{H}_{\lambda(V)}$ has not discrete spectrum, but $\mathcal{H}_V$ has discrete spectrum.

In this section we prove a complementary result that shows that under a rigidity assumption on $V$, the discreteness of the spectrum of $\mathcal{H}_V$ is enough to deduce the discreteness of the spectrum of $\mathcal{H}_{\lambda(V)}$.

The precise statement is the following.

\begin{thm}\label{pol-thm}
Assume that $V:\R^d\rightarrow H_2$ is such that: \begin{enumerate}
\item[\emph{(a)}] $V(x)\geq 0$ for every $x\in \R^d$,
\item[\emph{(b)}] $V(x)$ is a real symmetric $2\times 2$ matrix for every $x\in \R^d$,  
\item[\emph{(c)}] $(Vu,u)$ is a polynomial for every $u\in\C^2$.
\end{enumerate}
If the operator $\mathcal{H}_V$ has discrete spectrum, then $\mathcal{H}_{\lambda(V)}$ has discrete spectrum too.
\end{thm}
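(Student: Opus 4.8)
The plan is to use the characterization of discreteness of the spectrum given in Proposition \ref{schrodpp-prop} (or rather its scalar counterpart), and to exploit the rigid structure imposed by hypotheses (b) and (c) to control the oscillation of the eigenvector field of $V$. Write $V(x)=\begin{bmatrix} a(x) & b(x) \\ b(x) & c(x)\end{bmatrix}$ with $a,b,c$ real polynomials, $a,c\geq0$ and $ac\geq b^2$. The smallest eigenvalue is $\lambda(V)=\tfrac12\left(a+c-\sqrt{(a-c)^2+4b^2}\right)$. Given $\psi\in C^\infty_c(\R^d,\C^2)$ I would plug into $\mathcal{E}_V(\psi)=\int|\nabla\psi|^2+\int(V\psi,\psi)$ a test function of the form $\psi=\phi\, e(x)$, where $e(x)$ is the (real, unit-norm) eigenvector of $V(x)$ associated to $\lambda(V)$ and $\phi\in C^\infty_c(\R^d)$ is scalar; as in the radial/angular decomposition of Section \ref{osc3-sec}, this gives
\be
\mathcal{E}_V(\phi e)=\int_{\R^d}|\nabla\phi|^2+\int_{\R^d}\big(\lambda(V)+|\nabla e|^2\big)\phi^2 .
\ee
So if $\mathcal{H}_V$ has discrete spectrum then for every $\eps$ there is $R$ with $\int_{|x|\geq R}|\phi|^2\leq \eps\big(\int|\nabla\phi|^2+\int(\lambda(V)+|\nabla e|^2)\phi^2\big)$ for all scalar $\phi$. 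The point is then to show that the extra term $|\nabla e|^2$ can be discarded: i.e. that $\mathcal{H}_{\lambda(V)+|\nabla e|^2}$ discrete implies $\mathcal{H}_{\lambda(V)}$ discrete. This is where the polynomial rigidity enters.

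The key estimate to establish is a pointwise bound of the form
\be
|\nabla e(x)|^2 \leq C\,\big(1+|x|^2\big)^{-1}\quad\text{or, better, } |\nabla e(x)|^2 \leq C\big(1+\lambda(V)(x)\big)+C,
\ee
valid outside a bounded set, for some constant $C$ depending only on $V$. To get this, note that the eigenvector direction is determined by the off-diagonal structure: away from the coincidence set $\{a=c,\ b=0\}$ one has $e=(\cos\theta,\sin\theta)$ with $\tan 2\theta = 2b/(a-c)$, so $\nabla\theta$ is a rational function of $a,b,c$ and their gradients. Using that $a,b,c$ are polynomials, that $ac\geq b^2$, and elementary bounds of the form $|\nabla p|\leq C_p(1+|x|)^{-1}|p|^{?}$ type comparisons among a polynomial, its gradient and its size, one should be able to bound $|\nabla e|$ by a rational expression whose growth is controlled. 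The cleanest route is probably: either $\lambda(V)$ is large (and dominates everything), or we are near the degeneracy locus where $b^2\approx ac$ and $a\approx c$; in the latter regime one shows $|\nabla e|$ stays bounded (a Łojasiewicz-type or direct algebraic argument for the curve $\{ (a-c)^2+4b^2=0\}$ inside the real plane, which is just the real variety $a=c,b=0$). Because $d$ can be arbitrary but the $2\times2$ structure is fixed, only finitely many polynomials of bounded degree are involved, and the constants are uniform.

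Once $|\nabla e(x)|^2\leq C(1+\lambda(V)(x))$ for $|x|$ large is in hand, the conclusion is immediate: the effective potential $\lambda(V)+|\nabla e|^2$ is, outside a compact set, bounded by $(C+1)\lambda(V)+C$, and adding a bounded potential or multiplying $\lambda(V)$ by a constant does not affect discreteness of the spectrum (both follow at once from Proposition \ref{schrodpp-prop} applied in the scalar case, since the energy forms are comparable up to the contribution of a compact region, which can be absorbed by enlarging $R$). Hence $\mathcal{H}_{\lambda(V)+|\nabla e|^2}$ discrete $\Rightarrow$ $\mathcal{H}_{\lambda(V)}$ discrete, and combining with the first paragraph finishes the proof. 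I expect the main obstacle to be the pointwise gradient estimate on the eigenvector field near the degeneracy locus $\{a=c,\ b=0\}$: one must rule out that $e$ spins arbitrarily fast there, and although intuitively the two hypotheses (realness and polynomiality) prevent this, turning it into a clean quantitative bound with uniform constants — presumably via a Łojasiewicz inequality for the polynomial $(a-c)^2+4b^2$, or an explicit desingularization in low-dimensional slices — is the delicate point. A possible simplification is to first reduce to $d=1$ by restricting to lines, since discreteness and the gradient bound are both essentially one-dimensional in nature once the cube scale is fixed.
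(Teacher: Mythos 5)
Your plan hinges on two claims that are false in general, and the paper's proof is designed precisely to sidestep them.

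First, the globally defined unit eigenvector field $e(x)$ associated to $\lambda(V(x))$ need not exist as a single-valued continuous map. The degeneracy locus $\{\lambda=\mu\}=\{a=c,\, b=0\}$ is cut out by two real polynomial equations, hence is generically of codimension~$2$ in $\R^d$; around it the small-eigenvector direction typically has $-1$ monodromy (the angle is $\theta/2$ where $\tan 2\theta=2b/(a-c)$). So the test section $\psi=\phi\,e$ is not even well-defined on all of $\R^d$. Second, and more seriously, the pointwise bound you want --- $|\nabla e(x)|^2\lesssim 1+\lambda(V)(x)$ off a compact set --- is false. Differentiating $Vv=\lambda v$ and projecting onto the other eigendirection yields $(\mu-\lambda)|\partial_j v|\leq \lVert\partial_j V\rVert_{op}$, so $|\nabla e|$ is of size $\lVert\nabla V\rVert_{op}/(\mu-\lambda)$. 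This blows up as $\mu-\lambda\to 0$, regardless of how small $\lambda$ is, and the degeneracy locus need not be compact. A {\L}ojasiewicz inequality for $(a-c)^2+4b^2$ would only quantify the rate of this blow-up, not remove it; if anything it points in the ``wrong'' direction here.

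The paper avoids both issues by never attempting a global eigenvector or a pointwise gradient bound. It runs the Maz'ya--Shubin test (Theorem~\ref{mash-thm}, reducing the problem to Proposition~\ref{pol-key}, a cube-local lower bound on $\int_{Q\setminus F}\lambda$) and uses the good/bad cube dichotomy of Lemma~\ref{gb-lem}: inside every cube $Q$ one finds a proportional sub-cube $Q'$ on which either $\lambda\approx\mu$ (``good'': in that case one simply tests on $\eta(1-P)e_1$ with a \emph{constant} unit vector $e_1$, and no eigenvector is needed since $(Ve_1,e_1)\leq\mu\leq 8\lambda$), or $\mu\geq 2\lambda$ and $\mu$ is essentially constant (``bad'': here the eigenvalues do not cross on $Q'$, so a smooth real eigenvector $v$ exists on the simply-connected cube $Q'$, and the gap $\mu-\lambda\gtrsim\lVert\mu\rVert_{L^\infty(Q')}$ lets one turn the identity above into the \emph{integral} bound $\int_{Q'}|\nabla v|^2\lesssim \ell_{Q'}^{d-2}$ via the polynomial estimate $\int_Q\lVert\partial_jV\rVert^2_{op}\lesssim\ell_Q^{-2}\int_Q\lVert V\rVert^2_{op}$ of Lemma~\ref{key-grad-lem}). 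The dichotomy itself is obtained by applying polynomial rigidity (Lemmas~\ref{gb-zero} and~\ref{gb-approx}) to the two polynomials $\mathrm{tr}(V)$ and $\det(V)$, not to $\lambda-\mu$. Finally, your closing suggestion of reducing to $d=1$ by slicing along lines does not help; discreteness of the spectrum has no Fubini-type characterization over one-dimensional slices. The ``testing on sections'' instinct is the right one and indeed appears in the paper on bad cubes, but it must be done locally, on cubes chosen so that the eigenvector field exists and its gradient is integrable at the right scale; the good/bad cube decomposition and the Maz'ya--Shubin capacity framework are the missing ingredients.
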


Hypothesis (c) is the rigidity assumption mentioned before. The assumption $m=2$ and hypothesis (b) are most probably limitations of our proof technique, and we expect the converse of Proposition \ref{schrodpp-lambda} to hold for more general ``rigid" potentials. \newline

By Proposition \ref{schrodpp-prop}, the hypothesis of Theorem \ref{pol-thm} is that for every $\eps>0$ we can find an $R(\eps)<+\infty$ such that
\bel\label{pol-hyp}
\int_{|y|\geq R(\eps)}|\psi|^2\leq \eps\cdot\mathcal{E}_V(\psi)\qquad\forall \psi\in \mathcal{D}(\mathcal{E}_V).
\eel 
 
Thanks to Theorem \ref{mash-thm}, the proof of Theorem \ref{pol-thm} follows easily from the next proposition.
 
\begin{prop}\label{pol-key}
There exist $\gamma, c_1,c_2>0$ depending only on $V$, such that if $\ell>0$, $\eps\leq c_1\ell^2$, $Q(x,\ell)\subseteq \{|y|\geq R(\eps)\}$ and $F\in \mathcal{N}_\gamma(Q(x,\ell))$, then \be
\frac{1}{\ell^d}\int_{Q(x,\ell)\setminus F}\lambda(V(y))dy\geq c_2\eps^{-1}.
\ee
\end{prop}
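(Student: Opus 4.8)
The plan is to exploit hypotheses (b) and (c) to reduce the $2\times 2$ matrix potential essentially to a pair of scalar polynomials, and then to use the test-function argument underlying the necessity direction of Maz'ya--Shubin. First I would diagonalize $V$ pointwise: since $V(x)$ is real symmetric $2\times 2$ and depends polynomially on $x$ in the sense of (c), I can write $\lambda(V(x))$ and $\mu(V(x))$ for its eigenvalues, and $\lambda(V)+\mu(V)=\operatorname{tr}(V)$ is a polynomial, while $\lambda(V)\mu(V)=\det(V)$ is a polynomial too. The key structural fact I want is that the eigenvector associated with $\lambda(V(x))$, away from the (polynomially defined) degeneracy locus, can be written with controlled oscillation on a cube whose sidelength is not too large compared to the relevant scales of the polynomials; concretely, on a cube $Q(x,\ell)$ far from the origin one can choose a \emph{fixed} unit vector $u_0\in\C^2$ (indeed in $\R^2$) such that $(V(y)u_0,u_0)$ is comparable to $\lambda(V(y))$ for $y$ in a large-measure subset of $Q(x,\ell)$. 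This is where the rigidity is essential: for a general matrix potential the small-eigenvalue direction can rotate arbitrarily fast (this is exactly the phenomenon exploited in Theorem~\ref{osc2-thm}), but for real polynomial entries the rotation rate is controlled by the degrees, so on cubes of bounded sidelength the direction is "almost constant" on a definite fraction of the cube.

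Granting this, I would argue as follows. Fix the cube $Q=Q(x,\ell)\subseteq\{|y|\ge R(\eps)\}$ and a negligible set $F\in\mathcal{N}_\gamma(Q)$, with $\gamma$ to be chosen small. Pick a cutoff $\chi_\delta\in C^\infty_c(Q,[0,1])$ equal to $1$ on a large sub-cube (this introduces a harmless $\delta$), and pick $\eta\in C^\infty_c(2Q,[0,1])$ (or $C^\infty_c(\R^d,[0,1])$ if $d\ge 3$) with $\eta\equiv 1$ on $F$ and $\int|\nabla\eta|^2\le 2\gamma\operatorname{Cap}(Q)\asymp\gamma\ell^{d-2}$; such $\eta$ exists precisely because $F$ is $\gamma$-negligible. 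Now test \eqref{pol-hyp} with the \emph{vector-valued} function $\psi:=\chi_\delta(1-\eta)u_0$, exactly as in the necessity argument of \cite{ma-sh} but with the scalar test function promoted to a vector one by multiplying by the fixed unit vector $u_0$. On the left, $\int_{|y|\ge R(\eps)}|\psi|^2=\int_Q\chi_\delta^2(1-\eta)^2$ is bounded below by a fixed fraction of $|Q|=\ell^d$ once $\gamma$ and $\delta$ are small (the bad set $F$ where $\eta=1$ is negligible in measure by the capacity--measure comparison). On the right, $\mathcal{E}_V(\psi)=\int|\nabla[\chi_\delta(1-\eta)]|^2|u_0|^2+\int(V u_0,u_0)\chi_\delta^2(1-\eta)^2$; the gradient term is $O(\gamma\ell^{d-2})+O_\delta(\ell^{d-2})$, and since $\eps\le c_1\ell^2$ this term is absorbed into the left side (this is why the hypothesis couples $\eps$ to $\ell^2$). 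What survives, after rearranging, is the bound
\be
\ell^d \;\lesssim\; \eps\int_{Q}(Vu_0,u_0)\,\chi_\delta^2(1-\eta)^2\;\le\;\eps\int_{Q\setminus F}(Vu_0,u_0),
\ee
and by the choice of $u_0$ the integrand $(Vu_0,u_0)$ is $\lesssim\lambda(V)$ on a large-measure subset of $Q$; combined with a doubling/$A_\infty$-type property of the polynomial $(Vu_0,u_0)$ (scalar polynomials lie in $A_\infty$ with constants depending only on the degree, cf. \cite{ricci-stein}) and the measure bound $|F|\lesssim\gamma^{d/(d-2)}|Q|$, one upgrades this to $\ell^d\lesssim\eps\int_{Q\setminus F}\lambda(V)$, which is the claim with $c_2$ depending only on $V$.

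The main obstacle I anticipate is the first step: making precise and quantitative the statement that the $\lambda$-eigendirection of a real polynomial $2\times 2$ potential oscillates slowly on cubes of bounded sidelength, uniformly as the cube recedes to infinity, and controlling what happens near the degeneracy locus $\{\det V=\operatorname{tr}V\cdot(\text{something})\}$ where the two eigenvalues collide and the eigenvector is ill-defined. I would handle this by a dimension-reduction / Łojasiewicz-type argument on the semialgebraic set where the eigenvalue gap is small, or more elementarily by writing the unit $\lambda$-eigenvector explicitly in terms of the entries of $V$ (a rational-then-normalized expression in three real polynomials) and estimating its total variation on a cube by the degrees; the set where this expression is unstable has small measure and small capacity by standard facts about sublevel sets of polynomials, so it can be swept into $F$ (enlarging $F$ while keeping it $\gamma$-negligible for $\gamma$ suitably chosen, possibly after shrinking $\ell$ below a threshold depending on $V$, which is absorbed into the constant $c_1$). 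Once the eigendirection is under control, the rest is the Maz'ya--Shubin capacity machinery adapted verbatim to vector-valued test functions, as already done in Section~\ref{nomash-sec}.
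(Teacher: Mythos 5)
Your capacity/cutoff machinery and your diagnosis of why rigidity matters are fine; the gap is in the central structural claim that on a cube far from the origin one can find a \emph{fixed} unit vector $u_0$ with $(V(y)u_0,u_0)\lesssim\lambda(V(y))$ on a large-measure subset. Writing $\theta(y)$ for the angle between $u_0$ and the unit $\lambda$-eigenvector $v(y)$, one has $(Vu_0,u_0)=\lambda\cos^2\theta+\mu\sin^2\theta$, so the comparison you need forces $\sin^2\theta\lesssim\lambda/\mu$, i.e.\ alignment of $u_0$ with $v(y)$ to within $\sqrt{\lambda/\mu}$. The degree bound you invoke as a fallback (bounding the total variation of the normalized eigenvector by the degrees of the entries) gives precisely $\int_{Q}|\nabla v|^2\lesssim\ell_Q^{d-2}$, which only says $v$ rotates by an $O(1)$ angle across a cube of side $\ell_Q$ --- not by $o(1)$. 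Whenever $\mu/\lambda$ is large on a sizeable portion of the cube (which is exactly the hard case and is compatible with hypotheses (a)--(c) of Theorem~\ref{pol-thm}: e.g.\ $V=\delta I_2+(x_1,x_2)(x_1,x_2)^T$ has $\lambda\equiv\delta$, $\mu\sim|x|^2$, and $v$ rotating with the angular extent of the cube), no single $u_0$ stays $\sqrt{\lambda/\mu}$-close to $v$ on a positive fraction of the cube, and the step ``combined with a doubling/$A_\infty$-type property $\ldots$ one upgrades this to $\ell^d\lesssim\eps\int_{Q\setminus F}\lambda(V)$'' collapses.

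The paper's proof sidesteps this by a dichotomy produced from Lemmas~\ref{gb-zero} and \ref{gb-approx}: a sub-cube $Q'=Q(y,s)\subseteq Q$ with $s=c\ell$ is found on which either $\lambda\asymp\mu$ (\emph{good} case), where any fixed vector such as $e_1$ does satisfy $(Ve_1,e_1)\leq\mu\leq 8\lambda$, or $\mu\geq 2\lambda$ with $\mu$ approximately constant (\emph{bad} case). In the bad case one tests with the \emph{varying} vector field $\psi=\eta_\delta(1-P)v$, where $v$ is the smooth unit $\lambda$-eigenfield (which exists because $V$ is real symmetric and the gap prevents eigenvalue crossings on the simply connected $Q'$), so that $(V\psi,\psi)=\lambda|\eta_\delta(1-P)|^2$ holds pointwise and exactly. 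The price is an extra kinetic term $\int_{Q'}|\nabla v|^2$, but this is $\lesssim\ell^{d-2}$ by the differentiated eigenvalue equation plus Lemma~\ref{key-grad-lem}, and is then absorbed using $\eps\leq c_1\ell^2$, the same absorption you already perform for $|\nabla\eta_\delta|^2$ and $|\nabla P|^2$. So the estimate on $\nabla v$ you propose in your last paragraph is the right tool, but it must feed into the kinetic-energy term of a \emph{variable}-vector test function, not into a near-constancy claim justifying a fixed $u_0$.
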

We will prove Proposition \ref{pol-key} testing \eqref{pol-hyp} on appropriate test functions. To define them, we exploit a dichotomy to which the next section is dedicated.

\section{Good and bad cubes}\label{gb-sec}

Fix $V$ as in Theorem \ref{pol-thm}. We denote by $\lambda(x)$ the minimal eigenvalue function that we previously indicated by $\lambda(V(x))$, and we introduce\be
\mu(x):=\max_{u\in\C^2:\ |u|=1}(V(x)u,u),
\ee i.e., the maximal eigenvalue function. Since $m=2$, $\lambda(x)$ and $\mu(x)$ are the only eigenvalues of $V(x)$. We also put:\be
\text{tr}(x):=\text{tr}(V(x)), \quad \det(x):=\det(V(x)).
\ee By assumption (c) of Theorem \ref{pol-thm}, both $\text{tr}(x)$ and $\det(x)$ are polynomials on $\R^d$, while $\lambda(x)$ and $\mu(x)$ are not polynomials in general.

If $Q$ is a cube, we denote by $\ell_Q$ its side length.

\begin{lem}\label{gb-lem} There is a constant $c>0$ depending only on $d$ and the degree of $\emph{tr}(x)$ and $\det(x)$ such that for every cube $Q$ there is a smaller cube $Q'\subseteq Q$ such that $\ell_{Q'}= c \ell_Q$, $\lambda(x)<\mu(x)$ for every $x\in Q'$ and:\begin{enumerate}
\item[(i)] either $\frac{1}{8}\mu(x)\leq\lambda(x)$ for every $x\in Q'$,
\item[(ii)] or $2\lambda(x)\leq \mu(x)$ for every $x\in Q'$ and $\sup_{x\in Q'}\mu(x)\leq 4\inf_{x\in Q'}\mu(x)$.
\end{enumerate}
\end{lem}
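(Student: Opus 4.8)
The plan is to derive the dichotomy from two ingredients: an elementary algebraic description of the eigenvalues $\lambda(x)\le\mu(x)$ of $V(x)$ in terms of the polynomials $\text{tr}(x)$ and $\det(x)$, and a \emph{polynomial stabilization} principle. Write $k:=\max(\deg\text{tr},\deg\det)$, so that $g^2:=\text{tr}^2-4\det=(\mu-\lambda)^2$ is a polynomial of degree $\le 2k$, and recall $\lambda+\mu=\text{tr}$, $\lambda\mu=\det$, $0\le\lambda\le\mu$. I may assume $g^2\not\equiv 0$, i.e.\ $V$ is not almost everywhere a scalar multiple of the identity — the excluded case being trivial for the application in Section \ref{pol-sec}, since then $\lambda\equiv\mu$ and $\mathcal{H}_V=-\Delta+\lambda$ acts diagonally on $\C^2$-valued functions.

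The stabilization principle I would use is: for all $n,d$ there is $c_0=c_0(n,d)\in(0,1)$ such that for every polynomial $p$ of degree $\le n$ on $\R^d$ and every cube $Q$ there is a subcube $Q'\subseteq Q$ with $\ell_{Q'}=c_0\ell_Q$ on which either $p\equiv 0$ (possible only if $p$ is the zero polynomial) or $p$ has constant sign and $\sup_{Q'}|p|\le 2\inf_{Q'}|p|$. The proof is short: if $p\not\equiv 0$, let $M=\sup_Q|p|>0$, attained at some $x_0\in Q$; the multivariate Markov inequality gives $\sup_Q|\nabla p|\le C(n,d)\,\ell_Q^{-1}M$, so on a cube $Q'\subseteq Q$ of side $\delta\ell_Q$ having $x_0$ on its boundary one has $|p|\ge M-C(n,d)\sqrt{d}\,\delta M\ge M/2$ once $\delta=\delta(n,d)>0$ is small enough; take $c_0=\delta$.

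I would then argue in two steps. \textbf{Step 1.} Apply the principle to $\text{tr}$ (degree $\le k$, nonzero since $V\not\equiv 0$): this gives $Q_1\subseteq Q$, $\ell_{Q_1}=c_1\ell_Q$, with $\text{tr}>0$ on $Q_1$ and $\sup_{Q_1}\text{tr}\le 2\inf_{Q_1}\text{tr}$. Since $\text{tr}/2\le\mu\le\text{tr}$ pointwise, on every subcube $Q''\subseteq Q_1$ one gets $\mu>0$ and $\sup_{Q''}\mu\le 2\inf_{Q''}\text{tr}\le 4\inf_{Q''}\mu$, which is the oscillation bound required by alternative (ii). Moreover $\det/\text{tr}^2=\lambda\mu(\lambda+\mu)^{-2}=f(t)$ is well defined on $Q_1$, where $t:=\lambda/\mu\in[0,1]$ and $f(t):=t(1+t)^{-2}$ is strictly increasing and injective on $[0,1]$, with $f(1/8)=8/81$, $f(1/2)=2/9$, $f(1)=1/4$.

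\textbf{Step 2.} Apply the principle to $q:=81\det-8\text{tr}^2$ (degree $\le 2k$) on $Q_1$, obtaining $Q_2\subseteq Q_1$ with $\ell_{Q_2}=c_2\ell_{Q_1}$ on which $q\equiv 0$ or $q$ has constant sign. If $q<0$ on $Q_2$, then $\det/\text{tr}^2<8/81<1/4$ there, hence $t<1/8<1/2$, so $\lambda<\mu$ and $2\lambda<\mu$ on $Q_2$: alternative (ii) holds. If $q\equiv 0$, then $\det/\text{tr}^2\equiv 8/81$ on $Q_1$, hence $t\equiv 1/8$, so $\lambda=\mu/8<\mu$ on $Q_1$: alternative (i) holds. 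If $q>0$ on $Q_2$, then $\det/\text{tr}^2>8/81$, hence $t>1/8$, i.e.\ $\mu/8<\lambda$ — the inequality in alternative (i) — and to also secure $\lambda<\mu$ I apply the principle once more, to $g^2$ (degree $\le 2k$, nonzero), obtaining $Q_3\subseteq Q_2$ with $g^2>0$, hence $\mu-\lambda>0$: alternative (i) holds on $Q_3$. Setting $c:=c_1c_2c_3$ and, in each branch, passing to a subcube of side exactly $c\ell_Q$ — all the stated properties pass to subcubes — gives the desired $Q'$, with $c$ depending only on $d$ and $k$. The only delicate point is the accounting of constants: the thresholds $1/8$, $2$, $4$ leave room precisely because $\text{tr}$ is stabilized with oscillation factor $\le 2$, so that $\mu$ oscillates by $\le 4$; this is why one needs the sharper form of the stabilization principle (controllable oscillation, not merely a degree-and-dimension-dependent bound), everything else being routine.
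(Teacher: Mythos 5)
Your proof is correct, and it achieves the lemma by essentially the same strategy as the paper — locate a subcube where $\text{tr}$ has bounded oscillation and where a discriminating polynomial (built from $\text{tr}$ and $\det$) keeps a constant sign, then read off the spectral dichotomy — but via a different auxiliary tool. Where the paper proves two separate lemmas (\ref{gb-zero} for finding a subcube off the zero set, \ref{gb-approx} for the $\sup\leq 2\inf$ oscillation bound) by a compactness/normalization argument, you prove a single \emph{stabilization} principle via the multivariate Markov inequality applied at the point where $\sup_Q|p|$ is attained. This is more constructive, and your lemma actually subsumes both of the paper's: a nonzero polynomial with $\sup_{Q'}|p|\leq 2\inf_{Q'}|p|$ automatically has $\inf_{Q'}|p|>0$, hence constant sign. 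Your threshold polynomial $q=81\det-8\,\text{tr}^2$ (zero locus $\lambda/\mu=1/8$) differs from the paper's factor $\frac{\text{tr}^2}{8}-\det$ (zero locus $\lambda/\mu=3-2\sqrt 2\approx 0.17$), but both thresholds sit in the interval $(1/8,1/2)$ where the algebra forces exactly the desired conclusions, and the reparametrization $t=\lambda/\mu$ via the increasing function $f(t)=t(1+t)^{-2}$ makes this transparent. One small point in your favor: you explicitly flag that the lemma, as literally stated, fails when $V$ is everywhere a scalar multiple of the identity ($g^2\equiv 0$), a degenerate case the paper also silently excludes — its polynomial $p$ would then be identically zero and Lemma \ref{gb-zero} would not apply — and you correctly note that the case is harmless in the application.
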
 

If the first condition above holds we say that $Q$ is a \emph{good cube}, otherwise we say that $Q$ is a \emph{bad cube}. The second condition imposed on bad cubes is somehow technical and allows to assume $\mu$ approximately constant on $Q'$.
\par To prove Lemma \ref{gb-lem}, we need two elementary results. The first expresses the fact that the zero-set of a polynomial of bounded degree cannot be too dense in a given cube.

\begin{lem}\label{gb-zero}
For every $d,D\in\N$, there exists $c>0$ such that the following holds: if $p$ is a non-zero polynomial in $d$ variables of degree $\leq D$ and $Q\subseteq \R^d$ is a cube, there is a smaller cube $Q'\subseteq Q$ such that $\ell_{Q'}= c \ell_Q$ that does not intersect the zero set $Z(p)$ of $p$.
\end{lem}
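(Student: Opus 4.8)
The plan is to reduce to a normalized cube and then localize around a near-extremal point of $p$, using a Markov-type gradient bound to propagate the non-vanishing from a single point to a whole subcube. First I would observe that composing $p$ with the affine bijection taking $[0,1]^d$ onto $Q$ produces a non-zero polynomial of degree $\le D$, and that this correspondence maps subcubes to subcubes, scaling side lengths by the factor $\ell_Q$; hence it suffices to treat $Q=[0,1]^d$. We may also assume $D\ge 1$, the case $D=0$ being trivial since then $Z(p)=\varnothing$. Since a non-zero polynomial cannot vanish on a non-empty open set, $M:=\max_{[0,1]^d}|p|>0$, and I fix a point $x_0\in[0,1]^d$ with $|p(x_0)|=M$.

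The one genuine ingredient is the classical Markov inequality in one variable: if $q$ has degree $\le D$ on an interval of length $1$ then $\|q'\|_\infty\le 2D^2\|q\|_\infty$. Applying this to the restrictions of $p$ to segments parallel to the coordinate axes gives $|\partial_j p|\le 2D^2 M$ on $[0,1]^d$ for each $j$, hence $|\nabla p|\le C_1 M$ on $[0,1]^d$ with $C_1=C_1(d,D)$ (one can take $C_1=2\sqrt d\,D^2$). By the mean value inequality on the convex set $[0,1]^d$ we get $|p(y)|\ge M-C_1 M\,|y-x_0|$ for every $y\in[0,1]^d$; in particular $|p(y)|\ge M/2>0$ whenever $y\in[0,1]^d$ and $|y-x_0|\le \tfrac{1}{2C_1}$.

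To conclude I would exhibit an axis-parallel subcube inside this region. Set $c:=\tfrac{1}{2C_1\sqrt d}\in(0,1)$; this depends only on $d$ and $D$. For each coordinate $i$ the interval $[0,1]\cap[x_{0,i}-c,\,x_{0,i}+c]$ has length at least $c$, because $x_{0,i}\in[0,1]$ and $c\le 1$; so it contains a closed subinterval of length exactly $c$, and the product $Q'$ of these $d$ subintervals is a cube contained in $[0,1]^d$ with $\ell_{Q'}=c$ and $|y-x_0|\le c\sqrt d=\tfrac{1}{2C_1}$ for all $y\in Q'$. Therefore $|p|\ge M/2>0$ on $Q'$, i.e. $Q'\cap Z(p)=\varnothing$; transporting $Q'$ back through the affine map yields the required subcube of $Q$ with side $c\,\ell_Q$. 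There is no serious obstacle in this argument: the substantive input is the Markov inequality (which itself reduces to the one-dimensional statement), and the only point needing a little care is the last bookkeeping step guaranteeing that $Q'$ stays inside $Q$ when $x_0$ lies near $\partial Q$, which is precisely what the interval-length estimate handles. (An alternative route via slicing and induction on $d$ is possible but messier, since one must control how the zeros of the one-variable restrictions move with the remaining variables.)
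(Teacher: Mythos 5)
Your argument is correct, and it takes a genuinely different route than the paper. The paper proves this by a compactness-and-contradiction argument: after rescaling, it assumes there is a sequence $p_k$ of degree-$\le D$ polynomials with $\|p_k\|_{L^\infty([0,1]^d)}=1$ whose zero sets hit every cube of side $2^{-k}$, extracts a uniformly convergent subsequence in the finite-dimensional space of such polynomials, and derives that the limit polynomial $q$ vanishes identically, a contradiction. You instead give a direct, quantitative argument: normalize to $[0,1]^d$, pick a maximizer $x_0$ of $|p|$, dominate $|\nabla p|$ on $[0,1]^d$ via the one-dimensional Markov brothers' inequality applied along axis-parallel segments, and conclude by the mean value inequality that $|p|\geq M/2$ on a ball of radius $\frac{1}{2C_1}$ around $x_0$, inside which you fit an axis-parallel cube of side $c=\frac{1}{4dD^2}$ staying in $[0,1]^d$. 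Both are complete; the trade-off is that the paper's proof is shorter, requires no prior inequality, and is reused verbatim for the companion Lemma \ref{gb-approx}, whereas yours is constructive and produces an explicit constant $c$ in terms of $d$ and $D$, at the mild cost of importing Markov's inequality. One small remark: your variant would extend almost unchanged to Lemma \ref{gb-approx} as well (replace ``does not vanish'' by ``stays within a factor $2$ of its value at $x_0$''), so the extra machinery pays off there too.
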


\begin{proof}
We argue by contradiction. Appropriately translating, rotating and rescaling one may find a sequence $\{p_k\}_{k\in\N}$ of non-zero polynomials in $d$ variables of degree $\leq D$ such that $Z(p_k)$ intersects any cube of side $2^{-k}$ contained in the unit cube $[0,1]^d$. Multiplying by a constant the $p_k$'s, we can also assume that $||p_k||_{L^\infty([0,1]^d)}=1$. 

Since a space of polynomials of bounded degree is finite-dimensional, there is a subsequence $\{p_{k_\ell}\}_{\ell\in\N}$ converging uniformly on $[0,1]^d$ to a non-zero polynomial $q$. If $x\in [0,1]^d$, there is a sequence $x_\ell\in [0,1]^d$ such that $x_\ell\rightarrow x$ and $p_{k_\ell}(x_\ell)=0$. Then \be
|q(x)|\leq |q(x)-q(x_\ell)|+|q(x_\ell)-p_{k_\ell}(x_\ell)|\leq |q(x)-q(x_\ell)|+||q-p_{k_\ell}||_{L^\infty([0,1]^d)}.
\ee Letting $\ell$ tend to $+\infty$, we come to the conclusion that $q$ is identically $0$, a contradiction.
\end{proof}

The second result we need in the proof of Lemma \ref{gb-lem} says that a polynomial is approximately constant on a significant fraction of every cube.

\begin{lem}\label{gb-approx}
Given $d,D\in\N$, there exists a constant $c'>0$ such that for every polynomial $q$ in $d$ variables of degree $\leq D$ and cube $Q\subseteq \R^d$, there is a smaller cube $Q'\subseteq Q$ of side $\ell_{Q'}=c\ell_Q$ such that \be
\sup_{x\in Q'}|q(x)|\leq 2\inf_{x\in Q'}|q(x)|.
\ee
\end{lem}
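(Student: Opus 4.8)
The plan is to imitate the compactness argument already used in the proof of Lemma~\ref{gb-zero}. First I would observe that both the hypothesis and the conclusion are affine-invariant, since affine changes of coordinates preserve the space of polynomials of degree $\leq D$; hence it suffices to treat $Q=[0,1]^d$ and to produce a constant $c'=c'(d,D)>0$ such that every polynomial $q$ of degree $\leq D$ admits a subcube $Q'\subseteq[0,1]^d$ of side $c'$ with $\sup_{Q'}|q|\leq 2\inf_{Q'}|q|$.

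Then I would argue by contradiction, exactly in the style of Lemma~\ref{gb-zero}. If no such $c'$ exists, then, specializing $c'=2^{-k}$, for every $k\in\N$ there is a polynomial $q_k$ of degree $\leq D$ such that $\sup_{Q''}|q_k|>2\inf_{Q''}|q_k|$ for every cube $Q''\subseteq[0,1]^d$ of side $2^{-k}$ (in particular $q_k$ cannot be the zero polynomial). After rescaling I may assume $\|q_k\|_{L^\infty([0,1]^d)}=1$, which leaves the ratio inequality untouched. Since the polynomials of degree $\leq D$ form a finite-dimensional space, a subsequence $\{q_{k_\ell}\}_\ell$ converges uniformly on $[0,1]^d$ to a polynomial $q$ with $\|q\|_{L^\infty([0,1]^d)}=1$; in particular $q\not\equiv0$.

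The last step produces the contradiction from the near-constancy of $q$ around a non-vanishing point. Since $q\not\equiv0$, I can choose $x_0$ in the interior of $[0,1]^d$ with $q(x_0)\neq0$, and then, by continuity, find $\rho>0$ such that $Q(x_0,\rho)\subseteq(0,1)^d$ and $\tfrac{9}{10}|q(x_0)|\leq|q(x)|\leq\tfrac{11}{10}|q(x_0)|$ for all $x\in Q(x_0,\rho)$. Now take $\ell$ so large that $2^{-k_\ell}<\rho$ and $\|q-q_{k_\ell}\|_{L^\infty([0,1]^d)}\leq\tfrac{1}{20}|q(x_0)|$. The cube $Q''=Q(x_0,2^{-k_\ell})$ is then contained in $Q(x_0,\rho)$ and satisfies $\sup_{Q''}|q_{k_\ell}|\leq\tfrac{23}{20}|q(x_0)|$ while $\inf_{Q''}|q_{k_\ell}|\geq\tfrac{17}{20}|q(x_0)|$, so that $\sup_{Q''}|q_{k_\ell}|\leq\tfrac{23}{17}\inf_{Q''}|q_{k_\ell}|<2\inf_{Q''}|q_{k_\ell}|$, contradicting the defining property of $q_{k_\ell}$. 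This proves the lemma, with the admissible constant $c'$ obtained from the argument being automatically less than $1$ (and one can always shrink it, since for $Q''\subseteq Q'$ one has $\sup_{Q''}|q|\leq\sup_{Q'}|q|\leq 2\inf_{Q'}|q|\leq 2\inf_{Q''}|q|$).

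The step most in need of care — though not a real obstacle — is the transfer in the last paragraph, where one uses \emph{simultaneously} that $2^{-k_\ell}\to0$ and that $q_{k_\ell}\to q$ uniformly, so that the fixed-scale near-constancy of the limit $q$ is inherited by $q_{k_\ell}$ on the vanishingly small cubes of side $2^{-k_\ell}$; choosing the numerical thresholds $\tfrac{1}{10}$ and $\tfrac{1}{20}$ with a little room to spare makes this immediate. Beyond this bookkeeping, the entire content is the sequential compactness of the unit ball in the finite-dimensional space of polynomials of degree $\leq D$, precisely as in Lemma~\ref{gb-zero}.
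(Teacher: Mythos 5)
Your proof is correct and follows essentially the same compactness-and-contradiction route as the paper: reduce to $[0,1]^d$ by affine invariance, normalize the hypothetical counterexamples to have unit sup-norm, extract a uniformly convergent subsequence with a non-zero polynomial limit, and derive a contradiction from near-constancy of the limit near a point where it does not vanish. The only difference is cosmetic — you spell out the thresholds explicitly, whereas the paper simply observes that for $x$ with $q(x)\neq 0$ the two sequences $p_{k_\ell}(x_\ell)$ and $p_{k_\ell}(y_\ell)$ both converge to $q(x)$, which is incompatible with $|p_{k_\ell}(x_\ell)|>2|p_{k_\ell}(y_\ell)|$.
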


\begin{proof}
We may use a compactness and contradiction argument as in the proof of Lemma \ref{gb-zero}. This time, after translation, rotation and rescaling, we have a sequence $\{p_k\}_{k\in\N}$ of polynomials in $d$ variables of degree $\leq D$ such that for any $k\in\N$ and any cube $Q'$ of side $2^{-k}$ contained in $[0,1]^d$, we have\be
\sup_{x\in Q'}|p_k(x)|> 2\inf_{x\in Q'}|p_k(x)|.
\ee
Since the inequality above is homogeneous, we can multiply by a  positive constant and assume that $||p_k||_{L^\infty([0,1]^d)}=1$. Now we extract a subsequence $\{p_{k_\ell}\}_{\ell\in\N}$ converging uniformly to a non-zero polynomial $q$ on $[0,1]^d$.

Given $x\in [0,1]^d$, by construction there are two sequences $\{x_\ell\}_{\ell\in\N}$ and $\{y_\ell\}_{\ell\in\N}$ converging to $x$ such that $|p_{k_\ell}(x_\ell)|>2|p_{k_\ell}(y_\ell)|$ for every $\ell\in\N$. This is clearly in contradiction with the fact that \be\lim_{\ell\rightarrow+\infty}p_{k_{\ell}}(x_\ell)=\lim_{\ell\rightarrow+\infty}p_{k_{\ell}}(y_\ell)=q(x).\ee
\end{proof}

\begin{proof}[Proof of Lemma \ref{gb-lem}]
Consider the polynomial \be
p(x):=\left(\frac{\text{tr}(x)^2}{8}-\det(x)\right)\cdot\left(\frac{\text{tr}(x)^2}{4}-\det(x)\right).
\ee Given a cube $Q$, Lemma \ref{gb-zero} gives $Q'$ such that $\ell_{Q'}= c\ell_Q$ and $p(x)\neq0$ on $Q'$. Notice that $c$ depends only on $d$ and the degrees of $\text{tr}(x)$ and $\det(x)$. Since \be
\det(x)=\lambda(x)\mu(x)\leq\frac{(\lambda(x)+\mu(x))^2}{4}= \frac{\text{tr}(x)^2}{4},\ee $\frac{1}{4}\text{tr}(x)^2-\det(x)>0$ on $Q'$, which implies $\lambda(x)<\mu(x)$ for every $x\in Q'$. For the first factor of $p(x)$ there are two options: either $\frac{\text{tr}(x)^2}{8}<\det(x)$ for every $x\in Q'$, or $\det(x)<\frac{\text{tr}(x)^2}{8}$ for every $x\in Q'$. In the first case, \be
\frac{\mu(x)}{8}\leq\frac{\text{tr}(x)}{8}<\frac{\det(x)}{\text{tr}(x)}\leq \lambda(x)\qquad\forall x\in Q'.
\ee In the second case\be
2\lambda(x)\leq 4\frac{\det(x)}{\text{tr}(x)}<\frac{\text{tr}(x)}{2}\leq \mu(x)\qquad\forall x\in Q'.
\ee In the latter case we can pass to an even smaller cube where the supremum of $\mu$ is controlled by 4 times the infimum. This can be done observing that $\frac{\text{tr}(x)}{2}\leq \mu(x)\leq\text{tr}(x)$ and using Lemma \ref{gb-approx}.
\end{proof}

\section{Proof of Proposition \ref{pol-key}}\label{key-sec}

Let $x\in\R^d$ and $\ell>0$ and consider the cube $Q(x,\ell)$. Lemma \ref{gb-lem} gives us a smaller cube $Q(y,s)$ such that $s= c\ell$, with $c$ depending only on $d$ and the degrees of $\text{tr}(x)$ and $\det(x)$. Modifying by a factor the value of $c$, we can also assume that $Q(y,2s)\subseteq Q(x,\ell)$. For the moment we do not distinguish between good and bad cubes, and we follow the argument in Section 3 of \cite{ma-sh}. 

Fix $F\in\mathcal{N}_\gamma(Q(x,\ell))$, where $\gamma>0$ is to be chosen later. Let $F'$ be a compact set such that: \begin{enumerate}
\item[(a)] $F'$ is the closure of an open set with smooth boundary,
\item[(b)] $F\cap Q(y,s)\subseteq F'\subseteq Q\left(y,\frac{3}{2}s\right)$,
\item[(c)] $\text{Cap}(F')\leq 2\text{Cap}(F\cap Q(y,s))\leq 2\text{Cap}(F)$. 
\end{enumerate}
In fact, the outer regularity of capacity gives an open set $\Omega\supseteq F\cap Q(y,s)$ such that $\overline{\Omega}$ satisfies conditions (b) and (c), and any $F'$ that is the closure of a smooth open set and lies between $F\cap Q(y,s)$ and $\Omega$ will do. 

If $d=2$, one has to interpret $\text{Cap}$ as the capacity relative to $Q(x,2\ell)$, as defined by identity \eqref{mash-cap2}, while if $d=1$, the discussion becomes trivial and $F'=\varnothing$. 

If $d\geq 2$, there exists $A<+\infty$ depending only on $c$ (and hence on $V$) such that\bel\label{key-cap}
\text{Cap}(F')\leq 2\text{Cap}(F)\leq 2\gamma \text{Cap}(Q(x,\ell))\leq2\gamma A\text{Cap}(Q(y,s)),
\eel by the monotonicity of capacity and its behavior under scaling. We can now fix $\gamma$ in such a way that 
\bel \label{key-gamma} 2\gamma A<1.
\eel 
If $d\geq 2$, we denote by $P$ the equilibrium potential of $F'$, i.e., the function satisfying \begin{enumerate}
\item[(a)] $P\in C(\R^d,[0,1])$,
\item[(b)] $P\equiv1$ on $F'$,
\item[(c)] $\lim_{y\rightarrow\infty}P(y)=0$ (if $d\geq3$) or $P(y)\equiv0$ for every $y\notin Q(x,2\ell)$ (if $d=2$),
\item[(d)] $\Delta P=0$ on $\R^d\setminus F'$ (if $n\geq3$) or on $Q(x,2\ell)\setminus F'$ (if $d=2$),
\item[(e)] $\int_{\R^d}|\nabla P|^2=\text{Cap}(F')$.
\end{enumerate} The existence of $P$ is a classical fact of potential theory (see \cite{wermer}). If $d=1$, put $P\equiv0$ in what follows. We now state as a lemma a useful property of $P$.

\begin{lem}\label{key-eqpot}
$\int_{Q(y,s)}(1-P)^2\geq c_d(c\ell)^d.$
\end{lem}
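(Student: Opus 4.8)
The idea is that $F'$ has small capacity relative to the cube $Q(y,s)$, so its equilibrium potential $P$ cannot be close to $1$ on more than a small fraction of $Q(y,s)$. I would first dispose of the case $d=1$, where $P\equiv0$ by convention and $\int_{Q(y,s)}(1-P)^2=|Q(y,s)|=(c\ell)^d$, and then treat $d\geq3$ in detail, the case $d=2$ being entirely analogous with logarithmic (relative) capacity in place of Newtonian capacity.

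The first step, for $d\geq3$, is a sublevel-set estimate: for each $t\in(0,1)$ the Lipschitz function $v_t:=\min(P/t,1)$ equals $1$ on $\{P\geq t\}$ — a compact set, since $P$ is continuous and vanishes at infinity, and one containing $F'$ because $P\equiv1$ there — takes values in $[0,1]$, and, after truncating at radius $R\to\infty$ (which costs a vanishing amount of Dirichlet energy since $P\in L^{2^*}(\R^d)$), is an admissible competitor in the definition \eqref{mash-cap3}; since $\int_{\R^d}|\nabla v_t|^2=t^{-2}\int_{\{P<t\}}|\nabla P|^2\leq t^{-2}\,\text{Cap}(F')$, this gives $\text{Cap}(\{P\geq t\})\leq t^{-2}\,\text{Cap}(F')$. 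Taking $t=\tfrac12$ and combining with \eqref{key-cap} (so $\text{Cap}(F')\leq 2\gamma A\,\text{Cap}(Q(y,s))$), the scaling identity $\text{Cap}(Q(y,s))=s^{d-2}\text{Cap}(Q(0,1))$, and the measure–capacity comparison $|K|\leq C_d\,\text{Cap}(K)^{d/(d-2)}$ recalled in Section \ref{nomash-sec}, I obtain
\[
|\{P\geq \tfrac12\}|\;\leq\; C_d\bigl(8\gamma A\,s^{d-2}\text{Cap}(Q(0,1))\bigr)^{\frac{d}{d-2}}\;=\;C_d'\,(\gamma A)^{\frac{d}{d-2}}\,s^{d}.
\]

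The second step is to choose $\gamma$ in \eqref{key-gamma} a little smaller — harmless, since \eqref{key-gamma} only imposes $2\gamma A<1$ — so that in addition $C_d'(\gamma A)^{d/(d-2)}\leq\tfrac12$. Then $|\{P\geq\tfrac12\}\cap Q(y,s)|\leq\tfrac12 s^{d}$, hence $|\{P<\tfrac12\}\cap Q(y,s)|\geq\tfrac12 s^{d}$, and since $(1-P)^2>\tfrac14$ on that set,
\[
\int_{Q(y,s)}(1-P)^2\;\geq\;\tfrac14\,\bigl|\{P<\tfrac12\}\cap Q(y,s)\bigr|\;\geq\;\tfrac18\,s^{d}\;=\;\tfrac18\,(c\ell)^{d},
\]
which is the claim, with $c_d=\tfrac18$. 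For $d=2$ I would run the same three steps with $\text{Cap}$ interpreted as capacity relative to $Q(x,2\ell)$ and with the logarithmic bound $|K|\lesssim \ell^{2}\exp(-c/\text{Cap}(K,Q(x,\ell)))$ replacing the polynomial measure–capacity comparison; $\gamma$ must again be taken small, and the remaining details are routine.

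I expect the only delicate points to be bookkeeping rather than substance: verifying that $v_t$, which is merely Lipschitz and not compactly supported, is genuinely admissible for the capacity — settled by the standard fact that capacity is unchanged when computed with Lipschitz (equivalently $\dot H^1$) test functions, together with the truncation argument above — and, when $d=2$, keeping straight which surrounding cube each relative capacity and the equilibrium potential are attached to. Neither affects the structure of the argument.
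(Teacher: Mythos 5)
Your proof is correct, but it takes a genuinely different route from the paper. The paper simply invokes Lemma~$3.1$ of \cite{ko-ma-sh} (inequality~$3.11$ of \cite{ma-sh}), namely
\[
c_d\left(1-\frac{\text{Cap}(E)}{\text{Cap}(Q)}\right)^2\leq \sigma\frac{\text{Cap}(E)}{\text{Cap}(Q)}+\frac{\sigma^{-1}}{|Q|}\int_Q(1-P_E)^2,
\]
applied with $E=F'$ and $Q=Q(y,s)$ and a small choice of the free parameter $\sigma$; this works directly under the already-imposed constraint $2\gamma A<1$ with no further shrinking of $\gamma$. You instead build the bound from scratch: the sublevel-set capacity estimate $\text{Cap}(\{P\geq t\})\leq t^{-2}\text{Cap}(F')$ (via the competitor $\min(P/t,1)$, justified for the Wiener capacity by the truncation/Lipschitz-approximation remark), the measure--capacity comparison $|K|\leq C_d\text{Cap}(K)^{d/(d-2)}$ already used in Section~\ref{nomash-sec}, the scaling $\text{Cap}(Q(y,s))=s^{d-2}\text{Cap}(Q(0,1))$, and a pigeonhole on $\{P<\tfrac12\}\cap Q(y,s)$. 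The computation is right, and the arithmetic produces $c_d=1/8$. What you buy is a self-contained, elementary argument with no external citation; what you pay is that you must shrink $\gamma$ beyond $2\gamma A<1$ so that $C_d'(\gamma A)^{d/(d-2)}\leq\tfrac12$. In the context of Proposition~\ref{pol-key} and Theorem~\ref{pol-thm} this is harmless, since only the existence of some admissible $\gamma>0$ is needed, but it is worth being explicit that your constants are weaker than those coming from the cited lemma. One more thing to flag: you wave away $d=2$ as ``entirely analogous,'' but both the definition of the equilibrium potential and the measure--capacity comparison change in that case (as the paper itself notes, there is a discrepancy between capacity relative to $2Q(y,s)$ and relative to $Q(x,2\ell)$ that must be reconciled). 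The paper spells out this $d=2$ adjustment; your write-up should at least point to it rather than leave it implicit.
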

\begin{proof} Lemma $3.1$ of \cite{ko-ma-sh}, or inequality $3.11$ of \cite{ma-sh}, asserts that if $Q$ is a cube, $E\subseteq \frac{3}{2}Q$ is the closure of a smooth open set such that $\text{Cap}(E)<\text{Cap}(Q)$ and $P_E$ is the equilibrium potential of $E$, then
\bel\label{key-eqpot-ineq}
c_d\left(1-\frac{\text{Cap}(E)}{\text{Cap}(Q)}\right)^2\leq \sigma\frac{\text{Cap}(E)}{\text{Cap}(Q)}+\frac{\sigma^{-1}}{|Q|}\int_Q(1-P_E)^2\qquad\forall \sigma>0.
\eel If $d\geq 3$, the thesis follows applying this to $E=F'$ and $Q=Q(y,s)$, recalling \eqref{key-gamma} and choosing $\sigma$ small enough (depending on $d$).

The case $d=1$ is trivial, while the case $d=2$ can be derived using the argument above with a minor observation: \eqref{key-eqpot-ineq} holds for $d=2$ if $\text{Cap}(E)$ denotes the capacity relative to $2Q$, while here we are considering $F'=E$ and we want $\text{Cap}(F')$ to denote capacity relative to $Q(x,2\ell)$. This is not a problem, because the two quantities are easily seen to be comparable up to a constant depending on $c$ and hence, taking $\gamma$ small and recalling \eqref{key-cap} and \eqref{key-gamma}, we conclude.
\end{proof}

Let $\eta_\delta\in C^\infty_c(\R^d,[0,1])$ be equal to $1$ on $Q(y,(1-\delta) s)$, equal to $0$ outside of $Q(y,s)$, and such that $|\nabla \eta_\delta|\leq C_d(\delta s)^{-1}=C_d(\delta c\ell)^{-1} $.\newline

We now analyze separately good and bad cubes.\newline

If $Q(x,\ell)$ is good, we define $\psi:=\eta_\delta(1-P)e_1$, where $e_1=(1,0)$ (but any other unit norm vector of $\C^2$ would do). Notice that $\psi\in \mathcal{D}({\mathcal{E}_V})$. An integration by parts and the harmonicity of $P$ outside $F'$ yield\bee
\int_{\R^d}|\nabla\psi|^2&=&\int_{\R^d}|\nabla\eta_\delta|^2(1-P)^2+\int_{\R^d}\eta_\delta^2|\nabla P|^2-\int_{\R^n}\nabla (\eta_\delta^2)\cdot (1-P)\nabla P\\
&=&\int_{\R^d}|\nabla\eta_\delta|^2(1-P)^2\leq C_d(\delta c\ell)^{-2}\int_{Q(y,s)}(1-P)^2\leq C_d \delta^{-2} (c\ell)^{d-2},
\eee where the first inequality follows from the properties of $\eta_\delta$, while the second follows from the fact that $0\leq P\leq1$.

If $Q(x,\ell)\subseteq\{|y|\geq R(\eps)\}$ (recall how $R(\eps)$ was defined in Section \ref{pol-sec}), we test \eqref{pol-hyp} on $\psi$, using the bound above. The result is:
\bee
\int_{\R^d} \eta_\delta^2(1-P)^2 &\leq& \eps\left(C_d\delta^{-2} (c\ell)^{d-2}+\int_{\R^d}\eta_\delta^2(1-P)^2(Ve_1,e_1)\right)\\
&\leq& \eps\left(C_d\delta^{-2} (c\ell)^{d-2}+8\int_{Q(y,s)\setminus F'}\lambda\right),
\eee where in the second line we used the fact that $P\equiv1$ on $F'$ and the estimate $(Ve_1,e_1)\leq \mu\leq 8\lambda$, which holds on $Q(y,s)$, because $Q(x,\ell)$ is good. Since $F'\supseteq F\cap Q(y,s)$, we have\bel\label{key-good-1}
\int_{\R^d} \eta_\delta^2(1-P)^2 \leq\eps\left(C_d\delta^{-2} (c\ell)^{d-2}+8\int_{Q(x,\ell)\setminus F}\lambda\right)
\eel
Observing that $|Q(y,s)\setminus Q(y,(1-\delta)s)|\leq C_d\delta s^d=C_d\delta (c\ell)^d$ and $\eta_\delta\equiv1$ on $Q(y,(1-\delta)s)$ we can bound\be
\int_{Q(y,s)}(1-P)^2\leq\int_{\R^d}\eta_\delta^2(1-P)^2+C_d\delta (c\ell)^d.
\ee
By Lemma \ref{key-eqpot} we can choose $\delta=\delta_d$ small enough and obtain \bel\label{good-2}c_d(c\ell)^d\leq \int_{\R^d}\eta_{\delta_d}^2(1-P)^2.\eel 
Putting \eqref{key-good-1} (for $\delta=\delta_d$) and \eqref{good-2} together, we find\be
c_d(c\ell)^d\leq \eps\left(C_d(c\ell)^{d-2}+8\int_{Q(x,\ell)\setminus F}\lambda\right).
\ee It is now clear that there exist $c_1,c_2>0$ depending only on $V$ such that if $\eps\leq c_1\ell^2$, then $\frac{1}{\ell^d}\int_{Q(x,\ell)\setminus F}\lambda\geq c_2\eps^{-1}$, as we wanted.

Notice that this argument is a modification of Maz'ya-Shubin necessity argument (see Section $3.$ of \cite{ma-sh}). We now move to the bad cubes, which require more work.\newline

If $Q(x,\ell)$ is a bad cube the main difficulty is that $(Ve_1,e_1)$ is not bounded by $\lambda$ on $Q(y,s)$. The natural idea is to test \eqref{pol-hyp} on $\psi:=\eta_\delta(1-P)v$, where $v:Q(y,s)\rightarrow \R^2$ is smooth and satisfies $Vv\equiv\lambda v$ and $|v|=1$ everywhere. It is possible to define such a smooth selection of real eigenvectors because $V$ is assumed to be symmetric (assumption (b) of Theorem \ref{pol-thm}), the cube is simply-connected and $\lambda<\mu$ on $Q(y,s)$, i.e., there are no eigenvalue crossings (by Lemma \ref{gb-lem}). Notice that $\lambda$ is smooth on $Q(y,s)$ for the same reason.

Observe that\bee
\int_{\R^d}|\nabla\psi|^2&\leq&2\int_{\R^d}|\nabla(\eta_\delta(1-P))|^2+2\int_{\R^d}\eta_\delta^2(1-P)^2|\nabla v|^2\\
&\leq& C_d \delta^{-2} (c\ell)^{d-2}+2\int_{Q(y,s)}|\nabla v|^2,
\eee where the last bound is proved exactly as for good cubes. We claim that \bel\label{key-grad}
\int_{Q(y,s)}|\nabla v|^2\leq C\ell^{d-2},
\eel where $C$ is a constant depending on $V$, but independent of all the other parameters.
The reader may easily check that the argument given for good cubes may then be carried out word by word to prove Proposition \ref{pol-key} also for bad cubes. To prove the claim \eqref{key-grad}, we need a lemma.

\begin{lem}\label{key-grad-lem}
There is a constant $C_1$ depending only on $V$ such that for every cube $Q$\be \int_Q||\partial_jV||_{op}^2\leq C_1\ell_Q^{-2}\int_Q||V||_{\text{op}}^2\qquad\forall j=1,\dots,d.\ee
\end{lem}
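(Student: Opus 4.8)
The plan is to reduce the inequality to a scale-invariant statement about polynomials of bounded degree on a fixed cube, and then to invoke finite-dimensionality, in the same spirit as Lemmas \ref{gb-zero} and \ref{gb-approx}. First I would record that, by hypotheses (b) and (c) of Theorem \ref{pol-thm}, the entries $V_{11}$, $V_{12}=V_{21}$, $V_{22}$ of $V$ are real polynomials; let $D$ denote the maximum of their degrees, a quantity depending only on $V$. Since $m=2$ and all norms on the finite-dimensional space $H_2$ are equivalent, it suffices to prove the analogue of the claimed inequality with the operator norm replaced by the Hilbert--Schmidt norm $||A||_{HS}^2=\sum_{k,l}|A_{kl}|^2$, at the cost of a universal constant. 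Expanding $||\partial_j V||_{HS}^2=\sum_{k,l}|\partial_j V_{kl}|^2$, the whole statement reduces to the scalar claim: for every polynomial $p$ of degree $\le D$ in $d$ variables and every cube $Q$, one has $\int_Q|\partial_j p|^2\le C_1\,\ell_Q^{-2}\int_Q|p|^2$, with $C_1=C_1(d,D)$.

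Next I would make the dilation explicit. Writing $Q=Q(x_0,\ell)$ and $q(y):=p(x_0+\ell y)$ for $y\in Q(0,1)$, the substitution $x=x_0+\ell y$ gives $\int_{Q(0,1)}|\partial_j q|^2=\ell^{2-d}\int_Q|\partial_j p|^2$ and $\int_{Q(0,1)}|q|^2=\ell^{-d}\int_Q|p|^2$. Hence the desired inequality is \emph{equivalent} to the single estimate $\int_{Q(0,1)}|\partial_j q|^2\le C_1\int_{Q(0,1)}|q|^2$, required uniformly over all polynomials $q$ of degree $\le D$, since $q$ ranges over all of that class as $p$ and $Q$ vary.

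Finally, I would prove this last inequality by finite-dimensionality. On the finite-dimensional vector space $\mathcal{P}_{d,D}$ of polynomials of degree $\le D$ in $d$ variables, the map $q\mapsto\big(\int_{Q(0,1)}|q|^2\big)^{1/2}$ is a genuine \emph{norm}: it vanishes only when $q$ vanishes almost everywhere on $Q(0,1)$, hence identically, being a polynomial. The map $q\mapsto\big(\int_{Q(0,1)}|\partial_j q|^2\big)^{1/2}$ is a \emph{seminorm} on the same space. Since every seminorm on a finite-dimensional normed space is bounded by a constant multiple of the norm, there is $C_1=C_1(d,D)$ such that $\int_{Q(0,1)}|\partial_j q|^2\le C_1\int_{Q(0,1)}|q|^2$ for all $q\in\mathcal{P}_{d,D}$. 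Unwinding the two reductions (and absorbing the Hilbert--Schmidt/operator-norm comparison constant) yields the lemma with a constant depending only on $d$ and $D$, hence only on $V$.

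I do not anticipate a serious obstacle; the only points needing care are the bookkeeping of the powers of $\ell$ under the dilation and the observation that the $L^2(Q(0,1))$ functional is a \emph{norm} (not merely a seminorm) on $\mathcal{P}_{d,D}$, which is precisely what licenses the comparison ``seminorm $\le$ constant $\times$ norm''.
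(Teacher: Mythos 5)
Your proof is correct and takes essentially the same route as the paper's: rescale to the unit cube and invoke finite-dimensionality of the space of polynomials of degree $\leq D$, on which the $L^2(Q(0,1))$ functional is a genuine norm, so that $\partial_j$ is automatically bounded. The only cosmetic difference is that the paper works directly with the matrix-valued space $X_{d,D}$ (using $\sqrt{\int_{Q(0,1)}\|\cdot\|_{op}^2}$ as a norm there), whereas you first pass to the Hilbert--Schmidt norm to reduce to a scalar statement about polynomial entries.
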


\begin{proof}
The set $X_{d,D}$ of functions $W:\R^d\rightarrow H_2$ whose matrix elements are polynomials of degree $\leq D$ is a finite-dimensional vector space and $\partial_j$ is a linear operator on $X_{d,D}$. Since $\sqrt{\int_{Q(0,1)}||W||_{op}^2}$ is a norm on $X_{d,D}$, we have the bound\be
\int_{Q(0,1)}||\partial_jW||_{op}^2\leq C_1\int_{Q(0,1)}||W||_{op}^2 \qquad\forall W\in X_{d,D}.
\ee The application of this inequality to $V$, after rescaling and translating, gives the thesis.
\end{proof}

Let us differentiate the identity $Vv=\lambda v$ (all the computations are on $Q(y,s)$):\bel\label{diff-eig}
\frac{\partial V}{\partial x_j}v+V\frac{\partial v}{\partial x_j}=\frac{\partial\lambda}{\partial x_j} v+\lambda\frac{\partial v}{\partial x_j}.
\eel Since $|v|=1$ and $v$ is real-valued, $v\cdot\frac{\partial v}{\partial x_j}=0$. Taking the scalar product of both sides of \eqref{diff-eig} with $\frac{\partial v}{\partial x_j}$, we obtain\be
\left(\frac{\partial V}{\partial x_j}v\right)\cdot\frac{\partial v}{\partial x_j}+\left(V\frac{\partial v}{\partial x_j}\right)\cdot\frac{\partial v}{\partial x_j}=\lambda\left|\frac{\partial v}{\partial x_j}\right|^2.
\ee Since $m=2$ and $\frac{\partial v}{\partial x_j}$ is orthogonal to $v$, it must be an eigenvector of eigenvalue $\mu$ at every point of $Q(y,s)$. Hence $\left(V\frac{\partial v}{\partial x_j}\right)\cdot\frac{\partial v}{\partial x_j}=\mu\left|\frac{\partial v}{\partial x_j}\right|^2$ and\be
(\mu-\lambda)\left|\frac{\partial v}{\partial x_j}\right|^2=-\left(\frac{\partial V}{\partial x_j}v\right)\cdot\frac{\partial v}{\partial x_j}\leq \left|\left|\frac{\partial V}{\partial x_j}\right|\right|_{op}\left|\frac{\partial v}{\partial x_j}\right|.
\ee Since $\mu\geq2\lambda$ and $4\mu\geq ||\mu||_{L^\infty(Q(y,s))}$ on the bad cube $Q(y,s)$, this implies 
\be||\mu||_{L^\infty(Q(y,s))}\left|\frac{\partial v}{\partial x_j}\right|\leq 8\left|\left|\frac{\partial V}{\partial x_j}\right|\right|_{op}.\ee Squaring and integrating on $Q(y,s)$ this pointwise inequality, and applying Lemma \ref{key-grad-lem}, we find\bee
&&||\mu||_{L^\infty(Q(y,s))}^2\int_{Q(y,s)}\left|\frac{\partial v}{\partial x_j}\right|^2\leq64 \int_{Q(y,s)}\left|\left|\frac{\partial V}{\partial x_j}\right|\right|_{op}^2\\ 
&\leq& 64C_1(c\ell)^{-2}\int_{Q(y,s)}\left|\left|V\right|\right|_{op}^2\leq 64C_1(c\ell)^{n-2}||\mu||_{L^\infty(Q(y,s))}^2,
\eee where we used the fact that $||V||_{op}=\mu$. Simplifying and summing over $j$, we get \eqref{key-grad}. This concludes the proof of Theorem \ref{pol-thm}.

\chapter{Weighted Bergman kernels}\label{w-ch}

The main goal of this chapter is to prove new pointwise bounds on weighted Bergman kernels. Sections 2.1 through 2.3 contain preliminary definitions and results on weighted Bergman spaces, the weighted $\dbar$-problem and weighted Kohn Laplacians. Section 2.4 presents the well-known Morrey-Kohn-H\"ormander formula and a natural Caccioppoli-type inequality related to it. Even though there is no standard reference for this material, it mainly consists in the weighted versions of results that can be found in \cite{chen-shaw}. In Section 2.5 we introduce the key notion of $\mu$-coercivity for weighted Kohn Laplacians. Before proving our results, we need to interrupt the flow of the presentation with two intermezzos: Section 2.6 introduces a few useful technical tools from the theory of Schr\"odinger operators (see, e.g., \cite{shen}), while Section 2.7 proves a variant of Fefferman-Phong inequality (see, e.g., \cite{fefferman-uncertainty}). Sections 2.8 through 2.13 contain the original results anticipated in the Introduction.

\section{Weighted Bergman spaces and kernels}\label{berg-sec}

Let us assume that \bel\label{berg-assumption}
\varphi:\C^n\rightarrow\R\quad\text{ is locally bounded and measurable}.
\eel
Later we will focus on $C^2$ plurisubharmonic weights, but regularity and plurisubharmonicity of $\varphi$ are not needed for the basic results of this section. 

We associate to $\varphi$ the weighted $L^2$ space $L^2(\C^n,\varphi)$ consisting of (equivalence classes of) functions $f:\C^n\rightarrow\C$ such that
\be
\int_{\C^n}|f(z)|^2e^{-2\varphi(z)}d\mathcal{L}(z)<+\infty.
\ee
We insert the factor $2$ in the exponential in order to slightly simplify several formulas later on. The Hilbert space norm and scalar product of $L^2(\C^n,\varphi)$ will be denoted by $||\cdot||_\varphi$ and $(\cdot,\cdot)_\varphi$.

The \emph{weighted Bergman space} with respect to the weight $\varphi$ is then defined as follows:\be
A^2(\C^n,\varphi):=\left\{ h:\C^n\rightarrow\C:\ h\text{ is holomorphic and } h\in L^2(\C^n,\varphi)\right\}.
\ee

\begin{prop}
$A^2(\C^n,\varphi)$ is a closed linear subspace of $L^2(\C^n,\varphi)$. \newline 
If $z\in \C^n$ and $r>0$, we have\bel\label{berg-ev-bound}
|h(z)|\leq \frac{1}{|B(z,r)|}\sqrt{\int_{B(z,r)}e^{2\varphi}}\ ||h||_\varphi\qquad\forall h\in A^2(\C^n,\varphi).
\eel
\end{prop}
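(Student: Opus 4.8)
The plan is to establish the pointwise evaluation bound \eqref{berg-ev-bound} first, and then deduce closedness of $A^2(\C^n,\varphi)$ from it by a standard argument.

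First I would use the fact that a holomorphic function on $\C^n$ is harmonic as a function on $\R^{2n}\cong\C^n$, hence enjoys the solid (volume) mean value property over Euclidean balls: for $h\in A^2(\C^n,\varphi)$, $z\in\C^n$ and $r>0$,
\be
h(z)=\frac{1}{|B(z,r)|}\int_{B(z,r)}h\,d\mathcal{L}.
\ee
Taking absolute values, inserting the factor $e^{-\varphi}\cdot e^{\varphi}$ under the integral, and applying the Cauchy--Schwarz inequality would give
\be
|h(z)|\leq\frac{1}{|B(z,r)|}\left(\int_{B(z,r)}|h|^2e^{-2\varphi}\right)^{1/2}\left(\int_{B(z,r)}e^{2\varphi}\right)^{1/2}\leq\frac{1}{|B(z,r)|}\sqrt{\int_{B(z,r)}e^{2\varphi}}\ ||h||_\varphi,
\ee
where finiteness of $\int_{B(z,r)}e^{2\varphi}$ uses the local boundedness of $\varphi$ from \eqref{berg-assumption}. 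This is exactly \eqref{berg-ev-bound}, and it shows in addition that every element of $A^2(\C^n,\varphi)$ has a genuine holomorphic representative which is bounded uniformly on compact sets in terms of its norm.

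For the first assertion, linearity of $A^2(\C^n,\varphi)$ is immediate, since a linear combination of holomorphic functions in $L^2(\C^n,\varphi)$ is again holomorphic and in $L^2(\C^n,\varphi)$. For closedness, given a sequence $\{h_k\}$ in $A^2(\C^n,\varphi)$ converging to $f$ in $L^2(\C^n,\varphi)$, I would apply \eqref{berg-ev-bound} to the differences $h_k-h_j$: fixing $r>0$ and a compact set $K\subseteq\C^n$, the quantity $\sup_{z\in K}\frac{1}{|B(z,r)|^2}\int_{B(z,r)}e^{2\varphi}$ is finite (again by local boundedness of $\varphi$), so $\{h_k\}$ is uniformly Cauchy on $K$. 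Hence $\{h_k\}$ converges locally uniformly on $\C^n$ to a function $g$, which is holomorphic by the Weierstrass convergence theorem. Passing to a subsequence of $\{h_k\}$ converging to $f$ almost everywhere and comparing the two limits yields $f=g$ almost everywhere, so $f$ has a holomorphic representative and therefore lies in $A^2(\C^n,\varphi)$.

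The whole argument is routine, and I do not anticipate a genuine obstacle; the only point needing a touch of care is the identification of the locally uniform limit with the $L^2$ limit $f$, which is handled by passing to an almost-everywhere convergent subsequence.
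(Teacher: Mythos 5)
Your proof is correct and follows exactly the same route as the paper: the solid mean value property for harmonic functions, Cauchy--Schwarz with the splitting $e^{-\varphi}\cdot e^{\varphi}$, and closedness via locally uniform convergence plus the Weierstrass theorem. The only addition is your explicit identification of the locally uniform limit with the $L^2$ limit via an almost-everywhere convergent subsequence, which the paper leaves implicit.
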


\begin{proof}
If $h\in A^2(\C^n,\varphi)$, then in particular it is harmonic and satisfies the mean value property \be
h(z)=\frac{1}{|B(z,r)|}\int_{B(z,r)}h.
\ee The Cauchy-Schwarz inequality yields\bee
|h(z)|&\leq& \frac{1}{|B(z,r)|}\sqrt{\int_{B(z,r)}|h|^2e^{-2\varphi}}\sqrt{\int_{B(z,r)}e^{2\varphi}}\\
&\leq& \frac{1}{|B(z,r)|}\sqrt{\int_{B(z,r)}e^{2\varphi}}\ ||h||_\varphi, 
\eee proving \eqref{berg-ev-bound}.

Now notice that a sequence of elements of $A^2(\C^n,\varphi)$ which converges in $L^2(\C^n,\varphi)$ also converges uniformly on compact sets. This follows from \eqref{berg-ev-bound} and the assumption \eqref{berg-assumption}. Since a locally uniform limit of holomorphic functions is holomorphic, we can conclude that $A^2(\C^n,\varphi)$ is closed. Linearity of the weighted Bergman space is obvious and hence the proposition is proved.  
\end{proof}

Since $A^2(\C^n,\varphi)$ is a closed subspace, we can define the orthogonal projector $B_\varphi$ of $L^2(\C^n,\varphi)$ onto $A^2(\C^n,\varphi)$. The linear operator $B_\varphi$ is called the \emph{weighted Bergman projection}.

Inequality \eqref{berg-ev-bound} shows that the evaluation mapping $h\mapsto h(z)$ is a continuous linear functional on $A^2(\C^n,\varphi)$ for every $z\in\C^n$. By Riesz' Lemma, there exists $k_z\in A^2(\C^n,\varphi)$ such that \be
h(z)=(h,k_z)_\varphi=\int_{\C^n}h(w)\overline{k_z(w)}e^{-2\varphi(w)}d\mathcal{L}(w).
\ee
We define the \emph{weighted Bergman kernel} as follows:\be
B_\varphi(z,w):=\overline{k_z(w)}\qquad (z,w\in\C^n).
\ee

Our usage of the notation $B_\varphi$ both for the projector defined above and for this function is justified by the next proposition.

\begin{prop}\label{berg-integral-prop}
The weighted Bergman kernel is the integral kernel of the weighted Bergman projection: 
\bel\label{berg-integral}
B_\varphi(f)(z)=\int_{\C^n}B_\varphi(z,w)f(w)e^{-2\varphi(w)}d\mathcal{L}(w)\qquad\forall f\in L^2(\C^n,\varphi).
\eel
Moreover, $B_\varphi(z,w)=\overline{B_\varphi(w,z)}$ for every $z,w\in\C^n$.
\end{prop}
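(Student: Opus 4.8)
The plan is to run the standard reproducing-kernel Hilbert space argument, using only the two defining properties of $k_z$ (both already secured above via \eqref{berg-ev-bound} and Riesz' Lemma): that $k_z\in A^2(\C^n,\varphi)$, and that $h(z)=(h,k_z)_\varphi$ for every $h\in A^2(\C^n,\varphi)$.

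For the integral representation \eqref{berg-integral}, fix $f\in L^2(\C^n,\varphi)$ and $z\in\C^n$. Since $B_\varphi f\in A^2(\C^n,\varphi)$, the reproducing property gives $(B_\varphi f)(z)=(B_\varphi f,k_z)_\varphi$. Now $f-B_\varphi f$ is orthogonal to $A^2(\C^n,\varphi)\ni k_z$, so $(B_\varphi f,k_z)_\varphi=(f,k_z)_\varphi$ (equivalently, one invokes that $B_\varphi$ is self-adjoint and that $B_\varphi k_z=k_z$ because $k_z\in A^2(\C^n,\varphi)$). Hence
\be
(B_\varphi f)(z)=(f,k_z)_\varphi=\int_{\C^n}f(w)\overline{k_z(w)}e^{-2\varphi(w)}d\mathcal{L}(w)=\int_{\C^n}B_\varphi(z,w)f(w)e^{-2\varphi(w)}d\mathcal{L}(w),
\ee
using the definition $B_\varphi(z,w)=\overline{k_z(w)}$ in the last step.

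For the Hermitian symmetry, the key point is that $\overline{k_z(w)}=k_w(z)$ for all $z,w\in\C^n$. Indeed, applying the reproducing property to $k_w\in A^2(\C^n,\varphi)$ at the point $z$, and to $k_z\in A^2(\C^n,\varphi)$ at the point $w$, yields $k_w(z)=(k_w,k_z)_\varphi$ and $k_z(w)=(k_z,k_w)_\varphi=\overline{(k_w,k_z)_\varphi}$, so $\overline{k_z(w)}=(k_w,k_z)_\varphi=k_w(z)$. Therefore
\be
\overline{B_\varphi(w,z)}=\overline{\overline{k_w(z)}}=k_w(z)=\overline{k_z(w)}=B_\varphi(z,w),
\ee
which is the claim. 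There is no genuine obstacle here — the whole argument is formal once $k_z$ and its reproducing property are in hand; the only point worth a sentence of care is the identity $(B_\varphi f,k_z)_\varphi=(f,k_z)_\varphi$, handled cleanly by either the orthogonal decomposition $f=B_\varphi f+(f-B_\varphi f)$ or self-adjointness of $B_\varphi$ together with $B_\varphi k_z=k_z$.
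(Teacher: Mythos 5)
Your proof is correct and, for the integral representation, identical in substance to the paper's: write $f=B_\varphi f+(f-B_\varphi f)$ with the second summand orthogonal to $A^2(\C^n,\varphi)\ni k_z$, so $(B_\varphi f,k_z)_\varphi=(f,k_z)_\varphi$. For the Hermitian symmetry your route is a touch more direct: you derive $\overline{k_z(w)}=k_w(z)$ straight from the reproducing property and conjugate-symmetry of the inner product, whereas the paper instead plugs $\overline{B_\varphi(z,\cdot)}\in A^2(\C^n,\varphi)$ back into the integral formula \eqref{berg-integral} just proved and conjugates to recover $B_\varphi(w,z)$; both are two-line arguments resting on the same reproducing-kernel machinery, and yours has the small aesthetic advantage of not depending on the integral representation at all.
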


Observe that, since $k_z\in A^2(\C^n, \varphi)$, the integral \eqref{berg-integral} is absolutely convergent for every $z$.

\begin{proof} 
Given $f\in L^2(\C^n,\varphi)$, we write $f=B_\varphi(f)+g$, where $g$ is orthogonal to $A^2(\C^n,\varphi)$. We have\bee
B_\varphi(f)(z)&=&(B_\varphi(f),k_z)_\varphi=(f,k_z)_\varphi-(g,k_z)_\varphi\\
&=&\int_{\C^n}\overline{k_z(w)}f(w)e^{-2\varphi(w)}d\mathcal{L}(w)\\
&=&\int_{\C^n}B_\varphi(z,w)f(w)e^{-2\varphi(w)}d\mathcal{L}(w),
\eee where we used the definitions of $k_z$ and $B_\varphi$, and the fact that $k_z\in A^2(\C^n)$. This proves \eqref{berg-integral}.

Next, recall that, by definition, $\overline{B_\varphi(z,\cdot)}\in A^2(\C^n,\varphi)$ and therefore\bee
\overline{B_\varphi(z,w)} &=& \int_{\C^n}B_\varphi(w,w')\overline{B_\varphi(z,w')} e^{-2\varphi(w')}d\mathcal{L}_{\C^n}(w')\\
&=& \overline{\int_{\C^n}B_\varphi(z,w') \overline{B_\varphi(w,w')}e^{-2\varphi(w')}d\mathcal{L}_{\C^n}(w')}\\
&=& \overline{\overline{B_\varphi(w,z)}}=B_\varphi(w,z),
\eee for any $z,w\in \C^n$, as we wanted. 
\end{proof}

Proposition \ref{berg-integral-prop} has a consequence for the regularity of the Bergman kernel. In fact, by the second part of the proposition, $B_\varphi(\cdot,z)\in A^2(\C^n,\varphi)$ for every $z$. In particular $B_\varphi(z,\overline{w})$ is separately holomorphic and Hartogs' theorem allows to conclude that $B_\varphi(z,\overline{w})$ is jointly holomorphic, and hence that $B_\varphi(z,w)$ is jointly real-analytic.

The values of the weighted Bergman kernel on the diagonal have a particularly nice variational significance.

\begin{prop}\label{berg-on-diag-prop} The following identities hold for every $z\in\C^n$:\be
B_\varphi(z,z)=||B_\varphi(z,\cdot)||_\varphi^2=\sup_{h\in A^2(\C^n,\varphi):\ ||h||_\varphi=1} |h(z)|^2
\ee
\end{prop}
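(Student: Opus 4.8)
The plan is to unwind the definitions set up just before the statement. Recall that $k_z\in A^2(\C^n,\varphi)$ is the Riesz representative of the evaluation functional at $z$, i.e. $h(z)=(h,k_z)_\varphi$ for every $h\in A^2(\C^n,\varphi)$, and that by definition $B_\varphi(z,w)=\overline{k_z(w)}$. The whole proposition is then a direct consequence of this reproducing property together with the Cauchy--Schwarz inequality, so I do not expect any real obstacle; the only point requiring a word of care is the degenerate case $k_z=0$ (which does not arise, e.g., as soon as $A^2(\C^n,\varphi)$ contains a function nonvanishing at $z$).

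First I would establish $B_\varphi(z,z)=||B_\varphi(z,\cdot)||_\varphi^2$. Since $B_\varphi(z,\cdot)=\overline{k_z}$ as a function, one has $||B_\varphi(z,\cdot)||_\varphi=||k_z||_\varphi$. Applying the reproducing identity to $h=k_z$ gives $k_z(z)=(k_z,k_z)_\varphi=||k_z||_\varphi^2$, which is a non-negative real number; hence $B_\varphi(z,z)=\overline{k_z(z)}=||k_z||_\varphi^2=||B_\varphi(z,\cdot)||_\varphi^2$.

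It remains to identify $||k_z||_\varphi^2$ with the supremum $\sup\{|h(z)|^2:\ h\in A^2(\C^n,\varphi),\ ||h||_\varphi=1\}$. For the bound of this supremum by $||k_z||_\varphi^2$, note that for any unit vector $h$ the Cauchy--Schwarz inequality yields $|h(z)|=|(h,k_z)_\varphi|\leq ||h||_\varphi\,||k_z||_\varphi=||k_z||_\varphi$. For the reverse inequality, if $k_z\neq 0$ I would test with $h_0:=k_z/||k_z||_\varphi$, which has unit norm and satisfies $h_0(z)=(k_z,k_z)_\varphi/||k_z||_\varphi=||k_z||_\varphi$, so that $|h_0(z)|^2=||k_z||_\varphi^2$ and the supremum is actually attained. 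Finally, if $k_z=0$ then every $h\in A^2(\C^n,\varphi)$ vanishes at $z$, so both $||k_z||_\varphi^2$ and the supremum equal zero, and the asserted identities hold trivially in this case as well.
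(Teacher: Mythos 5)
Your proof is correct and follows essentially the same route as the paper: both identities come from the Riesz reproducing property $h(z)=(h,k_z)_\varphi$ and the definition $B_\varphi(z,\cdot)=\overline{k_z}$, with the second equality being the standard identification of a functional's operator norm with the norm of its Riesz representative. The paper phrases the first equality via the integral reproducing formula for $B_\varphi(\cdot,z)$ and simply invokes the Riesz Lemma for the second, where you spell out the Cauchy--Schwarz argument and the extremizer $k_z/\|k_z\|_\varphi$; these are the same argument, and your treatment of the degenerate case $k_z=0$ is a harmless extra precaution.
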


\begin{proof}
Since $B_\varphi(\cdot,z)\in A^2(\C^n,\varphi)$, we have\bee
B_\varphi(z,z)&=&\int_{\C^n}B_\varphi(z,w)B_\varphi(w,z)e^{-2\varphi(w)}d\mathcal{L}(w)\\
&=&\int_{\C^n}|B_\varphi(z,w)|^2e^{-2\varphi(w)}d\mathcal{L}(w)\\
&=&||B_\varphi(z,\cdot)||_\varphi^2,
\eee where in the second line we used the identity $B_\varphi(z,w)=\overline{B_\varphi(w,z)}$ of Proposition \ref{berg-integral-prop}. Since $B_\varphi(z,\cdot)=\overline{k_z(\cdot)}$, by the Riesz Lemma the norm of $B_\varphi(z,\cdot)$ equals the operator norm of the evaluation functional $h\mapsto h(z)$, which clearly equals the third term of the identity to be proved.
\end{proof}

Proposition \ref{berg-on-diag-prop} can be used to estimate $B_\varphi(z,z)$. Unfortunately, there is not an equally simple identity involving the off-diagonal values of $B_\varphi$, and estimating them requires more elaborated arguments involving the weighted $\dbar$ problem, to which the next section is dedicated.

\section{Weighted $\dbar$ problem}\label{dbar-sec}

The standing assumption for this section is still \eqref{berg-assumption}.

We begin by recalling the classical formalism of the $\dbar$ complex. We denote by $L^2_{(0,q)}(\C^n,\varphi)$ the Hilbert space of $(0,q)$-forms with coefficients in $L^2(\C^n,\varphi)$.   
Since we will be working only with forms of degree less than or equal to $2$, we confine our discussion to these cases. Adopting the standard notation for differential forms, we have that $L^2_{(0,0)}(\C^n,\varphi)=L^2(\C^n,\varphi)$,\be
L^2_{(0,1)}(\C^n,\varphi):=\left\{u=\sum_{1\leq j\leq n} u_j d\overline{z}_j:\ u_j\in L^2(\C^n,\varphi)\quad\forall j\right\},
\ee and
\be
L^2_{(0,2)}(\C^n,\varphi):=\left\{w=\sum_{1\leq j<k\leq n} w_{jk}\ d\overline{z}_j\wedge d\overline{z}_k:\ w_{jk}\in L^2(\C^n,\varphi)\quad\forall j,k\right\}.
\ee
For the norms and the scalar products in these Hilbert spaces of forms we use the same symbols $||\cdot||_\varphi$ and $(\cdot,\cdot)_\varphi$, i.e., if $u,\widetilde{u}\in L^2_{(0,1)}(\C^n,\varphi)$, we have\be
||u||_\varphi^2=\sum_{1\leq j\leq n}||u_j||_\varphi^2,\quad (u,\widetilde{u})_\varphi=\sum_{1\leq j\leq n}(u_j,\widetilde{u}_j)_\varphi,
\ee while if $w,\widetilde{w}\in L^2_{(0,1)}(\C^n,\varphi)$, we have\be
||w||_\varphi^2=\sum_{1\leq j<k\leq n}||w_{jk}||_\varphi^2,\quad (w,\widetilde{w})_\varphi=\sum_{1\leq j<k\leq n}(w_{jk},\widetilde{w}_{jk})_\varphi.
\ee 
The meaning of $||\cdot||_\varphi$ and $(\cdot,\cdot)_\varphi$ depends on whether the arguments are functions, $(0,1)$-forms or $(0,2)$-forms, but this ambiguity should not be a source of confusion. Observe that the formulas above reveal the nature of product Hilbert space of $L^2_{(0,q)}(\C^n,\varphi)$. 

We now introduce the initial fragment of the \emph{weighted $\dbar$ complex}:
\bel\label{dbar-complex}
L^2(\C^n,\varphi)\stackrel{\dbar}\longrightarrow L^2_{(0,1)}(\C^n,\varphi)\stackrel{\dbar}\longrightarrow L^2_{(0,2)}(\C^n,\varphi).
\eel
The symbol $\dbar$ denotes as usual both the operator $\dbar:L^2(\C^n,\varphi)\rightarrow L^2_{(0,1)}(\C^n,\varphi)$ defined on the domain \be
\mathcal{D}_0(\dbar):=\left\{f\in L^2(\C^n,\varphi):\ \frac{\partial f}{\partial \overline{z}_j}\in L^2(\C^n,\varphi)\ \forall j\right\}
\ee by the formula $\dbar f=\sum_j\frac{\partial f}{\partial \overline{z}_j}d\overline{z}_j$, and the operator $\dbar: L^2_{(0,1)}(\C^n,\varphi)\rightarrow L^2_{(0,2)}(\C^n,\varphi)$ defined on the domain \be
\mathcal{D}_1(\dbar):=\left\{u=\sum_ju_jd\overline{z}_j\in L^2_{(0,1)}(\C^n,\varphi):\ \frac{\partial u_k}{\partial \overline{z}_j}-\frac{\partial u_j}{\partial \overline{z}_k}\in L^2(\C^n,\varphi)\ \forall j,k\right\}
\ee by the formula $\dbar u=\sum_{j<k}\left(\frac{\partial u_k}{\partial \overline{z}_j}-\frac{\partial u_j}{\partial \overline{z}_k}\right)d\overline{z}_j\wedge d\overline{z}_k$. 

\begin{prop}\label{dbar-complex-prop}\begin{enumerate}
\item[\emph{(i)}] The two $\dbar$ operators above are closed and densely-defined.

\item[\emph{(ii)}] The weighted $\dbar$ complex \eqref{dbar-complex} is a complex, i.e., \bel\label{dbar-squared}
\dbar f\in \mathcal{D}_1(\dbar) \quad \text{and}\quad \dbar\dbar f=0\qquad\forall f\in \mathcal{D}_0(\dbar).
\eel

\item[\emph{(iii)}] The kernel of $\dbar:L^2(\C^n,\varphi)\rightarrow L^2_{(0,1)}(\C^n,\varphi)$ is $A^2(\C^n,\varphi)$.

\item[\emph{(iv)}] $C^\infty_c(\C^n)$ is dense in $\mathcal{D}_0(\dbar)$ with respect to the norm $||\cdot||_\varphi+||\dbar\cdot||_\varphi$.

\item[\emph{(v)}] The space of $(0,1)$-forms with smooth compactly supported coefficients is dense in $\mathcal{D}_1(\dbar)$ with respect to the norm $||\cdot||_\varphi+||\dbar\cdot||_\varphi$.
 
\end{enumerate}
\end{prop}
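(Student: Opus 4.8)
The plan is to establish the five assertions in the stated order, using repeatedly that, since $\varphi$ is locally bounded by \eqref{berg-assumption}, the weight $e^{-2\varphi}$ is bounded above and below by positive constants on every compact set; hence $||\cdot||_\varphi$ is equivalent to the ordinary $L^2$-norm on compacta, which allows us to import the standard local theory of distributional $\dbar$.

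For (i), density is immediate: the forms with smooth compactly supported coefficients lie in the relevant domains and are dense in $L^2(\C^n,\varphi)$, resp.\ $L^2_{(0,1)}(\C^n,\varphi)$. For closedness, if $f_k\to f$ and $\dbar f_k\to g$ in the respective spaces, then $f_k\to f$ in $L^2_{\mathrm{loc}}$, hence in $\mathcal{D}'(\C^n)$; since $\partial/\partial\bar z_j$ is continuous on distributions, the coefficients of $\dbar f$ exist and equal those of $g$. Thus $f\in\mathcal{D}_0(\dbar)$ and $\dbar f=g$; the same argument, with $d\bar z_j\wedge d\bar z_k$ replacing $d\bar z_j$, handles the operator $\mathcal{D}_1(\dbar)\to L^2_{(0,2)}(\C^n,\varphi)$. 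For (ii), if $f\in\mathcal{D}_0(\dbar)$ the coefficients of $\dbar f$ are the $L^2$-functions $\partial f/\partial\bar z_j$, while the coefficients of $\dbar\dbar f$ are the distributions $\partial_{\bar z_j}\partial_{\bar z_k}f-\partial_{\bar z_k}\partial_{\bar z_j}f$, which vanish by equality of mixed distributional derivatives; hence $\dbar f\in\mathcal{D}_1(\dbar)$ and $\dbar\dbar f=0$. For (iii), $f\in\ker\dbar$ means $\partial f/\partial\bar z_j=0$ in $\mathcal{D}'(\C^n)$ for every $j$; since then $\Delta f=4\sum_j\partial_{z_j}\partial_{\bar z_j}f=0$, Weyl's lemma shows $f$ is a.e.\ equal to a smooth harmonic function for which the classical Cauchy--Riemann equations hold, i.e.\ a holomorphic function, and since it lies in $L^2(\C^n,\varphi)$ we get $f\in A^2(\C^n,\varphi)$; the reverse inclusion is clear.

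The real work is (iv) and (v), which I would prove by the classical truncation-plus-mollification scheme. \emph{Truncation}: fix $\chi\in C^\infty_c(B(0,2),[0,1])$ with $\chi\equiv1$ on $B(0,1)$ and set $\chi_R(z):=\chi(z/R)$, so that $|\nabla\chi_R|\leq C/R$. For $f\in\mathcal{D}_0(\dbar)$ one has $\chi_Rf\to f$ in $L^2(\C^n,\varphi)$ by dominated convergence, and $\dbar(\chi_Rf)=\chi_R\,\dbar f+(\dbar\chi_R)f$, whose first summand tends to $\dbar f$ by dominated convergence and whose second has norm $\leq(C/R)||f||_\varphi\to0$; so the compactly supported elements of $\mathcal{D}_0(\dbar)$ are graph-norm dense. \emph{Mollification}: if $f\in\mathcal{D}_0(\dbar)$ is compactly supported and $\{\rho_\eps\}_{\eps>0}$ is a standard approximate identity, then $\rho_\eps*f\in C^\infty_c(\C^n)$ for small $\eps$; on a fixed compact neighbourhood of all the supports involved, $||\cdot||_\varphi$ is equivalent to the $L^2$-norm, so $\rho_\eps*f\to f$ and $\dbar(\rho_\eps*f)=\rho_\eps*\dbar f\to\dbar f$, using that convolution commutes with $\partial/\partial\bar z_j$. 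Combining the two steps yields (iv), and (v) is obtained by the identical two steps applied to $u=\sum_j u_j\,d\bar z_j\in\mathcal{D}_1(\dbar)$, now with the Leibniz rule $\dbar(\chi_Ru)=\chi_R\,\dbar u+\dbar\chi_R\wedge u$ in the truncation step and componentwise mollification in the smoothing step. The only slightly delicate point — and the one I expect to be the crux — is verifying in the truncation step that the correction terms $(\dbar\chi_R)f$ and $\dbar\chi_R\wedge u$ vanish in the limit, which is precisely where the uniform gradient bound $|\nabla\chi_R|\leq C/R$ enters; the remaining verifications are routine.
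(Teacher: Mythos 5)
Your proof is correct and follows essentially the same route as the paper: local equivalence of $\|\cdot\|_\varphi$ with the unweighted $L^2$ norm to get distributional convergence (closedness, identification of $\ker\dbar$ via harmonicity and elliptic regularity), equality of mixed distributional derivatives for (ii), and the truncation-plus-mollification scheme for (iv)--(v). The only cosmetic difference is in the truncation: you use $\chi(z/R)$ with $\chi\equiv1$ near the origin and the explicit bound $|\nabla\chi_R|\leq C/R$, whereas the paper uses $\eta(\eps z)$ with merely $\eta(0)=1$ and invokes dominated convergence; both are fine.
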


\begin{proof}
We recall two facts about $L^2(\C^n,\varphi)$:\begin{enumerate}
\item[(a)] convergence in $L^2(\C^n,\varphi)$ implies $L^2$ convergence on compact sets and hence convergence in the sense of distributions,
\item[(b)] smooth compactly supported functions are dense in $L^2(\C^n,\varphi)$.
\end{enumerate}
Fact (a) is an immediate consequence of the local boundedness of $\varphi$, while fact (b) can be proved first truncating functions on large balls and then convolving with a smooth approximate identity (and using fact (a)).

Since $(0,q)$-forms with smooth compactly supported coefficients are contained in $\mathcal{D}_q(\dbar)$ and are dense in $L^2_{(0,q)}(\C^n,\varphi)$ by fact (a), the $\dbar$ operators are densely-defined. To see that they are closed, let $g^{(m)}$ be a sequence of $(0,q)$-forms such that $g^{(m)}$ converges to $g$ in $L^2_{(0,q)}(\C^n,\varphi)$ and $\dbar g^{(m)}$ converges to $\widetilde{g}$ in $L^2_{(0,q+1)}(\C^n,\varphi)$. By fact (a), these convergences hold in the sense of distributions too. By the continuity of differential operators with respect to convergence in the sense of distributions and the uniqueness of the limit, we conclude that $\dbar g=\widetilde{g}$, proving (i). 

Property \eqref{dbar-squared} follows from the identity $\frac{\partial}{\partial \overline{z}_j}\frac{\partial}{\partial \overline{z}_k}=\frac{\partial}{\partial \overline{z}_k}\frac{\partial}{\partial \overline{z}_j}$.

Part (iii) follows from the fact that distributional solutions of the equation $\dbar f=0$ are also classical solutions. A short way to prove this is to notice that such solutions are harmonic in the weak sense and then to use the regularity theory for the Laplacian. 

We now prove part (iv), omitting the analogous proof of part (v). Let $u\in \mathcal{D}_0(\dbar)$. Let $\eta$ be smooth, compactly supported and such that $\eta(0)=1$. Put $\eta_\eps(z):=\eta(\eps z)$, for $\eps>0$. The function $\eta_\eps u$ is compactly supported and 
\be
\frac{\partial(\eta_\eps u)}{\partial \overline{z}_j}=\frac{\partial\eta_\eps}{\partial \overline{z}_j} u+\frac{\partial u}{\partial \overline{z}_j} \eta_\eps\qquad\forall j,
\ee 
from which we see that $\eta_\eps u\in \mathcal{D}_0(\dbar)$. The dominated convergence theorem shows that $\eta_\eps u$ converges to $u$ in the graph norm when $\eps$ tends to $0$. Hence compactly supported elements of $\mathcal{D}_0(\dbar)$ are dense in the graph norm.

Let now $u\in \mathcal{D}_0(\dbar)$ be compactly supported and let $\psi$ be a test function such that $\int_{\C^n}\psi=1$. We denote by $\psi^\eps$ the rescaled function $\eps^{-2n}\psi(\eps^{-1}\cdot)$. The function $u*\psi^\eps$ is smooth and compactly supported and \be
\frac{\partial(u*\psi^\eps)}{\partial \overline{z}_j}=\frac{\partial u}{\partial \overline{z}_j}*\psi^\eps\qquad\forall j.
\ee By the standard approximate identity argument plus the local boundedness of $\varphi$, we conclude that $u*\psi^\eps$ converges to $u$ in $L^2(\C^n,\varphi)$ when $\eps$ tends to $0$, and that $\frac{\partial u}{\partial \overline{z}_j}*\psi^\eps$ converges to $\frac{\partial u}{\partial \overline{z}_j}$.\end{proof}

We now formulate the \emph{weighted $\dbar$ problem}. 

Given $u\in L^2_{(0,1)}(\C^n,\varphi)$, the problem is to find a function $f\in L^2(\C^n,\varphi)$ such that \bel\label{dbar-equation}
\dbar f=u, \quad\text{i.e., }\quad \frac{\partial f}{\partial \overline{z}_j}=u_j\quad \forall j.
\eel
As a consequence of part \emph{(ii)} of Proposition \ref{dbar-complex-prop}, a necessary condition for the existence of a solution is that $u$ be in the range of $\dbar$, and hence, by \eqref{dbar-squared}, that $\dbar u=0$. In this case we say that $u$ is $\dbar$-closed.

\begin{prop}\label{dbar-canonical}
Assume that the equation \eqref{dbar-equation} admits a solution $f$. Then the set of all solutions is $f+A^2(\C^n,\varphi)$. Among these, there is a unique solution $S_\varphi(u)$ such that \be
(S_\varphi(u),h)_\varphi=0\qquad\forall h\in A^2(\C^n,\varphi), 
\ee
i.e., that is orthogonal to the weighted Bergman space. $S_\varphi(u)$ is also the solution of minimal $L^2(\C^n,\varphi)$ norm.
\end{prop}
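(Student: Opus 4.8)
The plan is to exploit the fact that $A^2(\C^n,\varphi)$ is a closed subspace of $L^2(\C^n,\varphi)$, together with the elementary theory of affine subspaces in Hilbert space. First I would observe that if $f$ is any solution of $\dbar f = u$, and $g$ is another element of $L^2(\C^n,\varphi)$, then $g$ is a solution if and only if $\dbar(g-f) = 0$; by part (iii) of Proposition \ref{dbar-complex-prop}, this is equivalent to $g - f \in A^2(\C^n,\varphi)$. Hence the solution set is exactly the affine subspace $f + A^2(\C^n,\varphi)$. (One should note that every element of this set automatically lies in $\mathcal{D}_0(\dbar)$, since both $f$ and the added holomorphic function do.)

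Next I would produce the distinguished solution by orthogonal projection. Since $A^2(\C^n,\varphi)$ is closed, we may write $f = B_\varphi(f) + S_\varphi(u)$, where $B_\varphi(f) \in A^2(\C^n,\varphi)$ and $S_\varphi(u) := f - B_\varphi(f)$ is orthogonal to $A^2(\C^n,\varphi)$. Since $B_\varphi(f) \in A^2(\C^n,\varphi)$, the element $S_\varphi(u)$ is again a solution of $\dbar f = u$, and by construction it is orthogonal to the Bergman space. For uniqueness: if $f_1, f_2$ are two solutions each orthogonal to $A^2(\C^n,\varphi)$, then $f_1 - f_2 \in A^2(\C^n,\varphi)$ (solutions differ by a holomorphic function) and is also orthogonal to $A^2(\C^n,\varphi)$, hence $f_1 - f_2 = 0$. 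This also shows that $S_\varphi(u)$ does not depend on the choice of initial solution $f$, so the notation is well-posed.

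Finally, the minimal-norm statement is the Pythagorean theorem: for any solution $g = S_\varphi(u) + h$ with $h \in A^2(\C^n,\varphi)$, orthogonality gives $\|g\|_\varphi^2 = \|S_\varphi(u)\|_\varphi^2 + \|h\|_\varphi^2 \geq \|S_\varphi(u)\|_\varphi^2$, with equality only when $h = 0$. So $S_\varphi(u)$ is the unique solution of least $L^2(\C^n,\varphi)$ norm.

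There is essentially no obstacle here — the only input beyond abstract Hilbert-space geometry is the identification of $\ker\dbar$ with $A^2(\C^n,\varphi)$, which is part (iii) of Proposition \ref{dbar-complex-prop} and is already available. If anything deserves a word of care, it is checking that $S_\varphi(u)$ genuinely lies in $\mathcal{D}_0(\dbar)$ and satisfies the equation (so that "solution" is meant in the same sense throughout), but this is immediate from $S_\varphi(u) = f - B_\varphi(f)$ with both terms in $\mathcal{D}_0(\dbar)$ and $\dbar B_\varphi(f) = 0$.
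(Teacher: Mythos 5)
Your proof is correct and follows essentially the same route as the paper, which simply notes that $S_\varphi(u)=f-B_\varphi(f)$ and calls the rest elementary linear algebra; you have merely spelled out the Hilbert-space details (affine solution set via $\ker\dbar = A^2(\C^n,\varphi)$, orthogonal decomposition, Pythagorean inequality) that the paper leaves implicit.
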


The function $S_\varphi(u)$ is called the \emph{canonical solution} of the weighted $\dbar$ equation with datum $u$.

\begin{proof} To see the minimality of the norm, notice that $S_\varphi(u)=f-B_\varphi(f)$. The rest is elementary linear algebra.
\end{proof}

We say that the weighted $\dbar$ equation is \emph{solvable} if it admits a solution for every datum $u\in L^2_{(0,1)}(\C^n,\varphi)$ such that $\dbar u=0$.

\begin{prop}
Assume that $\varphi$ is such that the weighted $\dbar$ equation is solvable. Then \be
S_\varphi: \{u\in L^2_{(0,1)}(\C^n,\varphi):\ \dbar u=0\}\longrightarrow L^2(\C^n,\varphi)
\ee is a bounded linear operator.
\end{prop}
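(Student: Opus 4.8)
The plan is a routine application of the closed graph theorem, the only substantive ingredient being the closedness of the two $\dbar$ operators established in Proposition \ref{dbar-complex-prop}.

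First I would record that $S_\varphi$ is well defined and linear on the stated domain. By hypothesis the equation $\dbar f = u$ is solvable for every $u$ in $\mathcal{Z}:=\{u\in L^2_{(0,1)}(\C^n,\varphi):\ \dbar u = 0\}$, and Proposition \ref{dbar-canonical} then provides a unique solution $S_\varphi(u)$ orthogonal to $A^2(\C^n,\varphi)$. If $u_1,u_2\in\mathcal{Z}$, $a,b\in\C$, and $f_1,f_2$ are solutions with $f_j\perp A^2(\C^n,\varphi)$, then $af_1+bf_2$ solves $\dbar(af_1+bf_2)=au_1+bu_2$ and is orthogonal to $A^2(\C^n,\varphi)$; by the uniqueness part of Proposition \ref{dbar-canonical}, $S_\varphi(au_1+bu_2)=aS_\varphi(u_1)+bS_\varphi(u_2)$, so $S_\varphi$ is linear.

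Next I would set up the closed graph theorem. The operator $\dbar\colon L^2_{(0,1)}(\C^n,\varphi)\rightarrow L^2_{(0,2)}(\C^n,\varphi)$ is closed by part \emph{(i)} of Proposition \ref{dbar-complex-prop}, hence its kernel $\mathcal{Z}$ is a closed subspace of the Hilbert space $L^2_{(0,1)}(\C^n,\varphi)$, and so $\mathcal{Z}$ is itself a Hilbert space. Thus it suffices to prove that the graph of $S_\varphi\colon\mathcal{Z}\rightarrow L^2(\C^n,\varphi)$ is closed. Let $u^{(m)}\to u$ in $\mathcal{Z}$ and $S_\varphi(u^{(m)})\to g$ in $L^2(\C^n,\varphi)$. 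Since $\dbar S_\varphi(u^{(m)})=u^{(m)}\to u$ and $S_\varphi(u^{(m)})\to g$, the closedness of $\dbar\colon L^2(\C^n,\varphi)\rightarrow L^2_{(0,1)}(\C^n,\varphi)$ (again Proposition \ref{dbar-complex-prop}\emph{(i)}) gives $g\in\mathcal{D}_0(\dbar)$ and $\dbar g = u$. Moreover $(S_\varphi(u^{(m)}),h)_\varphi=0$ for every $h\in A^2(\C^n,\varphi)$, and letting $m\to\infty$, by continuity of the scalar product, $(g,h)_\varphi=0$ for every such $h$. Hence $g$ solves $\dbar g=u$ and is orthogonal to $A^2(\C^n,\varphi)$, so $g=S_\varphi(u)$ by Proposition \ref{dbar-canonical}. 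The graph is therefore closed, and the closed graph theorem yields a constant $C<+\infty$ with $\|S_\varphi(u)\|_\varphi\le C\|u\|_\varphi$ for all $u\in\mathcal{Z}$.

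I do not anticipate a real obstacle here: the entire argument rests on (a) the completeness of the domain $\mathcal{Z}$, which comes for free from closedness of $\dbar$, and (b) stability of the two defining properties of the canonical solution (solving $\dbar\cdot=u$, and orthogonality to $A^2(\C^n,\varphi)$) under $L^2(\C^n,\varphi)$-limits, which is immediate from closedness of $\dbar$ and continuity of the inner product. If one preferred to avoid the closed graph theorem, an alternative is to note $S_\varphi(u)=(I-B_\varphi)f$ for any solution $f$ and argue via an open mapping / bounded-inverse statement for $\dbar$ restricted to $\mathcal{Z}^{\perp_{\dbar}}$, but the closed graph route above is the shortest.
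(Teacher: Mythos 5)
Your proposal is correct and follows essentially the same route as the paper: the domain is closed as the kernel of a closed operator, and the closed graph theorem applies because closedness of $\dbar$ and continuity of the inner product make both defining properties of the canonical solution pass to the limit. The extra verification of linearity and the explicit uniqueness argument are harmless additions, but the core reasoning is identical.
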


\begin{proof} The operator is well-defined because of solvability. The domain of $S_\varphi$ is a closed subspace of $L^2_{(0,1)}(\C^n,\varphi)$, because it is the kernel of a closed operator. The closed graph theorem then reduces the proof to showing the closure of $S_\varphi$. 

If $u^{(m)}$ converges to $u$ in $L^2_{(0,1)}(\C^n,\varphi)$ and $S_\varphi(u^{(m)})$ converges to $f$ in $L^2(\C^n,\varphi)$, the closure of $\dbar$ immediately implies that $\dbar f=u$. Since the orthogonality to $A^2(\C^n,\varphi)$ passes to the limit, we can conclude that $S_\varphi(u)=f$, as we wanted.
\end{proof}

\section{Weighted Kohn Laplacians}\label{kohn-sec}

The standing assumption for this section is that $\varphi:\C^n\rightarrow\R$ is $C^2$. In particular \eqref{berg-assumption} holds.

Taking the Hilbert space adjoints of the operators in \eqref{dbar-complex}, we have the dual complex:
\bel\label{kohn-dual}
L^2(\C^n,\varphi)\stackrel{\dbar^*_\varphi}\longleftarrow L^2_{(0,1)}(\C^n,\varphi)\stackrel{\dbar^*_\varphi}\longleftarrow L^2_{(0,2)}(\C^n,\varphi).
\eel

We use the index $\varphi$ in the symbol for these operators to stress the fact that not only the domain, but also the formal expression of $\dbar^*_\varphi$ depends on the weight $\varphi$.

\begin{prop}\label{kohn-dual-prop}
The operators in \eqref{kohn-dual} also define a complex of closed densely-defined linear operators. 
\end{prop}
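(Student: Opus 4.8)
The plan is to obtain the statement entirely from general Hilbert space duality together with the properties of the $\dbar$ complex recorded in Proposition~\ref{dbar-complex-prop}, with no new computation required.

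First recall two standard facts of functional analysis: the Hilbert space adjoint of any densely-defined operator is closed, and the adjoint of a \emph{closed} densely-defined operator is again densely defined (with $(T^*)^*=T$). By part (i) of Proposition~\ref{dbar-complex-prop} the two $\dbar$ operators in \eqref{dbar-complex} are closed and densely defined, so their adjoints $\dbar^*_\varphi$ in \eqref{kohn-dual} are closed and densely defined as well. It therefore remains only to check that \eqref{kohn-dual} is a complex.

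For this I would argue directly. Let $w\in\mathcal{D}(\dbar^*_\varphi)$ be a $(0,2)$-form, and put $g:=\dbar^*_\varphi w\in L^2_{(0,1)}(\C^n,\varphi)$; the claim is that $g\in\mathcal{D}(\dbar^*_\varphi)$ and $\dbar^*_\varphi g=0$. Given any $h\in\mathcal{D}_0(\dbar)$, part (ii) of Proposition~\ref{dbar-complex-prop} gives $\dbar h\in\mathcal{D}_1(\dbar)$ and $\dbar\,\dbar h=0$, hence
\be
(g,\dbar h)_\varphi=(\dbar^*_\varphi w,\dbar h)_\varphi=(w,\dbar\,\dbar h)_\varphi=0.
\ee
Thus the linear functional $h\mapsto(g,\dbar h)_\varphi$ vanishes identically on $\mathcal{D}_0(\dbar)$; by the very definition of the adjoint this forces $g\in\mathcal{D}(\dbar^*_\varphi)$ and $(\dbar^*_\varphi g,h)_\varphi=(g,\dbar h)_\varphi=0$ for every $h\in\mathcal{D}_0(\dbar)$. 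Since $\mathcal{D}_0(\dbar)$ is dense in $L^2(\C^n,\varphi)$, we conclude $\dbar^*_\varphi g=0$, which is exactly the complex property.

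There is no serious obstacle here. The only points deserving care are that the density of $\mathcal{D}(\dbar^*_\varphi)$ genuinely uses the closedness (not merely closability) of the primal $\dbar$, and that the manipulation $\dbar\,\dbar h=0$ is legitimate precisely because Proposition~\ref{dbar-complex-prop}(ii) guarantees $\dbar h\in\mathcal{D}_1(\dbar)$, so that the composition is defined. One could equivalently phrase the complex property as the abstract inclusion $\operatorname{ran}(\dbar^*_\varphi)\subseteq\ker(\dbar^*_\varphi)$, derived by taking orthogonal complements in $\operatorname{ran}(\dbar)\subseteq\ker(\dbar)$ and using $\ker(T)=\operatorname{ran}(T^*)^\perp$ for a closed densely-defined $T$; I prefer the direct computation above since it keeps the argument self-contained.
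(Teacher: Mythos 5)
Your proof is correct and follows essentially the same route as the paper: the paper notes that closedness and dense definition follow from general adjoint theory, and that the complex property is ``just a matter of unravelling the definition of Hilbert space adjoint and using $\dbar\dbar=0$'' --- which is precisely the unravelling you carried out, correctly invoking Proposition~\ref{dbar-complex-prop}(ii) to ensure $\dbar h\in\mathcal{D}_1(\dbar)$ so that $\dbar\dbar h$ is defined.
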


\begin{proof}
The operators $\dbar^*_\varphi$ are closed densely-defined linear operators, as they are adjoints of closed densely-defined operators. The fact that \eqref{kohn-dual} is a complex is just a matter of unravelling the definition of Hilbert space adjoint and using the identity $\dbar\dbar=0$.
\end{proof}

We denote by $\mathcal{D}_1(\dbar^*_\varphi)\subseteq L^2_{(0,1)}(\C^n,\varphi)$ and $\mathcal{D}_2(\dbar^*_\varphi)\subseteq L^2_{(0,2)}(\C^n,\varphi)$ the dense domains of the two $\dbar^*_\varphi$ operators.

The \emph{weighted Kohn Laplacian} is defined by the formula
\be
\Box_\varphi:=\dbar^*_\varphi\dbar+\dbar\dbar^*_\varphi
\ee 
on the domain of $(0,1)$-forms \be
\mathcal{D}(\Box_\varphi):=\{u\in L^2_{(0,1)}(\C^n,\varphi):\ u\in\mathcal{D}_1(\dbar)\cap\mathcal{D}_1(\dbar^*_\varphi),\ \dbar u\in \mathcal{D}_2(\dbar^*_\varphi)\text{ and }\dbar^* _\varphi u\in \mathcal{D}_0(\dbar)\}.
\ee 

\begin{prop}
The weighted Kohn Laplacian is a densely-defined, closed, self-adjoint and non-negative operator on $L^2_{(0,1)}(\C^n,\varphi)$.
\end{prop}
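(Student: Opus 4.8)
The plan is to realize $\Box_\varphi$ as the self-adjoint operator canonically attached to a closed, densely-defined, non-negative quadratic form, in the same spirit as the Friedrichs-type construction in the proof of Proposition \ref{ext-prop}.

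First I would introduce the sesquilinear form
\be
Q(u,v):=(\dbar u,\dbar v)_\varphi+(\dbar^*_\varphi u,\dbar^*_\varphi v)_\varphi
\ee
on the form domain $\mathcal{D}(Q):=\mathcal{D}_1(\dbar)\cap\mathcal{D}_1(\dbar^*_\varphi)$, and verify that it is densely defined (it contains the smooth compactly supported $(0,1)$-forms, which are $L^2$-dense), symmetric and non-negative (immediate), and closed: equipping $\mathcal{D}(Q)$ with the norm $\|u\|_Q:=(Q(u,u)+\|u\|_\varphi^2)^{1/2}$, a $\|\cdot\|_Q$-Cauchy sequence $(u_k)$ has $u_k$, $\dbar u_k$ and $\dbar^*_\varphi u_k$ all Cauchy in the relevant $L^2$ spaces, and the closedness of $\dbar$ and $\dbar^*_\varphi$ (Propositions \ref{dbar-complex-prop} and \ref{kohn-dual-prop}) forces the limits to be $u$, $\dbar u$ and $\dbar^*_\varphi u$, so $u_k\to u$ in $\|\cdot\|_Q$.

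Next, by the first representation theorem for closed non-negative forms --- concretely, by the Riesz-representation argument used in Proposition \ref{ext-prop}: given $\phi\in L^2_{(0,1)}(\C^n,\varphi)$, the functional $v\mapsto(v,\phi)_\varphi$ is bounded on $(\mathcal{D}(Q),\|\cdot\|_Q)$, hence represented by some $N\phi\in\mathcal{D}(Q)$ with $Q(v,N\phi)+(v,N\phi)_\varphi=(v,\phi)_\varphi$ for all $v\in\mathcal{D}(Q)$ and $\|N\phi\|_Q\leq\|\phi\|_\varphi$ --- one obtains a bounded, self-adjoint, injective operator $N$ with dense range, whose inverse is $I+L$ for a unique self-adjoint non-negative operator $L$ with $\mathcal{D}(L)\subseteq\mathcal{D}(Q)$ and $(Lu,v)_\varphi=Q(u,v)$ whenever $u\in\mathcal{D}(L)$, $v\in\mathcal{D}(Q)$. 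It then remains to check that $L=\Box_\varphi$. The inclusion $\mathcal{D}(\Box_\varphi)\subseteq\mathcal{D}(L)$, with $L=\Box_\varphi$ on it, is a one-line integration by parts: for $u\in\mathcal{D}(\Box_\varphi)$ and $v\in\mathcal{D}(Q)$ the conditions $\dbar^*_\varphi u\in\mathcal{D}_0(\dbar)$ and $\dbar u\in\mathcal{D}_2(\dbar^*_\varphi)$ give $(\dbar^*_\varphi\dbar u+\dbar\dbar^*_\varphi u,v)_\varphi=(\dbar u,\dbar v)_\varphi+(\dbar^*_\varphi u,\dbar^*_\varphi v)_\varphi=Q(u,v)$. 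Granting the reverse inclusion, self-adjointness is done, closedness is automatic for self-adjoint operators, the domain is dense because $N=(I+\Box_\varphi)^{-1}$ has dense range (and also because it contains the smooth compactly supported $(0,1)$-forms), and non-negativity is explicit, since $(\Box_\varphi u,u)_\varphi=\|\dbar u\|_\varphi^2+\|\dbar^*_\varphi u\|_\varphi^2\geq0$.

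The reverse inclusion $\mathcal{D}(L)\subseteq\mathcal{D}(\Box_\varphi)$ is the one genuinely non-formal point, and it is where I expect the main difficulty: one must show that a $u\in\mathcal{D}(Q)$ solving the weak equation $Q(u,v)=(f,v)_\varphi$ for all $v\in\mathcal{D}(Q)$ actually has $\dbar^*_\varphi u\in\mathcal{D}_0(\dbar)$, $\dbar u\in\mathcal{D}_2(\dbar^*_\varphi)$ and $\dbar^*_\varphi\dbar u+\dbar\dbar^*_\varphi u=f$. Testing against $v=\dbar\eta$ with $\eta\in C^\infty_c(\C^n)$ and against $v=\dbar^*_\varphi\beta$ with $\beta$ a smooth compactly supported $(0,2)$-form produces the relevant weak equations; to upgrade them one uses that $\Box_\varphi$ is a second-order \emph{elliptic} system (its component-wise principal part is a multiple of the Laplacian, by the Bochner--Kodaira identity), so interior elliptic regularity applies, and that over $\C^n$ there is no boundary, so $\dbar$ and the $\dbar^*_\varphi$ operators are the maximal extensions of their formal expressions and membership in their domains reduces to an $L^2$-integrability condition on distributional derivatives. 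This identification of the form-Laplacian with $\dbar^*_\varphi\dbar+\dbar\dbar^*_\varphi$ is classical; a reference is \cite{chen-shaw}.
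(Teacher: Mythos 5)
Your approach is genuinely different from the paper's, and the difference has real content. The paper's proof is the Gaffney/von~Neumann argument carried out entirely at the level of closed operators: using the orthogonal decomposition of $L^2_{(0,1)}(\C^n,\varphi)\oplus L^2(\C^n,\varphi)$ into the graph of $\dbar^*_\varphi$ and the rotated graph of $\dbar$, one first manufactures the bounded self-adjoint resolvents $(1+\dbar\dbar^*_\varphi)^{-1}$ and $(1+\dbar^*_\varphi\dbar)^{-1}$, and then verifies that
\be
L:=(1+\dbar\dbar^*_\varphi)^{-1}+(1+\dbar^*_\varphi\dbar)^{-1}-1
\ee
is a bounded self-adjoint two-sided inverse of $1+\Box_\varphi$ on the paper's domain $\mathcal{D}(\Box_\varphi)$. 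That it is a right inverse is a direct computation (using $\dbar\dbar=0$ and $\dbar^*_\varphi\dbar^*_\varphi=0$), injectivity of $1+\Box_\varphi$ follows from non-negativity, and self-adjointness of $1+\Box_\varphi$ (hence of $\Box_\varphi$) follows because it is the inverse of a bounded self-adjoint bijection. Nothing in this argument requires you to know anything \emph{a priori} about what the domain of the form-operator looks like. Your approach, going through the Kato representation theorem, buys you a self-adjoint operator $L$ essentially for free, but it moves all the work into establishing that $L$ coincides with the operator defined on $\mathcal{D}(\Box_\varphi)$ --- which you rightly single out as the one nontrivial step, and which the paper's route bypasses entirely.

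Concerning that nontrivial step, your sketch has a real gap that is not closed by what you write. Interior elliptic regularity applied to the distributional equation $\Box_\varphi u=f$ gives $u\in W^{2,2}_{\text{loc}}$, hence $\dbar\dbar^*_\varphi u$ and $\dbar^*_\varphi\dbar u$ are each in $L^2_{\text{loc}}$ and their sum equals $f\in L^2(\C^n,\varphi)$. But membership in $\mathcal{D}_0(\dbar)$ and $\mathcal{D}_2(\dbar^*_\varphi)$ (i.e.\ being in $\mathcal{D}(\Box_\varphi)$) demands that each of $\dbar\dbar^*_\varphi u$ and $\dbar^*_\varphi\dbar u$ lie \emph{separately} in the global weighted space $L^2(\C^n,\varphi)$; knowing that the sum is globally $L^2_\varphi$ and that each piece is locally $L^2$ does not imply this. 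The phrase ``membership in their domains reduces to an $L^2$-integrability condition on distributional derivatives'' is correct but is precisely the global condition you have not established. What actually closes the gap is a cutoff argument exploiting completeness of $\C^n$ (the weighted analogue of Gaffney's cutoff lemma), which lets you pass from local to global $L^2_\varphi$ estimates on each term and from $\mathcal{E}_\varphi(u,\cdot)$ tested on compactly supported forms to arbitrary forms in $\mathcal{D}(\mathcal{E}_\varphi)$. So the conclusion you want is correct and your plan is viable, but the step you flag as ``the main difficulty'' is not filled in, whereas the paper's direct Hilbert-space construction never has to face it.
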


\begin{proof} If $u\in \mathcal{D}(\Box_\varphi)$, we have\bel\label{kohn-quadratic}
(\Box_\varphi u, u)_\varphi=(\dbar^*_\varphi\dbar u,u)_\varphi+(\dbar\dbar^*_\varphi u,u)_\varphi=||\dbar u||^2_\varphi+||\dbar^*_\varphi u||^2_\varphi.
\eel
Thus $\Box_\varphi$ is non-negative.

To prove closure, let $u^{(m)}$ be a sequence in $\mathcal{D}(\Box_\varphi)$ which converges to $u\in L^2_{(0,1)}(\C^n,\varphi)$ and such that $\Box_\varphi u^{(m)}$ converges to $v\in L^2_{(0,1)}(\C^n,\varphi)$. Applying \eqref{kohn-quadratic} to $u^{(\ell)}-u^{(m)}$ and using Cauchy-Schwarz inequality, we obtain 
\be
||\dbar (u^{(\ell)}-u^{(m)})||^2_\varphi+||\dbar^*_\varphi (u^{(\ell)}-u^{(m)})||^2_\varphi\leq ||\Box_\varphi(u^{(\ell)}-u^{(m)})||_\varphi||u^{(\ell)}-u^{(m)}||_\varphi.
\ee 
Since both $u^{(m)}$ and $\Box_\varphi u^{(m)}$ are Cauchy sequences in $L^2_{(0,1)}(\C^n,\varphi)$, we conclude that $\dbar u^{(m)}$ and $\dbar^*_\varphi u^{(m)}$ are also Cauchy sequences in $L^2_{(0,2)}(\C^n,\varphi)$ and $L^2(\C^n,\varphi)$ respectively. By the closure of $\dbar$ and $\dbar^*_\varphi$, $u\in \mathcal{D}_1(\dbar)\cap\mathcal{D}_1(\dbar^*_\varphi)$, $\dbar u^{(m)}$ converges to $\dbar u$ and $\dbar^*_\varphi u^{(m)}$ converges to $\dbar^*_\varphi u$.

Now recall that $u^{(m)}\in\mathcal{D}(\Box_\varphi)$ and that $\dbar\dbar\dbar^*_\varphi u^{(m)}=0$ (by Proposition \ref{dbar-complex-prop}). By definition of adjoint,
\be
(\dbar\dbar^*_\varphi u^{(k)},\dbar^*_\varphi \dbar u^{(k)})_\varphi= (\dbar\dbar\dbar^*_\varphi u^{(k)},\dbar u^{(k)})_\varphi=0,
\ee i.e., $\dbar\dbar^*_\varphi u^{(k)}$ and $\dbar^*_\varphi \dbar u^{(k)}$ are orthogonal. Since their sum converges, the same is true of both of them separately. Again by the closure of $\dbar$ and $\dbar^*_\varphi$, we deduce that $\dbar u\in \mathcal{D}_2(\dbar^*_\varphi)$, that $\dbar^*_\varphi u\in \mathcal{D}_0(\dbar)$, and that $\dbar\dbar^*_\varphi u^{(m)}$ converges to $\dbar\dbar^*_\varphi u$, while $\dbar^*_\varphi \dbar u^{(m)}$ converges to $\dbar^*_\varphi\dbar u$. In particular $\Box_\varphi u^{(m)}$ converges to $\Box_\varphi u$. This finishes the proof of the closure.

By Hilbert space theory we have that $L^2_{(0,1)}(\C^n,\varphi)\oplus L^2(\C^n,\varphi)$ decomposes as the direct sum of the graph of $\dbar^*_\varphi$ and the image of the graph of $\dbar$ under the mapping $(u,v)\mapsto (v,-u)$. This depends only on the fact that $\dbar$ is closed and densely-defined. In particular, given $u\in L^2_{(0,1)}(\C^n,\varphi)$, there exist unique $f\in L^2_{(0,1)}(\C^n,\varphi)$ and $g\in L^2(\C^n,\varphi)$ such that \bel\label{kohn-orthogonal}
u\oplus 0=f\oplus\dbar^*_\varphi f+ \dbar g\oplus(-g), 
\eel i.e. $u=f+\dbar\dbar^*_\varphi f$, for a unique $f\in \mathcal{D}_1(\dbar^*_\varphi)$ such that $\dbar^*_\varphi f\in \mathcal{D}_0(\dbar)$. Taking norms in \eqref{kohn-orthogonal}, we find\be
||u||_\varphi^2=||f||_\varphi^2 + ||\dbar^*_\varphi f||_\varphi^2+ ||\dbar g||_\varphi^2+||g||_\varphi^2\geq ||f||^2_\varphi.
\ee Hence the mapping $(1+\dbar\dbar^*_\varphi)^{-1}$ that associates $f$ to $u$ is well-defined, linear and bounded, and of course it is the inverse of $1+\dbar\dbar^*_\varphi$ on its natural domain. We now show that $(1+\dbar\dbar^*_\varphi)^{-1}$ is self-adjoint. Let $f,g\in L^2_{(0,1)}(\C^n,\varphi)$. We have\bee
&&((1+\dbar\dbar^*_\varphi)^{-1}f,g)_\varphi=((1+\dbar\dbar^*_\varphi)^{-1}f,(1+\dbar\dbar^*_\varphi)(1+\dbar\dbar^*_\varphi)^{-1}g)_\varphi\\
&=&((1+\dbar\dbar^*_\varphi)^{-1}f,(1+\dbar\dbar^*_\varphi)^{-1}g)_\varphi+((1+\dbar\dbar^*_\varphi)^{-1}f,\dbar\dbar^*_\varphi (1+\dbar\dbar^*_\varphi)^{-1}g)_\varphi.
\eee Since $(1+\dbar\dbar^*_\varphi)^{-1}g$ is in the domain of $\dbar\dbar^*_\varphi$, we can write\bee
&&((1+\dbar\dbar^*_\varphi)^{-1}f,(1+\dbar\dbar^*_\varphi)^{-1}g)_\varphi+((1+\dbar\dbar^*_\varphi)^{-1}f,\dbar\dbar^*_\varphi (1+\dbar\dbar^*_\varphi)^{-1}g)_\varphi\\
&=&((1+\dbar\dbar^*_\varphi)^{-1}f,(1+\dbar\dbar^*_\varphi)^{-1}g)_\varphi+(\dbar\dbar^*_\varphi (1+\dbar\dbar^*_\varphi)^{-1}f,(1+\dbar\dbar^*_\varphi)^{-1}g)_\varphi\\
&=& (f,(1+\dbar\dbar^*_\varphi)^{-1}g)_\varphi,
\eee proving the self-adjointness. Since $(1+\dbar\dbar^*_\varphi)^{-1}$ is clearly injective, it has also dense range. This proves that the natural domain $\{u\in \mathcal{D}_1(\dbar^*_\varphi):\ \dbar^*_\varphi u\in\mathcal{D}_0(\dbar)\}$ of $1+\dbar\dbar^*_\varphi$ is dense.
 As a consequence, $1+\dbar\dbar^*_\varphi$ is also self-adjoint on the natural domain. By an analogous argument one can see that $1+\dbar^*_\varphi\dbar$ is self-adjoint on the natural domain.

Now consider the operator \be
L=(1+\dbar\dbar^*_\varphi)^{-1}+(1+\dbar^*_\varphi\dbar)^{-1}-1,
\ee which is patently bounded and self-adjoint. Writing\bel\label{kohn-L}
L=(1+\dbar\dbar^*_\varphi)^{-1}-(1+\dbar^*_\varphi\dbar-1)(1+\dbar^*_\varphi\dbar)^{-1}=(1+\dbar\dbar^*_\varphi)^{-1}-\dbar^*_\varphi\dbar(1+\dbar^*_\varphi\dbar)^{-1}.
\eel The second term is contained in the domain of $\dbar\dbar^*_\varphi$ (by the complex property $\dbar^*_\varphi\dbar^*_\varphi=0$ of Proposition \ref{kohn-dual-prop}), and hence the range of $L$ is also contained in the domain of $\dbar\dbar^*_\varphi$. By a symmetrical argument, the range of $L$ is contained in the domain of $\dbar^*_\varphi\dbar$ and hence in the domain of $\Box_\varphi$. Since \bel\label{kohn-L-2}
(1+\Box_\varphi)L=L+\dbar\dbar^*_\varphi L+\dbar^*_\varphi\dbar L=1,
\eel where we used the representation \eqref{kohn-L} and the one with $\dbar$ and $\dbar^*$ exchanged. Therefore $L$ is injective and thus, being self-adjoint, it has dense range, and a fortiori $\mathcal{D}(\Box_\varphi)$ is dense. Moreover, by \eqref{kohn-L-2} it is clear that the range of $1+\Box_\varphi$ is $L^2_{(0,1)}(\C^n,\varphi)$. Since $1+\Box_\varphi$ is strictly positive and hence injective on the domain of $\Box_\varphi$, we can conclude that it is the inverse of $L$. In particular $1+\Box_\varphi$ is self-adjoint. This immediately implies that $\Box_\varphi$ is self-adjoint.
\end{proof}

Another important object is the quadratic form \be
\mathcal{E}_\varphi(u,v):=(\dbar u,\dbar v)_\varphi+(\dbar^*_\varphi u,\dbar^*_\varphi v)_\varphi,
\ee defined for $u,v\in\mathcal{D}(\mathcal{E}_\varphi):=\mathcal{D}_1(\dbar)\cap\mathcal{D}_1(\dbar^*_\varphi)$. Notice that, by definition of Hilbert space adjoints, \be
(\Box_\varphi u,v)=\mathcal{E}_\varphi(u,v) \qquad \forall u\in \mathcal{D}(\Box_\varphi),\quad \forall v\in \mathcal{D}(\mathcal{E}_\varphi).
\ee
We will simply write $\mathcal{E}_\varphi(u)$ for $\mathcal{E}_\varphi(u,u)$.

\begin{prop}\label{kohn-dbar-star}\begin{enumerate}
\item[\emph{(i)}] We have
\be
\mathcal{D}_1(\dbar_\varphi^*) =\left\{u\in L^2_{(0,1)}(\C^n,\varphi):\ \sum_j\left(-\frac{\partial u_j}{\partial z_j}+2\frac{\partial \varphi}{\partial z_j}u_j\right)\in L^2(\C^n,\varphi)\right\},
\ee and if $u\in \mathcal{D}_1(\dbar_\varphi^*)$ then $\dbar^*_\varphi u=\sum_j\left(-\frac{\partial u_j}{\partial z_j}+2\frac{\partial \varphi}{\partial z_j}u_j\right)$.
\item[\emph{(ii)}] We have
\be
\mathcal{D}_2(\dbar_\varphi^*) =\left\{u\in L^2_{(0,2)}(\C^n,\varphi):\ \sum_j\left(-\frac{\partial u_{jk}}{\partial z_j}+2\frac{\partial \varphi}{\partial z_j}u_{jk}\right)\in L^2(\C^n,\varphi)\quad\forall k\right\},
\ee and if $u\in \mathcal{D}_2(\dbar_\varphi^*)$ then $\dbar^*_\varphi u=\sum_j\left(-\frac{\partial u_{jk}}{\partial z_j}+2\frac{\partial \varphi}{\partial z_j}u_{jk}\right)$, where we are using the standard convention that $u_{jk}=-u_{kj}$.

\item[\emph{(iii)}] The space of $(0,1)$-forms with smooth compactly supported coefficients is dense in $\mathcal{D}(\mathcal{E}_\varphi)$ with respect to the norm $||\cdot||_\varphi+||\dbar\cdot||_\varphi+||\dbar^*_\varphi\cdot||_\varphi$.
\end{enumerate}
\end{prop}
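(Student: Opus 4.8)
The three assertions follow from one integration by parts together with the cut-off and mollification scheme already prepared in Proposition \ref{dbar-complex-prop}.

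\emph{Parts (i) and (ii).} The starting point is the pointwise identity, valid for any $C^1$ function $h$ and any $j$ (and using only that $\varphi$ is real-valued),
\be
\frac{\partial}{\partial\overline{z}_j}\left(\overline{h}\,e^{-2\varphi}\right)=\overline{\left(\frac{\partial h}{\partial z_j}-2\frac{\partial\varphi}{\partial z_j}\,h\right)}\,e^{-2\varphi}.
\ee
Consequently, for $f\in C^\infty_c(\C^n)$ and $u=\sum_ju_jd\overline{z}_j\in L^2_{(0,1)}(\C^n,\varphi)$, integrating by parts (legitimate since $f$ is compactly supported and $\overline{u_j}e^{-2\varphi}\in L^1_{\text{loc}}$) yields
\be
(\dbar f,u)_\varphi=\sum_j\int_{\C^n}\frac{\partial f}{\partial\overline{z}_j}\,\overline{u_j}\,e^{-2\varphi}=(f,\vartheta_\varphi u)_\varphi,\qquad \vartheta_\varphi u:=\sum_j\left(-\frac{\partial u_j}{\partial z_j}+2\frac{\partial\varphi}{\partial z_j}u_j\right),
\ee
where the right-hand side is read as the action of the distribution $\vartheta_\varphi u$ on $f$. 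If $\vartheta_\varphi u\in L^2(\C^n,\varphi)$, both sides are continuous in $f$ for the norm $||\cdot||_\varphi+||\dbar\cdot||_\varphi$, so by Proposition \ref{dbar-complex-prop}(iv) the identity extends to every $f\in\mathcal{D}_0(\dbar)$; this is exactly the statement $u\in\mathcal{D}_1(\dbar_\varphi^*)$ with $\dbar_\varphi^*u=\vartheta_\varphi u$. Conversely, if $u\in\mathcal{D}_1(\dbar_\varphi^*)$ then $(f,\dbar_\varphi^*u)_\varphi=(\dbar f,u)_\varphi=(f,\vartheta_\varphi u)_\varphi$ for all $f\in C^\infty_c(\C^n)$, so $\vartheta_\varphi u=\dbar_\varphi^*u$ as distributions and in particular $\vartheta_\varphi u\in L^2(\C^n,\varphi)$. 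This proves (i). For (ii) one first checks, by relabelling the summation indices and using $w_{jk}=-w_{kj}$, that $(\dbar u,w)_\varphi=\sum_{j,k}\int_{\C^n}\frac{\partial u_k}{\partial\overline{z}_j}\,\overline{w_{jk}}\,e^{-2\varphi}$ when $u$ has smooth compactly supported coefficients and $w\in L^2_{(0,2)}(\C^n,\varphi)$; the same integration by parts, done in the variable $z_j$ for each fixed $k$, identifies the $k$-th component of the formal adjoint as $\sum_j(-\partial w_{jk}/\partial z_j+2(\partial\varphi/\partial z_j)w_{jk})$, and one concludes as before, now invoking the density statement of Proposition \ref{dbar-complex-prop}(v).

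\emph{Part (iii).} Let $u\in\mathcal{D}(\mathcal{E}_\varphi)=\mathcal{D}_1(\dbar)\cap\mathcal{D}_1(\dbar_\varphi^*)$; we approximate $u$ in the graph norm $||\cdot||_\varphi+||\dbar\cdot||_\varphi+||\dbar_\varphi^*\cdot||_\varphi$ in two steps. \textbf{Cut-off.} Pick $\eta\in C^\infty_c(\C^n)$ with $\eta\equiv1$ near the origin and set $\eta_R(z):=\eta(z/R)$. Then $\eta_Ru\to u$ in $L^2_{(0,1)}(\C^n,\varphi)$, while $\dbar(\eta_Ru)=\eta_R\,\dbar u+\dbar\eta_R\wedge u$ and, by part (i), $\vartheta_\varphi(\eta_Ru)=\eta_R\,\dbar_\varphi^*u-\sum_j\frac{\partial\eta_R}{\partial z_j}u_j$; since $|\dbar\eta_R|+|\partial\eta_R|\leq C/R$, the extra terms tend to $0$ in $L^2$, so $\eta_Ru\in\mathcal{D}(\mathcal{E}_\varphi)$ and $\eta_Ru\to u$ in the graph norm. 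It therefore suffices to approximate a compactly supported $u\in\mathcal{D}(\mathcal{E}_\varphi)$. \textbf{Mollification.} Let $\psi^\eps$ be a standard approximate identity, applied componentwise. Then $u*\psi^\eps$ is smooth and compactly supported, $u*\psi^\eps\to u$ in $L^2$, and $\dbar(u*\psi^\eps)=(\dbar u)*\psi^\eps\to\dbar u$. For the adjoint, part (i) gives
\be
\dbar_\varphi^*(u*\psi^\eps)=(\vartheta_\varphi u)*\psi^\eps+2\sum_j\left(\frac{\partial\varphi}{\partial z_j}(u_j*\psi^\eps)-\left(\frac{\partial\varphi}{\partial z_j}u_j\right)*\psi^\eps\right),
\ee
because the first-order derivatives commute with convolution. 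The commutators in the sum tend to $0$ in $L^2$, by the elementary bound $||a(f*\psi^\eps)-(af)*\psi^\eps||_{L^2}\leq\omega_a(\eps)\,||\,|f|*|\psi^\eps|\,||_{L^2}$ valid for $a$ continuous (with modulus of continuity $\omega_a$ on a fixed compact set containing the supports) and $f\in L^2$ compactly supported, applied with $a=\partial\varphi/\partial z_j$; this is the only place where regularity of the weight is used ($\varphi\in C^2$, indeed $C^1$, is more than enough). Hence $\dbar_\varphi^*(u*\psi^\eps)\to\dbar_\varphi^*u$ in $L^2$, which completes the proof.

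The routine content is the index bookkeeping in (ii) and the verification, via part (i), that cut-offs remain in $\mathcal{D}(\mathcal{E}_\varphi)$. The one genuine point is the commutator estimate in (iii): mollification fails to commute with the zeroth-order multiplication operator $2\sum_j(\partial\varphi/\partial z_j)\,\cdot$ appearing in $\dbar_\varphi^*$ only by an error that vanishes as $\eps\to0$, which is exactly what makes smooth compactly supported forms a core for $\mathcal{E}_\varphi$ on the boundaryless manifold $\C^n$.
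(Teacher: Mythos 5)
Your argument is correct and follows essentially the same route as the paper: integration by parts against test functions to identify the formal adjoint $\vartheta_\varphi$, the density statements of Proposition \ref{dbar-complex-prop} to characterize the domain, and cut-off followed by Friedrichs mollification to show that smooth compactly supported forms are a core for $\mathcal{E}_\varphi$. The one small refinement is your observation that the commutator error $a(f*\psi^\eps)-(af)*\psi^\eps$ already vanishes in $L^2$ for $a$ merely continuous, so $C^1$ regularity of $\varphi$ suffices in the mollification step, where the paper appeals to the local Lipschitz property of $\partial\varphi/\partial z_j$ coming from $C^2$.
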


\begin{proof}
Assume that $u\in\mathcal{D}_1(\dbar^*_\varphi)$. If $g\in C^\infty_c(\C^n)\subseteq\mathcal{D}_0(\dbar)$, we have the identity\bee
\int_{\C^n}\dbar^*_\varphi u\cdot  ge^{-2\varphi}=(\dbar^*_\varphi u,\overline{g})_\varphi=(u,\dbar \overline{g})_\varphi =\sum_j\int_{\C^n}e^{-2\varphi}u_j \frac{\partial g}{\partial z_j}.
\eee By the arbitrariness of $g$, we have \be
\dbar^*_\varphi u=-e^{2\varphi}\sum_j\frac{\partial(e^{-2\varphi}u_j)}{\partial z_j}=\sum_j\left(-\frac{\partial u_j}{\partial z_j}+2\frac{\partial \varphi}{\partial z_j}u_j\right).\ee

To conclude we need to show the converse to what we have just seen, i.e., that any $u\in L^2_{(0,1)}(\C^n,\varphi)$ such that $f:=\sum_j\left(-\frac{\partial u_j}{\partial z_j}+2\frac{\partial \varphi}{\partial z_j}u_j\right)\in L^2(\C^n,\varphi)$ lies in $\mathcal{D}_1(\C^n,\varphi)$. Let $g$ be as above. Then, by the definition of distributional derivative, \bel\label{kohn-adjoint-formula}
\sum_j\int_{\C^n}\frac{\partial g}{\partial \overline{z}_j} \overline{u_j}e^{-2\varphi}=\int_{\C^n} g \overline{f}e^{-2\varphi}.
\eel 
Now, by part (iv) of Proposition \ref{dbar-complex-prop}, given $g\in \mathcal{D}_0(\dbar)$ we can find a sequence $g^{(m)}\in C^\infty_c(\C^n)$ such that $g^{(m)}$ converges to $g$ in $L^2(\C^n,\varphi)$ and $\dbar g^{(m)}$ converges to $\dbar g$ in $L^2_{(0,1)}(\C^n,\varphi)$. Thus we can pass to the limit in \eqref{kohn-adjoint-formula} to conclude that the same identity holds for any $g\in \mathcal{D}_{(0,1)}(\dbar)$. This shows that $u\in \mathcal{D}_{(0,1)}(\dbar^*_\varphi)$ and that $\dbar^*_\varphi u=f$, concluding the proof of (i). 

The proof of (ii) is analogous and hence omitted.

Let now $u\in \mathcal{D}(\mathcal{E}_\varphi)$. Put $\eta_\eps(z):=\eta(\eps z)$, where $\eta\in C^\infty_c(\C^n)$ is such that $\eta(0)=1$. We have\be
\dbar^*_\varphi (\eta_\eps u)=\eta_\eps\dbar^*_\varphi u-\sum_ju_j \frac{\partial \eta_\eps}{\partial z_j},
\ee from which we see that $\eta_\eps u\in \mathcal{D}_1(\dbar^*_\varphi)$ and that, by dominated convergence, $\dbar^*_\varphi (\eta_\eps u)$ converges to $\dbar^*_\varphi u$ in $L^2(\C^n,\varphi)$ when $\eps$ tends to $0$. A similar computation shows that $\dbar(\eta_\eps u)$ converges to $\dbar^*u$. Since $\eta_\eps u$ converges to $u$, this shows that compactly supported elements of $\mathcal{D}(\mathcal{E}_\varphi)$ are dense in the required norm. 
In analogy with the proof of part (iv) of Proposition \ref{dbar-complex-prop}, we now assume that $u\in \mathcal{D}(\mathcal{E}_\varphi)$ is compactly supported and consider its componentwise convolution with an approximate identity $\psi^\eps$:\bee
\dbar^*_\varphi (u*\psi^\eps)&=&\sum_j \left(-\frac{\partial u_j}{\partial z_j}*\psi^\eps+2\frac{\partial \varphi}{\partial z_j}(u_j*\psi^\eps)\right)\\
&=&\dbar^*_\varphi u*\psi^\eps \\
&+&2\sum_j\int_{\C^n}\left(\frac{\partial \varphi}{\partial z_j}(z)-\frac{\partial \varphi}{\partial z_j}(z-w)\right)u_j(z-w)\psi^\eps(w)d\mathcal{L}(w).
\eee
The first term converges to $\dbar^*_\varphi u$ in $L^2(\C^n,\varphi)$ because of the local boundedness of $\varphi$, while to see that the second term converges to $0$ in the same norm we need to use the $C^2$ regularity of $\varphi$, or more precisely the local Lipschitz property of $\frac{\partial \varphi}{\partial z_j}$. This is essentially the classical Friedrichs' argument and we omit the elementary details.
\end{proof}

\section{The Morrey-Kohn-H\"ormander formula \\ and a Caccioppoli-type inequality}\label{MKH-sec}

In this section we assume that our weight $\varphi:\C^n\rightarrow\R$ is $C^2$ and \emph{plurisubharmonic}, i.e., that the \emph{complex Hessian}\be
H_\varphi(z):=\left(\frac{\partial^2\varphi}{\partial z_j\partial \overline{z}_k}(z)\right)_{j,k=1}^n\qquad (z\in\C^n)
\ee
is everywhere non-negative definite:\be
(H_\varphi(z) v,v)=\sum_{j,k}\frac{\partial^2\varphi}{\partial z_j\partial \overline{z}_k}(z)v_j\overline{v}_k\geq0\qquad\forall v=(v_1,\dots,v_n)\in \C^n,\quad\forall z\in\C^n.
\ee
The complex Hessian is an $n\times n$ Hermitian matrix-valued continuous mapping. From now on, we identify the $(0,1)$-form $u=\sum_j u_jd\overline{z}_j$ with the complex vector field $u=(u_1,\dots,u_n):\C^n\rightarrow \C^n$. In particular it makes sense to consider the non-negative function \be
(H_\varphi u,u)(z)=\sum_{j,k=1}^n\frac{\partial^2\varphi}{\partial z_j\partial \overline{z}_k}(z)u_j(z)\overline{u_k(z)},
\ee obtained by evaluating the quadratic form associated to the complex Hessian on the vector field $u$.

The next proposition is the reason for the central role played by $H_\varphi$ in weighted complex analysis.

\begin{prop}\label{MKH-prop} Let $u\in L^2_{(0,1)}(\C^n,\varphi)$. Then $u\in\mathcal{D}(\mathcal{E}_\varphi)$ if and only if \be
\frac{\partial u_j}{\partial\overline{z}_k}\in L^2(\C^n,\varphi)\quad\forall j,k,\quad\sqrt{(H_\varphi u,u)}\in L^2(\C^n,\varphi),
\ee and we have\bel\label{MKH-formula}
\mathcal{E}_\varphi(u,v)=\sum_{j,k}\int_{\C^n}\frac{\partial u_j}{\partial\overline{z}_k}\overline{\frac{\partial v_j}{\partial\overline{z}_k}}e^{-2\varphi}+2\int_{\C^n} (H_\varphi u,v)e^{-2\varphi},
\eel
for every $u,v\in \mathcal{D}(\mathcal{E}_\varphi)$. 
\end{prop}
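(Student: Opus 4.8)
The plan is to establish the Morrey–Kohn–Hörmander formula \eqref{MKH-formula} first for $(0,1)$-forms with smooth compactly supported coefficients, and then to extend it to all of $\mathcal{D}(\mathcal{E}_\varphi)$ by the density statement in part (iii) of Proposition \ref{kohn-dbar-star}. For the smooth compactly supported case I would compute both sides directly. On the one hand, $||\dbar u||_\varphi^2 = \sum_{j<k}\int_{\C^n}\left|\frac{\partial u_k}{\partial\overline{z}_j}-\frac{\partial u_j}{\partial\overline{z}_k}\right|^2 e^{-2\varphi}$, which expands to $\sum_{j,k}\int \left(\left|\frac{\partial u_j}{\partial\overline{z}_k}\right|^2 - \frac{\partial u_k}{\partial\overline{z}_j}\overline{\frac{\partial u_j}{\partial\overline{z}_k}}\right)e^{-2\varphi}$ after symmetrizing the indices. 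On the other hand, using the formula for $\dbar^*_\varphi$ from part (i) of Proposition \ref{kohn-dbar-star}, $||\dbar^*_\varphi u||_\varphi^2 = \int_{\C^n}\left|\sum_j\left(-\frac{\partial u_j}{\partial z_j}+2\frac{\partial\varphi}{\partial z_j}u_j\right)\right|^2 e^{-2\varphi}$, which expands into a double sum $\sum_{j,k}\int\left(\frac{\partial u_j}{\partial z_j}\right)\overline{\left(\frac{\partial u_k}{\partial z_k}\right)}e^{-2\varphi}$ plus cross terms involving $\frac{\partial\varphi}{\partial z}$.

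\textbf{The integration by parts.} The heart of the computation is to integrate by parts in the mixed term $\sum_{j,k}\int \frac{\partial u_j}{\partial z_j}\,\overline{\frac{\partial u_k}{\partial z_k}}\,e^{-2\varphi}$: moving the $\frac{\partial}{\partial z_j}$ off of $u_j$ produces a term with $\frac{\partial}{\partial \overline{z}_k}\frac{\partial}{\partial z_j}\overline{u_k}$ — wait, more carefully, one integrates by parts to convert $\partial_{z_j}u_j \cdot \overline{\partial_{z_k}u_k}$ into $-u_j\,\partial_{z_j}\!\left(\overline{\partial_{z_k}u_k}\,e^{-2\varphi}\right)e^{2\varphi}$, then integrates by parts again in $\overline{z}_k$. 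The net effect is the classical identity, valid for $f\in C^2_c$:
\be
\int_{\C^n}\frac{\partial u_j}{\partial z_j}\,\overline{\frac{\partial u_k}{\partial z_k}}\,e^{-2\varphi}
= \int_{\C^n}\frac{\partial u_k}{\partial\overline{z}_j}\,\overline{\frac{\partial u_j}{\partial\overline{z}_k}}\,e^{-2\varphi}
+ 2\int_{\C^n}\frac{\partial^2\varphi}{\partial z_j\partial\overline{z}_k}\,u_j\overline{u_k}\,e^{-2\varphi} + (\text{terms canceling the }\partial\varphi\text{ cross terms}).
\ee
Summing over $j,k$, the $\frac{\partial\varphi}{\partial z_j}u_j$ cross terms from $||\dbar^*_\varphi u||_\varphi^2$ cancel against those produced by the integration by parts, the $\left|\frac{\partial u_j}{\partial\overline{z}_k}\right|^2$ terms from $||\dbar u||_\varphi^2$ and the $\frac{\partial u_k}{\partial\overline{z}_j}\overline{\frac{\partial u_j}{\partial\overline{z}_k}}$ terms combine correctly, and one is left with $\sum_{j,k}\int\left|\frac{\partial u_j}{\partial\overline{z}_k}\right|^2 e^{-2\varphi} + 2\int (H_\varphi u,u)e^{-2\varphi}$, which is \eqref{MKH-formula} with $v=u$. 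The general bilinear identity then follows by polarization. I would carry out this bookkeeping once in full and state that it is the standard Morrey–Kohn–Hörmander integration by parts (cf.\ \cite{chen-shaw}), adapted to the weighted setting.

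\textbf{The density extension and the characterization.} Given the formula for smooth compactly supported forms, note that the right-hand side of \eqref{MKH-formula} with $v=u$ controls $\sum_{j,k}||\frac{\partial u_j}{\partial\overline{z}_k}||_\varphi^2 + 2||\sqrt{(H_\varphi u,u)}||_\varphi^2$, while by \eqref{kohn-quadratic} the left-hand side equals $||\dbar u||_\varphi^2 + ||\dbar^*_\varphi u||_\varphi^2 = \mathcal{E}_\varphi(u)$. Hence on the smooth compactly supported forms the norm $\left(||u||_\varphi^2 + \sum_{j,k}||\frac{\partial u_j}{\partial\overline{z}_k}||_\varphi^2 + ||\sqrt{(H_\varphi u,u)}||_\varphi^2\right)^{1/2}$ is equivalent to the graph norm $||u||_\varphi + ||\dbar u||_\varphi + ||\dbar^*_\varphi u||_\varphi$. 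Taking a sequence $u^{(m)}$ of smooth compactly supported forms converging to $u\in\mathcal{D}(\mathcal{E}_\varphi)$ in the graph norm (Proposition \ref{kohn-dbar-star}(iii)), the equivalence shows that $\frac{\partial u_j^{(m)}}{\partial\overline{z}_k}$ and $\sqrt{(H_\varphi u^{(m)},u^{(m)})}$ are Cauchy in $L^2(\C^n,\varphi)$; identifying the limits distributionally (using that $L^2(\C^n,\varphi)$-convergence implies convergence in distributions, plus continuity of $H_\varphi$) gives $\frac{\partial u_j}{\partial\overline{z}_k}\in L^2(\C^n,\varphi)$ and $\sqrt{(H_\varphi u,u)}\in L^2(\C^n,\varphi)$, and both sides of \eqref{MKH-formula} pass to the limit, first for $v=u$ and then, by polarization, for all $u,v$. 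Conversely, if $u$ satisfies the two integrability conditions, the same approximation argument (convolving and truncating as in the proof of Proposition \ref{kohn-dbar-star}) shows $u\in\mathcal{D}(\mathcal{E}_\varphi)$. The main obstacle I anticipate is being careful with the boundary-term-free integration by parts and with the precise matching of the $\frac{\partial\varphi}{\partial z}$ cross terms — a purely computational but error-prone step — and, secondarily, ensuring the distributional limits in the density argument are justified given only $C^2$ regularity of $\varphi$.
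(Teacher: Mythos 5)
Your proposal follows essentially the same route as the paper's own proof: establish \eqref{MKH-formula} for forms with smooth compactly supported coefficients by the standard Morrey--Kohn--H\"ormander integration by parts (driven by the commutation relation $[\partial^*_{\varphi,k},\dbar_j]=2\dbar_j\partial_k\varphi$), then extend to $\mathcal{D}(\mathcal{E}_\varphi)$ via the density of smooth compactly supported forms (Proposition \ref{kohn-dbar-star}(iii)) together with the norm equivalence that the non-negativity of $H_\varphi$ supplies, and finish by polarization. The only difference is a mirror-image of the bookkeeping: you integrate by parts starting from the $\|\dbar^*_\varphi u\|^2_\varphi$ term, whereas the paper integrates by parts in the cross term of $\|\dbar u\|^2_\varphi$ so that the $\|\dbar^*_\varphi u\|^2_\varphi$ cancellation appears automatically, which is a marginally cleaner arrangement but not a different idea.
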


Equation \eqref{MKH-formula} is called the \emph{Morrey-Kohn-H\"ormander formula}.

\begin{proof} 
We begin by proving \eqref{MKH-formula} for $u,v$ with compactly supported and smooth coefficients. Since $\mathcal{E}_\varphi(u)=||\dbar u||_\varphi^2+||\dbar^*_\varphi u||_\varphi^2$, we can use part (i) of Proposition \ref{kohn-dbar-star} to write\bel\label{MKH-1}
\mathcal{E}_\varphi(u)=\sum_{1\leq j<k\leq n}\int_{\C^n}|\dbar_j u_k-\dbar_k u_j|^2e^{-\varphi}+\int_{\C^n}|\sum_{j=1}^n\partial_{\varphi,j}^*u_j|^2e^{-\varphi},
\eel where $\partial_{\varphi,j}^*f=\partial_j f-2\partial_j \varphi f$. Notice that we are using the lighter symbols $\partial_j$ and $\dbar_j$ for $\frac{\partial}{\partial z_j}$ and $\frac{\partial}{\partial \overline{z}_j}$. We expand the first term of the right-hand side of \eqref{MKH-1}. The result is:\bee
&&\sum_{1\leq j<k\leq n}\int_{\C^n}\left(|\dbar_j u_k|^2+|\dbar_k u_j|^2-\dbar_j u_k\partial_k \overline{u_j}-\partial_j \overline{u_k}\dbar_k u_j\right)e^{-2\varphi}\\
&=&\sum_{j,k=1}^n\int_{\C^n}\left(|\dbar_j u_k|^2-\dbar_j u_k\partial_k \overline{u_j}\right)e^{-2\varphi}.
\eee Notice that we could add the terms corresponding to $j=k$ since they sum up to zero. Two integration by parts show that \bee
&&-\sum_{j,k=1}^n\int_{\C^n}\dbar_j u_k\partial_k \overline{u_j}e^{-2\varphi}=\sum_{j,k=1}^n\int_{\C^n}\partial_{\varphi,k}^*\dbar_j u_k\cdot \overline{u_j} e^{-2\varphi}\\
&=&\sum_{j,k=1}^n\int_{\C^n}[\partial_{\varphi,k}^*,\dbar_j] u_k\cdot \overline{u_j} e^{-2\varphi}+\sum_{j,k=1}^n\int_{\C^n}\dbar_j \partial_{\varphi,k}^*u_k\cdot \overline{u_j} e^{-2\varphi}\\ 
&=&2\sum_{j,k=1}^n\int_{\C^n}\dbar_j\partial_k\varphi u_k\cdot \overline{u_j} e^{-2\varphi}-\sum_{j,k=1}^n\int_{\C^n}\partial_{\varphi,k}^*u_k\cdot \overline{\partial_{\varphi,j}^*u_j} e^{-2\varphi}\\
&=&2\int_{\C^n}(H_\varphi u,u) e^{-2\varphi}-\int_{\C^n}|\sum_{j=1}^n\partial_{\varphi,k}^*u_j|^2 e^{-2\varphi}.
\eee In the third line we used the key commutation relation $[\partial_{\varphi,k}^*,\dbar_j]=2\dbar_j\partial_k\varphi $. Putting all the terms together we obtain \bel\label{MKH-formula-smooth}
\mathcal{E}_\varphi(u)=\sum_{j,k}\int_{\C^n}|\dbar_k u_j|^2e^{-2\varphi}+2\int_{\C^n} (H_\varphi u,u)e^{-2\varphi}.
\eel The regularity of $\varphi$ and $u$ ensures that all the computations are valid.

If $u\in \mathcal{D}(\mathcal{E}_\varphi)$, part (ii) of Proposition \ref{kohn-dbar-star} gives a sequence $u^{(m)}$ of $(0,1)$-forms with smooth compactly supported coefficients such that $\mathcal{E}_\varphi(u-u^{(m)})\rightarrow0$. Using \eqref{MKH-formula-smooth} and the non-negativity of $H_\varphi$, one can prove that $\dbar_ku_j^{(m)}$ is a Cauchy sequence in $L^2(\C^n,\varphi)$ for every $j$ and $k$, and that the same is true of the sequence $\sqrt{(H_\varphi u^{(m)}, u^{(m)})}$. We can pass to the limit and conclude that $\dbar_ku_j\in L^2(\C^n,\varphi)$ for every $j$ and $k$, that $\sqrt{(H_\varphi u,u)}\in L^2(\C^n,\varphi)$, and that \eqref{MKH-formula} holds for $u=v\in \mathcal{D}(\mathcal{E}_\varphi)$. Since both sides of \eqref{MKH-formula} are quadratic forms, the formula for general $u,v$ may be deduced by polarization. 

The proof that\be 
\dbar_ku_j\in L^2(\C^n,\varphi)\quad \forall j,k,\quad\sqrt{(H_\varphi u,u)}\in L^2(\C^n,\varphi)\quad\Longrightarrow\quad  u\in\mathcal{D}(\mathcal{E}_\varphi)
\ee is via a similar approximation argument and we omit it.\end{proof}

In order to prove a Caccioppoli-type inequality for solutions of the $\Box_\varphi$ equation, we need the following auxiliary proposition.

\begin{prop}\label{MKH-computation}
Assume that $u\in \mathcal{D}(\Box_\varphi)$ and let $\eta$ be a real-valued bounded Lipschitz function. Then $\eta u\in \mathcal{D}(\mathcal{E}_\varphi)$ and 
\be 
\mathcal{E}_\varphi(\eta u)=\frac{1}{4}\int_{\C^n}|\nabla \eta|^2|u|^2e^{-2\varphi} + \Re(\eta\Box_\varphi u,\eta u)_\varphi.
\ee
\end{prop}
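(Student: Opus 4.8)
The plan is to reduce everything to the Morrey--Kohn--H\"ormander identity \eqref{MKH-formula} of Proposition \ref{MKH-prop}, applied once to $\eta u$ and once to the pair $(u,\eta^2u)$, together with the polarization identity $(\Box_\varphi u,v)_\varphi=\mathcal{E}_\varphi(u,v)$, valid for $u\in\mathcal{D}(\Box_\varphi)$ and $v\in\mathcal{D}(\mathcal{E}_\varphi)$ (recall $\mathcal{D}(\Box_\varphi)\subseteq\mathcal{D}(\mathcal{E}_\varphi)$). First I would check the domain memberships. Since $\eta$ is bounded and Lipschitz, so is $\eta^2$, and for $u\in\mathcal{D}(\Box_\varphi)\subseteq\mathcal{D}(\mathcal{E}_\varphi)$ Proposition \ref{MKH-prop} gives $\dbar_k u_j,\ u_j\in L^2(\C^n,\varphi)$ for all $j,k$ and $\sqrt{(H_\varphi u,u)}\in L^2(\C^n,\varphi)$. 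The distributional Leibniz rule $\dbar_k(\eta u_j)=(\dbar_k\eta)u_j+\eta\,\dbar_k u_j$ (valid since $\eta$ is a Lipschitz multiplier and $u_j$ has locally $L^2$ antiholomorphic derivatives) then shows $\dbar_k(\eta u_j)\in L^2(\C^n,\varphi)$, and $\sqrt{(H_\varphi(\eta u),\eta u)}=|\eta|\sqrt{(H_\varphi u,u)}\in L^2(\C^n,\varphi)$; by the ``if'' part of Proposition \ref{MKH-prop}, $\eta u\in\mathcal{D}(\mathcal{E}_\varphi)$, and the same argument with $\eta^2$ gives $\eta^2 u\in\mathcal{D}(\mathcal{E}_\varphi)$.

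Next I would carry out the computation. Writing out \eqref{MKH-formula} for $\mathcal{E}_\varphi(\eta u)$ and for $\mathcal{E}_\varphi(u,\eta^2 u)$, the Hessian terms coincide because $\eta$ is real: $(H_\varphi(\eta u),\eta u)=\eta^2(H_\varphi u,u)=(H_\varphi u,\eta^2 u)$, and this is a nonnegative real quantity. For the first-order terms, expanding $\dbar_k(\eta u_j)=(\dbar_k\eta)u_j+\eta\,\dbar_k u_j$ and $\dbar_k(\eta^2 u_j)=2\eta(\dbar_k\eta)u_j+\eta^2\dbar_k u_j$, and using $\Re(\bar w)=\Re(w)$, one gets the pointwise identity
\[
|\dbar_k(\eta u_j)|^2=\Re\!\left(\dbar_k u_j\,\overline{\dbar_k(\eta^2 u_j)}\right)+|\dbar_k\eta|^2|u_j|^2,
\]
the cross terms $2\eta\,\Re\!\left((\dbar_k\eta)u_j\,\overline{\dbar_k u_j}\right)$ on the two sides cancelling. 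Summing over $j,k$, integrating against $e^{-2\varphi}$, using $\sum_{k=1}^n|\dbar_k\eta|^2=\tfrac14|\nabla\eta|^2$ for real $\eta$ (with $\nabla$ the Euclidean gradient on $\R^{2n}\cong\C^n$) and $\sum_j|u_j|^2=|u|^2$, and adding back the common Hessian term, one obtains
\[
\mathcal{E}_\varphi(\eta u)=\Re\,\mathcal{E}_\varphi(u,\eta^2 u)+\frac14\int_{\C^n}|\nabla\eta|^2|u|^2 e^{-2\varphi}.
\]

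Finally, since $u\in\mathcal{D}(\Box_\varphi)$ and $\eta^2 u\in\mathcal{D}(\mathcal{E}_\varphi)$, we have $\mathcal{E}_\varphi(u,\eta^2 u)=(\Box_\varphi u,\eta^2 u)_\varphi$, and since $\eta$ is real, $(\Box_\varphi u,\eta^2 u)_\varphi=(\eta\,\Box_\varphi u,\eta u)_\varphi$; taking real parts gives the asserted formula. The only point requiring attention is the bookkeeping of complex conjugates and real parts in the expansion of the first-order integrands (and the routine justification of the distributional Leibniz rule for a Lipschitz multiplier); I do not expect any substantial obstacle beyond that.
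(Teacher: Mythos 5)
Your proposal is correct and follows essentially the same route as the paper: the same pointwise cancellation of the cross terms in $|\dbar_k(\eta u_j)|^2$ versus $\Re\bigl(\dbar_k u_j\,\overline{\dbar_k(\eta^2 u_j)}\bigr)$, the same use of Morrey--Kohn--H\"ormander, and the same final identification $\mathcal{E}_\varphi(u,\eta^2 u)=(\eta\Box_\varphi u,\eta u)_\varphi$. The only cosmetic difference is that you verify $\eta u,\eta^2 u\in\mathcal{D}(\mathcal{E}_\varphi)$ via the ``if'' direction of Proposition \ref{MKH-prop}, where the paper points to part (i) of Proposition \ref{kohn-dbar-star}; both are fine.
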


\begin{proof}
The fact that $\eta u\in \mathcal{D}(\mathcal{E}_\varphi)$ follows easily from part (i) of Proposition \ref{kohn-dbar-star}. Then we have (using the symbols $\partial_j$ and $\dbar_j$ as in the proof of Proposition \ref{MKH-prop}): \bee
|\dbar_k(\eta u_j)|^2&=&|\dbar_k\eta|^2 |u_j|^2+\Re\left(\eta^2|\dbar_ku_j|^2+2\eta\partial_k\eta \cdot\overline{u_j}\dbar_k u_j\right)\\
&=&|\dbar_k\eta|^2 |u_j|^2+\Re\left(\dbar_k u_j(\eta^2\partial_k\overline{u_j}+\partial_k(\eta^2) \overline{u_j})\right)\\
&=&|\dbar_k\eta|^2  |u_j|^2+\Re\left(\dbar_k u_j\overline{\dbar_k(\eta^2u_j)}\right).
\eee 
Integrating this identity and using the Morrey-Kohn-H\"ormander formula we obtain\bee
\mathcal{E}_\varphi(\eta u)&=&\frac{1}{4}\int_{\C^n}|\nabla \eta|^2|u|^2e^{-2\varphi}+\Re\left(\sum_{j,k=1}^n\int_{\C^n}\dbar_k u_j\overline{\dbar_k(\eta^2u_j)}e^{-2\varphi}\right) \\
&+& 2\Re\left(\int_{\C^n}(H_\varphi u,\eta^2 u)e^{-2\varphi}\right)\\
&=&\frac{1}{4}\int_{\C^n}|\nabla \eta|^2|u|^2e^{-2\varphi}+\Re\left(\mathcal{E}_\varphi(u,\eta^2u)\right).
\eee Since $\eta^2u\in \mathcal{D}(\mathcal{E}_\varphi)$ and $u\in \mathcal{D}(\Box_\varphi)$, we have $\mathcal{E}_\varphi(u,\eta^2u)=(\Box_\varphi u,\eta^2u)_\varphi=(\eta\Box_\varphi u,\eta u)_\varphi$. This concludes the proof. 
\end{proof}

We are now in a position to state and prove the Caccioppoli-type inequality.

\begin{lem}\label{MKH-caccioppoli}
Assume that $u\in \mathcal{D}(\Box_\varphi)$ and that $\Box_\varphi u$ vanishes on $B(z,R)$. If $r<R$, then\be
\int_{B(z,r)}|\dbar^*_\varphi u|^2e^{-2\varphi}\leq (R-r)^{-2} \int_{B(z,R)}|u|^2e^{-2\varphi}.
\ee 
\end{lem}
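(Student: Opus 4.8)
The plan is to exploit Proposition~\ref{MKH-computation} with a carefully chosen cutoff function $\eta$ adapted to the balls $B(z,r)\subseteq B(z,R)$. Concretely, I would pick a Lipschitz function $\eta:\C^n\rightarrow[0,1]$ such that $\eta\equiv1$ on $B(z,r)$, $\eta\equiv0$ outside $B(z,R)$, and $|\nabla\eta|\leq (R-r)^{-1}$ almost everywhere --- for instance $\eta(w):=\max\{0,\min\{1,(R-|w-z|)/(R-r)\}\}$. Since $\eta$ is real-valued, bounded and Lipschitz, Proposition~\ref{MKH-computation} applies and gives
\be
\mathcal{E}_\varphi(\eta u)=\frac{1}{4}\int_{\C^n}|\nabla\eta|^2|u|^2e^{-2\varphi}+\Re(\eta\Box_\varphi u,\eta u)_\varphi.
\ee

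Next I would use the hypothesis that $\Box_\varphi u$ vanishes on $B(z,R)$: since $\eta$ is supported in $B(z,R)$, the term $\eta\Box_\varphi u$ vanishes identically, so $(\eta\Box_\varphi u,\eta u)_\varphi=0$ and the formula reduces to
\be
\mathcal{E}_\varphi(\eta u)=\frac{1}{4}\int_{\C^n}|\nabla\eta|^2|u|^2e^{-2\varphi}.
\ee
Now I bound the left-hand side from below by dropping the $\dbar$-part: by definition $\mathcal{E}_\varphi(\eta u)=\|\dbar(\eta u)\|_\varphi^2+\|\dbar^*_\varphi(\eta u)\|_\varphi^2\geq \|\dbar^*_\varphi(\eta u)\|_\varphi^2=\int_{\C^n}|\dbar^*_\varphi(\eta u)|^2e^{-2\varphi}$. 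Since $\eta\equiv1$ on $B(z,r)$, restricting the integral to $B(z,r)$ and using $\dbar^*_\varphi(\eta u)=\dbar^*_\varphi u$ there (because $\eta$ is locally constant near points of $B(z,r)$, or more carefully because $\dbar^*_\varphi(\eta u)=\eta\dbar^*_\varphi u-\sum_j u_j\partial_j\eta$ by the formula in Proposition~\ref{kohn-dbar-star}(i), and the second term vanishes a.e.\ on the open set where $\eta$ is constant), we get $\int_{B(z,r)}|\dbar^*_\varphi u|^2e^{-2\varphi}\leq \mathcal{E}_\varphi(\eta u)$. On the right-hand side, $|\nabla\eta|^2\leq (R-r)^{-2}$ and $\nabla\eta$ is supported in $B(z,R)$ (indeed in the annulus $B(z,R)\setminus B(z,r)$), so $\frac14\int|\nabla\eta|^2|u|^2e^{-2\varphi}\leq \frac14(R-r)^{-2}\int_{B(z,R)}|u|^2e^{-2\varphi}$. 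Combining the two bounds yields the claimed inequality, in fact with an extra factor $\tfrac14$ to spare.

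The only genuinely delicate point is the identification $\dbar^*_\varphi(\eta u)=\dbar^*_\varphi u$ on $B(z,r)$: one must be sure that $\eta$ can be chosen to be genuinely constant (equal to $1$) on an open neighborhood of $\overline{B(z,r)}$ inside $B(z,R)$, which forces a harmless shrinkage --- e.g.\ prove the estimate on $B(z,r')$ for every $r'<r$ with a plateau region, then let $r'\uparrow r$ by monotone convergence --- or, alternatively, simply keep the term $-\sum_j u_j\partial_j\eta$ and note it is supported away from $B(z,r)$. Either way the argument is routine; the substance of the lemma is entirely contained in Proposition~\ref{MKH-computation}, and the rest is bookkeeping with the cutoff. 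I do not expect any real obstacle beyond being careful about where $\eta$ and $\nabla\eta$ are supported.
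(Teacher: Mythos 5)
Your proof is correct and follows essentially the same route as the paper: both rest entirely on Proposition~\ref{MKH-computation} applied to the cutoff $\eta u$, with $\eta\Box_\varphi u=0$ killing the second term. The only difference is in the endgame: the paper applies the Leibniz formula $\dbar^*_\varphi(\eta u)=\eta\dbar^*_\varphi u-\sum_j u_j\frac{\partial\eta}{\partial z_j}$ globally and uses the triangle inequality, whereas you restrict the integral to $B(z,r)$ where $\nabla\eta$ vanishes, which avoids that loss and yields the extra factor $\tfrac14$; also note that your concern about shrinking to $r'<r$ is unnecessary, since $B(z,r)$ is already an open set on which $\eta\equiv1$, so $\nabla\eta=0$ there and the correction term drops pointwise.
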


\begin{proof} Let $\eta$ be Lipschitz, real-valued, identically equal to $1$ on $B(z,r)$, and supported on $B(z,R)$. Since $\eta\Box_\varphi u=0$, Proposition \ref{MKH-computation} yields\be
||\dbar^*_\varphi (\eta u)||_\varphi^2\leq \mathcal{E}_\varphi(\eta u)=\frac{1}{4}\int_{\C^n}|\nabla \eta|^2|u|^2e^{-2\varphi}\leq \frac{||\nabla \eta||_\infty^2}{4}||\chi_{B(z,R)} u||_\varphi^2.
\ee
By part (i) of Proposition \ref{kohn-dbar-star}, $\dbar^*_\varphi(\eta u)=\eta\dbar^*_\varphi u - \sum_{j=1}^n\frac{\partial\eta}{\partial z_j}u_j$, we have\bee
||\chi_{B(z,r)} \dbar^*_\varphi u||_\varphi&\leq& ||\eta\dbar^*_\varphi u||_\varphi\\
&\leq& ||\dbar^*_\varphi (\eta u)||_\varphi+\frac{||\nabla\eta||_\infty}{2}||\chi_{B(z,R)} u||_\varphi\\
&\leq &||\nabla\eta||_\infty||\chi_{B(z,R)} u||_\varphi.
\eee It is clear that we can choose $\eta$ such that $||\nabla\eta||_\infty=\frac{1}{R-r}$, and this gives the thesis.\end{proof}

\section{$\mu$-coercivity for weighted Kohn Laplacians}\label{coerc-sec}

In this section we introduce the notion of $\mu$-coercivity for the weighted Kohn Laplacian and we study its most basic consequences. We continue to assume that our weight $\varphi:\C^n\rightarrow\R$ is $C^2$ and plurisubharmonic.

\begin{dfn}\label{coerc-def}
Given a measurable function $\mu:\C^n\rightarrow[0,+\infty)$, we say that $\Box_\varphi$ is \emph{$\mu$-coercive} if the following inequality holds
\bel\label{coerc-formula}
\mathcal{E}_\varphi(u)\geq ||\mu u||_\varphi^2 \qquad\forall u\in \mathcal{D}(\mathcal{E}_\varphi).
\eel
\end{dfn}

The next proposition collects a few basic facts about $\mu$-coercivity. 

\begin{prop}\label{coerc-prop} Assume that $\Box_\varphi$ is $\mu$-coercive, and that 
\bel\label{coerc-inf}\inf_{z\in\C^n}\mu(z)>0.\eel
Then:
\begin{enumerate}
\item[\emph{(i)}] $\Box_\varphi$ has a bounded, self-adjoint and non-negative inverse $N_\varphi$ such that \bel\label{coerc-neumann-bound}
||\mu N_\varphi g||_\varphi\leq ||\mu^{-1} g||_\varphi \qquad\forall g\in L^2_{(0,1)}(\C^n,\varphi).
\eel
\item[\emph{(ii)}] The weighted $\dbar$ equation is solvable, and $\dbar^*_\varphi N_\varphi$ is the canonical solution $S_\varphi$ of Proposition \ref{dbar-canonical}. More precisely, if $f\in L^2_{(0,1)}(\C^n,\varphi)$ is $\dbar$-closed, i.e., $\dbar f=0$, then $u:=\dbar^*_\varphi N_\varphi f$ is such that $\dbar u=f$ and $u\perp A^2(\C^n,\varphi)$. Moreover, \bel\label{coerc-canonical-bound}
||u||_\varphi\leq ||\mu^{-1} f||_\varphi.
\eel 
\item[\emph{(iii)}] We have the identity
\bel\label{coerc-bergman} B_\varphi f= f-\dbar^*_\varphi N_\varphi \dbar f\qquad \forall f\in \mathcal{D}_0(\dbar).\eel
\end{enumerate}
\end{prop}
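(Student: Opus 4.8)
The plan is to build the inverse $N_\varphi$ from the quadratic form $\mathcal{E}_\varphi$ via a Lax--Milgram / Riesz representation argument, exactly as in the construction of $\mathcal{H}_V$ in Proposition \ref{ext-prop}. First I would observe that under the hypotheses $\mathcal{E}_\varphi(u)\geq \|\mu u\|_\varphi^2 \geq (\inf \mu)^2\|u\|_\varphi^2$, so $\mathcal{E}_\varphi$ is a coercive closed form and $\sqrt{\mathcal{E}_\varphi(u)}$ is a Hilbert space norm on $\mathcal{D}(\mathcal{E}_\varphi)$ equivalent to the graph norm; call this Hilbert space $\mathcal{H}$. Given $g\in L^2_{(0,1)}(\C^n,\varphi)$, the functional $v\mapsto (v,g)_\varphi$ is bounded on $\mathcal{H}$ (again using $\inf\mu>0$), so there is a unique $N_\varphi g\in\mathcal{D}(\mathcal{E}_\varphi)$ with $\mathcal{E}_\varphi(N_\varphi g,v)=(g,v)_\varphi$ for all $v\in\mathcal{D}(\mathcal{E}_\varphi)$. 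Testing against $v\in\mathcal{D}(\Box_\varphi)\cap$ (smooth compactly supported forms) and using that $\mathcal{D}(\Box_\varphi)$ is a core, one checks $N_\varphi g\in\mathcal{D}(\Box_\varphi)$ and $\Box_\varphi N_\varphi g=g$; injectivity of $\Box_\varphi$ (from coercivity) gives that $N_\varphi$ is a genuine two-sided inverse. Self-adjointness and non-negativity of $N_\varphi$ follow from symmetry and non-negativity of $\mathcal{E}_\varphi$ in the usual way (evaluate $\mathcal{E}_\varphi(N_\varphi g, N_\varphi h)$ two ways). For the bound \eqref{coerc-neumann-bound}: set $u=N_\varphi g$, then $\|\mu u\|_\varphi^2\leq \mathcal{E}_\varphi(u)=(g,u)_\varphi=(\mu^{-1}g,\mu u)_\varphi\leq \|\mu^{-1}g\|_\varphi\|\mu u\|_\varphi$ by Cauchy--Schwarz, so $\|\mu N_\varphi g\|_\varphi\leq\|\mu^{-1}g\|_\varphi$ provided $\|\mu u\|_\varphi<\infty$, which holds since $u\in\mathcal{D}(\mathcal{E}_\varphi)$ and the coercivity inequality bounds it.

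For part (ii), let $f$ be $\dbar$-closed and set $u:=\dbar^*_\varphi N_\varphi f$. Write $N_\varphi f = v\in\mathcal{D}(\Box_\varphi)$, so $\dbar^*_\varphi\dbar v+\dbar\dbar^*_\varphi v=f$. Pairing with $\dbar v$ and using $\dbar f=0$ together with $\dbar\dbar=0$ (Proposition \ref{dbar-complex-prop}) one gets $(\dbar^*_\varphi\dbar v,\dbar v)_\varphi + 0 = (f,\dbar v)_\varphi = (\dbar^* _\varphi f, v)_\varphi$; more efficiently, apply $\dbar$ to the equation and pair suitably, or pair the equation with $v$ itself to get $\|\dbar v\|_\varphi^2+\|\dbar^*_\varphi v\|_\varphi^2=(f,v)_\varphi$. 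The cleaner route: observe $\dbar\dbar^*_\varphi v$ and $\dbar^*_\varphi\dbar v$ are orthogonal (as in the closure proof of the Kohn Laplacian), and $\dbar^*_\varphi\dbar v$ lies in the range of $\dbar^*_\varphi$ hence is orthogonal to $\ker\dbar\supseteq\{f\}$ minus... — so I would instead argue: $f-\dbar\dbar^*_\varphi v=\dbar^*_\varphi\dbar v$ is in $\overline{\mathrm{Ran}\,\dbar^*_\varphi}=(\ker\dbar)^\perp$, while $f\in\ker\dbar$ and $\dbar\dbar^*_\varphi v\in\ker\dbar$, forcing $\dbar^*_\varphi\dbar v=0$ and hence $\dbar u=\dbar\dbar^*_\varphi v=f$. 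Orthogonality $u\perp A^2(\C^n,\varphi)=\ker\dbar$ holds because $u=\dbar^*_\varphi v\in\mathrm{Ran}\,\dbar^*_\varphi\perp\ker\dbar$. The norm bound \eqref{coerc-canonical-bound}: $\|u\|_\varphi^2=\|\dbar^*_\varphi v\|_\varphi^2\leq\mathcal{E}_\varphi(v)=(\Box_\varphi v,v)_\varphi=(f,v)_\varphi=(f,N_\varphi f)_\varphi=(\mu^{-1}f,\mu N_\varphi f)_\varphi\leq\|\mu^{-1}f\|_\varphi\|\mu N_\varphi f\|_\varphi\leq\|\mu^{-1}f\|_\varphi^2$ by \eqref{coerc-neumann-bound}.

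For part (iii), take $f\in\mathcal{D}_0(\dbar)$. Then $\dbar f$ is $\dbar$-closed by \eqref{dbar-squared}, so by (ii) the form $u:=\dbar^*_\varphi N_\varphi\dbar f$ satisfies $\dbar u=\dbar f$ and $u\perp A^2(\C^n,\varphi)$. Hence $f-u$ is holomorphic ($\dbar(f-u)=0$), lies in $L^2(\C^n,\varphi)$, and $f=(f-u)+u$ is precisely the decomposition of $f$ into its component in $A^2(\C^n,\varphi)$ plus a component orthogonal to it; by uniqueness of orthogonal projection, $B_\varphi f=f-u=f-\dbar^*_\varphi N_\varphi\dbar f$. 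I expect the main obstacle to be the careful justification in part (i) that $N_\varphi g$ actually lands in $\mathcal{D}(\Box_\varphi)$ (not just in $\mathcal{D}(\mathcal{E}_\varphi)$) and that $\Box_\varphi N_\varphi g=g$ in the operator sense — i.e., going from the weak identity $\mathcal{E}_\varphi(N_\varphi g,v)=(g,v)_\varphi$ to membership in the domain of the self-adjoint operator $\Box_\varphi$. This is the standard form-domain-versus-operator-domain subtlety; it is handled by the fact, already available from the proof that $\Box_\varphi$ is self-adjoint, that $\mathcal{D}(\Box_\varphi)$ is a form core and $(\Box_\varphi u,v)_\varphi=\mathcal{E}_\varphi(u,v)$ there, combined with $1+\Box_\varphi$ being surjective, so that $N_\varphi=\Box_\varphi^{-1}$ coincides with the form-theoretic inverse.
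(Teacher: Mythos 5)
Your proposal is correct, but part (i) reaches the inverse by a genuinely different construction from the paper's. You build $N_\varphi$ by a Friedrichs/Lax--Milgram argument on the coercive closed form: apply Riesz on the Hilbert space $(\mathcal{D}(\mathcal{E}_\varphi),\sqrt{\mathcal{E}_\varphi(\cdot)})$ to get $N_\varphi g\in\mathcal{D}(\mathcal{E}_\varphi)$ with $\mathcal{E}_\varphi(N_\varphi g,v)=(g,v)_\varphi$ for all $v$, and then upgrade this weak identity to $N_\varphi g\in\mathcal{D}(\Box_\varphi)$ with $\Box_\varphi N_\varphi g=g$ by testing against $v\in\mathcal{D}(\Box_\varphi)$ and using self-adjointness, i.e.\ $\mathcal{D}(\Box_\varphi^*)=\mathcal{D}(\Box_\varphi)$. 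The paper avoids that form-domain-to-operator-domain passage altogether: it first deduces injectivity of $\Box_\varphi$ and (by self-adjointness) dense range, then defines $N_\varphi g$ as the Riesz representative of the anti-linear functional $\Box_\varphi u\mapsto (g,u)_\varphi$ on the dense range, bounded thanks to the coercivity estimate $||\mu u||_\varphi\leq||\mu^{-1}\Box_\varphi u||_\varphi$ combined with $\inf\mu>0$. With that definition, $N_\varphi g\in\mathcal{D}(\Box_\varphi)$ and $\Box_\varphi N_\varphi g=g$ come for free from the defining relation, with no appeal to closed-form theory. Your route is the textbook one and more broadly applicable; the paper's is leaner precisely because it sidesteps the subtlety you rightly flag as the main obstacle. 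In part (ii), your orthogonality argument (that $\dbar^*_\varphi\dbar N_\varphi f$ lies in both $\ker\dbar$ and $(\ker\dbar)^\perp=\overline{\mathrm{Ran}\,\dbar^*_\varphi}$, hence vanishes) is a cleaner repackaging of the paper's computation $(\dbar\dbar^*_\varphi\dbar N_\varphi f,\dbar N_\varphi f)_\varphi=||\dbar^*_\varphi\dbar N_\varphi f||_\varphi^2=0$; both are sound and the norm bound is derived essentially the same way. Part (iii) coincides with the paper.
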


The operator $N_\varphi$ is customarily called the \emph{$\dbar$-Neumann operator}. 

\begin{proof}

(i) To see that $\Box_\varphi$ is injective, observe that, if $\Box_\varphi u=0$, inequality \eqref{coerc-formula} implies that $mu=0$ and hence, by \eqref{coerc-inf}, that $u=0$. By self-adjointness, $\Box_\varphi$ has dense range. If $g\in L^2_{(0,1)}(\C^n,\varphi)$, the anti-linear functional $\lambda_g:\Box_\varphi u\mapsto (g,u)_\varphi$ is then well-defined on a dense subset of $L^2_{(0,1)}(\C^n,\varphi)$. It satisfies the bound\bel\label{coerc-lambda}
|\lambda_g(\Box_\varphi u)|=|(g,u)_\varphi|\leq ||\mu^{-1} g||_\varphi||\mu u||_\varphi.
\eel
An application of the Cauchy-Schwarz inequality shows that $\mu$-coercivity implies, for any $u\in\mathcal{D}(\Box_\varphi)$, \be
||\mu u||_\varphi^2\leq \mathcal{E}_\varphi(u)=(\Box_\varphi u,u)_\varphi\leq ||\mu^{-1}\Box_\varphi u||_\varphi||\mu u||_\varphi,
\ee i.e., $||\mu u||_\varphi\leq||\mu^{-1}\Box_\varphi u||_\varphi$ ($||\mu u||_\varphi$ is finite for any $u\in\mathcal{D}(\Box_\varphi)$ by $\mu$-coercivity).
Plugging this inequality into \eqref{coerc-lambda}, we obtain \bel\label{coerc-existence}
|\lambda_g(\Box_\varphi u)|\leq ||\mu^{-1} g||_\varphi||\mu^{-1}\Box_\varphi u||_\varphi .
\eel
Since $\mu^{-1}$ is bounded, $\lambda_g$ may be uniquely extended to a continuous anti-linear functional on $L^2_{(0,1)}(\C^n,\varphi)$ and hence there exists $N_\varphi g\in L^2_{(0,1)}(\C^n,\varphi)$ such that $(u,g)_\varphi=(\Box_\varphi u,N_\varphi g)_\varphi$ for every $u\in \mathcal{D}(\Box_\varphi)$. This means that $N_\varphi g\in\mathcal{D}(\Box_\varphi)$ and that $\Box_\varphi N_\varphi g=g$. In particular $\Box_\varphi$ is surjective and $N_\varphi$, being the inverse of $\Box_\varphi$, is a bounded, self-adjoint, and non-negative operator. Inequality \eqref{coerc-neumann-bound} follows from \eqref{coerc-existence}:\be
||\mu N_\varphi g||_\varphi=\sup_{||w||_\varphi=1}|(\mu N_\varphi g,w)_\varphi|=\sup_{||w||_\varphi=1,||\mu w||_\varphi<+\infty}|\lambda_g(\mu w)|\leq ||\mu^{-1} g||_\varphi.
\ee In the second identity we used the fact that $\mu w\in L^2_{(0,1)}(\C^n,\varphi)$ for $w$ in a dense subspace as a consequence of $\mu$-coercivity.

(ii) Let $f$ and $u$ be as in the statement. We compute\be
\dbar u = \dbar\dbar^*_\varphi N_\varphi f= \Box_\varphi N_\varphi f-\dbar^*_\varphi \dbar N_\varphi f=f-\dbar^*_\varphi \dbar N_\varphi f.
\ee Notice that $N_\varphi f\in \mathcal{D}(\Box_\varphi)$ and hence the computation is meaningful. Since $f$ and $\dbar u$ are both $\dbar$-closed, the identity above implies that $\dbar^*_\varphi \dbar N_\varphi f\in \mathcal{D}_1(\dbar)$ and $\dbar\dbar^*_\varphi \dbar N_\varphi f=0$. In particular \be
0=(\dbar\dbar^*_\varphi \dbar N_\varphi f,\dbar N_\varphi f)_\varphi=||\dbar^*_\varphi \dbar N_\varphi f||_\varphi^2
\ee and hence $\dbar u=f$. Notice that one can repeat the above argument to show that $\dbar N_\varphi f=0$, but we don't need this fact. The solution $u$ is obviously orthogonal to $A^2(\C^n,\varphi)$, because it is in the range of $\dbar^*_\varphi$. To obtain the bound on $u$ we observe that\bee
||u||_\varphi^2&=&(\dbar^*_\varphi N_\varphi f,u)_\varphi=(N_\varphi f,\dbar u)_\varphi\\
&=&(N_\varphi f,f)_\varphi=(\mu N_\varphi f,\mu^{-1}f)_\varphi\leq ||\mu^{-1} f||_\varphi^2,
\eee where the last inequality is \eqref{coerc-neumann-bound}.

(iii) Let $f\in\mathcal{D}_0(\dbar)$. Since $\dbar f$ is $\dbar$-closed, part \emph{(ii)} shows that $\dbar^*_\varphi N_\varphi \dbar f$ is orthogonal to $A^2(\C^n,\varphi)$ and that $f-\dbar^*_\varphi N_\varphi \dbar f\in A^2(\C^n,\varphi)$. Hence $B_\varphi f=f-\dbar^*_\varphi N_\varphi \dbar f$.\end{proof}

The next result ties $\mu$-coercivity and discreteness of the spectrum of $\Box_\varphi$.

\begin{thm}\label{coerc-pp-thm}
Assume that $\Box_\varphi$ is $\mu$-coercive for some $\mu$ such that \be
\lim_{z\rightarrow}\mu(z)=+\infty.
\ee
Then the operator $\Box_\varphi$ has discrete spectrum.
\end{thm}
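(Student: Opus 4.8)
The plan is to show that $\Box_\varphi$ has compact resolvent; equivalently (this is the functional-analytic fact underlying Proposition \ref{schrodpp-prop}, applied to the closed non-negative form $\mathcal{E}_\varphi$ to which $\Box_\varphi$ is associated), it suffices to prove that the form domain $\mathcal{D}(\mathcal{E}_\varphi)$, equipped with the graph norm $u\mapsto\sqrt{\mathcal{E}_\varphi(u)+||u||_\varphi^2}$, embeds compactly into $L^2_{(0,1)}(\C^n,\varphi)$. So I would take a sequence $\{u^{(m)}\}_m\subseteq\mathcal{D}(\mathcal{E}_\varphi)$ with $\mathcal{E}_\varphi(u^{(m)})+||u^{(m)}||_\varphi^2\leq1$ and extract a subsequence converging in $L^2_{(0,1)}(\C^n,\varphi)$.

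Two ingredients enter. \emph{Tightness at infinity:} by $\mu$-coercivity, $||\mu u^{(m)}||_\varphi^2\leq\mathcal{E}_\varphi(u^{(m)})\leq1$, so given $\eps>0$, since $\mu(z)\to+\infty$ we may pick $R$ with $\mu(z)^2\geq\eps^{-1}$ for $|z|\geq R$, whence $\int_{|z|\geq R}|u^{(m)}|^2e^{-2\varphi}\leq\eps$ for every $m$. \emph{Local compactness:} fix $R>0$ and a cutoff $\eta\in C^\infty_c(B(0,2R))$ with $\eta\equiv1$ on $B(0,R)$. The Morrey--Kohn--H\"ormander formula (Proposition \ref{MKH-prop}) together with $H_\varphi\geq0$ gives $\sum_{j,k}||\dbar_ku^{(m)}_j||_\varphi^2\leq\mathcal{E}_\varphi(u^{(m)})\leq1$, and since $e^{-2\varphi}$ is bounded above and below on $B(0,2R)$, each $\eta u^{(m)}_j$ is a compactly supported $L^2(\C^n)$ function whose $\dbar_k$-derivatives lie in $L^2(\C^n)$ with norms bounded uniformly in $m$. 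Using the elementary identity $||\partial_k g||_{L^2(\C^n)}=||\dbar_k g||_{L^2(\C^n)}$, valid for compactly supported $g\in L^2(\C^n)$ whose $\dbar$-derivatives are in $L^2$ (immediate on the Fourier side, where the two symbols have equal modulus), it follows that $\{\eta u^{(m)}_j\}_m$ is bounded in $H^1(\C^n)$; by Rellich--Kondrachov it is precompact in $L^2(\C^n)$, hence $\{u^{(m)}|_{B(0,R)}\}_m$ is precompact in $L^2_{(0,1)}(B(0,R),\varphi)$.

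A diagonal extraction over $R=1,2,\dots$ then yields a subsequence $u^{(m_\ell)}$ converging in $L^2_{(0,1)}(B(0,R),\varphi)$ for every $R$, and the tightness estimate upgrades this to convergence in $L^2_{(0,1)}(\C^n,\varphi)$: given $\eps$, fix $R$ as above, split $||u^{(m_\ell)}-u^{(m_{\ell'})}||_\varphi^2$ into the contributions over $B(0,R)$ and over $\{|z|\geq R\}$, bound the first by local convergence and the second by $4\eps$ via tightness. This proves the compact embedding, hence the theorem. The one non-routine point is the local step: $\mathcal{E}_\varphi$ controls only the antiholomorphic derivatives $\dbar_k u_j$ globally (not the $\partial_k u_j$), so an honest $H^1$ bound is obtained only after localization, exploiting that $\eta u^{(m)}_j$ is compactly supported, where $||\partial_k g||_2=||\dbar_k g||_2$. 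Everything else — the tightness estimate, Rellich--Kondrachov, the diagonal argument, and the form-theoretic characterization of discreteness — is standard.
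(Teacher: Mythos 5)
Your proof is correct, and it takes a genuinely different route from the paper's. The paper disposes of this theorem in a few lines by citing a result of Haslinger (\cite{haslinger-funct}) which says that the $\dbar$-Neumann operator $N_\varphi$ is compact if and only if the tightness-at-infinity condition (every $\mathcal{E}_\varphi$-bounded family has uniformly small $L^2$ tails outside large balls) holds; $\mu$-coercivity for a $\mu$ diverging at infinity is then visibly equivalent to that condition, and compactness of the inverse gives discreteness of the spectrum. You instead prove the compact embedding of the form domain into $L^2_{(0,1)}(\C^n,\varphi)$ from scratch. The interesting part of your argument is the local step, which makes explicit the mechanism hidden inside Haslinger's theorem: the Morrey--Kohn--H\"ormander identity controls all the antiholomorphic derivatives $\dbar_k u_j$ globally (since $H_\varphi\geq 0$), and the Fourier-side observation that $\|\partial_k g\|_{L^2(\C^n)}=\|\dbar_k g\|_{L^2(\C^n)}$ --- which, incidentally, holds for any $g\in L^2(\C^n)$ with $\dbar_k g\in L^2(\C^n)$, not only compactly supported $g$, since $|\frac12(i\xi_k+\eta_k)|=|\frac12(i\xi_k-\eta_k)|$ --- converts this into a full $H^1$-bound on the cutoff $\eta u^{(m)}_j$, allowing Rellich--Kondrachov. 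The paper's proof is shorter but delegates the core difficulty to a cited black box; yours is longer but self-contained, and it makes clear why tightness alone (with no explicit local elliptic estimate in the hypothesis) suffices: local subelliptic control comes for free from the structure of $\Box_\varphi$.
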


We recall that we say that a self-adjoint operator has discrete spectrum if its spectrum is a discrete subset of $\R$ consisting of eigenvalues of finite multiplicity.

\begin{proof}
Haslinger proved in \cite{haslinger-funct} that $\Box_\varphi$ admits a compact inverse $N_\varphi$ if and only if for every $\eps>0$ there exists $R<+\infty$ such that if \be
u\in\mathcal{D}(\mathcal{E}_\varphi)\quad\text{is such that}\quad\mathcal{E}_\varphi(u)\leq1,
\ee then
\be
\int_{|z|\geq R}|u|^2e^{-2\varphi}\leq \eps.
\ee
This condition is clearly equivalent to $\mu$-coercivity for some $\mu$ diverging at infinity. To conclude the proof, recall that a compact operator has discrete spectrum and that if the inverse of a self-adjoint operator has discrete spectrum, the same is true of the operator itself.
\end{proof}

\section{First intermezzo: Radius functions and associated distances}\label{radius-sec}

We say that $\rho:\R^d\rightarrow (0,+\infty)$ is a \emph{radius function} if it is Borel and there exists a constant $C<+\infty$ such that for every $x\in\R^d$ we have\bel\label{radius-ineq}
C^{-1}\rho(x)\leq \rho(y)\leq C\rho(x) \qquad \forall y\in B(x,\rho(x)).
\eel

In other words, a radius function $\rho$ is approximately constant on the ball centered at $x$ of radius $\rho(x)$. 

To any radius function $\rho$, we associate the Riemannian metric $\rho(x)^{-2}dx^2$. In fact, we are interested only in the associated Riemannian distance, which we describe explicitly. If $I$ is a compact interval and $\gamma:I\rightarrow\R^d$ is a piecewise $C^1$ curve, we define\be
L_\rho(\gamma):=\int_I\frac{|\gamma'(t)|}{\rho(\gamma(t))}dt.
\ee 
Notice that the integrand $\frac{|\gamma'(t)|}{\rho(\gamma(t))}$ is defined on the complement of the finite set of times where $\gamma'$ is discontinuous, and it is a measurable function, because $\rho$ is assumed to be Borel. Moreover, the integral is absolutely convergent because $\rho^{-1}$ is locally bounded. 

Given $x,y\in\R^d$, we put\be
d_\rho(x,y):=\inf_\gamma L_\rho(\gamma),
\ee where the $\inf$ is taken as $\gamma$ varies over the collection of curves connecting $x$ and $y$. Finally, we define $B_\rho(x,r):=\{y\in\R^d:\ d_\rho(x,y)<r\}$. 

\begin{prop}\label{radius-prop} \par The function $d_\rho$ just defined is a distance and \be
B_\rho(x,C^{-1}r)\subseteq B(x,r\rho(x))\subseteq B_\rho(x,Cr) \qquad\forall r\leq1,\ x\in\R^d,
\ee where $C$ is the constant appearing in \eqref{radius-ineq}. Moreover, the function \be
y\mapsto d_\rho(x,y)\ee 
is locally Lipschitz for every $x$, and for almost every $y\in \R^d$ we have \bel\label{radius-nabla}
|\nabla_y d_\rho(x,y)|\leq \frac{C}{\rho(y)}.
\eel
\end{prop}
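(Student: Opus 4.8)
The plan is to prove the four assertions in the order stated, each one feeding into the next: first the distance axioms, then the comparison of Euclidean and $d_\rho$-balls, then local Lipschitzness of $y\mapsto d_\rho(x,y)$, and finally the gradient bound.

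For the distance axioms, symmetry and the triangle inequality are immediate from reversing and concatenating curves, so the only real point is that $d_\rho(x,y)=0$ forces $x=y$. This follows because $\rho$ is locally bounded: on a compact neighborhood $K$ of $x$ one has $\rho\le M$, so any curve $\gamma$ joining $x$ to a point $y\in K$ satisfies $L_\rho(\gamma)\ge M^{-1}\,\ell(\gamma)\ge M^{-1}|x-y|$, where for curves leaving $K$ one first restricts to the sub-arc before the first exit time (which is even longer and contained in $K$). Hence $d_\rho(x,y)\ge c_K|x-y|$ locally, which gives positivity.

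For the ball comparison with $r\le 1$: to get $B(x,r\rho(x))\subseteq B_\rho(x,Cr)$, take $y$ with $|x-y|<r\rho(x)\le\rho(x)$; the Euclidean segment from $x$ to $y$ lies in $B(x,\rho(x))$, where \eqref{radius-ineq} gives $\rho\ge C^{-1}\rho(x)$, so its $\rho$-length is at most $C|x-y|/\rho(x)<Cr$. For the reverse inclusion $B_\rho(x,C^{-1}r)\subseteq B(x,r\rho(x))$, I argue by contraposition: if $|x-y|\ge r\rho(x)$, any curve from $x$ to $y$ has, up to its first exit time from $B(x,r\rho(x))$, Euclidean length $\ge r\rho(x)$ while staying in $B(x,\rho(x))$ where $\rho\le C\rho(x)$; hence its $\rho$-length is $\ge r\rho(x)/(C\rho(x))=C^{-1}r$, so $d_\rho(x,y)\ge C^{-1}r$. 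A byproduct of the first inclusion (applied with $x$ replaced by $y$) is the estimate $d_\rho(y,y')\le C|y-y'|/\rho(y)$ whenever $|y-y'|\le\rho(y)$. Combined with $|d_\rho(x,y)-d_\rho(x,y')|\le d_\rho(y,y')$ and the lower bound $\rho(y')\ge C^{-1}\rho(y)$ on $B(y,\rho(y))$ from \eqref{radius-ineq}, this shows $y'\mapsto d_\rho(x,y')$ is Lipschitz with constant $O(\rho(y)^{-1})$ on a ball around $y$ of radius $\sim\rho(y)$, hence locally Lipschitz; by Rademacher's theorem it is then differentiable at a.e. $y$.

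Finally, for the gradient bound, fix a point $y$ of differentiability and a unit vector $v$; the segment estimate above gives $d_\rho(x,y+tv)\le d_\rho(x,y)+Ct/\rho(y)$ for $0<t<\rho(y)$, so the directional derivative of $d_\rho(x,\cdot)$ along $v$ is $\le C/\rho(y)$, and choosing $v$ in the direction of $\nabla_y d_\rho(x,y)$ yields \eqref{radius-nabla}. I do not expect a serious obstacle; the arguments are standard for length-metric constructions. The one point requiring care is making the ``first exit time'' argument precise — that a curve from the center of a ball to a point outside it contains a sub-arc, lying in the ball, of Euclidean length at least the radius — and checking that every application of \eqref{radius-ineq} is only ever made on a ball of the form $B(z,\rho(z))$, where it is valid; the endpoint case $r=1$ is covered by the same exit argument without modification.
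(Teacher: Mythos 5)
Your proposal is correct and follows essentially the same route as the paper: a first-exit argument on the ball $B(x,\rho(x))$ to get the lower bound on $d_\rho$ (proving positivity and the inclusion $B_\rho(x,C^{-1}r)\subseteq B(x,r\rho(x))$), a straight-segment argument for the reverse inclusion and the local bound $d_\rho(y,y')\leq C|y-y'|/\rho(y)$, and then the triangle inequality plus Rademacher for the Lipschitz and gradient estimates. The only cosmetic difference is that the paper packages the first-exit estimate as a single inequality $d_\rho(x,y)\geq C^{-1}\min\{|x-y|/\rho(x),1\}$, whereas you derive positivity and the first inclusion as two separate (but equivalent) applications of the same idea.
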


\begin{proof}
Let $x,y\in\R^d$ be such that $|x-y|=s\rho(x)$, for some $s>0$. Take any piecewise $C^1$ curve $\gamma:[0,T]\rightarrow\R^d$ connecting $x$ and $y$, and let $T_0$ be the minimum time such that $|x-\gamma(T_0)|=\min\{s,1\}\rho(x)$. By \eqref{radius-ineq}, $\rho(\gamma(t))\leq C\rho(x)$ for every $t\in[0,T_0)$. Hence, \bee
L_\rho(\gamma)&\geq& \int_0^{T_0}\frac{|\gamma'(t)|}{\rho(\gamma(t))}dt\geq \frac{C^{-1}}{\rho(x)}\int_0^{T_0}|\gamma'(t)|dt\\
&\geq& \frac{C^{-1}}{\rho(x)} \min\{s,1\}\rho(x)=C^{-1}\min\{s,1\}.
\eee By the arbitrariness of $\gamma$, we conclude that $d_\rho(x,y)\geq C^{-1}\min\left\{\frac{|x-y|}{\rho(x)},1\right\}$. This implies that $d_\rho$ is non-degenerate and hence a genuine distance, triangle inequality and symmetry being obvious. 
It also shows that if $y$ lies in $B_\rho(x,C^{-1}r)$ ($r\leq1$), then $r>\min\left\{\frac{|x-y|}{\rho(x)},1\right\}$, and then the minimum has to be equal to $\frac{|x-y|}{\rho(x)}$, proving the first inclusion of the statement.

To prove the second inclusion, we use the fact that $\rho(u)\geq C^{-1}\rho(x)$ for every $u\in B(x,r\rho(x))$, if $r\leq 1$. Given $y\in B(x,r\rho(x))$, define $\sigma(t)=x+t(y-x)$ and notice that \be
d_\rho(x,y)\leq \int_0^1\frac{|\sigma'(t)|}{\rho(\sigma(t))}dt\leq \frac{C}{\rho(x)}|x-y|< Cr,
\ee so that $B(x,r\rho(x))\subseteq B_\rho(x,Cr)$, that is the second inclusion to be proved.

Fix now $x,y\in \R^d$ and let $h\in\R^d$ be such that $|h|<\rho(y)$. As above, we have $d_\rho(y,y+h)\leq C\frac{|h|}{\rho(y)}$. The triangle inequality then yields\bel\label{radius-lip}
|d_\rho(x,y+h)-d_\rho(x,y)|\leq d_\rho(y,y+h)\leq C\frac{|h|}{\rho(y)}\qquad\forall h: |h|<\rho(y).
\eel By the local boundedness of $\rho^{-1}$, we conclude that $d_\rho(x,\cdot)$ is locally Lipschitz. Rademacher's theorem implies that $d_\rho$ is almost everywhere differentiable and \eqref{radius-lip} translates into \eqref{radius-nabla}.\end{proof}

We conclude this section with two elementary propositions. The second one is a very classical construction of a covering.

\begin{prop}\label{radius-max}
If $\rho_1$ and $\rho_2$ are two radius functions on $\R^d$, then $\rho_1\vee\rho_2:=\max\{\rho_1,\rho_2\}$ is a radius function.
\end{prop}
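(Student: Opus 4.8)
The plan is to verify directly that $\rho:=\rho_1\vee\rho_2$ satisfies the two requirements in the definition of radius function: Borel measurability, and the quantitative "almost constant on its own ball" inequality \eqref{radius-ineq}. Measurability is immediate since the pointwise maximum of two Borel functions is Borel, and positivity is clear since each $\rho_i$ is positive. So the entire content is the inequality, and the only subtlety is that the radius of the ball on which we must control $\rho$ is $\rho(x)$, which may be strictly larger than $\rho_i(x)$ for the index $i$ that is \emph{not} attaining the maximum at $x$; hence one cannot simply quote \eqref{radius-ineq} for each $\rho_i$ on the ball $B(x,\rho(x))$.

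Let $C_1,C_2$ be the constants from \eqref{radius-ineq} for $\rho_1$ and $\rho_2$ respectively, and set $C:=C_1 C_2$ (or, say, $C:=\max\{C_1,C_2\}^2$). Fix $x\in\R^d$ and $y\in B(x,\rho(x))$; without loss of generality $\rho(x)=\rho_1(x)\geq\rho_2(x)$. First I would prove the upper bound $\rho(y)\leq C\rho(x)$: since $|y-x|<\rho(x)=\rho_1(x)$, \eqref{radius-ineq} for $\rho_1$ gives $\rho_1(y)\leq C_1\rho_1(x)=C_1\rho(x)$. For $\rho_2$ the issue is that $y$ need not lie in $B(x,\rho_2(x))$; the standard trick is to iterate \eqref{radius-ineq} along the segment from $x$ to $y$, or more cleanly to argue by a continuity/connectedness argument that $\rho_2(y)\leq C_2'\rho_2(x)\leq C_2'\rho(x)$ whenever $|y-x|<\rho_1(x)$, with $C_2'$ depending only on $C_1$ and $C_2$. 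Concretely: one shows that $t\mapsto\rho_2(x+t(y-x))$ cannot grow too fast, because at any point $z$ on the segment with $\rho_2(z)\leq\Lambda$ one controls $\rho_2$ on $B(z,\rho_2(z))$, a ball of radius at least $\Lambda/C_2$ once a lower bound on $\rho_2$ is in hand; a finite chain of such balls of comparable radii covers the segment of length $<\rho_1(x)$. Combining, $\rho(y)=\max\{\rho_1(y),\rho_2(y)\}\leq \max\{C_1,C_2'\}\rho(x)$.

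For the lower bound $\rho(y)\geq C^{-1}\rho(x)$, it suffices to bound $\rho_1(y)$ from below, since $\rho(y)\geq\rho_1(y)$: as $|y-x|<\rho_1(x)$, \eqref{radius-ineq} for $\rho_1$ directly yields $\rho_1(y)\geq C_1^{-1}\rho_1(x)=C_1^{-1}\rho(x)$. Hence $\rho(y)\geq C_1^{-1}\rho(x)$. Absorbing all constants into a single $C$ depending only on $C_1$ and $C_2$ finishes the proof. The main (mild) obstacle, as flagged above, is the upper bound for the non-dominant function on a ball larger than its own canonical ball; I expect this to be handled by the elementary chaining argument just sketched, and nothing deeper is needed. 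In fact a cleaner route avoiding chaining is available: observe that \eqref{radius-ineq} for $\rho_i$ implies the \emph{doubling-type} consequence that $B(x,\rho_i(x))$ and $B(y,\rho_i(y))$ are comparable whenever the balls are nested, and one can phrase the whole argument as: the collection $\{B(x,\rho(x))\}$ has bounded overlap and comparable radii on overlaps — but for this proposition the direct estimate above is shortest, so that is the route I would write up.
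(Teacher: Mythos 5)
Your plan correctly identifies the lower bound as easy (it follows from \eqref{radius-ineq} applied to whichever $\rho_i$ attains the maximum at $x$, since $\rho(y)\geq\rho_i(y)$), and you correctly flag that the upper bound is where something must be done. But the fix you propose contains a genuinely false intermediate claim. You assert that ``$\rho_2(y)\leq C_2'\rho_2(x)$ whenever $|y-x|<\rho_1(x)$, with $C_2'$ depending only on $C_1$ and $C_2$,'' and this fails. Take $\rho_1\equiv D+1$ (a radius function with $C_1=1$) and $\rho_2(z)=\frac{1}{2}|z|+r_0$ on $\R$, which satisfies \eqref{radius-ineq} with a fixed $C_2$ independent of $r_0$ and $D$. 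With $x=0$ one has $\rho_2(x)=r_0$, while at $y$ with $|y|=D<\rho_1(x)$ one has $\rho_2(y)\approx D/2$. The ratio $\rho_2(y)/\rho_2(x)\approx D/(2r_0)$ is unbounded, with $C_1,C_2$ fixed. The most one can hope for is $\rho_2(y)\lesssim\rho(x)$ rather than $\rho_2(y)\lesssim\rho_2(x)$; proving that by chaining is possible but more delicate than you sketch, and in any case your write-up as stated would not go through.

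The paper avoids the entire issue with a short symmetry argument that you may find instructive. After proving the lower bound $\rho(y)\geq C^{-1}\rho(x)$ as you do (via the dominant $\rho_i$ at $x$), it observes a dichotomy: either $\rho(y)<\rho(x)$, in which case the upper bound $\rho(y)\leq C\rho(x)$ is trivial since $C\geq1$; or $\rho(y)\geq\rho(x)$, in which case $|x-y|<\rho(x)\leq\rho(y)$ shows $x\in B(y,\rho(y))$, so the same lower-bound argument applied with $x$ and $y$ swapped gives $\rho(x)\geq C^{-1}\rho(y)$, which is exactly $\rho(y)\leq C\rho(x)$. Thus one never needs any control at all on the \emph{non}-dominant function on a large ball; only the dominant one at whichever point is the center of the ball being used. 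This is what makes the constant come out as the same $C$ that works for both $\rho_1$ and $\rho_2$, with no chaining.
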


\begin{proof}
Assume that $C<+\infty$ is a constant for which \eqref{radius-ineq} holds both for $\rho_1$ and $\rho_2$.

Fix $x\in\R^d$ and assume that $\rho_1\vee\rho_2(x)=\rho_1(x)$. If $y\in B(x,\rho_1\vee\rho_2(x))$, then the first inequality in \eqref{radius-ineq} for $\rho_1$ yields \be
\rho_1\vee\rho_2(x)=\rho_1(x)\leq C\rho_1(y)\leq C\rho_1\vee\rho_2(y).
\ee
If $\rho_1\vee\rho_2(x)=\rho_2(x)$ the conclusion would be the same (using \eqref{radius-ineq} for $\rho_2$).

Now there are two possibilities: either $\rho_1\vee\rho_2(x)\leq \rho_1\vee\rho_2(y)$, in which case the same argument with $x$ and $y$ swapped gives the bound $\rho_1\vee\rho_2(y)\leq C\rho_1\vee\rho_2(x)$, or the converse inequality $\rho_1\vee\rho_2(y)< \rho_1\vee\rho_2(x)$ holds. In both cases the proof is completed.
\end{proof}

\begin{prop}\label{radius-covering}
If $\rho$ is a radius function there is a countable set $\{x_k\}_{k\in\N}\subseteq \R^d$ such that: \begin{enumerate}
\item[\emph{(i)}] $\left\{B(x_k,\rho(x_k))\right\}_{k\in\N}$ is a covering of $\R^d$,
\item[\emph{(ii)}] any $x\in\R^d$ lies in at most $K$ of the balls of the covering, where $K$ depends only on $C$ and $d$.
\end{enumerate}
\end{prop}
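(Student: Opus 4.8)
The plan is to run a standard greedy/Vitali-type selection. I would first fix a small parameter $\lambda\in(0,1)$ depending only on the constant $C$ of \eqref{radius-ineq} — concretely $\lambda:=\tfrac{1}{2(C^2+1)}$ works — and consider the collection of all subsets $S\subseteq\R^d$ for which the open balls $\{B(x,\lambda\rho(x))\}_{x\in S}$ are pairwise disjoint. This collection is nonempty and closed under unions of chains, so Zorn's lemma yields a maximal such $S$. Since a family of pairwise disjoint nonempty open subsets of $\R^d$ is necessarily countable (each contains a point of a fixed countable dense set, and disjoint members contain distinct such points), I may enumerate $S=\{x_k\}_{k\in\N}$.

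Next I would deduce the covering property (i) from maximality. Given $y\in\R^d$, either $y\in S$, in which case $y\in B(y,\rho(y))$ trivially, or adjoining $y$ to $S$ destroys disjointness, so $B(y,\lambda\rho(y))$ meets $B(x_k,\lambda\rho(x_k))$ for some $k$. Choosing $z$ in the intersection and applying \eqref{radius-ineq} inside $B(x_k,\rho(x_k))$ and inside $B(y,\rho(y))$ gives $\rho(y)\leq C\rho(z)\leq C^2\rho(x_k)$, whence $|y-x_k|\leq\lambda\rho(y)+\lambda\rho(x_k)\leq\lambda(C^2+1)\rho(x_k)<\rho(x_k)$ by the choice of $\lambda$. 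Thus $y\in B(x_k,\rho(x_k))$, which proves (i).

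For the finite-overlap property (ii), I would fix $x\in\R^d$ and suppose $x\in B(x_k,\rho(x_k))$. Then \eqref{radius-ineq} forces $C^{-1}\rho(x)\leq\rho(x_k)\leq C\rho(x)$ and $|x-x_k|<\rho(x_k)\leq C\rho(x)$, so for every such $k$ the ball $B(x_k,\lambda\rho(x_k))$ has radius $\geq\lambda C^{-1}\rho(x)$ and is contained in $B(x,2C\rho(x))$; since these balls are pairwise disjoint, a volume comparison in $\R^d$ bounds the number of admissible $k$ by $K:=(2C^2/\lambda)^d$, a quantity depending only on $C$ and $d$. The only delicate point is bookkeeping the several applications of \eqref{radius-ineq} and fixing $\lambda$ so that $\lambda(C^2+1)<1$; I do not expect any real obstacle beyond keeping the constants consistent between the covering step and the overlap step.
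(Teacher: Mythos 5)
Your proof is correct and follows essentially the same route as the paper: choose a maximal family of pairwise disjoint shrunken balls $B(x,\lambda\rho(x))$, derive the covering property from maximality plus two applications of \eqref{radius-ineq}, and bound the overlap by a volume comparison. The only cosmetic differences are your explicit invocation of Zorn's lemma (the paper just asserts existence of a maximal family) and a slightly different choice of the shrinking factor ($\lambda=\tfrac{1}{2(C^2+1)}$ versus the paper's $\tfrac{1}{1+C^2}$), which is immaterial.
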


\begin{proof}
Let $\{x_k\}_{k\in\N}$ be such that $\mathcal{B}=\left\{B(x_j,\frac{\rho(x_j))}{1+C^2}\right\}_{j\in\N}$ be any maximal disjoint subfamily of $\left\{B(x,\frac{\rho(x))}{1+C^2}\right\}_{x\in\R^d}$ (of course, any maximal subfamily is countable). If $x\neq x_j$ for every $j\in\N$, then by maximality there exists a $k$ such that $B\left(x,\frac{\rho(x)}{1+C^2}\right)$ intersects $B\left(x_k,\frac{\rho(x_k)}{1+C^2}\right)$. Picking a point in the intersection and using twice \eqref{radius-ineq}, we see that $\rho(x)\leq C^2\rho(x_k)$, and thus that $x\in B(x_k,\rho(x_k))$. This proves (i).

To see that (ii) holds, fix $k$ and consider the indices $j_1,\dots,j_N$ corresponding to balls of the covering intersecting $B(x_k,\rho(x_k))$. By the same argument as above, we see that $C^{-2}\rho(x_k)\leq \rho(x_{j_\ell})\leq C^2\rho(x_k)$. This means that $B(x_k,(1+2C^2)\rho(x_k))$ contains $B\left(x_{j_\ell}, \frac{\rho(x_{j_\ell})}{1+C^2}\right)$ for every $\ell$. These balls are disjoint by construction and their radius is $\geq\frac{\rho(x_k)}{C^2(1+C^2)} $, therefore $N$ has to be bounded by a constant which depends only on $C$ and the dimension $d$.
\end{proof}

Now consider a measurable function\be
V:\R^d\rightarrow [0,+\infty).
\ee 
We assume that $V$ is locally bounded, not almost everywhere zero, and satisfies the following \emph{$L^\infty$-doubling condition}:\bel\label{pot-doubling}
||V||_{L^\infty(B(x,2r))}\leq D ||V||_{L^\infty(B(x,r))}\qquad\forall x\in\R^d,\ r>0,
\eel
where $D<+\infty$ is a constant independent of $x$ and $r>0$.

We want to associate to every such $V$ a certain radius function. Before giving the detailed arguments, let us describe the heuristics behind it. \newline

\par If we have a free quantum particle moving in $\R^d$ and $B$ is a ball of radius $r$, the uncertainty principle asserts that in order to localize the particle on the ball $B$ one needs an energy of the order of $r^{-2}$. If the particle is not free, but it is subject to a potential $V$, this energy increases by the size of $V$ on $B$. This means in particular that if $B$ is such that $\max_B V\leq r^{-2}$, then the amount of energy required for the localization is comparable to the one in the free case: in this case \emph{one does not feel the potential on $B$}. The radius function $\rho_V$ we are going to describe gives at every point $x$ the largest radius $\rho_V(x)$ such that one cannot feel the potential $V$ on $B(x,\rho_V(x))$.\newline

To formalize the discussion above, we begin by defining the function \be
f(x,r):=r^2||V||_{L^\infty(B(x,r))}\qquad (x\in\R^d, r>0).\ee 
We highlight two properties of $f$:\begin{enumerate}
\item $f(x,r)$ is strictly monotone in $r$ for every fixed $x$. 
\item $\lim_{r\rightarrow0+}f(x,r)=0$ and $\lim_{r\rightarrow+\infty}f(x,r)=+\infty$ for every $x$.
\end{enumerate}
To verify them, it is useful to observe that since $V$ is not almost everywhere $0$, an iterated application of \eqref{pot-doubling} shows that $||V||_{L^\infty(B)}>0$ for every non empty ball $B$.

We define \be
\rho_V(x):=\sup\{r>0:\ f(x,r)\leq 1\}.
\ee
By properties (1) and (2) above the $\sup$ exists and it is positive and finite. 

\begin{prop}\label{pot-rsquared}
We have\be
\frac{\rho_V(x)^{-2}}{4D}\leq||V||_{L^\infty(B(x,\rho_V(x)))} \leq \rho_V(x)^{-2}\qquad\forall x\in\R^d.
\ee
\end{prop}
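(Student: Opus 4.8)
The plan is to read off both inequalities directly from the definition $\rho_V(x)=\sup\{r>0:\ f(x,r)\le 1\}$, using only the strict monotonicity of $r\mapsto f(x,r)$ (property (1) in the text) and the $L^\infty$-doubling hypothesis \eqref{pot-doubling}; no fact beyond these is needed.

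First I would record the elementary observation that $r\mapsto ||V||_{L^\infty(B(x,r))}$ is non-decreasing and \emph{left-continuous}: since $B(x,r)=\bigcup_{0<s<r}B(x,s)$ is an increasing union, one has $||V||_{L^\infty(B(x,r))}=\sup_{0<s<r}||V||_{L^\infty(B(x,s))}=\lim_{s\uparrow r}||V||_{L^\infty(B(x,s))}$. Consequently $f(x,\cdot)$ is left-continuous and strictly increasing, and $\{r>0:\ f(x,r)\le 1\}$ is an interval of the form $(0,\rho_V(x)]$ or $(0,\rho_V(x))$; in particular $f(x,r)\le 1$ for every $r<\rho_V(x)$ and $f(x,r)>1$ for every $r>\rho_V(x)$.

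For the upper bound, I would take any $r<\rho_V(x)$, so that $f(x,r)\le 1$, i.e. $||V||_{L^\infty(B(x,r))}\le r^{-2}$, and then let $r\uparrow\rho_V(x)$: the left-continuity of $r\mapsto ||V||_{L^\infty(B(x,r))}$ yields $||V||_{L^\infty(B(x,\rho_V(x)))}\le\rho_V(x)^{-2}$. For the lower bound, I would apply $f(x,r)>1$ with the specific choice $r=2\rho_V(x)$, obtaining $4\rho_V(x)^2\,||V||_{L^\infty(B(x,2\rho_V(x)))}>1$, and then estimate $||V||_{L^\infty(B(x,2\rho_V(x)))}\le D\,||V||_{L^\infty(B(x,\rho_V(x)))}$ by \eqref{pot-doubling} with $r=\rho_V(x)$. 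Combining the two gives $||V||_{L^\infty(B(x,\rho_V(x)))}\ge\frac{\rho_V(x)^{-2}}{4D}$ (in fact with strict inequality).

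The only step needing a little care — essentially the sole obstacle — is the passage to the limit $r\uparrow\rho_V(x)$ in the upper bound, i.e. justifying that the supremum defining $\rho_V(x)$ is attained in the weak form $f(x,\rho_V(x))\le 1$; this is exactly what the left-continuity remark in the first step is designed to supply. Once that is in place, the rest is an immediate consequence of the definition of $\rho_V$ and of the doubling condition.
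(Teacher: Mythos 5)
Your proof is correct and follows the paper's argument: the upper bound comes from the definition of $\rho_V$ as a supremum, and the lower bound from applying $f(x,2\rho_V(x))>1$ together with one use of the $L^\infty$-doubling property at scale $\rho_V(x)$. The only added detail is your explicit left-continuity argument justifying $f(x,\rho_V(x))\le 1$, which the paper glosses over by calling the upper bound ``immediate''.
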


\begin{proof}
The right inequality follows immediately from the definition of $\rho_V$. To prove the one on the left, observe that \eqref{pot-doubling} implies\bee
f(x, 2\rho_V(x))&=&4\rho_V(x)^2||V||_{L^\infty(B(x,2\rho_V(x)))}\\
&\leq& 4D\rho_V(x)^2||V||_{L^\infty(B(x,\rho_V(x)))}.
\eee The definition of $\rho_V(x)$ shows that the last term is smaller than $4D$, while the first one is larger than $1$. This finishes the proof.
\end{proof}

The next two results together prove that $\rho_V$ is a radius function.

\begin{prop} The function $\rho_V$ is Borel. 
\end{prop}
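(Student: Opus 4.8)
The plan is to deduce that $\rho_V$ is Borel from two observations: that for each fixed $r>0$ the function $x\mapsto f(x,r)=r^2||V||_{L^\infty(B(x,r))}$ is Borel on $\R^d$ (indeed lower semicontinuous), and that the superlevel sets of $\rho_V$ can be written as countable unions of superlevel sets of these functions. Since a function $\R^d\to(0,+\infty)$ is Borel as soon as $\{x:\rho_V(x)>t\}$ is Borel for every $t\in\R$, this will suffice.

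First I would show that $x\mapsto||V||_{L^\infty(B(x,r))}$ is lower semicontinuous for fixed $r>0$, i.e., that $U_a:=\{x\in\R^d:||V||_{L^\infty(B(x,r))}>a\}$ is open for every $a\in\R$. If $x_0\in U_a$, the set $E:=\{y\in B(x_0,r):V(y)>a\}$ has positive Lebesgue measure; since $B(x_0,r)=\bigcup_{\eps>0}B(x_0,r-\eps)$ is an increasing union, continuity of measure provides $\eps>0$ with $|E\cap B(x_0,r-\eps)|>0$. Now for every $x$ with $|x-x_0|<\eps$ one has $B(x_0,r-\eps)\subseteq B(x,r)$, hence $||V||_{L^\infty(B(x,r))}\ge||V||_{L^\infty(E\cap B(x_0,r-\eps))}>a$, so $B(x_0,\eps)\subseteq U_a$. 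This proves lower semicontinuity; multiplying by the positive constant $r^2$ shows that $x\mapsto f(x,r)$ is lower semicontinuous too, and therefore $\{x\in\R^d:f(x,r)\le 1\}$ is closed for every $r>0$. (The local boundedness of $V$ is used only to guarantee that all the $L^\infty$ norms involved are finite.)

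It then remains to check the identity
\[
\{x\in\R^d:\rho_V(x)>t\}=\bigcup_{q\in\Q,\ q>t}\{x\in\R^d:f(x,q)\le 1\}\qquad(t\in\R).
\]
If $\rho_V(x)>t$, the definition of the supremum gives some $r>t$ with $f(x,r)\le 1$; picking a rational $q$ with $t<q<r$ and using that $r\mapsto f(x,r)$ is non-decreasing (because $B(x,q)\subseteq B(x,r)$ and $q^2<r^2$) yields $f(x,q)\le f(x,r)\le 1$, so $x$ lies in the right-hand side. Conversely, if $f(x,q)\le 1$ for some rational $q>t$, then $\rho_V(x)\ge q>t$ by the very definition of $\rho_V$. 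The right-hand side is a countable union of closed sets, hence Borel, and for $t<0$ it equals $\R^d$ since $\rho_V>0$ everywhere; thus $\{\rho_V>t\}$ is Borel for all $t\in\R$, i.e.\ $\rho_V$ is Borel. I expect the only step requiring genuine care to be the lower semicontinuity in the second paragraph, where one must pass from the open ball $B(x_0,r)$ to the slightly smaller balls $B(x_0,r-\eps)$ that are contained in every nearby ball $B(x,r)$ — precisely the point where continuity of Lebesgue measure along an increasing union enters; the reduction in the third paragraph uses nothing beyond monotonicity of $r\mapsto f(x,r)$.
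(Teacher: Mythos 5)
Your proof is correct and follows essentially the same route as the paper: reduce to showing that the superlevel sets $\{\rho_V>t\}$ are Borel, rewrite each as a countable union over rational $r>t$ of the sets $\{x:f(x,r)\leq 1\}$, and observe that each of these is closed because $x\mapsto ||V||_{L^\infty(B(x,r))}$ is lower semicontinuous. Your second paragraph simply spells out the lower-semicontinuity argument in more detail than the paper (which only remarks that ``$V>u$ on a positive-measure subset of $B(x,r)$'' is preserved under small perturbations of the center), but the idea is the same.
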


\begin{proof}
We have to see that $\{x:\rho_V(x)>t\}$ is a Borel set for every $t>0$, but \bee
\{x:\rho_V(x)>t\}&=&\{x:\ \exists r>t \text{ s.t. } f(x,r)\leq 1\}\\
&=&\{x:\ \exists r\in\Q\cap(t,+\infty) \text{ s.t. } f(x,r)\leq 1\}\\
&=&\cup_{r\in\Q\cap(t,+\infty)}\{x:\ ||V||_{L^\infty(B(x,r))}\leq r^{-2}\}.
\eee
It then suffices to verify that $||V||_{L^\infty(B(x,r))}$ is Borel in $x$ for every fixed $r>0$. In fact, $||V||_{L^\infty(B(\cdot,r))}$ is lower semi-continuous: $||V||_{L^\infty(B(x,r))}>u$ if and only if $V>u$ on a subset of positive measure of $B(x,r)$, and this property is clearly preserved by small perturbations of the center $x$.
\end{proof}

\begin{prop}\label{pot-to-rad} The function $\rho_V$ satisfies the following inequalities for every $x,y\in\R^d$: \be
 C^{-1}  \max\left\{\frac{|x-y|}{\rho_V(x)},1\right\}^{-M_1}\rho_V(x)\leq \rho_V(y)\leq C  \max\left\{\frac{|x-y|}{\rho_V(x)},1\right\}^{M_2}\rho_V(x),
\ee where $C, M_1, M_2$ depends only on the $L^\infty$-doubling constant $D$ appearing in \eqref{pot-doubling}. 

In particular $\rho_V$ is a radius function.
\end{prop}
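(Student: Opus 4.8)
The plan is to play off two facts against each other: balls centred at $x$ and at $y$ are comparable up to an additive shift $|x-y|$ of the radius, and, by Proposition \ref{pot-rsquared}, $||V||_{L^\infty(B(z,\rho_V(z)))}$ is comparable to $\rho_V(z)^{-2}$. Write $\rho:=\rho_V(x)$, $\delta:=|x-y|$ and $t:=\max\{\delta/\rho,1\}$; comparing the two sides of \eqref{pot-doubling} (both positive, since $V$ is not a.e.\ zero) shows $D\ge1$. I will use freely that, by strict monotonicity of $r\mapsto f(x,r)$, the inequality $f(y,r_0)>1$ forces $\rho_V(y)\le r_0$ and $f(y,r_0)\le1$ forces $\rho_V(y)\ge r_0$; the ball inclusions $B(y,r)\subseteq B(x,r+\delta)$ and $B(x,r)\subseteq B(y,r+\delta)$ together with monotonicity of $||V||_{L^\infty}$ under inclusion; and the iterated-doubling bound obtained by applying \eqref{pot-doubling} $\lceil\log_2(s/\rho)\rceil$ times and then the right-hand inequality of Proposition \ref{pot-rsquared}:
\be
||V||_{L^\infty(B(x,s))}\le D\left(\tfrac{s}{\rho}\right)^{\log_2 D}\rho^{-2}\quad(s\ge\rho),\qquad ||V||_{L^\infty(B(x,s))}\le\rho^{-2}\quad(s\le\rho).
\ee

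For the upper estimate I would test $f(y,\cdot)$ at $r_0:=(2\sqrt D+1)(\rho+\delta)$. Since $r_0\ge\rho+\delta$ we have $B(x,\rho)\subseteq B(y,r_0)$, hence by the left-hand inequality of Proposition \ref{pot-rsquared},
\be
f(y,r_0)=r_0^2\,||V||_{L^\infty(B(y,r_0))}\ge r_0^2\,||V||_{L^\infty(B(x,\rho))}\ge \frac{r_0^2\rho^{-2}}{4D}\ge\frac{(2\sqrt D+1)^2}{4D}>1,
\ee
so $\rho_V(y)\le r_0=(2\sqrt D+1)(\rho+\delta)\le 2(2\sqrt D+1)\,\rho\, t$, which is the asserted bound with $M_2=1$ and $C=4\sqrt D+2$ (a fortiori it holds with any exponent $\ge1$). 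For the lower estimate I would test $f(y,\cdot)$ at $r_0:=D^{-1}t^{-\frac12\log_2 D}\rho$; note $r_0\le\rho$. From $B(y,r_0)\subseteq B(x,r_0+\delta)$ it is enough to bound $||V||_{L^\infty(B(x,r_0+\delta))}$. If $r_0+\delta\le\rho$ this is $\le\rho^{-2}$, so $f(y,r_0)\le(r_0/\rho)^2\le1$. If $r_0+\delta>\rho$, the first displayed bound with $s=r_0+\delta$, together with $r_0+\delta\le 2\rho t$ (because $r_0\le\rho\le\rho t$ and $\delta\le\rho t$) and $2^{\log_2 D}=D$, gives
\be
f(y,r_0)\le r_0^2\cdot D\left(\tfrac{r_0+\delta}{\rho}\right)^{\log_2 D}\rho^{-2}\le D\,\frac{r_0^2}{\rho^2}\,(2t)^{\log_2 D}=D\cdot D^{-2}t^{-\log_2 D}\cdot D\,t^{\log_2 D}=1.
\ee
Hence $\rho_V(y)\ge r_0=D^{-1}t^{-\frac12\log_2 D}\rho$, i.e.\ the asserted bound with $C=D$ and $M_1=\tfrac12\log_2 D$.

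Finally, $\rho_V$ is Borel by the previous proposition, and for $y\in B(x,\rho_V(x))$ one has $\delta<\rho$, so $t=1$ and the two estimates above collapse to $C^{-1}\rho_V(x)\le\rho_V(y)\le C\rho_V(x)$ with $C:=\max\{D,4\sqrt D+2\}$ — exactly \eqref{radius-ineq}. The only delicate point is the choice of $r_0$ in the lower estimate: it must be small enough that $r_0^2||V||_{L^\infty}$ stays $\le1$ even after $||V||$ is allowed to blow up by the doubling factor $D(s/\rho)^{\log_2 D}$ over the enlarged ball $B(x,r_0+\delta)$ (of radius $\sim\delta$); balancing $r_0^2\sim D^{-2}t^{-\log_2 D}\rho^2$ against that growth factor $\sim D\,t^{\log_2 D}\rho^{-2}$ is precisely what forces the exponent $M_1=\tfrac12\log_2 D$, and everything else is routine bookkeeping.
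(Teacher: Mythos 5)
Your proof is correct and rests on the same two pillars the paper uses: the characterization $f(x,\rho_V(x))\approx 1$ from Proposition \ref{pot-rsquared}, and the iterated $L^\infty$-doubling bound $||V||_{L^\infty(B(x,s))}\lesssim (s/\rho_V(x))^{\log_2 D}\rho_V(x)^{-2}$. The one genuine difference is in the organisation: the paper only proves the lower bound $\rho_V(x)\le D^{(k+1)/2}\rho_V(y)$ directly (via the dyadic threshold $|x-y|<2^k\rho_V(x)$) and then obtains the upper bound by swapping the roles of $x$ and $y$, which costs a further power and yields a larger exponent $M_2=M(M+1)$. You instead test $f(y,\cdot)$ at a large explicit radius $r_0=(2\sqrt D+1)(\rho+\delta)$ and read off $\rho_V(y)\lesssim\max\{\delta,\rho\}$ directly, giving the optimal exponent $M_2=1$. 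This is a small but real gain; both routes prove the proposition, and both specialize to \eqref{radius-ineq} when $\delta<\rho$, so the ``in particular $\rho_V$ is a radius function'' conclusion follows identically.
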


\begin{proof} We have already seen that $\rho_V:\R^d\rightarrow(0,+\infty)$ is well-defined and Borel. Assume that $|x-y|<2^k \rho_V(x)$, for some integer $k\geq 1$. If $|x-y|<s<2^k \rho_V(x)$ we have\bee
s^2||V||_{L^\infty(B(y,s))}&\leq& s^2||V||_{L^\infty(B(x,2s))}\\
&\leq&D^{k+1}s^2||V||_{L^\infty(B(x,2^{-k}s))}\\
&=& 2^{2k}D^{k+1}(2^{-k}s)^2||V||_{L^\infty(B(x,2^{-k}s))}\\
&\leq&2^{2k}D^{k+1},
\eee
where in the third line we used $k+1$ times \eqref{pot-doubling} and in the last one we used the fact that $2^{-k}s<\rho_V(x)$.

 In particular $f(y,2^{-k}D^{-\frac{k+1}{2}}s)\leq 1$ and hence $2^{-k}D^{-\frac{k+1}{2}}s\leq \rho_V(y)$. By the arbitrariness of $s<2^k \rho_V(x)$, we conclude that \bel\label{pot-half-bound}
 \rho_V(x)\leq D^{\frac{k+1}{2}}\rho_V(y)\leq 2^{Mk}\rho_V(y),
 \eel for an integer $M$ depending only on $D$. 
 
 Inequality \eqref{pot-half-bound} gives $|x-y|<2^{(M+1)k}\rho_V(y)$, so that we can apply the above argument with $x$ and $y$ inverted, we conclude that $\rho_V(y)\leq 2^{M(M+1)k}\rho_V(x)$. Now the thesis follows choosing $k$ such that $2^k$ is comparable to $\max\left\{\frac{|x-y|}{\rho_V(x)},1\right\}$.\end{proof}

\section{Second intermezzo: a variant \\ of Fefferman-Phong inequality}\label{fph-sec}

We now prove a version of an inequality going back to Fefferman and Phong (see, e.g., \cite{fefferman-uncertainty} or \cite{shen}).\newline

Let $V:\R^d\rightarrow [0, +\infty)$ be a locally bounded function satisfying the reverse H\"older inequality\bel\label{fph-RH}
||V||_{L^\infty(B(x,r))}\leq Ar^{-d}\int_{B(x,r)}V\qquad\forall x\in\R^d, r>0,
\eel where $A<+\infty$ is a constant which is independent of $x$ and $r$. It is well-known that $V(x)dx$ is a doubling measure, and hence $V$ satisfies \eqref{pot-doubling} (see, e.g., \cite{stein-bigbook}). In particular the radius function $\rho_V$ is well-defined.

\begin{prop}\label{fph-lem}
There is a constant $C$ which depends only on $V$ such that for every $f\in C^1_c(\R^d)$ we have\be
\int_{\R^d}\rho_V^{-2}|f|^2\leq C\left(\int_{\R^d}|\nabla f|^2+\int_{\R^d}V|f|^2\right).
\ee
\end{prop}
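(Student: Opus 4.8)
The plan is to reduce the global inequality to a uniform \emph{local} estimate on the balls of a bounded-overlap covering adapted to the radius function $\rho_V$, and then sum.

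First I would apply Proposition \ref{radius-covering} to fix a countable set $\{x_k\}_{k\in\N}\subseteq\R^d$ such that the balls $B_k:=B(x_k,\rho_V(x_k))$ cover $\R^d$ and no point of $\R^d$ lies in more than $K=K(C,d)$ of them. Writing $r_k:=\rho_V(x_k)$, the radius-function inequality \eqref{radius-ineq} gives $\rho_V(y)^{-2}\leq C^2r_k^{-2}$ for every $y\in B_k$, so it is enough to prove the local estimate
\bel\label{fph-local}
r_k^{-2}\int_{B_k}|f|^2\leq C_1\Big(\int_{B_k}|\nabla f|^2+\int_{B_k}V|f|^2\Big)\qquad\forall f\in C^1_c(\R^d),
\eel
with $C_1$ independent of $k$. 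Indeed, multiplying \eqref{fph-local} by $C^2$, bounding $\rho_V^{-2}\leq C^2r_k^{-2}$ on $B_k$, and summing over $k$ — the left-hand sides then control $\int_{\R^d}\rho_V^{-2}|f|^2$ since the $B_k$ cover $\R^d$, and the right-hand sides add up to at most $C^2C_1K$ times $\int_{\R^d}|\nabla f|^2+\int_{\R^d}V|f|^2$ by the bounded overlap — yields the proposition.

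To establish \eqref{fph-local}, I would fix $B=B_k$, $r=r_k$, put $f_B:=\frac{1}{|B|}\int_Bf$, and use the $L^2$ Poincaré inequality on a ball, $\int_B|f-f_B|^2\leq c_dr^2\int_B|\nabla f|^2$ (which follows from the two-point Poincaré inequality used elsewhere in the paper). From the pointwise bounds $|f|^2\leq 2|f-f_B|^2+2|f_B|^2$ and $|f_B|^2\leq 2|f|^2+2|f-f_B|^2$, everything reduces to controlling the constant $|f_B|^2$. Multiplying the second bound by $V$ and integrating over $B$,
\be
|f_B|^2\int_BV\leq 2\int_BV|f|^2+2\|V\|_{L^\infty(B)}\int_B|f-f_B|^2\leq 2\int_BV|f|^2+2c_d\,r^2\|V\|_{L^\infty(B)}\int_B|\nabla f|^2.
\ee
Here the upper bound in Proposition \ref{pot-rsquared} gives $r^2\|V\|_{L^\infty(B)}\leq 1$, while the lower bound in Proposition \ref{pot-rsquared} combined with the reverse H\"older hypothesis \eqref{fph-RH} gives $\int_BV\geq (4AD)^{-1}r^{d-2}\geq c\,|B|\,r^{-2}$ for a constant $c$ depending only on $A$ and $D$, hence only on $V$. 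Dividing, $|B|r^{-2}|f_B|^2\leq c^{-1}\big(2\int_BV|f|^2+2c_d\int_B|\nabla f|^2\big)$, and plugging this together with the Poincaré estimate into $r^{-2}\int_B|f|^2\leq 2r^{-2}\int_B|f-f_B|^2+2r^{-2}|B||f_B|^2$ produces \eqref{fph-local}.

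The argument is essentially routine once the covering and Proposition \ref{pot-rsquared} are in hand; the only point that needs care is the lower bound $\int_{B_k}V\gtrsim|B_k|r_k^{-2}$, which is exactly where the reverse H\"older inequality \eqref{fph-RH} enters — as opposed to the weaker $L^\infty$-doubling \eqref{pot-doubling} used merely to make $\rho_V$ well-defined — and without which the Fefferman--Phong inequality fails.
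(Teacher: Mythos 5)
Your proof is correct and follows essentially the same approach as the paper: a local estimate on each ball $B(x,\rho_V(x))$ via Poincar\'e, the $L^\infty$ bound on $V$ from Proposition \ref{pot-rsquared}, and the lower bound $\int_B V\gtrsim \rho_V(x)^{d-2}$ from reverse H\"older \eqref{fph-RH}, assembled globally through the bounded-overlap covering of Proposition \ref{radius-covering}. The only cosmetic difference is that you split $f=(f-f_B)+f_B$ and bound the mean separately, whereas the paper integrates the two-point inequality $V(y)|f(y')|^2\leq 2V(y)|f(y)-f(y')|^2+2V(y')|f(y')|^2$ directly over $B\times B$ --- algebraically the same manipulation.
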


\begin{proof}
The function $f$ is fixed throughout the proof. If $x\in\R^d$, we put $B=B(x,\rho_V(x))$. Integrating in $(y,y')\in B\times B$ the trivial bound\be
V(y)|f(y')|^2\leq 2V(y)|f(y)-f(y')|^2+2V(y')|f(y')|^2, 
\ee we get\be
\int_BV \int_B|f|^2\leq2 ||V||_{L^\infty(B)}\int_B\int_B|f(y)-f(y')|^2dydy'+\omega_d\rho_V(x)^d\int_B V|f|^2,
\ee 
where $\omega_d$ is the measure of the unit ball of $\R^d$.
We recall that we have the following form of Poincar\'e inequality: \be
\int_{B\times B}|f(y)-f(y')|^2dydy'\leq C_d r^{d+2}\int_B|\nabla f|^2,
\ee where $C_d$ is a constant depending only on $d$ and $B$ is any euclidean ball of radius $r$. Combining it with Proposition \ref{pot-rsquared} we find\bel\label{fph-quad}
\int_BV \int_B|f|^2\leq2C_d \rho_V(x)^d\int_B|\nabla f|^2+\omega_d\rho_V(x)^d\int_B V|f|^2,
\eel  
The reverse H\"older condition and Proposition \ref{pot-rsquared} give\bel\label{fph-int}
\int_BV\geq \frac{1}{A}\rho_V(x)^d||V||_{L^\infty(B)}\geq \frac{1}{4DA}\rho_V(x)^{d-2}.
\eel
Putting \eqref{fph-quad} and \eqref{fph-int} together, and using Proposition \ref{pot-to-rad} to bring $\rho_V^{-2}$ inside the integral, we obtain\bel\label{fph-quad-2}
\int_B\rho_V^{-2}|f|^2\leq C'\left( \int_B|\nabla f|^2+\int_B V|f|^2\right),
\eel where $C'$ depends on $V$, but not on $x$ or $f$. Summing the inequalities \eqref{fph-quad-2} corresponding to the points $x_j$ given by Proposition \ref{radius-covering} (applied to $\rho_V$) we obtain\be
\int_{\R^d}\rho_V^{-2}|f|^2\leq C'K\left( \int_{\R^d}|\nabla f|^2+\int_{\R^d} V|f|^2\right),
\ee where $K$ is the constant appearing in Proposition \ref{radius-covering}. Putting $C=C'K$ we obtain the statement.\end{proof}

\section{Weighted Kohn Laplacians\\ and matrix Schr\"odinger operators}\label{kohnschrod-sec}

In this section we show that weighted Kohn Laplacians are unitarily equivalent to certain matrix Schr\"odinger operators. This observation in the special case $n=1$ already appeared in the literature, for instance in \cite{berndtsson}.  

Let $\varphi:\C^n\rightarrow \R$ be $C^2$ and plurisubharmonic. We identify $\C^n$ with $\R^{2n}$ using the real coordinates $(x_1,y_1,\dots,x_n,y_n)$ such that $z_j=x_j+iy_j$ for every $j$. It will be useful to define:\bel\label{kohnschrod-perp}
\nabla^\perp\varphi:=\left(-\frac{\partial\varphi}{\partial y_1},\frac{\partial\varphi}{\partial x_1},\dots,-\frac{\partial\varphi}{\partial y_n},\frac{\partial\varphi}{\partial x_n}\right),
\eel which is the \emph{symplectic gradient of $\varphi$}.

It is easy to verify that the mapping\bee
U_\varphi: L^2(\C^n,\C^n)&\longrightarrow &L^2_{(0,1)}(\C^n,\varphi)\\
 u=(u_1,\dots,u_n)&\longmapsto& \sum_{j=1}^n e^\varphi u_jd\overline{z}_j
\eee
is a surjective unitary transformation. If $u\in C^2_c(\C^n,\C^n)$ then $U_\varphi u\in\mathcal{D}(\Box_\varphi)$, because it is a $(0,1)$-form with $C^2_c$ coefficients, and by Proposition \ref{kohn-dbar-star} these forms are in the domain of $\Box_\varphi$.

\begin{prop}\label{kohnschrod-prop}
Consider the $n\times n$ Hermitian matrix-valued electric potential \be
V=8H_\varphi-4\text{tr}(H_\varphi)I_n,
\ee where $I_n$ is the $n\times n$ identity matrix, and the magnetic potential \be
A=\nabla^\perp\varphi.
\ee
We have the following identity\bel\label{kohnschrod-id}
\Box_\varphi (U_\varphi u)=U_\varphi\left(\frac{1}{4}\mathcal{H}_{A,V}u\right)\qquad \forall u\in C^2_c(\C^n,\C^n).
\eel
\end{prop}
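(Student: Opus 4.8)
The plan is to verify the identity \eqref{kohnschrod-id} by direct computation on a test vector field $u\in C^2_c(\C^n,\C^n)$, using the explicit formula for $\dbar^*_\varphi$ from Proposition \ref{kohn-dbar-star} and the Morrey-Kohn-H\"ormander framework, or more directly by expanding both sides componentwise. First I would write $U_\varphi u = \sum_j e^\varphi u_j\, d\overline z_j$ and apply $\Box_\varphi = \dbar^*_\varphi\dbar + \dbar\dbar^*_\varphi$. By part (i) of Proposition \ref{kohn-dbar-star}, $\dbar^*_\varphi(U_\varphi u) = \sum_j\left(-\partial_j(e^\varphi u_j) + 2(\partial_j\varphi)e^\varphi u_j\right) = e^\varphi\sum_j\left(-\partial_j u_j + (\partial_j\varphi)u_j\right)$, since $\partial_j e^\varphi = (\partial_j\varphi)e^\varphi$. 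Similarly $\dbar(U_\varphi u) = \sum_{j<k}\left(\dbar_j(e^\varphi u_k) - \dbar_k(e^\varphi u_j)\right)d\overline z_j\wedge d\overline z_k = \sum_{j<k} e^\varphi\left((\dbar_j\varphi)u_k + \dbar_j u_k - (\dbar_k\varphi)u_j - \dbar_k u_j\right)d\overline z_j\wedge d\overline z_k$. The point of conjugating by $e^\varphi$ is precisely to convert the weighted first-order operators $\partial_j - 2\partial_j\varphi$ and $\dbar_j$ acting on $e^\varphi u_j$ into the "magnetic" operators $\partial_j - \partial_j\varphi$ and $\dbar_j + \dbar_j\varphi$ acting on $u_j$, whose real and imaginary parts reassemble into $\frac{\partial}{\partial x_k} - iA_k$ with $A = \nabla^\perp\varphi$.

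Next I would compute $\dbar^*_\varphi\dbar(U_\varphi u)$ and $\dbar\dbar^*_\varphi(U_\varphi u)$ componentwise, again using Proposition \ref{kohn-dbar-star}, and sum them. The natural intermediate step is the analogue of the Morrey-Kohn-H\"ormander identity at the operator (not quadratic-form) level: after the conjugation, the sum $\dbar^*_\varphi\dbar + \dbar\dbar^*_\varphi$ should produce, on the $k$-th component, a term $-\sum_j D_j\overline{D_j} u_k$ (the magnetic Laplacian, up to the factor $\frac14$ and sign conventions) plus a zeroth-order curvature term coming from the commutator $[\,\overline{D_j}, D_k\,]$, which equals $-2\partial_j\dbar_k\varphi = -2(H_\varphi)_{kj}$ up to conventions. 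Here $D_j = \partial_j - (\partial_j\varphi)$ and $\overline{D_j} = \dbar_j + (\dbar_j\varphi)$. Grouping the second-order part: writing $z_j = x_j + iy_j$, one has $\partial_j = \frac12(\partial_{x_j} - i\partial_{y_j})$ and $\dbar_j = \frac12(\partial_{x_j} + i\partial_{y_j})$, so $4\,\overline{D_j}D_j$ expands to $\sum$ of $(\partial_{x_j} - i A_{2j-1})^2 + (\partial_{y_j} - iA_{2j})^2$ type terms with $A_{2j-1} = -\partial_{y_j}\varphi$, $A_{2j} = \partial_{x_j}\varphi$, matching \eqref{kohnschrod-perp}; this accounts for the factor $\frac14$ on the right side of \eqref{kohnschrod-id} and the identification $-\Delta_A = -\sum_k(\partial_{x_k} - iA_k)^2$ acting diagonally.

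The bookkeeping of the zeroth-order term is where I expect the real work to be. One must carefully collect: (a) the diagonal curvature contributions $\partial_j\dbar_j\varphi$ summed over $j$, which give a $\text{tr}(H_\varphi)$-type term on each component; (b) the off-diagonal terms $\partial_j\dbar_k\varphi\, u_k$ for $j\neq k$, which give the genuinely matrix part $H_\varphi u$; and (c) terms involving $\partial_j\varphi\,\dbar_k\varphi$, which should all cancel between the $\dbar^*_\varphi\dbar$ and $\dbar\dbar^*_\varphi$ pieces (this cancellation is the algebraic heart of the computation, and it mirrors why $|A|^2$-type terms in $\Delta_A$ work out). Putting (a) and (b) together yields exactly $8H_\varphi - 4\,\text{tr}(H_\varphi)I_n$ after matching the factor $\frac14$; I would fix sign and normalization conventions by testing on $n = 1$ (where $H_\varphi = \partial_z\dbar_z\varphi = \frac14\Delta\varphi$ and $V = 8H_\varphi - 4H_\varphi = 4H_\varphi = \Delta\varphi$), which is the case already recorded in \cite{berndtsson} and serves as a sanity check.

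A cleaner alternative I would seriously consider is to avoid computing $\Box_\varphi$ on a general field and instead verify \eqref{kohnschrod-id} by pairing against an arbitrary $v\in C^2_c(\C^n,\C^n)$: since $(\Box_\varphi(U_\varphi u), U_\varphi v)_\varphi = \mathcal{E}_\varphi(U_\varphi u, U_\varphi v)$, I can invoke the already-proved Morrey-Kohn-H\"ormander formula \eqref{MKH-formula} directly. Expanding $\dbar_k(U_\varphi u)_j = e^\varphi(\dbar_k\varphi\, u_j + \dbar_k u_j)$ inside \eqref{MKH-formula}, the $e^{2\varphi}$ cancels the weight $e^{-2\varphi}$, and after an integration by parts in the cross terms $\int \dbar_k u_j\,\overline{\dbar_k\varphi\, v_j}$ one recovers $\int_{\C^n}(\text{magnetic energy of }u)\cdot\overline{(\ldots v)} + \int_{\C^n}(\widetilde V u, v)$ with $\widetilde V$ built from second derivatives of $\varphi$; matching this against $\frac14\int (\mathcal H_{A,V} u, v)e^{-2\varphi}\cdot$(after applying $U_\varphi$, weights cancel) $= \frac14\int(|\nabla_A u|^2\text{-form} + (Vu,v))$ and using the density of $C^2_c$ gives the identity. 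The main obstacle either way is purely computational stamina in tracking the first-derivative-of-$\varphi$ cross terms and confirming they reorganize into the magnetic gradient rather than leaving a residue; I would organize this by introducing the shorthand $D_j, \overline{D_j}$ above so that the commutator relation $[\overline{D_j}, D_k] = 2\partial_k\dbar_j\varphi$ does the heavy lifting, exactly as $[\partial^*_{\varphi,k}, \dbar_j] = 2\dbar_j\partial_k\varphi$ did in the proof of Proposition \ref{MKH-prop}.
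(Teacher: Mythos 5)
Your ``cleaner alternative'' is in substance the paper's proof: the paper reduces \eqref{kohnschrod-id} by polarization to the quadratic-form identity $(\Box_\varphi U_\varphi u, U_\varphi u)_\varphi = \frac14(\mathcal H_{A,V}u,u)_0$, invokes the Morrey--Kohn--H\"ormander formula to write $\mathcal E_\varphi(U_\varphi u) = \sum_{j,k}\int|\dbar_k(e^\varphi u_j)|^2 e^{-2\varphi} + 2\int(H_\varphi u,u)e^{-2\varphi}$, and then applies a separate computational lemma (Lemma~\ref{kohnschrod-comp}) asserting $\sum_j\int|\dbar_j(e^\varphi f)|^2 e^{-2\varphi} = \frac14\int|\nabla_A f|^2 - \frac14\int\Delta\varphi\,|f|^2$ for scalar $f$; collecting the potential terms gives $2H_\varphi - \frac14\Delta\varphi\,I_n = \frac14(8H_\varphi - 4\operatorname{tr}(H_\varphi)I_n)$. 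The ``integration by parts in the cross terms'' you describe to convert $\sum_j|\dbar_j(e^\varphi u_k)|^2$ into a magnetic kinetic energy plus a $\Delta\varphi$ term is exactly the content of that lemma, which the paper proves via the commutator $\sum_j[X_j,Y_j]=-i\Delta\varphi$ of the real vector fields $X_j = \partial_{x_j}+i\,\partial_{y_j}\varphi$, $Y_j=\partial_{y_j}-i\,\partial_{x_j}\varphi$ (equivalent to your $[\overline{D_j},D_j]$ up to a sign and a linear change of frame, since $\overline{D_j}=\tfrac12(X_j+iY_j)$ and $D_j=\tfrac12(X_j-iY_j)$).

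Your primary route --- a pointwise operator-level expansion of $\dbar^*_\varphi\dbar + \dbar\dbar^*_\varphi$ on $U_\varphi u$ --- is also valid and is a genuinely different organization: it proves a stronger (pointwise, not merely integrated) identity, at the cost of re-deriving the MKH cancellations at the operator level rather than reusing Proposition~\ref{MKH-prop}. For a statement about formally self-adjoint operators, the polarization route is shorter because you only ever see quadratic forms, and the first-derivative cross terms in $\varphi$ vanish under a single integration by parts rather than cancelling between $\dbar^*_\varphi\dbar$ and $\dbar\dbar^*_\varphi$ separately. One small note: your claimed commutator $[\overline{D_j},D_k]=2\partial_k\dbar_j\varphi$ has the wrong sign with your conventions ($D_k=\partial_k-\partial_k\varphi$, $\overline{D_j}=\dbar_j+\dbar_j\varphi$ give $[\overline{D_j},D_k]=-2\partial_k\dbar_j\varphi$); you flag this as a convention issue, and it is, but it is worth fixing before carrying out the bookkeeping, since the $n=1$ sanity check $V=\Delta\varphi$ will not detect a uniform sign error that flips both $H_\varphi$ and $\operatorname{tr}(H_\varphi)$.
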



Let us stress the fact that while $\Box_\varphi$ is a genuine self-adjoint operator, the matrix Schr\"odinger operator $\mathcal{H}_{A,V}$ has been defined only formally. Identity \eqref{kohnschrod-id} may be used to extend $\mathcal{H}_{V,A}$ to a domain on which it is self-adjoint and unitarily equivalent to the weighted Kohn Laplacian.

The proof of Proposition \ref{kohnschrod-prop} is based on the following computation, which we present as a separate lemma in order to be able to use it again later.

\begin{lem}\label{kohnschrod-comp}
If $A$ is as in Proposition \ref{kohnschrod-prop}, we have\be
\sum_{j=1}^n\int_{\C^n}\left|\frac{\partial (e^\varphi f)}{\partial \overline{z}_j} \right|^2e^{-2\varphi}=\frac{1}{4}\int_{\C^n}|\nabla_A f|^2-\frac{1}{4}\int_{\C^n}\Delta\varphi|f|^2\qquad\forall f\in C^2_c(\C^n).
\ee
\end{lem}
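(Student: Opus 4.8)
The plan is to expand the left-hand side directly in real coordinates, using $\frac{\partial}{\partial \overline{z}_j} = \frac12\left(\frac{\partial}{\partial x_j} + i\frac{\partial}{\partial y_j}\right)$, and to recognize the magnetic gradient on the right. Writing $g = e^\varphi f$, one has
\[
\frac{\partial(e^\varphi f)}{\partial \overline{z}_j} = e^\varphi\left(\frac{\partial f}{\partial \overline{z}_j} + \frac{\partial \varphi}{\partial \overline{z}_j}f\right),
\]
so that
\[
\left|\frac{\partial(e^\varphi f)}{\partial \overline{z}_j}\right|^2 e^{-2\varphi} = \left|\frac{\partial f}{\partial \overline{z}_j} + \frac{\partial \varphi}{\partial \overline{z}_j}f\right|^2.
\]
Now $\frac{\partial}{\partial \overline{z}_j} + \frac{\partial \varphi}{\partial \overline{z}_j}$, written out in $x_j, y_j$, is $\frac12\big[(\partial_{x_j} + iA^{(1)}_j) + i(\partial_{y_j} + iA^{(2)}_j)\big]$ for suitable real combinations $A^{(1)}_j, A^{(2)}_j$ of the first partials of $\varphi$; the point is that with $A = \nabla^\perp\varphi = (-\partial_{y_1}\varphi, \partial_{x_1}\varphi, \dots)$ these match exactly the components of the magnetic gradient $\nabla_A f$, up to the factor $\frac12$. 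So I would compute
\[
\left|\frac{\partial f}{\partial \overline{z}_j} + \frac{\partial \varphi}{\partial \overline{z}_j}f\right|^2 = \frac14\left(\left|\Big(\partial_{x_j} - iA_{2j-1}\Big)f\right|^2 + \left|\Big(\partial_{y_j} - iA_{2j}\Big)f\right|^2\right) + (\text{cross terms}),
\]
where the cross terms are real and come from the interaction of the $x_j$- and $y_j$-derivatives.

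The key step, and the main source of work, is the treatment of those cross terms. Expanding the modulus squared of a sum of a complex quantity and $i$ times another, the cross term is $2\,\Re\big(\overline{w_1}\cdot i w_2\big) = -2\,\mathrm{Im}(\overline{w_1}w_2)$, which after substituting $w_1 = \frac12(\partial_{x_j} + iA_{2j-1})f$, $w_2 = \frac12(\partial_{y_j} + iA_{2j})f$ and using that $A_{2j-1} = -\partial_{y_j}\varphi$, $A_{2j} = \partial_{x_j}\varphi$ reduces to a total-derivative-plus-potential expression. The claim is that, after summing over $j$ and integrating over $\C^n$, all the genuinely derivative pieces integrate to zero (integration by parts, since $f$ is compactly supported and $\varphi$ is $C^2$), leaving precisely $-\frac14\int|f|^2\Delta\varphi$. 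Concretely I expect the cross term in the $j$-th summand to equal $\frac14\big(\partial_{x_j}(\cdots) + \partial_{y_j}(\cdots)\big) - \frac14|f|^2(\partial^2_{x_j}\varphi + \partial^2_{y_j}\varphi)$, and summing over $j$ produces $-\frac14|f|^2\Delta\varphi$ after the divergence terms vanish upon integration.

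Assembling the pieces: the non-cross terms sum to $\frac14\sum_{k=1}^{2n}\int|(\partial_k - iA_k)f|^2 = \frac14\int|\nabla_A f|^2$, and the cross terms contribute $-\frac14\int|f|^2\Delta\varphi$, which is exactly the asserted identity. The main obstacle is purely bookkeeping: keeping the sign conventions of $\nabla^\perp\varphi$ consistent with the Wirtinger calculus and making sure the integration by parts is applied to the right combination so that the surviving zeroth-order term is $\Delta\varphi = \sum_j(\partial^2_{x_j} + \partial^2_{y_j})\varphi$ and not, say, $4\,\mathrm{tr}(H_\varphi)$ with a wrong constant (though of course $\Delta\varphi = 4\,\mathrm{tr}(H_\varphi)$, so the two are consistent once the constants are tracked). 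No deep idea is needed beyond the commutation relation already exploited in the Morrey–Kohn–Hörmander computation; one could even phrase the argument as an application of that same relation $[\partial^*_{\varphi,k},\dbar_j] = 2\dbar_j\partial_k\varphi$ restricted to a single function, but doing it by hand in real coordinates is the most transparent route.
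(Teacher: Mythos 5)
Your plan is correct and is essentially the paper's own argument: both start from $\partial_{\overline z_j}(e^\varphi f)\,e^{-\varphi}=\tfrac12(X_jf+iY_jf)$, where $X_j,Y_j$ are exactly the real-coordinate magnetic derivatives with potential $A=\nabla^\perp\varphi$, and both handle the cross term $-2\,\mathrm{Im}(\overline{X_jf}\,Y_jf)$ by one integration by parts to leave $-\Delta\varphi\,|f|^2$; the paper merely packages this as $-i\sum_j\int[X_j,Y_j]f\cdot\overline f$ together with $\sum_j[X_j,Y_j]=-i\Delta\varphi$, which is the same computation organized through the commutator. One bookkeeping slip to fix: your substitution $w_1=\tfrac12(\partial_{x_j}+iA_{2j-1})f$ has the wrong sign — consistency with $\nabla_A f=(\partial_k f-iA_kf)_k$ and your own earlier display requires $w_1=\tfrac12(\partial_{x_j}-iA_{2j-1})f$.
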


\begin{proof}

Define the vector fields \be
X_j:=\frac{\partial}{\partial x_j}+i\frac{\partial\varphi}{\partial y_j},\quad Y_j:=\frac{\partial}{\partial y_j}-i\frac{\partial\varphi}{\partial x_j}\qquad (j=1,\dots,n).
\ee  Recalling \eqref{kohnschrod-perp} and the definition of magnetic gradient (see Section \ref{mag-sec}), we have $|\nabla_Af|^2=\sum_{j=1}^n\left(|X_jf|^2+|Y_jf|^2\right)$. Notice that\be
\frac{\partial}{\partial \overline{z}_j}+\frac{\partial\varphi}{\partial \overline{z}_j}=\frac{1}{2}\left(X_j+iY_j\right).
\ee and that the formal adjoints of $X_j$ and $Y_j$ are $-X_j$ and $-Y_j$ respectively. Therefore we have \bee
&&\int_{\C^n}\left|\frac{\partial (e^\varphi f)}{\partial \overline{z}_j} \right|^2e^{-2\varphi}=\sum_{j=1}^n\int_{\C^n}\left|\frac{\partial f}{\partial \overline{z}_j}+\frac{\partial\varphi}{\partial \overline{z}_j}f\right|^2\\
&=&\frac{1}{4}\sum_{j=1}^n\int_{\C^n}\left|(X_j+iY_j)f\right|^2\\
&=&\frac{1}{4}\sum_{j=1}^n\left(\int_{\C^n}\left|X_jf\right|^2+\int_{\C^n}\left|X_jf\right|^2+\int_{\C^n}X_jf\overline{iY_j f}+\int_{\C^n}iY_jf\overline{X_j f}\right)\\
&=&\frac{1}{4}\sum_{j=1}^n\left(\int_{\C^n}|\nabla_Af|^2-i\int_{\C^n}[X_j,Y_j]f\cdot \overline{f}\right)
\eee
Observing that $\sum_{j=1}^n[X_j,Y_j]=-i\Delta\varphi$, we obtain the thesis.
\end{proof}

\begin{proof}[Proof of Proposition \ref{kohnschrod-prop}]

Since both $\Box_\varphi$ and $\mathcal{H}_{V,A}$ are formally self-adjoint, by polarization it is enough to prove that \be
(\Box_\varphi U_\varphi u, U_\varphi u )_\varphi=(U_\varphi\mathcal{H}_{V,A}u,U_\varphi u)_\varphi=(\mathcal{H}_{V,A}u,u)_0,
\ee where the parenthesis on the right represent the scalar product in the unweighted space $L^2(\C^n,\C^n)$. Using Proposition \ref{MKH-prop} and Lemma \ref{kohnschrod-comp}, we can write \bee
&&(\Box_\varphi U_\varphi u, U_\varphi u )_\varphi=\mathcal{E}_\varphi(U_\varphi u)\\
&=&\sum_{j,k=1}^n\int_{\C^n}\left|\frac{\partial (e^\varphi f)}{\partial \overline{z}_j} \right|^2e^{-2\varphi}+2\int_{\C^n}(H_\varphi u,u)\\
&=&\sum_{k=1}^n\left(\frac{1}{4}\int_{\C^n}|\nabla_A u_k|^2-\frac{1}{4}\int_{\C^n}\Delta\varphi|u_k|^2\right)+2\int_{\C^n}(H_\varphi u,u)\\
&=&\frac{1}{4}\left(\int_{\C^n}|\nabla_A u|^2+\int_{\C^n}((8H_\varphi-\Delta\varphi I_n) u,u)\right).
\eee To complete the proof notice that \be
\text{tr}(H_\varphi)=\sum_{j=1}^n\frac{\partial^2\varphi}{\partial z_j\partial \overline{z}_j}=\frac{1}{4}\Delta\varphi.
\ee and recall \eqref{schrod-energy}.\end{proof}

\section{One complex variable versus several complex variables in the analysis of $\Box_\varphi$}\label{onevsmany-sec}

Proposition \ref{kohnschrod-prop} reveals a radical difference between the one-dimensional case ($n=1$) and the higher-dimensional case ($n\geq2$) in the analysis of the weighted Kohn Laplacian. If $n=1$, the potential $V$ is the scalar function \be
8H_\varphi-4\text{tr}(H_\varphi)=4\text{tr}(H_\varphi)=\Delta\varphi,
\ee which is non-negative, while if $n\geq2$ the potential $V$ is matrix-valued and \be
\text{tr}(V)=\text{tr}(8H_\varphi-4\text{tr}(H_\varphi)I_n)=(8-4n)\text{tr}(H_\varphi)=(2-n)\Delta\varphi
\ee is non-positive. As a consequence, the potential $V$ always has non-positive eigenvalues if $n\geq2$. 

In the one-variable case one may combine Proposition \ref{kohnschrod-prop}, identity \eqref{schrod-energy} and the diamagnetic inequality of Lemma \ref{mag-diamag}, obtaining (for $u\in L^2(\C,\C)=L^2(\C)$):\bee
\mathcal{E}_\varphi(U_\varphi u)&=&\frac{1}{4}\mathcal{E}_{V,A}(u)\\
&=&\frac{1}{4}\left(\int_\C|\nabla_Au|^2+\int_\C V|u|^2\right)\\
&\geq&\frac{1}{4}\left(\int_\C|\nabla|u||^2+\int_\C\Delta\varphi|u|^2\right).
\eee

The last term is the energy $\mathcal{E}_{\Delta\varphi,0}(|u|)$ of the compactly supported Lipschitz function $|u|$ in presence of the scalar electric potential $\Delta\varphi$ and of no magnetic field. If one can prove the bound \bel\label{onevsmany-bound}
\mathcal{E}_{\Delta\varphi,0}(u)\geq \int_\C \mu^2 |u|^2,
\eel for some $\mu:\C\rightarrow[0,+\infty)$ and for all Lipschitz functions $u$, one can immediately deduce that \be
\mathcal{E}_\varphi(u)\geq  \int_\C \left(\frac{\mu}{2}\right)^2 |u|^2e^{-2\varphi}, 
\ee i.e., that $\Box_\varphi$ is $\frac{\mu}{2}$-coercive. Notice that \eqref{onevsmany-bound} is the Fefferman-Phong inequality (see, e.g., Lemma \ref{fph-lem} of Section \ref{fph-sec}). This is the approach followed by Christ in \cite{christ}.\newline

Such a route is not viable in general in several variables: if $u\in C^\infty_c(\C^n,\C^n)$, applying the diamagnetic inequality we get:\bee
\mathcal{E}_\varphi(U_\varphi u)&=&\frac{1}{4}\mathcal{E}_{V,A}(u)\\
&=&\frac{1}{4}\left(\sum_{k=1}^n\int_{\C^n}|\nabla_Au_k|^2+\int_{\C^n} (Vu,u)\right)\\
&\geq&\frac{1}{4}\sum_{k=1}^n\left(\int_{\C^n}|\nabla|u_k||^2+\int_{\C^n}\lambda(V)|u_k|^2\right),
\eee
 where $\lambda(V)$, the minimal eigenvalue of $V$, is everywhere non-positive. The last term is $\sum_{k=1}^n\mathcal{H}_{\lambda(V),0}(|u_k|)$, which is not even non-negative in general, so no estimate like \eqref{onevsmany-bound} can hold.

Nevertheless, a variant of this approach can be very useful in the special case in which the eigenvalues of $H_\varphi$ are comparable, as we show in the next section.

\section{$\mu$-coercivity when the eigenvalues are comparable}\label{mucomparable-sec}

We  are going to use an argument which appears, e.g., in the proof of Theorem 5.6 of \cite{haslinger-helffer}. 

If $\varphi:\C^n\rightarrow\R$ is $C^2$ and plurisubharmonic, we say that \emph{$H_\varphi$ has comparable eigenvalues} if there exists $\delta>0$ such that \bel\label{mucomparable-ineq}
(H_\varphi(z)v,v)\geq \delta \Delta\varphi(z)|v|^2\qquad\forall z\in\C^n, \ v\in\C^n.
\eel Since $\Delta\varphi/4$ is the trace of $H_\varphi$, the condition above is clearly equivalent to the global comparability of any pair of eigenvalues of $H_\varphi$. If \eqref{mucomparable-ineq} holds, necessarily $4\delta\leq1$.

Notice that \eqref{mucomparable-ineq} holds automatically in one complex dimension. 

\begin{lem}\label{mucomparable-lem}
Let $\varphi:\C^n\rightarrow\R$ be $C^2$, plurisubharmonic and such that \begin{enumerate}
\item[\emph{(i)}] the eigenvalues of $H_\varphi$ are comparable, i.e., \eqref{mucomparable-ineq} holds,
\item[\emph{(ii)}] $\Delta\varphi$ satisfies the reverse-H\"older inequality
\bel\label{mucomparable-RH}
||\Delta\varphi||_{L^\infty(B(z,r))}\leq Ar^{-2n}\int_{B(z,r)}\Delta\varphi\qquad\forall z\in\C^n, r>0.
\eel 
\end{enumerate}
Then $\Box_\varphi$ is $\mu$-coercive, where \be
\mu=c\rho_{\Delta\varphi}^{-1}.
\ee Here $c>0$ is a constant which depends only on $\Delta\varphi$ and $\delta$, and $\rho_{\Delta\varphi}$ is the radius function associated to the potential $\Delta\varphi$, as described in the Appendix.
\end{lem}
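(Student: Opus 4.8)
The plan is to reduce, via the Morrey--Kohn--H\"ormander formula and the unitary equivalence of Proposition \ref{kohnschrod-prop}, the coercivity of $\Box_\varphi$ to a \emph{componentwise} scalar estimate, and then to invoke the diamagnetic inequality (Lemma \ref{mag-diamag}) together with the Fefferman--Phong inequality (Proposition \ref{fph-lem}) applied to the potential $\Delta\varphi$ (admissible by hypothesis (ii), which also guarantees that $\rho_{\Delta\varphi}$ is a well-defined radius function; note that if $\Delta\varphi\equiv 0$ there is nothing to prove). Since forms with $C^\infty_c$ coefficients are dense in $\mathcal{D}(\mathcal{E}_\varphi)$ (Proposition \ref{kohn-dbar-star}(iii)), $\mathcal{E}_\varphi$ is continuous in the graph norm, and $\|\mu\,\cdot\,\|_\varphi$ is lower semicontinuous with respect to $L^2(\C^n,\varphi)$-convergence (Fatou, after passing to an a.e.-convergent subsequence), it suffices to establish \eqref{coerc-formula} for $u=U_\varphi v$ with $v\in C^\infty_c(\C^n,\C^n)$.

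\textbf{Step 1 (identity).} For such $v$, the computation in the proof of Proposition \ref{kohnschrod-prop}, combined with Lemma \ref{kohnschrod-comp} applied to each component $v_k$ and summed, gives (with $A=\nabla^\perp\varphi$)
\[
\mathcal{E}_\varphi(U_\varphi v)=S+2\int_{\C^n}(H_\varphi v,v),\qquad
S:=\sum_{j,k}\int_{\C^n}\Big|\tfrac{\partial(e^\varphi v_j)}{\partial\overline{z}_k}\Big|^2e^{-2\varphi}=\tfrac14\sum_{k}\int_{\C^n}|\nabla_A v_k|^2-\tfrac14\int_{\C^n}\Delta\varphi\,|v|^2 .
\]
In particular $S\ge 0$.

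\textbf{Step 2 (key bookkeeping).} Here the comparability hypothesis \eqref{mucomparable-ineq} enters, together with the elementary fact it forces, namely $4\delta\le 1$. Since $0<4\delta\le 1$ and $S\ge 0$ we have $S\ge 4\delta S$, hence
\[
\mathcal{E}_\varphi(U_\varphi v)=S+2\int(H_\varphi v,v)\ \ge\ 4\delta S+2\int(H_\varphi v,v)
=\delta\sum_k\int|\nabla_A v_k|^2-\delta\int\Delta\varphi\,|v|^2+2\int(H_\varphi v,v).
\]
Bounding $2\int(H_\varphi v,v)\ge 2\delta\int\Delta\varphi\,|v|^2$ pointwise by \eqref{mucomparable-ineq}, the two multiples of $\int\Delta\varphi\,|v|^2$ combine to a \emph{positive} one:
\[
\mathcal{E}_\varphi(U_\varphi v)\ \ge\ \delta\Big(\sum_k\int_{\C^n}|\nabla_A v_k|^2+\int_{\C^n}\Delta\varphi\,|v|^2\Big).
\]
The point of only ``diluting'' $S$ (and keeping the full curvature term $2\int(H_\varphi v,v)$) is precisely that it lets the positive curvature absorb the sign-indefinite term $-\tfrac14\int\Delta\varphi\,|v|^2$ coming from the magnetic commutator, with no smallness assumption on $\delta$ beyond $\delta>0$. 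I expect this to be the only non-routine point of the argument.

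\textbf{Step 3 (conclusion).} Apply the diamagnetic inequality (Lemma \ref{mag-diamag}) componentwise, $|\nabla_A v_k|\ge|\nabla|v_k||$ a.e., and then Proposition \ref{fph-lem} (which extends from $C^1_c$ to Lipschitz, compactly supported functions by a routine mollification argument) to each $|v_k|$ with potential $\Delta\varphi$. This produces a constant $C$ depending only on $\Delta\varphi$ such that
\[
\mathcal{E}_\varphi(U_\varphi v)\ \ge\ \delta\sum_k\Big(\int|\nabla|v_k||^2+\int\Delta\varphi\,|v_k|^2\Big)\ \ge\ \frac{\delta}{C}\sum_k\int_{\C^n}\rho_{\Delta\varphi}^{-2}|v_k|^2
=\frac{\delta}{C}\int_{\C^n}\rho_{\Delta\varphi}^{-2}|v|^2=\Big\|\,\sqrt{\tfrac{\delta}{C}}\,\rho_{\Delta\varphi}^{-1}\,U_\varphi v\,\Big\|_\varphi^2,
\]
using that $\|g\,U_\varphi v\|_\varphi^2=\int|g|^2|v|^2$ for any measurable $g$. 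Hence $\Box_\varphi$ is $\mu$-coercive with $\mu=c\,\rho_{\Delta\varphi}^{-1}$, $c:=\sqrt{\delta/C}$ depending only on $\delta$ and $\Delta\varphi$; the density/Fatou argument from the first paragraph then extends the inequality from $C^\infty_c$-forms to all of $\mathcal{D}(\mathcal{E}_\varphi)$.
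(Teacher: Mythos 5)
Your proof is correct and follows essentially the same route as the paper's: the identical ``dilution'' trick of replacing the full anti-holomorphic gradient term $S$ by $4\delta S$ (valid since $S\ge 0$ and $4\delta\le 1$) so that the curvature term $2\int(H_\varphi v,v)\ge 2\delta\int\Delta\varphi|v|^2$ absorbs the $-\delta\int\Delta\varphi|v|^2$ arising from Lemma \ref{kohnschrod-comp}, followed by the diamagnetic inequality and Fefferman--Phong. The only (harmless) difference is that you are a bit more explicit about the density/lower semicontinuity step used to pass from $C^\infty_c$-forms to all of $\mathcal{D}(\mathcal{E}_\varphi)$.
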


\begin{proof}
Let $u\in C^2_c(\C^n,\C^n)$. We have \bee
\mathcal{E}_\varphi(U_\varphi u)&=&\sum_{j,k}\int_{\C^n}\left|\frac{\partial (e^\varphi u_j)}{\partial\overline{z}_k}\right|^2e^{-2\varphi}+2\int_{\C^n} (H_\varphi u,u)\\
&\geq&\sum_{j=1}^n\left(4\delta \sum_{k=1}^n\int_{\C^n}\left|\frac{\partial (e^\varphi u_j)}{\partial\overline{z}_k}\right|^2e^{-2\varphi}+2\delta\int_{\C^n}\Delta\varphi|u_j|^2\right),
\eee 
where we used Proposition \ref{MKH-prop}, hypothesis (i), and the inequality $4\delta\leq 1$. We invoke Lemma \ref{kohnschrod-comp} to obtain\be
\mathcal{E}_\varphi(U_\varphi u)\geq \delta\sum_{j=1}^n\left(\int_{\C^n}|\nabla_A u_j|^2+\int_{\C^n}\Delta\varphi|u_j|^2\right).
\ee
We can now apply the diamagnetic inequality (Lemma \ref{mag-diamag}) to deduce that\be
\mathcal{E}_\varphi(U_\varphi u)\geq  \delta\sum_{j=1}^n\left(\int_{\C^n}|\nabla |u_j||^2+\int_{\C^n}\Delta\varphi|u_j|^2\right).
\ee 
Since we assumed hypothesis (ii), we can apply the Fefferman-Phong inequality of Lemma \ref{fph-lem}:\be
\mathcal{E}_\varphi(U_\varphi u)\geq C^{-1} \delta\int_{\C^n}\rho_{\Delta\varphi}^{-2}|u|^2.
\ee To complete the proof, we replace $u$ with $e^{-\varphi} u$ and recall the density of $(0,1)$-forms with smooth compactly supported coefficients in $\mathcal{D}(\mathcal{E}_\varphi)$ (Proposition~\ref{kohn-dbar-star}).
\end{proof}

\section{Admissible weights}\label{adm-sec}

It is now time to introduce the class of weights to which our main results apply.

\begin{dfn}\label{adm-dfn} A $C^2$ plurisubharmonic weight $\varphi:\C^n\rightarrow\R$ is said to be admissible if:\begin{enumerate}
\item[\emph{(1)}] the following $L^\infty$ doubling condition holds:\be
\sup_{B(z,2r)}\Delta\varphi\leq D\sup_{B(z,r)}\Delta\varphi \quad\forall z\in \C^n,\ r>0,
\ee for some finite constant $D$ which is independent of $z$ and $r$,
\item[\emph{(2)}] there exists $c>0$ such that 
\bel\label{adm-lower}
\inf_{z\in\C^n}\sup_{w\in B(z,c)}\Delta\varphi(w)>0.
\eel
\end{enumerate}
\end{dfn}

If $\varphi$ is an admissible weight, then \be V\equiv\Delta\varphi:\C^n\rightarrow[0,+\infty)\ee satisfies condition \eqref{pot-doubling} of Section \ref{radius-sec} (we are identifying $\C^n$ and $\R^{2n}$), and is continuous and not everywhere zero, because of \eqref{adm-lower}. Thus we have the associated radius function $\rho_{\Delta\varphi}$. Since here we are dealing only with one fixed weight $\varphi$, we can drop the subscript and denote this radius function just by $\rho$. To this radius function we can in turn associate a distance function $d$ on $\C^n\equiv\R^{2n}$. We call $\rho$ the \emph{maximal eigenvalue radius function} and $d$ the \emph{maximal eigenvalue distance} corresponding to the weight $\varphi$. The details are in the Appendix. The reason for this name is simple: as we have already remarked, $\Delta\varphi$ is four times the trace of the complex Hessian of  $\varphi$ and hence it is comparable to its maximal eigenvalue.

\begin{prop}\label{adm-bounded}
The maximal eigenvalue radius function associated to an admissible weight is bounded.
\end{prop}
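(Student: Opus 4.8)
The plan is to prove that $\rho=\rho_{\Delta\varphi}$ is bounded by a constant depending only on the data in Definition \ref{adm-dfn}; in fact only hypothesis (2) is needed, and the doubling hypothesis (1) plays no role here. Recall from Section \ref{radius-sec} that, with $V=\Delta\varphi$, one sets $f(z,r):=r^2\|\Delta\varphi\|_{L^\infty(B(z,r))}$ and $\rho(z):=\sup\{r>0:\ f(z,r)\leq1\}$, and that for each fixed $z$ the function $r\mapsto f(z,r)$ is strictly increasing, with $\lim_{r\to0+}f(z,r)=0$ and $\lim_{r\to+\infty}f(z,r)=+\infty$.

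First I would extract from hypothesis (2) the uniform nondegeneracy constant $\eps_0:=\inf_{z\in\C^n}\sup_{w\in B(z,c)}\Delta\varphi(w)>0$. Since $\Delta\varphi$ is continuous, for every $z\in\C^n$ and every $r\geq c$ the inclusion $B(z,c)\subseteq B(z,r)$ together with the monotonicity of the supremum over nested balls gives $\|\Delta\varphi\|_{L^\infty(B(z,r))}\geq\sup_{w\in B(z,c)}\Delta\varphi(w)\geq\eps_0$, hence $f(z,r)\geq\eps_0 r^2$ for all $r\geq c$.

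Then I would set $R_0:=\max\{c,\,2\eps_0^{-1/2}\}$, so that $f(z,R_0)\geq\eps_0 R_0^2\geq4>1$ for every $z$. Since $r\mapsto f(z,r)$ is increasing, it follows that $f(z,r)>1$ for every $r\geq R_0$, so $\{r>0:\ f(z,r)\leq1\}\subseteq(0,R_0)$ and therefore $\rho(z)\leq R_0$. As $R_0$ is independent of $z$, this yields $\sup_{z\in\C^n}\rho(z)\leq R_0<+\infty$, which is exactly the assertion. There is no real obstacle: the only point requiring a little care is to argue directly from the definition of $\rho_{\Delta\varphi}$ and the elementary monotonicity properties of $f$ recorded in Section \ref{radius-sec}, rather than through the coarser estimates of Proposition \ref{pot-to-rad}, which are not needed here and would only give a worse bound.
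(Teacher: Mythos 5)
Your proof is correct and follows exactly the same line as the paper's (the paper just compresses it into one sentence, invoking the definition of $\rho$ and \eqref{adm-lower}); you have merely unpacked the monotonicity of $r\mapsto f(z,r)$ and produced an explicit uniform bound $R_0=\max\{c,\,2\eps_0^{-1/2}\}$, which is a perfectly faithful elaboration.
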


\begin{proof}
By the definition of the radius function associated to a potential (see Section \ref{radius-sec}), \be
\rho(z):=\sup\{r>0: \sup_{w\in B(z,r)}\Delta\varphi(w)\leq r^{-2}\},
\ee and the statement follows immediately from \eqref{adm-lower}.
\end{proof}

The next proposition will play a key role in later sections.

\begin{prop}\label{adm-hol} Let $\varphi$ and $\rho$ be as above. There exists a constant $C$ depending only on $\varphi$ such that if $h:B(z,r)\rightarrow\C$ is holomorphic and $r\leq \rho(z)$, then \be
|h(z)|^2e^{-2\varphi(z)}\leq \frac{C}{|B(z,r)|}\int_{B(z,r)}|h|^2e^{-2\varphi}.
\ee 
\end{prop}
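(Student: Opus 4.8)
The plan is to prove a sub-mean-value property for $|h|^2 e^{-2\varphi}$ by exploiting the near-constancy of $\varphi$ up to a harmonic correction on balls of radius $\rho(z)$. The key observation is that on $B(z,r)$ with $r\le\rho(z)$ we have $0\le\Delta\varphi\le r^{-2}$ by the very definition of $\rho$ and Proposition~\ref{pot-rsquared}, so $\varphi$ differs from a harmonic (in fact pluriharmonic, but harmonicity suffices) function by something small and controlled. Concretely, first I would solve the Dirichlet problem: let $\psi$ be harmonic on $B(z,r/2)$ with $\psi=\varphi$ on $\partial B(z,r/2)$, so that $\varphi-\psi$ vanishes on the boundary and satisfies $\Delta(\varphi-\psi)=\Delta\varphi$, hence $0\le \varphi-\psi \le C_n$ on $B(z,r/2)$ by the maximum principle comparison with the solution of $\Delta v=-r^{-2}$ (whose sup over the ball of radius $r/2$ is a dimensional constant). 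Thus $|\varphi-\psi|\le C_n$ throughout a fixed fraction of the ball.

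Next, since $\psi$ is harmonic and real-valued, write $\psi=\mathrm{Re}\, g$ for a holomorphic $g$ on (a slightly smaller ball, say) $B(z,r/4)$; then $e^{-g}h$ is holomorphic there, so $|e^{-g}h|^2=e^{-2\psi}|h|^2$ is plurisubharmonic (even log-plurisubharmonic), hence subharmonic, and the mean value inequality gives
\begin{equation*}
e^{-2\psi(z)}|h(z)|^2 \le \frac{1}{|B(z,r/4)|}\int_{B(z,r/4)} e^{-2\psi}|h|^2.
\end{equation*}
Now replace $e^{-2\psi}$ by $e^{-2\varphi}$ on both sides at the cost of the factor $e^{\pm 2C_n}$ coming from $|\varphi-\psi|\le C_n$: the left side loses at most $e^{2C_n}$, the right side gains at most $e^{2C_n}$, yielding
\begin{equation*}
|h(z)|^2 e^{-2\varphi(z)} \le e^{4C_n}\frac{1}{|B(z,r/4)|}\int_{B(z,r/4)} |h|^2 e^{-2\varphi} \le \frac{C}{|B(z,r)|}\int_{B(z,r)}|h|^2 e^{-2\varphi},
\end{equation*}
since $|B(z,r/4)| = 4^{-2n}|B(z,r)|$ and the integrand is non-negative so enlarging the domain of integration only helps. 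Absorbing $4^{2n}e^{4C_n}$ into the constant $C$ (which depends only on $n$, hence only on $\varphi$) finishes the argument.

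The main technical point to be careful about is the shrinkage of radii: I pass from $B(z,r)$ to $B(z,r/2)$ to solve the Dirichlet problem, then to $B(z,r/4)$ to apply the mean value inequality cleanly on the interior; one must check that $\psi$ extends harmonically and $g$ holomorphically to a neighborhood of $\overline{B(z,r/4)}$, which is automatic since the Dirichlet solution is smooth up to the boundary and the domain is a ball (simply connected), so the harmonic conjugate exists globally on $B(z,r/2)$. A minor subtlety is justifying $0\le\varphi-\psi\le C_n$: the lower bound is immediate since $\varphi-\psi$ is superharmonic with zero boundary values, and the upper bound follows by comparing $\varphi-\psi$ with $w(\zeta):=\frac{r^{-2}}{4n}\big((r/2)^2-|\zeta-z|^2\big)$, which satisfies $\Delta w=-r^{-2}\le -\Delta\varphi$ and $w\ge 0=\varphi-\psi$ on the boundary, so $\varphi-\psi\le w\le \frac{1}{16n}$ (writing $\Delta$ for the real Laplacian on $\R^{2n}$ and using $\Delta\varphi\le r^{-2}$ from the definition of $\rho$). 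No step is genuinely hard; the only thing requiring attention is keeping the chain of radius reductions and the constant bookkeeping consistent, and noting that all constants depend solely on the dimension $n$ and therefore — as the statement demands — only on $\varphi$.
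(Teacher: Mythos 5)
Your argument has a genuine gap that appears precisely for $n\geq 2$. After solving the (real) Dirichlet problem you obtain a function $\psi$ that is \emph{harmonic} with respect to the full real Laplacian on $\R^{2n}$, but then you write $\psi=\Re\,g$ for a holomorphic $g$. That step requires $\psi$ to be \emph{pluriharmonic} (i.e., $\partial^2\psi/\partial z_j\partial\overline z_k=0$ for all $j,k$), which is strictly stronger than $\Delta\psi=0$ when $n\geq 2$: the Laplacian only kills the trace of the complex Hessian. A real harmonic function on a ball in $\C^n$, $n\geq 2$, is in general not locally the real part of a holomorphic function, so no such $g$ exists and $|h|^2e^{-2\psi}$ need not be subharmonic. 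The approach collapses exactly at the point where complex structure would have to enter. This is why the paper instead invokes Lemma~\ref{adm-gauge} (via Delin's lemma about solving $i\partial\dbar\psi=\omega$ with $L^\infty$ control): that construction directly produces a $\psi$ with $H_\psi=H_\varphi$, so that the difference $\psi-\varphi$ is \emph{pluri}harmonic and does admit a holomorphic primitive $H$ with $\psi-\varphi=\Re H$, after which $he^H$ is holomorphic and the mean-value argument you sketch goes through verbatim. Your proof is essentially Christ's one-variable argument, and works for $n=1$ (where harmonic and pluriharmonic coincide), but does not extend to several variables.

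There is also a minor sign slip worth noting: since $\Delta\varphi\geq0$ and $\psi$ is harmonic with the same boundary values, $\varphi-\psi$ is \emph{sub}harmonic with zero boundary data, hence $\varphi-\psi\leq 0$ by the maximum principle; the comparison with the auxiliary quadratic then gives a lower bound $\varphi-\psi\geq -\tfrac{1}{16n}$, not the two-sided bound you wrote. The conclusion $|\varphi-\psi|\leq C_n$ still holds once the signs are straightened out, so this is a cosmetic issue — but the pluriharmonicity gap above is not.
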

Notice that the above estimate holds for every ball if $\varphi=0$. One can think of $\rho(z)$ as the maximal scale at which \emph{one does not feel the weight}. This should be compared with the heuristic discussion in Section \ref{radius-sec}. 

The proof of Proposition \ref{adm-hol} is based on the following lemma.

\begin{lem}\label{adm-gauge} Let $\varphi$ and $\rho$ be as above. For every $z\in\C^n$ there exists a $C^2$ function $\psi:B(z,\rho(z))\rightarrow\R$ such that $H_\varphi=H_\psi$, i.e., $\frac{\partial^2\varphi}{\partial z_j\partial \overline{z}_k}=\frac{\partial^2\psi}{\partial z_j\partial \overline{z}_k}$ $\forall j,k$, and such that \be
\sup_{w\in B(z,\rho(z))}|\psi(w)|\leq C_n,
\ee where $C_n$ is a constant which depends only on the dimension $n$.
\end{lem}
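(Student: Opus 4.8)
The statement asks for a "gauge" $\psi$ on the ball $B(z,\rho(z))$ whose complex Hessian coincides with that of $\varphi$ but which is uniformly bounded (by a dimensional constant) on that ball. The plan is to subtract from $\varphi$ a pluriharmonic function, i.e. the real part of a holomorphic polynomial, chosen so as to kill the "linear part" of $\varphi$ at $z$, and then to control the remainder using the size estimate $\sup_{B(z,\rho(z))}\Delta\varphi\le\rho(z)^{-2}$ that defines $\rho$.

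First I would set $r=\rho(z)$ and, after translating, assume $z=0$. Since any pluriharmonic function $g$ satisfies $H_g\equiv 0$, replacing $\varphi$ by $\psi:=\varphi-\operatorname{Re} P$ for a holomorphic polynomial $P$ leaves $H_\psi=H_\varphi$ unchanged; it therefore suffices to choose $P$ of degree $\le 1$ (in the complex variables), $P(w)=a+\sum_j b_j w_j$, so that $\psi$ and its first-order Taylor data vanish at $0$. Concretely take $\operatorname{Re} a=\varphi(0)$ and $b_j=2\,\partial\varphi/\partial z_j(0)$, so that $\psi(0)=0$ and the full real gradient of $\psi$ vanishes at $0$ (here one uses that $\operatorname{Re}\partial_{z_j}(\sum_k b_k w_k)=\operatorname{Re} b_j$ and the analogous imaginary-part identity, matching $\partial\varphi/\partial x_j(0)$ and $\partial\varphi/\partial y_j(0)$). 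Thus $\psi$ is a $C^2$ function on $B(0,r)$ with $\psi(0)=0$, $\nabla\psi(0)=0$, and $\Delta\psi=\Delta\varphi$ on $B(0,r)$.

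Next I would estimate $\sup_{B(0,r)}|\psi|$ by a Poisson/Green-type representation. Writing $\psi=h+\psi_0$ on $B(0,r)$ where $h$ is the harmonic function with the same boundary values as $\psi$ on $\partial B(0,r)$ (so $\psi_0$ solves $\Delta\psi_0=\Delta\varphi$, $\psi_0|_{\partial B}=0$), one controls the Newtonian-potential part by
\[
|\psi_0(w)|\le C_n\,r^2\sup_{B(0,r)}\Delta\varphi\le C_n,
\]
using $\sup_{B(0,r)}\Delta\varphi\le r^{-2}$ by the very definition of $\rho(0)=r$ (Section \ref{radius-sec}). For the harmonic part $h$, the catch is that $h(0)=\psi(0)-\psi_0(0)$ and $\nabla h(0)=\nabla\psi(0)-\nabla\psi_0(0)$ are only controlled by $O(1)$ and $O(r^{-1})$ respectively, which is not enough by itself; so instead I would estimate $h$ directly via the mean-value property and a gradient bound. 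Since $h$ is harmonic on $B(0,r)$, $h(0)=\fint_{\partial B(0,r)}h$ and, using the Poisson kernel gradient bound, $|\nabla h|\le C_n r^{-1}\operatorname{osc}_{\partial B}h$ on $B(0,r/2)$, and more generally $h$ on all of $B(0,r)$ is controlled by its oscillation on the boundary; but the oscillation of $h$ on $\partial B$ equals that of $\psi-\psi_0$, i.e. of $\psi$ up to $O(1)$. Hence the whole problem reduces to bounding $\operatorname{osc}_{\partial B(0,r)}\psi$, equivalently $\sup_{B(0,r)}|\psi|$ itself — a circularity that must be broken. The clean way around it: bound $\nabla\psi$ first. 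From $\nabla\psi(0)=0$ and $|D^2\psi|\lesssim$ (second derivatives of $\varphi$), one gets $|\nabla\psi(w)|\le |w|\sup_{B(0,r)}|D^2\varphi|$; and the mixed complex second derivatives of $\varphi$ are bounded by $\Delta\varphi$, while the holomorphic-holomorphic ones $\partial^2\varphi/\partial z_j\partial z_k$ are \emph{not} directly controlled by $\Delta\varphi$. This is exactly where the subtraction of $\operatorname{Re} P$ must be upgraded: I would take $P$ to also cancel the holomorphic Hessian, i.e. include the quadratic terms $\tfrac12\sum_{j,k}c_{jk}w_jw_k$ with $c_{jk}=2\,\partial^2\varphi/\partial z_j\partial z_k(0)$, so that at $0$ the full real Hessian of $\psi$ equals $4\,H_\varphi(0)$ acting on the appropriate real directions — its operator norm bounded by $\Delta\varphi(0)\le r^{-2}$.

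With this better choice of $P$, the argument closes: $\psi$ has $\psi(0)=0$, $\nabla\psi(0)=0$, and $\|D^2\psi(0)\|\le C_n r^{-2}$; moreover $\|D^2\psi\|=\|D^2\varphi\|$ differs from $\Delta\varphi$ only by the holomorphic Hessian, which I have now arranged to vanish to first order at $0$ — so a further Taylor estimate and the doubling/size bound $\sup_{B(0,r)}\Delta\varphi\le r^{-2}$ give $\|D^2\psi(w)\|\le C_n r^{-2}$ throughout $B(0,r)$ (here one does need admissibility of $\varphi$ to propagate the bound from one ball to a comparable one, but this is already encoded in $\sup_{B(z,\rho(z))}\Delta\varphi\le\rho(z)^{-2}$). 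Then integrating twice from $0$: $|\psi(w)|\le \tfrac12|w|^2\sup_{B(0,r)}\|D^2\psi\|\le \tfrac12 r^2\cdot C_n r^{-2}=C_n$, which is the claimed uniform bound. I would present the argument in this streamlined order: (1) reduce to $z=0$; (2) subtract $\operatorname{Re} P$, $P$ a holomorphic polynomial of degree $\le 2$, to kill $\psi(0)$, $\nabla\psi(0)$, and the holomorphic Hessian; (3) observe $H_\psi=H_\varphi$ and $\|D^2\psi\|\lesssim\Delta\varphi$ on $B(0,r)$ after step (2); (4) use $\sup_{B(0,r)}\Delta\varphi\le r^{-2}$ and double Taylor integration to conclude $\sup_{B(0,r)}|\psi|\le C_n$.

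The main obstacle is step (2)–(3): the holomorphic Hessian $\partial^2\varphi/\partial z_j\partial z_k$ is invisible to $\Delta\varphi$ and to $H_\varphi$, so it is not a priori small on $B(0,r)$; one genuinely has to remove its value and first-order behaviour at the center by hand via $\operatorname{Re} P$, and then argue that what remains is governed by $\Delta\varphi$. Getting the Taylor remainder estimate for the holomorphic Hessian to be $O(r^{-2})$ on the whole ball — rather than just at the center — is the delicate point, and it is here that the defining property of $\rho$ (and, if needed, the $L^\infty$-doubling of $\Delta\varphi$) does the real work.
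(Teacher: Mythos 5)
Your reduction correctly isolates the obstacle: a gauge $\psi=\varphi-\operatorname{Re} P$ with $P$ holomorphic has the same complex Hessian, and everything hinges on getting the pure holomorphic second derivatives $\partial^2\psi/\partial z_j\partial z_k$ under control, since these are invisible to $\Delta\varphi$. The step where the plan breaks down is the claim in steps (2)--(3) that, after subtracting the second-order holomorphic Taylor polynomial of $\varphi$ at the center, ``a further Taylor estimate'' and the bound $\sup_{B(z,\rho(z))}\Delta\varphi\leq\rho(z)^{-2}$ yield $\|D^2\psi(w)\|\leq C_n\rho(z)^{-2}$ on the whole ball. This cannot work. The weight $\varphi$ is only assumed $C^2$, so $\partial^2\varphi/\partial z_j\partial z_k$ is merely continuous and arranging its value at one point says nothing about it at nearby points. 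More fundamentally, no estimate on $\Delta\varphi$ (or on $H_\varphi$, or on its doubling constant) can control $\partial^2\varphi/\partial z_j\partial z_k$, because adding an arbitrary pluriharmonic function changes the latter without affecting the former. A concrete counterexample: take $n=1$, $\varphi(z)=|z|^2+\operatorname{Re} f(z)$ with $f$ holomorphic. Then $\Delta\varphi\equiv 4$ and $\rho\equiv r_0$ is a fixed dimensional constant, but $f$ can be chosen so that $f-T_2f$ (the remainder after subtracting the degree-$2$ Taylor polynomial at $0$) has oscillation as large as you like on $B(0,r_0)$; your $\psi=\varphi-\operatorname{Re} T_2f$ is then unbounded in the required uniform sense, while the correct gauge $\psi=|z|^2$ is obviously fine. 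So the ``delicate point'' you flag at the end is not a technicality to be filled in later --- it is exactly where the approach fails.

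The paper avoids any Taylor expansion and does not try to produce a bounded gauge by subtracting a polynomial from $\varphi$. Instead it quotes a solvability-with-$L^\infty$-bounds result for $i\partial\bar\partial$ on a Euclidean ball (Lemma~4 of Delin's paper): a continuous, bounded, real, $d$-closed $(1,1)$-form $\omega$ on $B(z,r)$ admits a continuous real potential $\psi$ with $i\partial\bar\partial\psi=\omega$ and $\|\psi\|_{L^\infty}\leq C_n r^2\|\omega\|_{L^\infty}$. Applied to $\omega=i\partial\bar\partial\varphi$ on $B(z,\rho(z))$, together with the elementary fact that the entries of the non-negative matrix $H_\varphi$ are bounded by $\operatorname{tr} H_\varphi=\tfrac14\Delta\varphi\leq\tfrac14\rho(z)^{-2}$, this gives the bounded gauge directly. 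In effect the lemma constructs the ``right'' pluriharmonic correction to $\varphi$ by an integral operator rather than by Taylor matching, and the $L^\infty$ control of the output falls out of the kernel estimate, not from any pointwise estimate on $\partial^2\varphi/\partial z_j\partial z_k$. Any repair of your argument would have to replace the Taylor step with something of this kind --- i.e.\ a global potential-theoretic choice of the pluriharmonic correction on the whole ball, not a jet-matching at the center.
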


\begin{proof} We recall the following fact: if $\omega$ is a continuous and bounded $(1,1)$-form defined on $B(z,r)\subseteq\C^n$ such that:\begin{enumerate}
\item $\overline{\omega}=\omega$,   
\item it is $d$-closed in the sense of distributions,
\end{enumerate}
then there exists a real-valued, bounded and continuous function $\psi$ on $B(z,r)$ such that $i\partial\dbar\psi=\omega$ and $||\psi||_\infty\leq C_n r^2||\omega||_\infty$. The latter $L^\infty$ norm is the maximum of the $L^\infty$ norms of the coefficients of $\omega$. This is Lemma 4 of \cite{delin}, where a proof can be found. 

To deduce our lemma notice that $i\partial\dbar\varphi$, restricted to $B(z,\rho(z))$, satisfies conditions (1) and (2) above (recall that $\partial$ and $\dbar$ anti-commute).  The $L^\infty$ norm of $i\partial\dbar\varphi$ on $B(z,\rho(z))$ is bounded by a constant times $\rho(z)^{-2}$ by the definition of $\rho$ and the elementary observation that the coefficients of a non-negative matrix are bounded by its trace. Therefore there is a real-valued function $\psi$ on $B(z,\rho(z))$ such that $\partial\dbar\psi=\partial\dbar\varphi$ and $||\psi||_\infty\leq C_n$, as we wanted. Notice that $\psi-\varphi$ is harmonic, and hence smooth, so that $\psi$ has the same regularity as $\varphi$.
\end{proof}

\begin{proof}[Proof of Proposition \ref{adm-hol}]
By $A\lesssim B$ we mean $A\leq CB$, where $C$ is a constant depending only on $\varphi$. Fix $z$ and $r$ and let $\psi$ be the function given by Lemma \ref{adm-gauge}. Since $\psi-\varphi$ is pluriharmonic, there exists a holomorphic function $H$ on $B(z,r)$ such that $\Re(H)=\psi-\varphi$. If $h$ is as in the statement, using the $L^\infty$ bound on $\psi$, we can estimate
\be 
|h(z)|^2e^{-2\varphi(z)}\lesssim|h(z)|^2e^{2\psi(z)-2\varphi(z)}= |h(z)e^{H(z)}|^2.
\ee Applying the mean-value property and the Cauchy-Schwarz inequality to the holomorphic function $he^H$, we find
\bee
|h(z)e^{H(z)}|^2&\leq& \frac{1}{|B(z,r)|}\int_{B(z,r)}|he^H|^2\\
&=& \frac{1}{|B(z,r)|}\int_{B(z,r)}|h|^2 e^{\widetilde{\varphi}-\varphi}\\
&\lesssim& \frac{1}{|B(z,r)|}\int_{B(z,r)}|h|^2 e^{-\varphi}.
\eee This concludes the proof. \end{proof}

\section{Exponential decay of canonical solutions}\label{exp-sec}

Now that all the ingredients are in place, in this section we prove that if $\varphi$ is an admissible weight such that $\Box_\varphi$ is $\mu$-coercive and $\mu$ satisfies certain assumptions, then the canonical solutions of the weighted $\dbar$ problem exhibit a fast decay outside the support of the datum, in a way which is described in terms of $\mu$.

In the statement of the result we use the following terminology: a constant $C$ is \emph{allowable} if it depends only on $\varphi$, $\mu$ and the dimension $n$, and $A\lesssim B$ stands for the inequality $A\leq CB$, where the implicit constant $C$ is allowable. 

\begin{thm}\label{exp-thm} Let $\varphi$ be an admissible weight and assume that there exists $\kappa:\C^n\rightarrow(0,+\infty)$ such that:\begin{enumerate}
\item $\kappa$ is a bounded radius function, 
\item $\kappa(z)\geq\rho(z)$ for every $z\in\C^n$,
\item $\Box_\varphi$ is $\kappa^{-1}$-coercive.
\end{enumerate}
Recall that $\rho$ is the maximal eigenvalue function introduced in Section \ref{adm-sec}.

Then there are allowable constants $\eps, r_0,R_0>0$ such that the following holds true. Let $z\in\C^n$ and let $u\in L^2_{(0,1)}(\C^n,\varphi)$ be $\dbar$-closed and identically zero outside $B_\kappa(z,r_0)$. If $f$ is the canonical solution of \be
\dbar f=u,
\ee which exists by part \emph{(ii)} of Proposition \ref{coerc-prop}, then the pointwise bound\be
|f(w)|\lesssim e^{\varphi(w)}\kappa(z)e^{-\eps d_\kappa(z,w)} \rho(w)^{-n}||u||_\varphi
\ee
 holds for every $w$ such that $d_\kappa(z,w)\geq R_0$.
\end{thm}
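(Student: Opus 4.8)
The plan is to follow the classical Agmon-style strategy for exponential decay, adapted to the $\dbar$-Neumann setting and combined with the interior estimates available for admissible weights. Fix $z$ and the datum $u$ supported in $B_\kappa(z,r_0)$, and let $f=S_\varphi(u)=\dbar^*_\varphi N_\varphi u$ be the canonical solution. The first step is to record the global bound $\|f\|_\varphi\leq\|\kappa u\|_\varphi\lesssim\|u\|_\varphi$ coming from part (ii) of Proposition~\ref{coerc-prop} together with the boundedness of $\kappa$. The second, and central, step is an energy-weighted estimate: for a suitable nonnegative Lipschitz function $g$ (ultimately $g(w)=\eps\, d_\kappa(z,w)$ truncated away from a neighborhood of $\mathrm{supp}\,u$), one tests the equation $\Box_\varphi N_\varphi u=u$ against $e^{2g}N_\varphi u$ and uses the Morrey–Kohn–Hörmander identity in the form of Proposition~\ref{MKH-computation} (applied to $v=N_\varphi u$ with cutoff $e^{g}$) to obtain
\be
\mathcal{E}_\varphi(e^{g}v)=\tfrac14\int_{\C^n}|\nabla g|^2 e^{2g}|v|^2 e^{-2\varphi}+\Re(e^{g}\Box_\varphi v,e^{g}v)_\varphi .
\ee
Since $\Box_\varphi v=u$ vanishes outside $B_\kappa(z,r_0)$ and, by Proposition~\ref{radius-prop}/\eqref{radius-nabla}, $|\nabla g|\le C\eps\,\kappa^{-1}$, the first term on the right is bounded by $C\eps^2\int \kappa^{-2}e^{2g}|v|^2e^{-2\varphi}$; $\kappa^{-1}$-coercivity applied to $e^{g}v$ gives $\mathcal{E}_\varphi(e^{g}v)\ge\int\kappa^{-2}e^{2g}|v|^2e^{-2\varphi}$, so for $\eps$ a small allowable constant the gradient term is absorbed into the left side. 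This yields
\be
\int_{\C^n}\kappa^{-2}e^{2g}|v|^2e^{-2\varphi}\lesssim \int_{B_\kappa(z,r_0)}|u|\,e^{g}|v|\,e^{-2\varphi}\lesssim\|u\|_\varphi\,\Big(\int_{B_\kappa(z,2r_0)}e^{2g}|v|^2e^{-2\varphi}\Big)^{1/2},
\ee
and since $g$ is bounded on $B_\kappa(z,2r_0)$ by an allowable constant (here $r_0$ is chosen allowable), a Cauchy–Schwarz/absorption argument together with the global bound on $\|v\|_\varphi$ produces a weighted $L^2$ bound $\int\kappa^{-2}e^{2\eps d_\kappa(z,\cdot)}|v|^2e^{-2\varphi}\lesssim\|u\|_\varphi^2$, hence also $\int e^{2\eps' d_\kappa(z,\cdot)}|f|^2 e^{-2\varphi}\lesssim\|u\|_\varphi^2$ after using $f=\dbar^*_\varphi v$ and the Caccioppoli inequality (Lemma~\ref{MKH-caccioppoli}) away from $\mathrm{supp}\,u$, possibly with a slightly smaller allowable $\eps'$.

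The third step passes from this weighted $L^2$ bound to the pointwise bound. Fix $w$ with $d_\kappa(z,w)\ge R_0$. Since $\Box_\varphi f$ involves only $\dbar f=u$ which vanishes near $w$, $f$ is holomorphic on a ball $B(w,c\rho(w))$ once $R_0$ is large enough that this ball avoids $\mathrm{supp}\,u$ (using $\rho\le\kappa$ and the comparability of $d_\kappa$ with the Euclidean metric at scale $\kappa$, Proposition~\ref{radius-prop}); more precisely one applies $\Box_\varphi f=\dbar^*_\varphi u$ is not quite holomorphic, so instead one uses the interior ellipticity: from $\Box_\varphi v=u=0$ near $w$ one gets $\dbar\dbar^*_\varphi v=0$ and $\dbar^*_\varphi\dbar v=0$ there, and in particular $f=\dbar^*_\varphi v$ satisfies $\dbar f=0$ near $w$, i.e.\ $f$ is holomorphic on a ball of radius $\asymp\rho(w)$. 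Then Proposition~\ref{adm-hol} gives
\be
|f(w)|^2 e^{-2\varphi(w)}\lesssim \frac{1}{|B(w,c\rho(w))|}\int_{B(w,c\rho(w))}|f|^2 e^{-2\varphi}\lesssim \rho(w)^{-2n}\, e^{-2\eps' d_\kappa(z,w)+C}\int e^{2\eps' d_\kappa(z,\cdot)}|f|^2e^{-2\varphi},
\ee
where we used that $d_\kappa(z,\cdot)$ varies by at most an allowable constant over $B(w,c\rho(w))$ (again by \eqref{radius-nabla} and $\rho\le\kappa$). Combining with the weighted $L^2$ bound of the previous step, and renaming $\eps'$ to $\eps$ and absorbing constants, gives $|f(w)|\lesssim e^{\varphi(w)}\,e^{-\eps d_\kappa(z,w)}\rho(w)^{-n}\|u\|_\varphi$. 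Finally, inserting the bounded factor $\kappa(z)$ only weakens the estimate, so the claimed bound $|f(w)|\lesssim e^{\varphi(w)}\kappa(z)e^{-\eps d_\kappa(z,w)}\rho(w)^{-n}\|u\|_\varphi$ follows; the factor $\kappa(z)$ is kept because it is the natural form in which the estimate feeds into the Bergman kernel bound of the next section.

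I expect the main obstacle to be the rigorous justification of the weighted energy identity in step two: $e^{g}v$ must be shown to lie in $\mathcal{D}(\mathcal{E}_\varphi)$ and the integration by parts in Proposition~\ref{MKH-computation} must be legitimate with the unbounded-looking weight $e^{2g}$. The fix is to run the argument with $g$ replaced by a bounded truncation $g_N=\min(g,N)$ (still Lipschitz with $|\nabla g_N|\le|\nabla g|$), obtain all estimates uniformly in $N$, and let $N\to\infty$ by monotone convergence; one also needs the density statement of Proposition~\ref{kohn-dbar-star}(iii) and Lipschitz-cutoff stability of $\mathcal{D}(\mathcal{E}_\varphi)$ to handle $e^{g_N}v$. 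A secondary technical point is ensuring all the geometric comparisons ($d_\kappa$ versus Euclidean balls, $\rho\le\kappa$, oscillation of $d_\kappa$ at scale $\rho$) are invoked with the constants from Propositions~\ref{radius-prop}, \ref{pot-to-rad}, and \ref{adm-bounded}, and that the choice of allowable $\eps, r_0, R_0$ is made in the correct order (first $\eps$ small relative to the coercivity/doubling constants, then $r_0$, then $R_0$ large).
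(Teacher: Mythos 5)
Your overall strategy is the right one and is, in essence, the same as the paper's: an Agmon-type weighted energy estimate built from Proposition~\ref{MKH-computation} and $\kappa^{-1}$-coercivity, the Caccioppoli inequality of Lemma~\ref{MKH-caccioppoli} to pass from $v=N_\varphi u$ to $f=\dbar^*_\varphi v$, and the sub-mean-value estimate of Proposition~\ref{adm-hol} to convert from $L^2$ to pointwise. The only organizational difference (you prove a global weighted $L^2$ bound and then localize at $w$; the paper reduces first to a local $L^2$ estimate near $w$ via Prop.~\ref{adm-hol} and Caccioppoli, then runs the Agmon argument for that local quantity) is immaterial.

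There is, however, a genuine quantitative gap in your Step 2. The weighted $L^2$ bound you claim, $\int\kappa^{-2}e^{2\eps d_\kappa(z,\cdot)}|v|^2e^{-2\varphi}\lesssim\|u\|_\varphi^2$, is not what the Cauchy--Schwarz/absorption argument yields. After absorbing the gradient term, you obtain $X\lesssim\|u\|_\varphi\bigl(\int_{B_\kappa(z,r_0)}|v|^2e^{-2\varphi}\bigr)^{1/2}$ with $X=\int\kappa^{-2}e^{2g}|v|^2e^{-2\varphi}$; to close the loop you must insert $\kappa\kappa^{-1}$ in the last integral and compare it to $X$, and this costs $\sup_{B_\kappa(z,r_0)}\kappa^2\approx\kappa(z)^2$. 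The correct bound is therefore $X\lesssim\kappa(z)^2\|u\|_\varphi^2$, and (the same factor accompanies any route through the a~priori estimate of Proposition~\ref{coerc-prop}(i), since $\|\kappa^{-1}v\|_\varphi\le\|\kappa u\|_\varphi\lesssim\kappa(z)\|u\|_\varphi$). When this is propagated through the Caccioppoli step and the pointwise step, the factor $\kappa(z)$ appears naturally in the final estimate.

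Consequently, your closing remark --- that ``inserting the bounded factor $\kappa(z)$ only weakens the estimate'' --- is incorrect and cannot salvage the argument as written. The function $\kappa$ is bounded above but has no positive lower bound (e.g., for $\varphi(z)=|z|^4$ one has $\rho(z)\to0$ as $z\to\infty$, and nothing forces $\kappa$ to be larger than $\rho$ by more than a constant), so the implication $|f(w)|\lesssim A\Rightarrow|f(w)|\lesssim\kappa(z)A$ fails. This is a bookkeeping error rather than a conceptual one: once the $\kappa(z)^2$ factor is tracked correctly in the $L^2$ estimate, no such ad~hoc insertion is needed and the theorem follows exactly as in the paper.
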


A few comments before the proof:\begin{enumerate}
\item The distance $d_\kappa$ and the corresponding metric balls $B_\kappa(z,r)$ associated to $\kappa$ are defined in Section \ref{radius-sec}.
\item The definition of $\mu$-coercivity (Definition \ref{coerc-def}) shows that $\mu$ is dimensionally the inverse of a length, and this is consistent with our requirement that $\kappa=\mu^{-1}$ be a radius function.
\item If $\Box_\varphi$ is $\kappa^{-1}$-coercive for some bounded radius function $\kappa$ that does not satisfy condition (2) of the statement, then we can put $\widetilde{\kappa}:=\kappa\vee\rho$. By Proposition \ref{radius-max} of Section \ref{radius-sec}, $\widetilde{\kappa}$ is a radius function, condition (2) is trivially satisfied, and $\Box_\varphi$ is $\widetilde{\kappa}^{-1}$-coercive, because $\widetilde{\kappa}^{-1}\leq \kappa$.
\end{enumerate}

\begin{proof} By Proposition \ref{radius-prop} of Section \ref{radius-sec} we can find allowable constants $r_0\in(0,1)$ and $R_0\geq 2$ such that \bel\label{exp-inclusion}
B_\kappa(z,r_0)\subseteq B(z,\kappa(z)/2)\subseteq B(z,\kappa(z))\subseteq B_\kappa(z,R_0-1).
\eel
There are also allowable constants $r_1,r_2\in(0,1)$ such that \bel\label{exp-inclusion2} B(w,2r_1\kappa(w))\subseteq B_\kappa(w,r_2)\subseteq B(w,\kappa(w)).\eel 
If $d_\kappa(z,w)\geq R_0$, we have $B_\kappa(z,R_0-1)\cap B_\kappa(w,r_2)=\varnothing$. Since $\kappa(w)\geq\rho_{\text{max}}(w)$, the canonical solution $f$ is holomorphic on $B(w,r_1\rho(w))$. By Lemma \ref{adm-hol}, we have\be
|f(w)|^2e^{-2\varphi(w)}\lesssim \rho(w)^{-2n}\int_{B(w,r_1\rho(w))}|f|^2e^{-2\varphi}.
\ee 
Recall from part (ii) of Proposition \ref{coerc-prop} that $f=\dbar^*_\varphi N_\varphi u$. Since $\Box_\varphi N_\varphi u=u$ vanishes on $B(w,2r_1\kappa(w))$, Lemma \ref{MKH-caccioppoli} yields
\bee
\int_{B(w,r_1\rho(w))}|f|^2e^{-2\varphi}&\leq& \int_{B(w,r_1\kappa(w))}|f|^2e^{-2\varphi}\\
&\lesssim& \kappa(w)^{-2}\int_{B(w,2r_1\kappa(w))}|N_\varphi u|^2e^{-2\varphi}.
\eee
Putting our estimates together, we see that we are left with the task of proving the $L^2$ estimate (with $\eps>0$ admissible):\bel\label{exp-L2}
\kappa(w)^{-2}\int_{B(w,2r_1\kappa(w))}|N_\varphi u|^2e^{-2\varphi} \lesssim \kappa(z)^2e^{-2\eps d_\kappa(z,w)}||u||_\varphi^2.
\eel
Let $\ell:[0,+\infty)\rightarrow[0,1]$ be the continuous function equal to $0$ on $[0,\kappa(z)/2]$, equal to $1$ on $[\kappa(z),+\infty)$, and affine in between. By \eqref{exp-inclusion} and \eqref{exp-inclusion2}, the function $\eta(z'):=\ell(|z'-z|)$ is equal to $0$ on $B(w,2r_1\kappa(w))$, equal to $1$ on $B_\kappa(w,r_2)$, and 
\bel\label{exp-nabla-eta}\sup_{z'\in B(z,\kappa(z))}|\nabla\eta(z')|=\frac{\kappa(z)^{-1}}{2}.\eel 
We also need to define $b(z'):=\min\{d_\kappa(z,z'),d_\kappa(z,w)\}$. We know by Proposition \ref{radius-prop} that $d_\kappa(z,\cdot)$ is Lipschitz, and hence $b$ is also Lipschitz. Moreover, estimate \eqref{radius-nabla} gives \bel\label{exp-nabla-b}
|\nabla b(z')|\lesssim\kappa(z')^{-1},\eel  and $||b||_\infty\leq d_\kappa(z,w)$. From these facts, one may easily conclude that $\eta e^{\eps b}$ is a real-valued bounded Lipschitz function, for any $\eps>0$. By Lemma \ref{MKH-computation}, we obtain
\bee 
\mathcal{E}_\varphi(\eta e^{\eps b}N_\varphi u)&=&\frac{1}{4}\int_{\C^n}|\nabla(\eta e^{\eps b})|^2|N_\varphi u|^2e^{-2\varphi} + \Re(\eta e^{\eps b} u,\eta e^{\eps b} N_\varphi u)_\varphi \\
&\lesssim&\int_{\C^n}|\nabla\eta|^2 e^{2\eps b}|N_\varphi u|^2e^{-2\varphi}+\eps^2\int_{\C^n}\eta^2 e^{2\eps b}|\nabla b|^2|N_\varphi u|^2e^{-2\varphi},
\eee
where we used the fact that $u$ vanishes on the support of $\eta$. By the $\kappa^{-1}$-coercivity of $\Box_\varphi$, \eqref{exp-nabla-eta} and \eqref{exp-nabla-b}, we get\bee
\int_{\C^n}\kappa^{-2}\eta^2 e^{2\eps b}|N_\varphi u|^2e^{-2\varphi}&\lesssim&\kappa(z)^{-2}\int_{B(z,\kappa(z))} e^{2\eps b}|N_\varphi u|^2e^{-2\varphi}\\
&+&\eps^2\int_{\C^n}\eta^2 e^{2\eps b}\kappa^{-2}|N_\varphi u|^2e^{-2\varphi}.
\eee If $\eps\leq\eps_0$, where $\eps_0$ is allowable, recalling that on $B(z,\kappa(z))\subseteq B_\kappa(z,R_0-1)$ we have $e^{2\eps b(z')}\leq e^{2\eps d_\kappa(z,z')}\lesssim 1$, we find
\be\int_{\C^n}\kappa^{-2}\eta^2 e^{2\eps b}|N_\varphi u|^2e^{-2\varphi}\lesssim \kappa(z)^{-2}\int_{B(z,\kappa(z))}|N_\varphi u|^2e^{-2\varphi}.\ee
Notice that:\begin{enumerate}
\item[(a)] $b\geq d_\kappa(z,w)-1$ on $B(w, 2r_1\kappa(w))\subseteq B_\kappa(w,r_2)$, and that $\eta\equiv1$ on this ball,
\item[(b)] $\kappa^{-2}\gtrsim \kappa(w)^{-2}$ on $B(w,\kappa(w))$, and hence on $B(w, 2r_1\kappa(w))$, because $\kappa$ is a radius function,
\item[(c)] $\kappa(z)^{-2}\lesssim \kappa^{-2}$ on $B(z,\kappa(z))$, again because $\kappa$ is a radius function.
\end{enumerate}
For $\eps>0$ allowable, we then have
\bel\label{exp-1}
\kappa(w)^{-2}\int_{B(w, 2r_1\kappa(w))}|N_\varphi u|^2e^{-2\varphi}\lesssim e^{-2\eps d_\kappa(z,w)}\int_{\C^n}\kappa^{-2}|N_\varphi u|^2e^{-2\varphi}.
\eel
By part (i) of Proposition \ref{coerc-prop} and the fact that $u$ is supported where $\kappa\lesssim \kappa(z)$, we have\bel\label{exp-2}
\int_{\C^n}\kappa^{-2}|N_\varphi u|^2e^{-2\varphi}\lesssim \kappa(z)^2||u||_\varphi^2.
\eel
Putting \eqref{exp-1} and \eqref{exp-2} together we finally obtain \eqref{exp-L2} and hence the thesis.
\end{proof}

\section{Pointwise bounds for weighted Bergman kernels}\label{bergman-sec}
 
 To prove the pointwise bounds for weighted Bergman kernels we use a technique introduced in \cite{kerzman}, and adapted to the weighted case in \cite{delin}. For the sake of completeness, we state as a lemma the relevant part of \cite{delin} and recall its proof. 
 
 We continue working under the assumptions of Theorem \ref{exp-thm}, that is $\varphi$ is an admissible weight and $\kappa$ is a bounded radius function such that $\kappa\geq \rho$ and $\Box_\varphi$ is $\kappa^{-1}$-coercive.
 
 Let $\eta$ be a radial test function supported on the unit ball of $\C^n$ such that $\int_{\C^n}\eta=1$, and put \be\eta_z(w):=\frac{1}{(\delta\rho_{\text{max}}(z))^{2n}}\eta\left(\frac{w-z}{\delta\rho(z)}\right),\ee where $\delta>0$ is an allowable constant chosen so that the support of $\eta_z$, i.e., $B(z,\delta\rho(z))$, is contained in $B_\kappa(z,r_0)$, with $r_0$ as in Theorem \ref{exp-thm} (this is possible by Proposition \ref{radius-prop} of Section \ref{radius-sec}).

\begin{lem}\label{bergman-fz}
For every $z\in\C^n$ there exists a holomorphic function $H_z$ defined on $B(z,\rho(z))$ that vanishes in $z$ and such that\be
f_z:=\eta_z e^{\overline{H_z}+2\varphi}\in \mathcal{D}_0(\dbar).
\ee Moreover, we have the following inequalities\be
||f_z||_\varphi\lesssim e^{\varphi(z)}\rho(z)^{-n},\quad ||\dbar f_z||_\varphi\lesssim e^{\varphi(z)}\rho(z)^{-n-1}.
\ee
\end{lem}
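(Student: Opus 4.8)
The statement collects three claims: the existence of a suitable holomorphic $H_z$, the membership $f_z\in\mathcal D_0(\dbar)$, and two quantitative bounds. The plan is to start from Lemma~\ref{adm-gauge}, which provides (on $B(z,\rho(z))$) a $C^2$ function $\psi$ with $H_\varphi=H_\psi$ and $\sup_{B(z,\rho(z))}|\psi|\le C_n$. Since $\psi-\varphi$ is pluriharmonic on the ball, there is a holomorphic $G_z$ on $B(z,\rho(z))$ with $\Re(G_z)=\psi-\varphi$; subtracting a constant we may assume $G_z(z)=0$, and set $H_z:=-G_z$, so that $\Re(H_z)=\varphi-\psi$, $H_z(z)=0$, and $\overline{H_z}+2\varphi = \overline{H_z}+H_z+(\varphi+\psi) = 2\Re(H_z)+\varphi+\psi$ up to the imaginary part; more precisely $e^{\overline{H_z}+2\varphi}$ has modulus $e^{2\Re(H_z)+ \text{something}}$ — the clean way is to note that on the support of $\eta_z$ (which lies in $B(z,\rho(z))$ once $\delta$ is chosen small, using Proposition~\ref{radius-prop} to fit $B(z,\delta\rho(z))\subseteq B_\kappa(z,r_0)$) we have $|e^{\overline{H_z}+2\varphi}| = e^{\Re(\overline{H_z})+2\varphi} = e^{\varphi-\psi+2\varphi}$... let me instead organize the modulus computation as: $|e^{\overline{H_z}}|=e^{\Re H_z}=e^{\varphi-\psi}$, hence $|f_z| = \eta_z\, e^{\varphi-\psi}\,e^{2\varphi}=\eta_z e^{3\varphi-\psi}$; this is not quite matched to the weight $e^{-2\varphi}$, so the correct choice is $\Re(H_z)=\psi-\varphi$ giving $|f_z|=\eta_z e^{2\varphi+\psi-\varphi}=\eta_z e^{\varphi+\psi}$, and then $|f_z|e^{-\varphi}=\eta_z e^{\psi}$, which is what makes the bounds clean. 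I will fix the sign so that $\Re(H_z) = \psi-\varphi$ and $H_z(z)=0$.

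Granting that choice, the first step is to record that $f_z=\eta_z e^{\overline{H_z}+2\varphi}$ is smooth (product of the smooth compactly supported $\eta_z$ with a $C^2$ function on a neighborhood of its support — note $e^{\overline{H_z}}$ is only $C^2$, not holomorphic, because of the conjugate, but that is enough) and compactly supported, hence lies in $\mathcal D_0(\dbar)$; this is immediate. The second step is the $L^2$ bound: $\|f_z\|_\varphi^2=\int |\eta_z|^2 e^{2\Re(\overline{H_z})}e^{4\varphi}e^{-2\varphi}=\int \eta_z^2 e^{2(\psi-\varphi)}e^{2\varphi}=\int\eta_z^2 e^{2\psi}$. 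Using $|\psi|\le C_n$ on the support and $\int\eta_z^2 \lesssim (\delta\rho(z))^{-2n}$ (since $\eta$ is a fixed bump and rescaling by $\delta\rho(z)$ in $2n$ real dimensions), and that $\varphi$ is comparable to $\varphi(z)$ on $B(z,\delta\rho(z))$ — this last point needs the admissibility of $\varphi$, via the fact that $\psi$ is bounded on $B(z,\rho(z))$ and $\psi-\varphi$ is pluriharmonic so $\varphi(w)=\psi(w)-\Re(H_z(w))$ and $\Re(H_z)$ is bounded on the smaller ball $B(z,\delta\rho(z))$ by a dimensional constant (shrink $\delta$, use the gradient estimate for the harmonic function $\Re(H_z)$ which vanishes at $z$) — we get $\|f_z\|_\varphi\lesssim e^{\varphi(z)}\rho(z)^{-n}$. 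The third step is the $\dbar f_z$ bound: since $e^{\overline{H_z}}$ is anti-holomorphic, $\dbar$ hits only $\eta_z$ and the $e^{2\varphi}$ factor... wait, $e^{2\varphi}$ is not holomorphic either, so $\dbar f_z = (\dbar\eta_z)e^{\overline{H_z}+2\varphi} + \eta_z e^{\overline{H_z}} \dbar(e^{2\varphi}) = (\dbar\eta_z)e^{\overline{H_z}+2\varphi} + 2\eta_z e^{\overline{H_z}+2\varphi}\,\dbar\varphi$. Hmm — the intent of putting $e^{2\varphi}$ in $f_z$ is presumably that $B_\varphi f_z = $ evaluation-reproducing, and the $\dbar$-derivative of the weight is meant to be controlled; I will use $|\dbar\varphi|\lesssim \rho(z)^{-1}$ on $B(z,\delta\rho(z))$ — this follows from Lemma~\ref{adm-gauge}/the definition of $\rho$, since $\partial\dbar\varphi$ has size $\lesssim\rho(z)^{-2}$ there, so after the gauge change $\partial\dbar\psi$ is bounded and $\nabla\psi$ is bounded, while $\nabla(\varphi-\psi)=\nabla\Re(H_z)$ is harmonic-gradient-bounded by $\lesssim\rho(z)^{-1}$... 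I should present this as: on $B(z,\delta\rho(z))$, $|\nabla\varphi - \nabla\varphi(z)|\lesssim \rho(z)^{-1}$ by the Hessian bound, but $\nabla\varphi(z)$ itself is not small; this is where one uses that a pluriharmonic gauge can be further chosen so that $\nabla\varphi(z)=0$ — in fact replace $\varphi$ by $\varphi$ minus the real part of a linear holomorphic polynomial, i.e.\ incorporate the linear term into $H_z$. So I will choose $H_z$ so that additionally $\nabla(\varphi-\Re H_z)(z)=0$, i.e.\ $\psi$ can be taken with $\nabla\psi(z)=0$ after absorbing the linear part; then $|\dbar\varphi|\lesssim\rho(z)^{-1}$ on $B(z,\delta\rho(z))$. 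Combined with $|\nabla\eta_z|\lesssim(\delta\rho(z))^{-1}$ and $\int\eta_z^2\lesssim\rho(z)^{-2n}$, both terms of $\dbar f_z$ are $\lesssim e^{\varphi(z)}\rho(z)^{-n-1}$ in $L^2_\varphi$-norm, as claimed.

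The main obstacle I anticipate is getting the gauge/normalization of $H_z$ exactly right so that simultaneously (i) $H_z(z)=0$, (ii) $\psi:=\varphi-\Re(H_z)$ or $\Re(H_z)$ (whichever sign) is bounded by a dimensional constant on $B(z,\rho(z))$ so that $e^{\pm\psi}$ contributes only allowable factors, and (iii) $\dbar\varphi$ is $O(\rho(z)^{-1})$ on the small support ball, which seems to require the additional freedom of killing the first-order term — this needs to be woven through Lemma~\ref{adm-gauge} carefully (the cited Lemma~4 of \cite{delin} gives a potential; one then adds a pluriharmonic function, i.e.\ $\Re$ of a holomorphic function, to adjust the value and gradient at $z$, which costs only another dimensional constant on the ball of radius $\rho(z)$ by standard interior estimates for harmonic functions). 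Once that normalization is pinned down, everything else is the two routine $L^2$ computations above, together with the inclusion $B(z,\delta\rho(z))\subseteq B_\kappa(z,r_0)$ from Proposition~\ref{radius-prop}, which fixes the allowable constant $\delta$.
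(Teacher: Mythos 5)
Your gauge setup and the $L^2$ bound on $\|f_z\|_\varphi$ follow the same route as the paper (Lemma~\ref{adm-gauge} $\Rightarrow$ pluriharmonic $\psi-\varphi$ $\Rightarrow$ choose a holomorphic primitive normalized at $z$), but the $\dbar$ bound as written has a genuine gap and ends up organized around the wrong quantity.

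\textbf{The derivative miscomputation.} You drop the $\dbar$ of $e^{\overline{H_z}}$, treating an anti-holomorphic function as $\dbar$-closed. Anti-holomorphic functions are annihilated by $\partial$, not by $\dbar$: since $\dbar H_z=0$, one has $\dbar\,\overline{H_z}=\dbar(H_z+\overline{H_z})=2\dbar\,\Re H_z\not\equiv0$. The correct identity is
\begin{equation*}
\dbar f_z = e^{\overline{H_z}+2\varphi}\bigl(\dbar\eta_z + 2\eta_z\,\dbar(\Re H_z+\varphi)\bigr)
          = e^{\overline{H_z}+2\varphi}\bigl(\dbar\eta_z + 2\eta_z\,\dbar\psi\bigr),
\end{equation*}
with $\psi=\Re H_z+\varphi$ (the bounded gauge), not $e^{\overline{H_z}+2\varphi}(\dbar\eta_z+2\eta_z\dbar\varphi)$. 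So the quantity you need to control is $\dbar\psi$, not $\dbar\varphi$, and this changes the whole discussion: the device of adjusting $H_z$ with a linear holomorphic polynomial to kill $\nabla\varphi(z)$ is addressing the wrong term. (It is also not true in general that $\varphi$ is ``comparable to $\varphi(z)$'' on $B(z,\delta\rho(z))$; only $\psi$ is bounded there — the pluriharmonic part of $\varphi$ can oscillate at scale $\rho(z)$ with large amplitude.)

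\textbf{Why the proposed gradient-normalization fix doesn't close.} You invoke ``the gradient estimate for the harmonic function $\Re(H_z)$,'' but interior estimates for harmonic functions need an $L^\infty$ bound on the function, and $\Re H_z=\psi-\varphi$ (in your sign) is \emph{not} bounded on $B(z,\rho(z))$ — only $\psi$ is. Similarly, bounding $|\nabla\psi|$ near $z$ from $\nabla\psi(z)=0$ needs a second-derivative estimate on $\psi$ which is not available directly from Lemma~\ref{adm-gauge} (which only gives $\|\psi\|_\infty\lesssim1$ and, via the weight, $\Delta\psi=\Delta\varphi\lesssim\rho(z)^{-2}$). If you want to pursue the pointwise route, the estimate that does work is the standard interior elliptic bound: $\psi$ bounded in $L^\infty$ with $\Delta\psi$ bounded in $L^\infty$ on $B(z,\rho(z))$ gives $|\nabla\psi|\lesssim\rho(z)^{-1}$ on $B(z,\rho(z)/2)$, with no gradient normalization at $z$ needed at all.

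\textbf{What the paper does instead.} The paper's proof of the $\dbar$-bound never establishes a pointwise gradient estimate. It uses the subharmonicity $\Delta(\Re F+\varphi)=\Delta\varphi\geq0$ to write, for the real function $g:=2\Re F+2\varphi$,
\begin{equation*}
4|\dbar g|^2=|\nabla g|^2\leq \Delta g+|\nabla g|^2=\Delta(e^{g})\,e^{-g},
\end{equation*}
and then integrates by parts, moving $\Delta$ onto $\eta_z^2$. After that, only the $L^\infty$ bound $\|\Re F+\varphi\|_\infty\lesssim1$ is needed. This is a different mechanism from yours — it sidesteps the regularity question entirely and is sharper in spirit. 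Your approach can be made rigorous with the interior elliptic estimate noted above, but as written the key derivative step is both miscomputed and unsupported.
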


\begin{proof} Let $\psi$ be the function given by Lemma \ref{adm-gauge} and $F$ the holomorphic function on $B(z,\rho(z))$ such that $\psi-\varphi=\Re(F)$. We define $H_z(w):=F(w)-F(z)$. Lemma \ref{adm-gauge} also gives the bound\bel\label{bergman-gauge}
||\varphi+\Re(F)||_\infty\lesssim1.
\eel
Let us check that $f_z:=\eta_z e^{\overline{H_z}+\varphi}$ verifies the inequalities of the statement. First of all,\bee
||f_z||_\varphi^2&\lesssim&\rho(z)^{-4n}\int_{B(z,\delta\rho(z))}|e^{H_z+2\varphi}|^2e^{-2\varphi}\\
&=&\rho(z)^{-4n}e^{-2\Re(F(z))}\int_{B(z,\delta\rho(z))}e^{2\Re(F)+2\varphi}\\
&\lesssim &\rho(z)^{-4n}e^{2\varphi(z)}\rho(z)^{2n}=\rho(z)^{-2n}e^{2\varphi(z)}, 
\eee where in the third line we used \eqref{bergman-gauge}. This proves the bound on $||f_z||_\varphi$. 

Next, we compute (using again \eqref{bergman-gauge})\bee
||\dbar f_z||_\varphi^2&=&\int_{\C^n}|\dbar f_z|^2e^{-2\varphi}\\
&=&e^{-2\Re(F(z))}\int_{\C^n}|\dbar \eta_z+\eta_z\dbar(\overline{F}+2\varphi)|^2e^{2\Re(F)+2\varphi}\\
&\lesssim& e^{-2\varphi(z)}\int_{\C^n}|\dbar \eta_z|^2+e^{-2\varphi(z)}\int_{\C^n}\eta_z^2|\dbar(2\Re(F)+2\varphi)|^2,
\eee where in the last term we used the fact that $\dbar F=0$. The key observation is that, since $\Delta(\Re(F)+\varphi)=\Delta\varphi\geq0$,\bee
\Delta(e^{2\Re(F)+2\varphi})e^{-2\Re(F)-2\varphi}&=&\Delta(2\Re(F)+2\varphi)+|\nabla(2\Re(F)+2\varphi)|^2\\
&\geq&4|\dbar(2\Re(F)+2\varphi)|^2.
\eee Coming back to our estimate, we have\bee
e^{-2\varphi(z)}\int_{\C^n}\eta_z^2|\dbar(2\Re(F)+2\varphi)|^2&\lesssim&e^{-2\varphi(z)}\int_{\C^n}\eta_z^2\Delta(e^{2\Re(F)+2\varphi})e^{-2\Re(F)-2\varphi}\\
&\lesssim &e^{-2\varphi(z)}\int_{\C^n}\Delta(\eta_z^2)e^{2\Re(F)+2\varphi}\lesssim e^{-2\varphi(z)}\int_{\C^n}\Delta(\eta_z^2), 
\eee where we used an integration by parts and \eqref{bergman-gauge}. Since it is easily seen that \be
\int_{\C^n}|\dbar \eta_z|^2+\int_{\C^n}\Delta(\eta_z^2)\lesssim \rho(z)^{-2n-2}, 
\ee the estimates of the statement are proved.\end{proof}

\begin{thm}\label{bergman-thm}
Under the assumptions of Theorem \ref{exp-thm}, there is an allowable constant $\eps>0$ such that the pointwise bound
\be
|B_\varphi(z,w)|\lesssim e^{\varphi(z)+\varphi(w)}\frac{\kappa(z)}{\rho(z)}\frac{e^{-\eps d_\kappa(z,w)}}{\rho(z)^{n}\rho(w)^{n}}
\ee 
holds for every $z,w\in\C^n$.
\end{thm}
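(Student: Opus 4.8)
The plan is to combine the Kerzman--Delin reproducing device of Lemma \ref{bergman-fz} with the exponential decay estimate of Theorem \ref{exp-thm}, splitting the argument into the regions $d_\kappa(z,w)\geq R_0$ and $d_\kappa(z,w)<R_0$, where $R_0$ is the allowable constant from Theorem \ref{exp-thm}. The starting point is the reproducing property of $f_z$: for $h\in A^2(\C^n,\varphi)$, since $\eta_z$ is real and radial, $H_z$ is holomorphic on $B(z,\rho(z))\supseteq\operatorname{supp}(\eta_z)$, $\varphi$ is real, and $H_z(z)=0$, one computes
\be
(h,f_z)_\varphi=\int_{\C^n}h\,\overline{\eta_z e^{\overline{H_z}+2\varphi}}\,e^{-2\varphi}=\int_{\C^n}h\,e^{H_z}\,\eta_z=\bigl(he^{H_z}\bigr)(z)=h(z),
\ee
using that $he^{H_z}$ is holomorphic on a neighbourhood of $\operatorname{supp}(\eta_z)$ together with the mean value property. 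Hence $B_\varphi f_z\in A^2(\C^n,\varphi)$ satisfies $(B_\varphi f_z,h)_\varphi=(f_z,h)_\varphi=\overline{h(z)}$ for every $h\in A^2(\C^n,\varphi)$; since $k_z=\overline{B_\varphi(z,\cdot)}$ is characterised by the same identity (and lies in $A^2(\C^n,\varphi)$), we conclude $B_\varphi f_z=\overline{B_\varphi(z,\cdot)}$, i.e. $B_\varphi(z,w)=\overline{(B_\varphi f_z)(w)}$ for all $w$.

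Next, because $f_z\in\mathcal{D}_0(\dbar)$, part (iii) of Proposition \ref{coerc-prop} gives $B_\varphi f_z=f_z-\dbar^*_\varphi N_\varphi\dbar f_z=f_z-S_\varphi(\dbar f_z)$, so that $B_\varphi(z,w)=\overline{f_z(w)}-\overline{S_\varphi(\dbar f_z)(w)}$. Now suppose $d_\kappa(z,w)\geq R_0$. The form $u:=\dbar f_z$ is $\dbar$-closed and supported in $B(z,\delta\rho(z))\subseteq B_\kappa(z,r_0)$ by the choice of $\delta$, and $f_z(w)=0$ because $w\notin\operatorname{supp}(\eta_z)$ (since $r_0<R_0$); therefore $B_\varphi(z,w)=\overline{S_\varphi(u)(w)}$ and Theorem \ref{exp-thm} applies, yielding
\be
|B_\varphi(z,w)|\lesssim e^{\varphi(w)}\,\kappa(z)\,e^{-\eps d_\kappa(z,w)}\,\rho(w)^{-n}\,\|\dbar f_z\|_\varphi .
\ee
Inserting $\|\dbar f_z\|_\varphi\lesssim e^{\varphi(z)}\rho(z)^{-n-1}$ from Lemma \ref{bergman-fz} and writing $\kappa(z)\rho(z)^{-n-1}=\tfrac{\kappa(z)}{\rho(z)}\rho(z)^{-n}$ produces exactly the asserted bound on this region.

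For $d_\kappa(z,w)<R_0$ I would use the diagonal estimate instead. Proposition \ref{berg-on-diag-prop} together with Proposition \ref{adm-hol} applied at radius $\rho(z)$ gives $B_\varphi(z,z)\lesssim e^{2\varphi(z)}\rho(z)^{-2n}$, and the Cauchy--Schwarz inequality $|B_\varphi(z,w)|\leq\sqrt{B_\varphi(z,z)}\sqrt{B_\varphi(w,w)}$ (a consequence of the reproducing identity of Proposition \ref{berg-integral-prop}) then yields $|B_\varphi(z,w)|\lesssim e^{\varphi(z)+\varphi(w)}\rho(z)^{-n}\rho(w)^{-n}$. Since $R_0$ and $\eps$ are allowable and $\kappa(z)\geq\rho(z)$, on this region we have $1\leq e^{\eps R_0}e^{-\eps d_\kappa(z,w)}$ and $1\leq\kappa(z)/\rho(z)$, so this estimate is absorbed into the desired one, completing the proof.

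The only step requiring genuine care is the reproducing identity and the ensuing identification $B_\varphi f_z=\overline{B_\varphi(z,\cdot)}$: one must verify that $he^{H_z}$ is holomorphic on a neighbourhood of $\operatorname{supp}(\eta_z)$, that the two conjugations cancel correctly against the factor $e^{-2\varphi}$ in the inner product, and that the radiality of $\eta_z$ legitimises the mean value computation. Everything else is a bookkeeping assembly of Lemma \ref{bergman-fz}, Proposition \ref{coerc-prop}, Theorem \ref{exp-thm}, Proposition \ref{berg-on-diag-prop}, Proposition \ref{berg-integral-prop}, and Proposition \ref{adm-hol}, with all implicit constants tracked as allowable.
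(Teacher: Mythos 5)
Your proof is correct and follows the same overall strategy as the paper's: reproduce $\overline{B_\varphi(z,\cdot)}$ via the Kerzman--Delin function $f_z$ of Lemma \ref{bergman-fz}, use Proposition \ref{coerc-prop}(iii) to write $\overline{B_\varphi(z,w)}=f_z(w)-\dbar^*_\varphi N_\varphi\dbar f_z(w)$, and invoke Theorem \ref{exp-thm} in the region $d_\kappa(z,w)\geq R_0$, where $f_z(w)=0$. (There is an immaterial sign slip: one gets $B_\varphi(z,w)=-\overline{S_\varphi(\dbar f_z)(w)}$ there, not $+$, but it does not affect the modulus.)

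The one genuine divergence is in the near-diagonal regime $d_\kappa(z,w)<R_0$. The paper does not split into cases at the outset; it first derives a bound valid for \emph{all} $z,w$ by estimating $\|B_\varphi(z,\cdot)\|_\varphi$ via $\|f_z\|_\varphi$ and $\|\kappa\,\dbar f_z\|_\varphi$ (using $\mu$-coercivity through \eqref{coerc-canonical-bound}) and then applying Lemma \ref{adm-hol} to the holomorphic function $\overline{B_\varphi(z,\cdot)}$; the exponential refinement comes second. You instead go straight to the on-diagonal identity of Proposition \ref{berg-on-diag-prop} together with Lemma \ref{adm-hol} to get $B_\varphi(z,z)\lesssim e^{2\varphi(z)}\rho(z)^{-2n}$, and then invoke the Cauchy--Schwarz bound $|B_\varphi(z,w)|\leq\sqrt{B_\varphi(z,z)}\sqrt{B_\varphi(w,w)}$. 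Your version has the appeal that the near-diagonal estimate uses no coercivity at all (only holomorphy and admissibility), and in fact yields a slightly sharper bound there, lacking the harmless extra factor $\kappa(z)/\rho(z)$. Both close the argument since $\kappa/\rho\geq 1$ and $e^{-\eps d_\kappa(z,w)}\gtrsim 1$ on the near-diagonal region; the paper's route has the minor advantage of producing one clean global estimate before the case split.
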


\begin{proof}
For $z\in\C^n$ let $f_z$ be as in Lemma \ref{bergman-fz} and notice that \bee
B_\varphi(f_z)(w)&=&\int_{\C^n}B_\varphi(w,w')f_z(w')e^{-2\varphi(w')}d\mathcal{L}(w')\\
&=&\int_{\C^n}B_\varphi(w,w')\eta_z(w') e^{\overline{H_z}(w')}d\mathcal{L}(w')\\
&=& B_\varphi(w,z)e^{\overline{H_w}(w)}=\overline{B_\varphi(z,w)},
\eee where in the last line we used the fact that $\eta_w$ is radial with respect to $w\in\C^n$, $\int_{\C^n}\eta_w=1$, $B_\varphi(z,\cdot)e^{\overline{H_w}}$ is harmonic, being the product of two anti-holomorphic functions, and $H_w(w)=0$. Hence, by formula \eqref{coerc-bergman} of Proposition \ref{coerc-prop}, we have\bel\label{bergman-bar}
\overline{B_\varphi(z,w)}=f_z(w)-\dbar^*_\varphi N_\varphi \dbar f_z (w).
\eel
Since $\overline{B_\varphi(z,\cdot)}$ is holomorphic, Lemma \ref{adm-hol} yields \be
|B_\varphi(z,w)|\lesssim e^{\frac{\varphi(w)}{2}}\rho(w)^{-n}||B_\varphi(z,\cdot)||_\varphi.
\ee Thanks to \eqref{bergman-bar} and inequality \eqref{coerc-canonical-bound} of Proposition \ref{coerc-prop}, we have
\bee
||B_\varphi(z,\cdot)||_\varphi&\leq&||f_z||_\varphi+||\dbar^*_\varphi N_\varphi \dbar f_z||_\varphi\\
&\lesssim& ||f_z||_\varphi+||\kappa\dbar f_z||_\varphi\\ 
&\lesssim& ||f_z||_\varphi+\left(\max_{B(z,\rho(z))}\kappa\right)||\dbar f_z||_\varphi.
\eee Now recall that $\kappa\geq\rho$ and hence that $\kappa$, being a radius function, is $\lesssim \kappa(z)$ on $B(z,\rho(z))$. Lemma \ref{bergman-fz} finally gives\be
||B_\varphi(z,\cdot)||_\varphi\lesssim  \frac{\kappa(z)}{\rho(z)}e^{\varphi(z)}\rho(z)^{-n}.
\ee
What we obtained until now is \be
|B_\varphi(z,w)|\lesssim e^{\varphi(z)+\varphi(w)}\frac{\kappa(z)}{\rho(z)}\rho(z)^{-n}\rho(w)^{-n}.
\ee
This is equivalent to the conclusion of the theorem if $d_\kappa(z,w)\lesssim 1$. We can then assume from now on that $d_\kappa(z,w)\geq R_0$, with $R_0$ the allowable constant in Theorem \ref{exp-thm}, which then implies \be
|\dbar^*_\varphi N_\varphi \dbar f_z (w)|\lesssim e^{\varphi(w)}\kappa(z)e^{-\eps d_\kappa(z,w)} \rho(w)^{-n}||\dbar f_z||_\varphi.
\ee
We conclude by Lemma \ref{bergman-fz} and the identity $\overline{B_\varphi(z,w)}=-\dbar^*_\varphi N_\varphi \dbar f_z (w)$ (which holds for $d_\kappa(z,w)\geq R_0$).
\end{proof}

We state now as a separate theorem our bound on the weighted Bergman kernel when the eigenvalues of $H_\varphi$ are comparable.

\begin{thm}\label{bergman-comp}
Let $\varphi:\C^n\rightarrow\R$ be $C^2$, plurisubharmonic and such that:
\begin{enumerate}
\item[\emph{(i)}] there exists $c>0$ such that 
\be
\inf_{z\in\C^n}\sup_{w\in B(z,c)}\Delta\varphi(w)>0,
\ee
\item[\emph{(ii)}] $\Delta\varphi$ satisfies the reverse-H\"older inequality
\bel\label{bergman-RH}
||\Delta\varphi||_{L^\infty(B(z,r))}\leq Ar^{-2n}\int_{B(z,r)}\Delta\varphi\qquad\forall z\in\C^n, r>0,
\eel for some $A<+\infty$,
\item[\emph{(iii)}] the eigenvalues of $H_\varphi$ are comparable, i.e., \eqref{mucomparable-ineq} holds.
\end{enumerate}
Then there is an allowable constant $\eps>0$ such that the pointwise bound
\be
|B_\varphi(z,w)|\lesssim e^{\varphi(z)+\varphi(w)}\frac{e^{-\eps d(z,w)}}{\rho(z)^{n}\rho(w)^{n}}
\ee 
holds for every $z,w\in\C^n$, where $d$ is the maximal eigenvalue distance associated to $\varphi$ (see Section \ref{adm-sec}).
\end{thm}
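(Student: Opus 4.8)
The plan is to deduce Theorem \ref{bergman-comp} from Theorem \ref{bergman-thm}. The key point is that hypotheses (i)--(iii) force $\Box_\varphi$ to be $\kappa^{-1}$-coercive for a radius function $\kappa$ that is a fixed multiple of $\rho$, so that the factor $\frac{\kappa(z)}{\rho(z)}$ appearing in Theorem \ref{bergman-thm} reduces to a constant and $d_\kappa$ becomes comparable to the maximal eigenvalue distance $d$.

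First I would verify that $\varphi$ is an admissible weight (Definition \ref{adm-dfn}). Condition (2) there is exactly hypothesis (i). Condition (1), the $L^\infty$-doubling of $\Delta\varphi$, follows from the reverse-H\"older inequality \eqref{bergman-RH}: as recalled at the beginning of Section \ref{fph-sec}, \eqref{bergman-RH} implies that $\Delta\varphi\,d\mathcal{L}$ is a doubling measure, hence that $\Delta\varphi$ satisfies \eqref{pot-doubling}. Consequently the maximal eigenvalue radius function $\rho=\rho_{\Delta\varphi}$ and distance $d=d_\rho$ of Section \ref{adm-sec} are well-defined, and $\rho$ is bounded by Proposition \ref{adm-bounded}.

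Next I would invoke Lemma \ref{mucomparable-lem}, whose hypotheses (i) and (ii) are precisely hypotheses (iii) and (ii) here: it yields a constant $c>0$ depending only on $\Delta\varphi$ and the comparability constant $\delta$ (in particular independent of $z$ and $w$) such that $\Box_\varphi$ is $c\rho^{-1}$-coercive. Shrinking $c$ to $\min\{c,1\}$ if needed --- which only weakens the coercivity inequality --- I may assume $c\le1$ and set $\kappa:=c^{-1}\rho$. Then $\kappa\ge\rho$; $\kappa$ is again a bounded radius function, being a fixed positive multiple of the radius function $\rho$ (the radius-function estimate persists on the larger ball $B(x,\kappa(x))$, with a worse constant, by Proposition \ref{pot-to-rad}); and $\Box_\varphi$ is $\kappa^{-1}$-coercive since $\kappa^{-1}=c\rho^{-1}$. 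Thus $\varphi$ and $\kappa$ satisfy the standing hypotheses of Theorem \ref{exp-thm}, hence of Theorem \ref{bergman-thm}.

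Applying Theorem \ref{bergman-thm}, there is an allowable $\eps>0$ with
\be
|B_\varphi(z,w)|\lesssim e^{\varphi(z)+\varphi(w)}\,\frac{\kappa(z)}{\rho(z)}\,\frac{e^{-\eps d_\kappa(z,w)}}{\rho(z)^n\rho(w)^n}\qquad\forall z,w\in\C^n.
\ee
Since $\kappa=c^{-1}\rho$, one has $\frac{\kappa(z)}{\rho(z)}=c^{-1}$, a constant depending only on $\varphi$, which is absorbed into the implicit constant; and from $d_\rho(x,y)=\inf_\gamma\int\frac{|\gamma'|}{\rho(\gamma)}$ one gets $d_\kappa=d_{c^{-1}\rho}=c\,d_\rho=c\,d$, so that $e^{-\eps d_\kappa(z,w)}=e^{-(\eps c)\,d(z,w)}$ with $\eps c$ allowable. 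This is exactly the claimed estimate. I do not expect any genuine obstacle in this argument; the only point requiring a little care is that the constant $c$ furnished by Lemma \ref{mucomparable-lem} is uniform in the base points, which is precisely what makes $\kappa(z)/\rho(z)$ a true constant rather than an unbounded quantity and what keeps $\eps$ allowable after the rescaling.
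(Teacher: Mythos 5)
Your proof is correct and follows essentially the same route as the paper: verify admissibility via the reverse-H\"older inequality, invoke Lemma \ref{mucomparable-lem} to get $c\rho^{-1}$-coercivity, and apply Theorem \ref{bergman-thm} with $\kappa=c^{-1}\rho$. The paper states this same argument more tersely, implicitly taking for granted that $c\le1$ (or invoking remark (3) after Theorem \ref{exp-thm} if not), whereas you make this explicit and also spell out that $d_\kappa=c\,d_\rho$; these are helpful clarifications but not new mathematical content.
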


\begin{proof}
The reverse-H\"older inequality \eqref{bergman-RH} implies that the measure with density $\Delta\varphi$ with respect to Lebesgue measure is doubling, i.e.,\be
\int_{B(z,2r)}\Delta\varphi\lesssim\int_{B(z,r)}\Delta\varphi\qquad\forall z\in\C^n,\ r>0.
\ee
This, together with the reverse-H\"older inequality itself, implies condition (1) in Definition \ref{adm-dfn}. Since condition (2) of that definition is our hypothesis (i), the weight $\varphi$ is admissible.

By Lemma \ref{mucomparable-lem}, $\Box_\varphi$ is $c\rho^{-1}$-coercive, where $c>0$ is admissible. An application of Theorem \ref{bergman-thm} with $k=c^{-1}\rho$ gives the thesis.
\end{proof}

\chapter{Model weights in $\C^2$}\label{C2-ch}

In this chapter we discuss, by a combination of explicit computations and more conceptual arguments, $\mu$-coercivity of weighted Kohn Laplacians for \emph{homogeneous model weights}. These are introduced in Section 3.1, where our main result (Theorem \ref{model-thm}) is also stated. Sections 3.2 through 3.6 contain the corollaries and the proof of Theorem \ref{model-thm}.

\section{Homogeneous model weights in $\C^2$}\label{model-sec}

Let us define the class of model weights.

\begin{dfn}\label{model-dfn}
If $\Gamma\subseteq \N^2$ is finite, we define the model weight associated to $\Gamma$ as follows:\be
\varphi_\Gamma(z,w):=\sum_{(\alpha,\beta)\in\Gamma}|z^\alpha w^\beta|^2\qquad\forall (z,w)\in\C^2.
\ee
\end{dfn}

Of course one could consider the analogous definition in $\C^n$, associating a model weight to any finite $\Gamma\subseteq \N^n$, but here we shall only treat only the two-dimensional case.

\begin{prop}\label{model-prop}
If $\Gamma\subseteq\N^2$, the model weight $\varphi_\Gamma$ is admissible in the sense of Definition \ref{adm-dfn}.
\end{prop}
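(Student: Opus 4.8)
The plan is to check the two defining properties of admissibility (Definition~\ref{adm-dfn}) directly, after first recording that $\varphi_\Gamma$ qualifies as a $C^2$ plurisubharmonic weight and computing its Laplacian. Being a finite sum of squared moduli of the holomorphic polynomials $z^\alpha w^\beta$, the function $\varphi_\Gamma$ is a polynomial in $z,\overline z,w,\overline w$, hence real-analytic; its complex Hessian is $\sum_{(\alpha,\beta)\in\Gamma}v_{\alpha,\beta}v_{\alpha,\beta}^{*}$ with $v_{\alpha,\beta}=(\partial_z(z^\alpha w^\beta),\partial_w(z^\alpha w^\beta))$, hence non-negative definite, so $\varphi_\Gamma$ is plurisubharmonic. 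Using $\partial_z\partial_{\overline z}|z|^{2\alpha}=\alpha^2|z|^{2\alpha-2}$ (and the analogue in $w$) and $\Delta=4(\partial_z\partial_{\overline z}+\partial_w\partial_{\overline w})$, a one-line computation gives
\[
\Delta\varphi_\Gamma(z,w)=4\sum_{(\alpha,\beta)\in\Gamma}\Big(\alpha^2|z|^{2\alpha-2}|w|^{2\beta}+\beta^2|z|^{2\alpha}|w|^{2\beta-2}\Big),
\]
with the convention that a summand carrying a negative exponent is absent, its coefficient being $0$. In particular $\Delta\varphi_\Gamma$ is a genuine polynomial in the real coordinates on $\C^2\equiv\R^4$, it is non-negative, its degree is bounded in terms of $\Gamma$ only, and when written in the variables $|z|^2,|w|^2$ all its coefficients are non-negative; it is moreover not identically zero as soon as $\Gamma$ contains some $(\alpha,\beta)$ with $\alpha+\beta\geq1$, which we assume throughout (otherwise $\varphi_\Gamma$ is constant, and such trivial $\Gamma$ play no role in Section~\ref{model-sec}).

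For condition~(1), the $L^\infty$-doubling of $\Delta\varphi_\Gamma$, I would run the same compactness/norm-equivalence argument used in Lemma~\ref{key-grad-lem}: on the finite-dimensional vector space of real polynomials on $\R^4$ of degree $\leq\deg(\Delta\varphi_\Gamma)$, the two maps $p\mapsto\|p\|_{L^\infty(B(0,1))}$ and $p\mapsto\|p\|_{L^\infty(B(0,2))}$ are norms, hence equivalent, so $\|p\|_{L^\infty(B(0,2))}\leq D\|p\|_{L^\infty(B(0,1))}$ with $D$ depending only on the degree. Applying this to $p(x)=\Delta\varphi_\Gamma(z_0+rx)$, which is a polynomial of the same degree, and using $\Delta\varphi_\Gamma\geq0$ so that $\|\cdot\|_{L^\infty}=\sup$, one obtains $\sup_{B(z_0,2r)}\Delta\varphi_\Gamma\leq D\sup_{B(z_0,r)}\Delta\varphi_\Gamma$ for every $z_0\in\C^2$ and $r>0$, with $D=D(\Gamma)$.

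For condition~(2), the uniform lower bound $\inf_{z}\sup_{B(z,c)}\Delta\varphi_\Gamma>0$, I would exploit the monotonicity of $\Delta\varphi_\Gamma$ in $|z|$ and in $|w|$: writing $\Delta\varphi_\Gamma(z,w)=\sum_{(a,b)}c_{a,b}|z|^{2a}|w|^{2b}$ with all $c_{a,b}\geq0$ and some fixed $c_{a_0,b_0}>0$, the quantity $\Delta\varphi_\Gamma(z,w)$ does not decrease when $|z|$ or $|w|$ increases. Fix $c=1$. Given $p=(z_0,w_0)$, push radially outward by $\tfrac14$ in each coordinate to reach a point $(\widetilde z,\widetilde w)$ with $|\widetilde z|\geq\tfrac14$, $|\widetilde w|\geq\tfrac14$ and $|(\widetilde z,\widetilde w)-p|=\tfrac{\sqrt2}{4}<1$; then
\[
\sup_{w'\in B(p,1)}\Delta\varphi_\Gamma(w')\ \geq\ \Delta\varphi_\Gamma(\widetilde z,\widetilde w)\ \geq\ c_{a_0,b_0}|\widetilde z|^{2a_0}|\widetilde w|^{2b_0}\ \geq\ c_{a_0,b_0}\,4^{-2(a_0+b_0)}\ >\ 0,
\]
a bound independent of $p$, which is exactly condition~(2).

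The whole argument is essentially bookkeeping once the explicit formula for $\Delta\varphi_\Gamma$ is in hand; the only step that requires a genuine (if small) idea is the uniform-in-$z$ lower bound of condition~(2), handled above by the monotonicity-in-moduli observation, which is the natural analogue in this setting of the fact that a nonzero non-negative polynomial cannot be small on every ball of a fixed radius.
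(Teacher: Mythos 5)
Your proof is correct, and your treatment of condition~(1) coincides with the paper's: the equivalence of the two sup-norms on the finite-dimensional space of polynomials of bounded degree, applied to the translated and rescaled function $\Delta\varphi_\Gamma(z_0+r\,\cdot)$. For condition~(2), however, you take a genuinely different route. The paper does not compute $\Delta\varphi_\Gamma$ explicitly; it argues abstractly that, as $z_0$ varies, $\Delta\varphi_\Gamma(z_0+\cdot)$ sweeps out an affine hyperplane not through the origin of the polynomial space (because the coefficients of the top-degree monomials are translation-invariant and not all zero), and since $p\mapsto\sup_{B(0,1)}|p|$ is a norm, the infimum of this norm over that hyperplane is strictly positive. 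That argument applies to \emph{any} non-constant non-negative polynomial on $\R^{2n}$. Your argument instead exploits the particular structure you derived: $\Delta\varphi_\Gamma$ is a polynomial in $(|z|^2,|w|^2)$ with non-negative coefficients, hence monotone in each modulus, and a radial outward push by $1/4$ in each coordinate lands at a point of $B((z_0,w_0),1)$ where a chosen positive monomial is bounded below, yielding an explicit constant depending only on $\Gamma$. Both are valid; the paper's is more general and avoids writing out the Laplacian, yours is more elementary and quantitative. Two minor remarks: the informal ``push radially outward'' needs a convention at a zero coordinate (replace $0$ by any point of modulus $1/4$), and you are right to flag the degenerate case $\Gamma\subseteq\{(0,0)\}$; the paper's proof also tacitly assumes $\Delta\varphi_\Gamma\not\equiv 0$, since otherwise there is no non-zero top-degree monomial to anchor its hyperplane argument.
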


\begin{proof}
Being a sum of squares of holomorphic functions, $\varphi_\Gamma$ is $C^2$ and plurisubharmonic (alternatively, this follows from Proposition \ref{model-formulas}). 

Conditions (1) and (2) of Definition \ref{adm-dfn} only depend on the fact that $\Delta\varphi$ is a non-negative polynomial on $\C^n\equiv\R^{2n}$. Let $d\in\N$ be the degree of this polynomial. The mappings\be
p\mapsto \sup_{B(0,1)}|p|\quad\text{and}\quad p\mapsto \sup_{B(0,2)}|p|
\ee 
are norms on the finite-dimensional vector space of real polynomials in $2n$ real variables of degree $\leq d$ on $\C^n\equiv\R^{2n}$, and therefore they are equivalent. In particular \bee
\sup_{w\in B(z,2r)}\Delta\varphi(w)&=&\sup_{w\in B(0,2)}\Delta\varphi(z+rw)\\
&\leq& D\sup_{w\in B(0,1)}\Delta\varphi(z+rw)=D\sup_{w\in B(z,r)}\Delta\varphi(w).
\eee This proves condition (1).

As $z$ varies in $\C^n$, the polynomial $\Delta\varphi(z+\cdot)$ varies on a hyperplane not containing the origin of the vector space of real polynomials in $2n$ real variables of degree $\leq d$. To see this, just notice that any of the coefficients of a monomial of highest degree of $\Delta\varphi$ is not affected by translations. Since $p\mapsto \sup_{B(0,1)}|p|$ is a norm, we have\be
\inf_{z\in\C^n}\sup_{B(z,1)}\Delta\varphi=\inf_{z\in\C^n}\sup_{B(0,1)}\Delta\varphi(z+\cdot)>0,
\ee that is condition (2).\end{proof}

Since $\varphi_\Gamma(z,w)=\sum_{(\alpha,\beta)\in\Gamma}|z|^{2\alpha} |w|^{2\beta}$, model weights only depend on the squared moduli of the coordinates. In view of this, we introduce the polynomial \bel\label{model-pol}
p_\Gamma(x,y):=\sum_{(\alpha,\beta)\in\Gamma}x^\alpha y^\beta\qquad (x,y)\in \R_+^2,
\eel and in what follows we think of $x$ and $y$ both as independent variables and as denoting $|z|^2$ and $|w|^2$ respectively, so that $\varphi_\Gamma(z,w)=p_\Gamma(|z|^2,|w|^2)=p_\Gamma(x,y)$. This ambiguity will not be a source of confusion.

We now prove a very useful formula for the determinant and the trace of the complex Hessian $H_{\varphi_\Gamma}$ of a model weight. In order to state it, we associate to any $\Gamma\subseteq \N^2$ four further subsets of $\N^2$:
\bee
\Gamma_r&:=&\{(\alpha,\beta)\in\Gamma\colon\ \alpha\neq0\} \quad\text{ ($r$ stands for ``right")},\\
\Gamma_u&:=&\{(\alpha,\beta)\in\Gamma\colon\ \beta\neq0\} \quad\text{($u$ stands for ``upper")},\\
\Gamma^{(1)}&:=&\{(\alpha,\beta)+(\gamma,\delta)\colon\ (\alpha,\beta), (\gamma,\delta)\in\Gamma \text{ linearly independent}\}-(1,1),\\
\Gamma^{(2)}&:=&\left[\Gamma_r-(1,0)\right]\cup\left[\Gamma_u-(0,1)\right].
\eee
Here $\Gamma_r-(1,0)$ denotes the collection $\{(\alpha-1,\beta):\ (\alpha,\beta)\in\Gamma_r\}$, and the other symbols have analogous meanings. Observe that if $(\alpha,\beta)$ and $(\gamma,\delta)$ are linearly independent elements of $\N^2$, then $(\alpha+\gamma-1,\beta+\delta-1)\in\N^2$, and hence $\Gamma^{(1)}\subseteq \N^2$.

\begin{prop}\label{model-formulas} If $\Gamma\subseteq \N^2$ is finite, then
\bee
\text{det}(H_{\varphi_\Gamma}(z,w))&\approx& \varphi_{\Gamma^{(1)}}(z,w),\\
\text{tr}(H_{\varphi_\Gamma}(z,w))&\approx& \varphi_{\Gamma^{(2)}}(z,w),
\eee where the implicit constants depend only on $\Gamma$.
\end{prop}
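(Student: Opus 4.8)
The plan is to compute the complex Hessian of $\varphi_\Gamma$ explicitly and then read off both formulas from the algebraic structure of the answer. Writing $f_{\alpha,\beta}(z,w):=z^\alpha w^\beta$ so that $\varphi_\Gamma=\sum_{(\alpha,\beta)\in\Gamma}|f_{\alpha,\beta}|^2$, the first point is that for any holomorphic function $g$ one has $\frac{\partial^2|g|^2}{\partial z_j\partial\overline{z}_k}=\frac{\partial g}{\partial z_j}\overline{\frac{\partial g}{\partial z_k}}$, so the complex Hessian of $|g|^2$ is the rank-one, positive semidefinite matrix $(\nabla g)(\nabla g)^*$. Hence, setting
\[
v_{\alpha,\beta}:=\bigl(\alpha z^{\alpha-1}w^\beta,\ \beta z^{\alpha}w^{\beta-1}\bigr)
\]
(with the convention that a term carrying a negative exponent is absent), one gets $H_{\varphi_\Gamma}=\sum_{(\alpha,\beta)\in\Gamma}v_{\alpha,\beta}v_{\alpha,\beta}^*=AA^*$, where $A$ is the $2\times|\Gamma|$ matrix whose columns are the vectors $v_{\alpha,\beta}$.

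For the trace I would simply expand $\operatorname{tr}(H_{\varphi_\Gamma})=\sum_{(\alpha,\beta)\in\Gamma}|v_{\alpha,\beta}|^2=\sum_{(\alpha,\beta)\in\Gamma}\bigl(\alpha^2|z|^{2(\alpha-1)}|w|^{2\beta}+\beta^2|z|^{2\alpha}|w|^{2(\beta-1)}\bigr)$. The first summand is nonzero precisely when $\alpha\neq 0$, i.e. $(\alpha,\beta)\in\Gamma_r$, and then it is a positive multiple of the monomial with exponent $(\alpha-1,\beta)$; the second is nonzero precisely when $(\alpha,\beta)\in\Gamma_u$, contributing the exponent $(\alpha,\beta-1)$. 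Collecting, $\operatorname{tr}(H_{\varphi_\Gamma})=\sum_{(\gamma,\delta)\in\Gamma^{(2)}}c_{\gamma,\delta}\,|z|^{2\gamma}|w|^{2\delta}$, where each $c_{\gamma,\delta}$ is a sum of some of the positive integers $\alpha^2,\beta^2$ (possibly coming from both $\Gamma_r-(1,0)$ and $\Gamma_u-(0,1)$), hence $\geq 1$ and $\leq$ a constant depending only on $\Gamma$. This gives $\operatorname{tr}(H_{\varphi_\Gamma})\approx\varphi_{\Gamma^{(2)}}$.

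For the determinant I would invoke the Cauchy--Binet formula for $\det(AA^*)$: since $A$ has two rows,
\[
\det(H_{\varphi_\Gamma})=\det(AA^*)=\sum_{\{(\alpha,\beta),(\gamma,\delta)\}\subseteq\Gamma}\bigl|\det[\,v_{\alpha,\beta}\mid v_{\gamma,\delta}\,]\bigr|^2 .
\]
A direct $2\times 2$ computation gives $\det[\,v_{\alpha,\beta}\mid v_{\gamma,\delta}\,]=(\alpha\delta-\beta\gamma)\,z^{\alpha+\gamma-1}w^{\beta+\delta-1}$, whose coefficient $\alpha\delta-\beta\gamma$ vanishes exactly when $(\alpha,\beta)$ and $(\gamma,\delta)$ are linearly dependent. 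Thus the surviving terms are indexed precisely by the unordered linearly independent pairs from $\Gamma$, the exponent attached to such a pair is $(\alpha,\beta)+(\gamma,\delta)-(1,1)$, and the monomials that appear are exactly those with exponents in $\Gamma^{(1)}$. Collecting equal exponents again produces coefficients that are sums of integers $(\alpha\delta-\beta\gamma)^2\geq 1$, bounded above in terms of $\Gamma$ only, so $\det(H_{\varphi_\Gamma})\approx\varphi_{\Gamma^{(1)}}$.

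The argument is essentially a bookkeeping exercise and I do not expect any serious obstacle; the two points deserving a little care are the absence of cancellations---guaranteed here because the sum-of-rank-one and Cauchy--Binet representations exhibit both $\operatorname{tr}$ and $\det$ as sums of \emph{nonnegative} monomials, so collecting terms can only increase coefficients---and the already-noted fact that $\Gamma^{(1)},\Gamma^{(2)}\subseteq\N^2$, i.e. that two linearly independent exponent vectors cannot both have vanishing first coordinate (nor both vanishing second coordinate), which is exactly what keeps the subtracted $(1,1)$ from producing a negative exponent.
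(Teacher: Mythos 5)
Your proof is correct and follows essentially the same route as the paper: both exhibit $H_{\varphi_\Gamma}$ as a sum of rank-one matrices $v_{\alpha,\beta}v_{\alpha,\beta}^*$, read off the trace immediately, and express the determinant as $\sum_{\text{pairs}}|\det[\,v_{\alpha,\beta}\mid v_{\gamma,\delta}\,]|^2$; the only cosmetic difference is that you cite Cauchy--Binet for this last identity while the paper derives it by symmetrizing the $2\times 2$ determinant expansion by hand. Your extra remarks about the absence of cancellations and about $\Gamma^{(1)},\Gamma^{(2)}\subseteq\N^2$ are precisely the points the paper also flags.
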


In particular, this proposition shows that model weights are weakly plurisubharmonic on the set where $\varphi_{\Gamma^{(1)}}$ vanishes. Since $\varphi_{\Gamma^{(1)}}$ is a model weight, this set may be easily determined from $\Gamma^{(1)}$, and may be empty, the origin, a complex coordinate axes ($\{z=0\}$ or $\{w=0\}$), or $\{z=0\}\cup\{w=0\}$. We omit the elementary details.

\begin{proof}
Let $h_1,\cdots,h_N:\C^2\rightarrow \C$ be holomorphic functions and consider the weight \be
\varphi:=\sum_{j=1}^N |h_j|^2.
\ee We have\be
\partial_z\dbar_z\varphi = \sum_j |\partial_zh_j|^2,\quad \partial_w\dbar_w\varphi = \sum_j |\partial_wh_j|^2,
\ee and \be
\partial_z\dbar_w\varphi = \sum_j \partial_zh_j\overline{\partial_w h_j},\quad \partial_w\dbar_z\varphi = \sum_j \partial_wh_j\overline{\partial_z h_j}.
\ee Hence\bee
\text{det}(H_\varphi) &=& \partial_z\dbar_z\varphi \cdot\partial_w\dbar_w\varphi - \partial_z\dbar_w\varphi\cdot\partial_w\dbar_z\varphi\\
&=& \sum_{j,k} |\partial_zh_j|^2|\partial_wh_k|^2 - \sum_{j,k} \partial_zh_j\overline{\partial_w h_j}\partial_wh_k\overline{\partial_z h_k}\\
&=& \frac{1}{2}\left(\sum_{j,k} |\partial_zh_j|^2|\partial_wh_k|^2 + |\partial_wh_j|^2|\partial_zh_k|^2 - 2\Re(\partial_zh_j\overline{\partial_w h_j}\partial_wh_k\overline{\partial_z h_k})\right)\\
&=& \frac{1}{2}\sum_{j,k} |\partial_zh_j\partial_wh_k - \partial_wh_j\partial_zh_k|^2.
\eee We also have\be
\text{tr}(H_\varphi)= \partial_z\dbar_z\varphi + \partial_w\dbar_w\varphi =\sum_j |\partial_zh_j|^2+|\partial_wh_j|^2.
\ee
Specializing to $\varphi_\Gamma(z,w):=\sum_{(\alpha,\beta)\in\Gamma}|z^\alpha w^\beta|^2$, we obtain (here l.i. stands for ``linearly independent"):
\bee
\text{det}(H_{\varphi_\Gamma}(z,w))&=&\frac{1}{2}\sum_{(\alpha,\beta),(\gamma,\delta)\in\Gamma}(\alpha\delta-\beta\gamma)^2|z^{\alpha+\gamma-1}w^{\beta+\delta-1}|^2\\
&\approx& \sum_{(\alpha,\beta),(\gamma,\delta)\in\Gamma \text{ l. i.}}|z^{\alpha+\gamma-1}w^{\beta+\delta-1}|^2\\
&\approx&\varphi_{\Gamma^{(1)}}(z,w),
\eee and \bee
\text{tr}(H_{\varphi_\Gamma}(z,w))&=&\sum_{(\alpha,\beta)\in\Gamma}\alpha^2|z^{\alpha-1}w^\beta|^2+\beta^2|z^\alpha w^{\beta-1}|^2\\
&\approx& \sum_{(\alpha,\beta)\in\Gamma\colon\alpha\neq0}|z^{\alpha-1} w^\beta|^2+\sum_{(\alpha,\beta)\in\Gamma\colon\beta\neq0}|z^\alpha w^{\beta-1}|^2\\
&\approx&\varphi_{\Gamma^{(2)}}(z,w).
\eee
It is easy to see that the implicit constants in the approximate equalities above depend only on $\Gamma$.
\end{proof}

It is time to give the main definition of this section.

\begin{dfn}
A homogeneous model weight is a model weight $\varphi_\Gamma$ such that there are $m,n\geq1$ for which the following two properties hold: \begin{enumerate}
\item $\{(m,0), (0,n)\} \subseteq\Gamma$,
\item every $(\alpha,\beta)\in\Gamma$ lies on the line segment connecting $(m,0)$ and $(0,n)$, i.e.\bel\label{model-segment}
n\alpha+m\beta=nm\qquad\forall (\alpha,\beta)\in\Gamma.
\eel
\end{enumerate}
A homogeneous model weight is said to be \emph{decoupled} if $\Gamma=\{(m,0),(0,n)\}$. 
\end{dfn}

Any homogeneous model weight is homogeneous with respect to a system of non necessarily isotropic dilations, i.e.,
\be\varphi_\Gamma(t^\frac{1}{m} z, t^\frac{1}{n} w)=t^2\varphi_\Gamma(z,w)\qquad \forall t>0 \text{ and } (z,w)\in\C^2.\ee

If $\varphi_\Gamma$ is not decoupled, it contains at least a mixed monomial and hence homogeneity forces $m$ and $n$ to be both greater than or equal to $2$.

In our analysis of homogeneous model weights, a key role will be played by the two quantities $\sigma$ and $\tau$, which are defined as the smallest non-negative real numbers such that \bel\label{model-sigma-tau}
\frac{1}{\sigma}\leq \frac{\beta}{\alpha}\leq \tau\qquad\forall (\alpha,\beta)\in\Gamma\text{ such that }\alpha,\beta\neq0.
\eel
We denote by $(\alpha_1,\beta_1)$ and $(\alpha_2,\beta_2)$ the unique points of $\Gamma$ such that 
\be \alpha_1/\beta_1=\sigma\quad \text{and}\quad  \beta_2/\alpha_2=\tau.\ee 
Notice that $\sigma,\tau<+\infty$. If $\varphi_\Gamma$ is decoupled, then $\sigma$ and $\tau$ equal $0$, because in that case $\Gamma_r=\{(m,0)\}$ and $\Gamma_u=\{(0,n)\}$, while if $\varphi_\Gamma$ is not decoupled both $\sigma$ and $\tau$ are positive. 

\begin{center}

\begin{tikzpicture} 

\tiny

\begin{axis}[
axis x line=bottom, axis y line=left, 
xmin=0, xmax=17, ymin=0, ymax=17, 
xtick={0,4,8,12,16, 17}, ytick={0,3,6,9,12, 17},
xticklabels={0,4,8,12,16,$\alpha$}, yticklabels={0,3,6,9,12, $\beta$}, 
enlargelimits=false] 

\addplot coordinates {(0,12)}
[xshift=12pt, yshift=8pt]
        node {(0,12)}
;

\addplot coordinates {(4,9)}
[yshift=8pt]
        node {$(\alpha_2,\beta_2)$=(4,9)}
;
\addplot coordinates {(8,6)}
[xshift=12pt, yshift=8pt]
        node {(8,6)}
;
\addplot coordinates {(12,3)}
[yshift=8pt]
        node {$(\alpha_1,\beta_1)$=(12,3)}
;
\addplot coordinates {(16,0)}
[xshift=-12pt, yshift=8pt]
        node {(16,0)}
;
\addplot coordinates{(0,12)(4,9)(8,6)(12,3)(16,0)};
;
;

\end{axis}

\end{tikzpicture}
\end{center}

\begin{small}
Figure 1: The plot of the set $\Gamma$ corresponding to the homogeneous model weight $\varphi_\Gamma(z,w)=|z|^{32}+|z|^{24}|w|^6+|z|^{16}|w|^{12}+|z|^8|w|^{18}+|w|^{24}$. In this case $\sigma=4$ and $\tau=\frac{9}{4}$.
\end{small}

\ \newline

Let us state our main result about homogeneous model weights.

\begin{thm}\label{model-thm}
Let $\varphi_\Gamma$ be a homogeneous model weight. 

Then $\Box_{\varphi_\Gamma}$ is $\mu$-coercive, where \be
\mu(z,w)=c(1+|z|^\sigma+|w|^\tau).
\ee Here $c>0$ is a constant that depends on $\Gamma$.
\end{thm}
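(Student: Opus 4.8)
The plan is to reduce $\mu$-coercivity of $\Box_{\varphi_\Gamma}$ to an inequality for scalar functions on $\C^2$, then exploit both the explicit structure of homogeneous model weights and a ``holomorphic uncertainty principle''. By Proposition \ref{kohnschrod-prop} and the computations in Section \ref{onevsmany-sec}, for $u=(u_1,u_2)\in C^2_c(\C^2,\C^2)$ we have
\be
\mathcal{E}_{\varphi_\Gamma}(U_{\varphi_\Gamma}u)=\sum_{j,k=1}^2\int_{\C^2}\left|\frac{\partial(e^{\varphi_\Gamma}u_j)}{\partial\overline{z}_k}\right|^2e^{-2\varphi_\Gamma}+2\int_{\C^2}(H_{\varphi_\Gamma}u,u).
\ee
The first term is always non-negative; the difficulty is that $(H_{\varphi_\Gamma}u,u)$ can vanish on the coordinate axes (this is exactly where $\varphi_{\Gamma^{(1)}}$ vanishes, by Proposition \ref{model-formulas}), so the Hessian term alone does not control $\mu^2|u|^2$ with $\mu(z,w)=c(1+|z|^\sigma+|w|^\tau)$. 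The idea is therefore to keep a fraction of the ``kinetic'' term, write $\mu^2=c^2(1+|z|^\sigma+|w|^\tau)^2\lesssim 1+|z|^{2\sigma}+|w|^{2\tau}$, and prove separately the three estimates
\be
\int|u|^2e^{-2\varphi_\Gamma}\lesssim \mathcal{E}_{\varphi_\Gamma}(U_{\varphi_\Gamma}u),\quad \int|z|^{2\sigma}|u|^2e^{-2\varphi_\Gamma}\lesssim \mathcal{E}_{\varphi_\Gamma}(U_{\varphi_\Gamma}u),\quad \int|w|^{2\tau}|u|^2e^{-2\varphi_\Gamma}\lesssim \mathcal{E}_{\varphi_\Gamma}(U_{\varphi_\Gamma}u),
\ee
and then combine by linearity; a density argument (Proposition \ref{kohn-dbar-star}) extends the result from $C^2_c$ to $\mathcal{D}(\mathcal{E}_{\varphi_\Gamma})$.

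The first estimate is the statement that $\Box_{\varphi_\Gamma}$ is $c$-coercive, which should follow from the admissibility of $\varphi_\Gamma$ (Proposition \ref{model-prop}) together with the lower bound \eqref{adm-lower} on $\sup_{B(z,c)}\Delta\varphi_\Gamma$ and a Fefferman--Phong-type argument as in Lemma \ref{fph-lem}, since $\mu\gtrsim 1$ everywhere and $\Delta\varphi_\Gamma$ is a non-negative polynomial (hence satisfies a reverse Hölder inequality with constants depending only on its degree). For the weighted estimates, the plan is to use homogeneity: on the region where $|w|^\tau\lesssim |z|^\sigma+1$, i.e.\ roughly where $|z|$ is the dominant variable, $\varphi_\Gamma$ behaves like $|z|^{2m}$ and the ``off-axis'' eigenvalue of $H_{\varphi_\Gamma}$ is comparable to $|z|^{2\sigma}\varphi_{\Gamma'}$ for the appropriate sub-diagram, so the Hessian term directly dominates $\int|z|^{2\sigma}|u_2|^2e^{-2\varphi_\Gamma}$ there; the component $u_1$ along the degenerate direction must be handled by integrating by parts in the $z$-variable, moving a power of $z$ onto $e^{\varphi_\Gamma}u_1$ and using the kinetic term $\int|\partial_{\overline z}(e^{\varphi_\Gamma}u_1)/\ldots|^2$ — this is where Lemma \ref{hol-unc}, the holomorphic uncertainty principle, enters: it quantifies that a function cannot simultaneously have small $\dbar$-energy (hence be nearly holomorphic) and be concentrated where a non-negative weight with controlled growth is large, giving exactly the trade-off $\int|z|^{2\sigma}|u_1|^2e^{-2\varphi_\Gamma}\lesssim \mathcal{E}$.

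The main obstacle I expect is the degenerate direction on (and near) the coordinate axes $\{z=0\}$ and $\{w=0\}$, where $H_{\varphi_\Gamma}$ has a vanishing eigenvalue: there the potential energy gives nothing and one must extract the full weight $|z|^{2\sigma}$ (resp.\ $|w|^{2\tau}$) purely from the $\dbar$-energy of $u$ and the size of $\varphi_\Gamma$ itself. Making the uncertainty-principle bookkeeping uniform across the transition region — where neither $|z|$ nor $|w|$ clearly dominates, and one must interpolate between the two homogeneity regimes — and choosing the exponents $\sigma,\tau$ sharply (they are read off from the points $(\alpha_1,\beta_1)$, $(\alpha_2,\beta_2)$ of $\Gamma$ via \eqref{model-sigma-tau}) will be the delicate part; the decoupled case $\Gamma=\{(m,0),(0,n)\}$, where $\sigma=\tau=0$ and the operator splits, serves as a sanity check, and the non-decoupled case is handled by localizing to the dyadic annuli adapted to the anisotropic dilations $\varphi_\Gamma(t^{1/m}z,t^{1/n}w)=t^2\varphi_\Gamma(z,w)$ and summing with bounded overlap.
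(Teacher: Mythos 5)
Your high-level intuition is sound: the argument really does split into the region where the minimal eigenvalue $\lambda_\Gamma$ of $H_{\varphi_\Gamma}$ already dominates $1+|z|^{2\sigma}+|w|^{2\tau}$, and neighbourhoods of the degenerate set where the holomorphic uncertainty principle (Lemma~\ref{hol-unc}) must supply the weight. But two of your concrete steps would not go through.

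First, you cannot get the constant bound $\int|u|^2e^{-2\varphi_\Gamma}\lesssim\mathcal{E}_{\varphi_\Gamma}(u)$ by the Fefferman--Phong route of Section~\ref{onevsmany-sec} and Lemma~\ref{fph-lem}. The diamagnetic reduction degrades the matrix energy to the scalar operator with potential $\lambda(V)$, where $V=8H_{\varphi_\Gamma}-4\text{tr}(H_{\varphi_\Gamma})I_2$; as pointed out in Section~\ref{onevsmany-sec}, $\text{tr}(V)\leq 0$ for $n\geq 2$, so $\lambda(V)\leq 0$ and the resulting quadratic form is not even non-negative. The workaround of Lemma~\ref{mucomparable-lem} requires the eigenvalues of $H_\varphi$ to be globally comparable, which fails for every homogeneous model weight other than the trivial $|z|^2+|w|^2$: by Proposition~\ref{model-formulas}, $\lambda_\Gamma\approx\varphi_{\Gamma^{(1)}}/\varphi_{\Gamma^{(2)}}$ vanishes wherever $\varphi_{\Gamma^{(1)}}$ does. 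So even the constant estimate already requires the uncertainty principle.

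Second, the claim that on the $|z|$-dominant region the Hessian term alone bounds $\int|z|^{2\sigma}|u_2|^2e^{-2\varphi_\Gamma}$ fails near the $z$-axis: by Proposition~\ref{lambda-prop}, $\lambda_\Gamma(z,w)\approx|z|^{2\alpha_1}|w|^{2(\beta_1-1)}$ on $E_1$, and whenever $\beta_1\geq 2$ this tends to $0$ as $|w|\to 0$, far below $|z|^{2\sigma}$ on the tube $\{|w|\lesssim|z|^{-\sigma}\}$ --- and this degeneracy affects \emph{both} components of $u$, not just $u_1$ (the form components $u_1,u_2$ are not eigenvectors of $H_{\varphi_\Gamma}$). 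Nor does an integration by parts in $z$ appear anywhere in the actual proof. What the paper does instead is a four-piece spatial decomposition $\C^2=U_0\cup U_r\cup U_u\cup\mathcal{C}$: on $\mathcal{C}$ the explicit formulas of Section~\ref{lambda-sec} give $\lambda_\Gamma\gtrsim 1+|z|^{2\sigma}+|w|^{2\tau}$, so the Hessian term suffices; on $U_r$ the key input is Proposition~\ref{est-prop}, that $\varphi_{\Gamma_r}$ is bounded there, which lets one drop the exponential weight in the $w$-slice and apply Lemma~\ref{hol-unc} fiberwise on $\{z\}\times D(0,|z|^{-\sigma})$ with potential $\lambda_\Gamma(z,\cdot)$. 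The constant produced is $\min_{|z|^{-\sigma}/2<|w'|\leq|z|^{-\sigma}}\lambda_\Gamma(z,w')\approx|z|^{2(\alpha_1-\sigma\beta_1+\sigma)}=|z|^{2\sigma}$, which is exactly the needed weight. $U_u$ is symmetric, and $U_0$ uses Lemma~\ref{hol-unc} twice by Fubini plus compactness. So the exponents $\sigma,\tau$ enter as the values of $\lambda_\Gamma$ at the boundary of the uncertainty regions, not via any redistribution of powers of $z$.
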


\section{Corollaries of Theorem \ref{model-thm}}

We can apply Theorem \ref{exp-thm} and Theorem \ref{bergman-thm} to $\varphi_\Gamma$ choosing $\kappa(z,w)=c^{-1}(1+|z|^\sigma+|w|^\tau)^{-1}$, because it is easily seen that $\kappa$ is approximately constant on balls of radius $1$, and hence a radius function.

Theorem \ref{model-thm} also implies the following result about discreteness of the spectrum of Kohn Laplacians.

\begin{thm}\label{model-disc}
If $\varphi_\Gamma$ is a homogeneous model weight, then the Kohn Laplacian $\Box_\varphi$ has discrete spectrum if and only if $\varphi_\Gamma$ is not decoupled.
\end{thm}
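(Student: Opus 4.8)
The plan is to prove the two directions of the equivalence separately; the forward implication is essentially a bookkeeping consequence of what is already available, while the converse needs a short construction.

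\textbf{If $\varphi_\Gamma$ is not decoupled, $\Box_{\varphi_\Gamma}$ has discrete spectrum.} When $\varphi_\Gamma$ is not decoupled both exponents $\sigma$ and $\tau$ are strictly positive (observed right after their definition), so the function $\mu(z,w)=c(1+|z|^\sigma+|w|^\tau)$ provided by Theorem~\ref{model-thm} satisfies $\mu(z,w)\to+\infty$ as $(z,w)\to\infty$ in $\C^2$. Since $\varphi_\Gamma$ is admissible (Proposition~\ref{model-prop}) and $C^2$ plurisubharmonic, Theorem~\ref{coerc-pp-thm} applies and yields discreteness of the spectrum of $\Box_{\varphi_\Gamma}$ at once.

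\textbf{If $\varphi_\Gamma$ is decoupled, $\Box_{\varphi_\Gamma}$ has no discrete spectrum.} Write $\varphi=\varphi_1(z)+\varphi_2(w)$ with $\varphi_1(z)=|z|^{2m}$ and $\varphi_2(w)=|w|^{2n}$. Here $\sigma=\tau=0$, so Theorem~\ref{model-thm} only gives $\mu$-coercivity with $\mu$ a positive \emph{constant}; still, this makes $\Box_\varphi$ bounded below by a positive constant, hence invertible with bounded inverse $N_\varphi$ (Proposition~\ref{coerc-prop}(i)). Were $\Box_\varphi$ to have discrete spectrum it would then have eigenvalues diverging to $+\infty$, so $N_\varphi$ would be compact, and by the criterion of Haslinger recalled in the proof of Theorem~\ref{coerc-pp-thm} we would have: for every $\varepsilon>0$ there is $R<+\infty$ with
\[
\mathcal{E}_\varphi(u)\le 1\ \Longrightarrow\ \int_{|(z,w)|\ge R}|u|^2 e^{-2\varphi}\le\varepsilon \qquad\forall\,u\in\mathcal{D}(\mathcal{E}_\varphi).
\]
I would contradict this with a Weyl-type sequence adapted to the product structure. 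Fix a radial $a\in C^\infty_c(\C)$ with $a\equiv1$ near $0$ and a cutoff $\chi\in C^\infty_c(\C)$ equal to $1$ on the unit disc, and for $R\ge1$ set $k=k(R):=\lceil 2n(2R)^{2n}\rceil$ and $u_R:=a(z)\,w^{k}\chi(w/3R)\,d\overline{z}$. Since $\varphi$ is decoupled, $e^{-2\varphi}$ factors, and the Morrey--Kohn--H\"ormander formula (Proposition~\ref{MKH-prop}), using $(H_\varphi)_{11}=m^2|z|^{2m-2}$ and $(H_\varphi)_{12}=0$, separates by Fubini:
\[
\frac{\mathcal{E}_\varphi(u_R)}{\|u_R\|_\varphi^2}=\frac{\int_\C|\partial_{\overline{z}}a|^2 e^{-2\varphi_1}}{\int_\C|a|^2 e^{-2\varphi_1}}+\frac{2m^2\int_\C|z|^{2m-2}|a|^2 e^{-2\varphi_1}}{\int_\C|a|^2 e^{-2\varphi_1}}+\frac{\int_\C|\partial_{\overline{w}}(w^{k}\chi(w/3R))|^2 e^{-2\varphi_2}}{\int_\C|w^{k}\chi(w/3R)|^2 e^{-2\varphi_2}}.
\]
The first two terms, call them $\rho_1$ and $\rho_2$, are finite and depend only on the fixed $a$. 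The third term $\rho_3(R)$ tends to $0$ (see below), so $\mathcal{E}_\varphi(u_R)\le(\rho_1+\rho_2+1)\|u_R\|_\varphi^2$ for $R$ large. Since $|w|^{2k}e^{-2|w|^{2n}}$ concentrates near $|w|=(k/2n)^{1/2n}\approx2R$, a $(1-o(1))$-fraction of $\|u_R\|_\varphi^2$ is carried by $\{|w|\ge R\}\subseteq\{|(z,w)|\ge R\}$; normalising $\widetilde u_R:=u_R/\sqrt{\mathcal{E}_\varphi(u_R)}$ then gives $\mathcal{E}_\varphi(\widetilde u_R)=1$ while $\int_{|(z,w)|\ge R}|\widetilde u_R|^2e^{-2\varphi}$ stays $\ge(2(\rho_1+\rho_2+1))^{-1}>0$, contradicting the displayed implication.

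\textbf{Where the difficulty is.} The only non-formal point is $\rho_3(R)\to0$: the numerator $\partial_{\overline{w}}(w^{k}\chi(w/3R))=w^{k}\,\partial_{\overline{w}}[\chi(w/3R)]$ is supported in the annulus $\{3R\le|w|\le6R\}$, which lies well beyond the maximum $|w|=(k/2n)^{1/2n}\approx2R$ of $|w|^{2k}e^{-2|w|^{2n}}$ (by many multiples of its concentration width); so one must verify that the mass of this measure past a fixed multiple $>1$ of its maximum is exponentially small, of order $e^{-c_nR^{2n}}$ — routine Laplace-type asymptotics, but to be carried out carefully. In effect the construction exploits that high-degree monomials of the weighted Bergman space of $\varphi_2$ on $\C$ can be truncated cheaply. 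A more structural alternative would be the K\"unneth-type unitary splitting of the weighted $\dbar$-complex of a decoupled weight, $\Box_\varphi\cong(\Box_{\varphi_1}\otimes I+I\otimes\widetilde\Box_{\varphi_2})\oplus(\widetilde\Box_{\varphi_1}\otimes I+I\otimes\Box_{\varphi_2})$, where $\widetilde\Box_{\varphi_j}=\dbar^*_{\varphi_j}\dbar$ acts on functions: since $\ker\widetilde\Box_{\varphi_2}=A^2(\C,\varphi_2)$ is infinite-dimensional, any eigenvalue of $\Box_{\varphi_1}$ is an eigenvalue of $\Box_\varphi$ of infinite multiplicity, which again rules out discreteness — at the cost of setting up the tensor decomposition.
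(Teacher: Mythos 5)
Your forward direction is identical to the paper's: Theorem~\ref{model-thm} gives $\kappa^{-1}$-coercivity with $\kappa^{-1}\to\infty$ whenever $\sigma,\tau>0$, and Theorem~\ref{coerc-pp-thm} then yields discreteness. For the converse the paper does not give an argument at all: it simply cites Haslinger--Helffer for the non-discreteness of $\Box_\varphi$ when $\varphi=|z|^{2m}+|w|^{2n}$. You instead supply a self-contained proof, and this is where your write-up genuinely differs. Your Weyl-type family $u_R=a(z)\,w^k\chi(w/3R)\,d\overline z$ is correctly calibrated: for a decoupled weight the Morrey--Kohn--H\"ormander form separates by Fubini exactly as you display (with $(H_\varphi)_{11}=m^2|z|^{2m-2}$ and $(H_\varphi)_{12}=0$); $\rho_1,\rho_2$ are $R$-independent constants; choosing $k=\lceil 2n(2R)^{2n}\rceil$ places the Laplace peak of $|w|^{2k}e^{-2|w|^{2n}}$ near $|w|\approx 2R$, so the truncation error $\rho_3(R)$ is supported past $|w|=3R$ and is $O(R^{-2}e^{-cR^{2n}})$, and a $(1-o(1))$-fraction of $\|u_R\|_\varphi^2$ indeed sits in $\{|w|\ge R\}$; this contradicts the Haslinger criterion as you state. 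The alternative route you sketch is also sound and arguably cleaner: for a decoupled weight $\Box_\varphi$ splits as a direct sum over the two form components, and on the $d\overline z$ component it is $\Box_{\varphi_1}\otimes I+I\otimes\widetilde\Box_{\varphi_2}$ with $\widetilde\Box_{\varphi_2}=\dbar^*_{\varphi_2}\dbar$ on functions; since $\ker\widetilde\Box_{\varphi_2}=A^2(\C,\varphi_2)$ is infinite-dimensional, the restriction to $L^2(\C,\varphi_1)\otimes\ker\widetilde\Box_{\varphi_2}$ is $\Box_{\varphi_1}\otimes I$, which reproduces every spectral value of $\Box_{\varphi_1}$ with infinite multiplicity and forbids discreteness. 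What your version buys is a self-contained chapter; what the paper's citation buys is brevity. Either of your two routes could be tightened into a complete proof with a short Laplace computation (for the explicit sequence) or a brief tensor-product lemma (for the splitting).
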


\begin{proof}
If $\varphi_\Gamma$ is not decoupled, then $\sigma$ and $\tau$ are strictly positive and Theorem \ref{model-thm} immediately shows that the condition of Theorem \ref{coerc-pp-thm} holds. 

If, on the contrary, $\varphi_\Gamma$ is decoupled, the non-discreteness of spectrum is proved in \cite{haslinger-helffer}.
\end{proof}

\section{The proof of Theorem \ref{model-thm}}\label{modelstruct-sec}

The proof is organized as follows:\begin{enumerate}
\item In Section \ref{lambda-sec} we carefully estimate the minimal eigenvalue 
\be\lambda_\Gamma:=\lambda(H_{\varphi_\Gamma}) \ee
of the complex Hessian of $\varphi_\Gamma$, taking advantage of the special formulas of Proposition \ref{model-formulas}.
\item In Section \ref{hol-sec} we introduce a \emph{holomorphic uncertainty principle} to take care of the regions where $\lambda_\Gamma$ is small.
\item Finally, in Section \ref{est-sec} we estimate from below the energy $\mathcal{E}_{\varphi_\Gamma}$: outside of a hyperbolic neighborhood of the complex coordinate axes we use the estimate of Section \ref{lambda-sec}, while on this neighborhood we exploit the holomorphic uncertainty principle introduced in Section \ref{hol-sec}.
\end{enumerate}

\section{Estimating $\lambda_\Gamma$}\label{lambda-sec}

If $(u,v)\in\R^2$, we consider the curve in the non-negative quadrant $[0,+\infty)^2$\be
C_{u,v}:\quad t\longmapsto (t^u,t^v)\qquad(t\geq1).
\ee
Notice that if $(u',v')$ is proportional to $(u,v)$, $C_{u,v}$ and $C_{u',v'}$ have the same range. If $A\subseteq \N^2$ is finite, using the notation \eqref{model-pol} we have\be
p_A(C_{u,v}(t))=\sum_{(\alpha,\beta)\in A} (t^u)^\alpha(t^v)^\beta=\sum_{(\alpha,\beta)\in A} t^{u\alpha+v\beta}\approx t^{m_{u,v}(A)}\quad(t\geq1),
\ee
where $m_{u,v}(A)$ is the maximum of the linear functional $(\xi,\eta)\mapsto u\xi+v\eta$ on the set $A\subseteq \R^2$, and the implicit constant depends on $\Gamma$ and is independent of $u,v$. 

We are interested in estimating $\lambda_\Gamma(z,w)$ when $(x,y)=(|z|^2,|w|^2)$ lies on the curve $C_{u,v}$. Since the trace of a non-negative matrix is comparable to its maximal eigenvalue we have \be
\det(H_{\varphi_\Gamma}(z,w))\approx\lambda_\Gamma(z,w) \text{tr}(H_{\varphi_\Gamma}(z,w)).
\ee 
Proposition \ref{model-formulas} yields the approximate identity\be
p_{\Gamma^{(1)}}(|z|^2,|w|^2)=\varphi_{\Gamma^{(1)}}(z,w)\approx\lambda_\Gamma(z,w) \varphi_{\Gamma^{(2)}}(z,w)=\lambda_\Gamma(z,w) p_{\Gamma^{(2)}}(|z|^2,|w|^2).
\ee
If $(|z|^2,|w|^2)=(t^u,t^v)=C_{u,v}(t)$ ($t\geq1$), the discussion at the beginning of this section gives\bel\label{lambda-zwt}
\lambda_\Gamma(z,w) \approx t^{m_{u,v}(\Gamma^{(1)})-m_{u,v}(\Gamma^{(2)})}.
\eel

Observe that for homogeneous model weights the definitions of $\Gamma^{(1)}$ and $\Gamma^{(2)}$ take the slightly simpler forms\bee
\Gamma^{(1)}&:=&\{(\alpha+\gamma,\beta+\delta):\ (\alpha,\beta)\neq (\gamma,\delta)\in\Gamma\}-(1,1),\\
\Gamma^{(2)}&:=&\left(\Gamma\setminus \{(0,n)\}-(1,0)\right)\cup\left(\Gamma\setminus \{(m,0)\}-(0,1)\right).
\eee
If $(u,v)\in\R^2$ is fixed, by convexity considerations it is clear that the maximum of $u\xi+v\eta$ on $\Gamma$ is attained at $(m,0)$ if $um\geq vn$, while it is attained at $(0,n)$ if $um\leq vn$. We separately analyze the two cases, assuming without loss of generality that $m\geq n$.

\subsection*{Case I: $v\leq\frac{m}{n}u$} We have\be
m_{u,v}(\Gamma^{(1)})=um+m_{u,v}(\Gamma\setminus \{(m,0)\})-u-v,
\ee and 
\bee
m_{u,v}(\Gamma^{(2)})&=&\max\{m_{u,v}(\Gamma\setminus \{(0,n)\})-u,m_{u,v}(\Gamma\setminus \{(m,0)\})-v\}\\
&=&\max\{um-u,m_{u,v}(\Gamma\setminus \{(m,0)\})-v\}.
\eee
Hence \bel\label{lambda-max-formula}
m_{u,v}(\Gamma^{(1)})-m_{u,v}(\Gamma^{(2)})=\min\{m_{u,v}(\Gamma\setminus \{(m,0)\})-v, u(m-1)\}.
\eel
It is almost immediate to see that the maximum of $u\xi+v\eta$ on $\Gamma\setminus \{(m,0)\}$ is attained at the point $(\alpha_1,\beta_1)$ satisfying $\frac{\alpha_1}{\beta_1}=\sigma$, which we introduced above. Identity \eqref{lambda-max-formula} becomes\be
m_{u,v}(\Gamma^{(1)})-m_{u,v}(\Gamma^{(2)})=\min\{u\alpha_1+v\beta_1-v, u(m-1)\}.
\ee

The inequality $u\alpha_1+v\beta_1-v\leq u(m-1)$ holds if and only if\bel\label{lambda-condition-m-n}
v\leq \frac{m-1-\alpha_1}{\beta_1-1}u.
\eel This condition depends only on the ratio of $u$ and $v$, as it should. Observe that \be
\frac{m}{n}\leq\frac{m-1-\alpha_1}{\beta_1-1}.
\ee In fact, the inequality above is obviously equivalent to $mn-n\geq\alpha_1n+m\beta_1-m$, and recalling the homogeneity condition \eqref{model-segment} we see that this is the same as $m\geq n$, which we assumed before. This shows that condition \eqref{lambda-condition-m-n} is a consequence of $v\leq\frac{m}{n}u$ and thus \bel\label{lambda-caseI}
m_{u,v}(\Gamma^{(1)})-m_{u,v}(\Gamma^{(2)})=u\alpha_1+v\beta_1-v.
\eel

\subsection*{Case II: $u\leq\frac{n}{m}v$} Proceeding analogously to the case $um\geq vn$, this time formula \eqref{lambda-max-formula} is replaced by \be
m_{u,v}(\Gamma^{(1)})-m_{u,v}(\Gamma^{(2)})=\min\{m_{u,v}(\Gamma\setminus \{(0,n)\})-u, v(n-1)\},
\ee and the maximum of $ux+vy$ on $\Gamma\setminus \{(0,n)\}$ is attained at the point $(\alpha_2,\beta_2)$ satisfying $\frac{\beta_2}{\alpha_2}=\tau$. Hence\be
m_{u,v}(\Gamma^{(1)})-m_{u,v}(\Gamma^{(2)})=\min\{u\alpha_2+v\beta_2-u, v(n-1)\}.
\ee Here comes the difference with Case I: the minimum above equals $u\alpha_2+v\beta_2-u$ if and only if \be
u\leq\frac{n-1-\beta_2}{\alpha_2-1}v,
\ee but this condition is not automatically implied by the inequality $u\leq\frac{n}{m}v$. In fact \be
\frac{n-1-\beta_2}{\alpha_2-1}\leq \frac{n}{m},
\ee as may be easily verified using \eqref{model-segment} and the fact that $n\leq m$. Thus there are two further sub-cases: if \be
u\leq\frac{n-1-\beta_2}{\alpha_2-1}v
\ee then \bel\label{lambda-caseIIa}
m_{u,v}(\Gamma^{(1)})-m_{u,v}(\Gamma^{(2)})=u\alpha_2+v\beta_2-u, 
\eel while if \be
\frac{n-1-\beta_2}{\alpha_2-1}v\leq u\leq \frac{n}{m}v
\ee then \bel\label{lambda-caseIIb}
m_{u,v}(\Gamma^{(1)})-m_{u,v}(\Gamma^{(2)})=v(n-1). 
\eel

Let us define the three regions of $\C^2$:\bee
E_1&:=&\{|z|\geq 1,\quad |w|\leq |z|^\frac{m}{n}\},\\
E_2&:=&\{|w|\geq 1,\quad |w|^\nu\leq |z|\leq |w|^\frac{n}{m}\},
\eee and \be
E_3:=\{|w|\geq 1,\quad |z|\leq |w|^\nu\},
\ee where $\nu:=\frac{n-1-\beta_2}{\alpha_2-1}$. The figure below depicts these regions with respect to the coordinates $(x,y)$.

\begin{center}

\begin{tikzpicture} 
\small
\begin{axis}[
axis x line=bottom, axis y line=left, 
xmin=0, xmax=3, ymin=0, ymax=3, 
xtick={1,3},
ytick={1,3},
xticklabels={1,$x=|z|^2$},
yticklabels={1,$y=|w|^2$},
enlargelimits=false] 

\addplot[dotted, domain=1:3, samples=100]{x^3}
[xshift= 100pt,yshift=120pt]
node[pos=1]{$E_2$};
\addlegendentry{$x=y^\nu$}

\addplot[dashed, domain=1:3, samples=100]{x^1.3}
[xshift= 140pt,yshift=60pt]
node[pos=1]{$E_1$};
\addlegendentry{$x=y^\frac{n}{m}$}

\addplot[] coordinates {(1,0)(1,1)}
[xshift= 35pt,yshift=100pt]
node[pos=1]{$E_3$};

\addplot[] coordinates {(0,1)(1,1)}
[xshift= 32pt,yshift=26pt];

\end{axis}

\end{tikzpicture}

\end{center}

Recalling \eqref{lambda-zwt}, \eqref{lambda-caseI}, \eqref{lambda-caseIIa}, and \eqref{lambda-caseIIb}, we can summarize our computations in the following proposition.

\begin{prop}\label{lambda-prop} The following approximate identities hold:
\bee
\lambda_\Gamma(z,w)&\approx& |z|^{2\alpha_1}|w|^{2(\beta_1-1)}\qquad\forall (z,w)\in E_1,\\
\lambda_\Gamma(z,w)&\approx& |w|^{2(n-1)}\qquad\qquad\quad\forall (z,w)\in E_2,\\
\lambda_\Gamma(z,w)&\approx& |z|^{2(\alpha_2-1)}|w|^{2\beta_2}\qquad\forall (z,w)\in E_3.
\eee
\end{prop}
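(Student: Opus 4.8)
\textbf{Proof plan for Proposition \ref{lambda-prop}.}

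The strategy is to reduce everything to the two explicit formulas of Proposition \ref{model-formulas} together with the elementary observation that for a nonnegative $2\times 2$ Hermitian matrix $A$ one has $\det(A)\approx \lambda(A)\,\mathrm{tr}(A)$, with absolute constants (since $\mathrm{tr}(A)\approx\mu(A)$ and $\det(A)=\lambda(A)\mu(A)$). Applying this with $A=H_{\varphi_\Gamma}(z,w)$, Proposition \ref{model-formulas} gives
\be
\lambda_\Gamma(z,w)\approx\frac{\det(H_{\varphi_\Gamma}(z,w))}{\mathrm{tr}(H_{\varphi_\Gamma}(z,w))}\approx\frac{p_{\Gamma^{(1)}}(|z|^2,|w|^2)}{p_{\Gamma^{(2)}}(|z|^2,|w|^2)},
\ee
valid wherever $\mathrm{tr}(H_{\varphi_\Gamma})>0$, which holds on all of $E_1\cup E_2\cup E_3$ since there $|z|\geq1$ or $|w|\geq1$ and $\Gamma^{(2)}$ contains the points $(m-1,0)$ and $(0,n-1)$. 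So the whole problem becomes: estimate the ratio of the two nonnegative polynomials $p_{\Gamma^{(1)}}$ and $p_{\Gamma^{(2)}}$ along the curves $C_{u,v}$ that sweep out $E_1$, $E_2$, $E_3$.

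The next step is to parametrize each region by a family of curves $t\mapsto(|z|^2,|w|^2)=(t^u,t^v)$, $t\geq1$, and use the monomial-counting identity $p_A(C_{u,v}(t))\approx t^{m_{u,v}(A)}$ already recorded in the text, where $m_{u,v}(A)$ is the max of the linear functional $(\xi,\eta)\mapsto u\xi+v\eta$ over $A$. This yields $\lambda_\Gamma(z,w)\approx t^{m_{u,v}(\Gamma^{(1)})-m_{u,v}(\Gamma^{(2)})}$ along $C_{u,v}$. Then I would carry out exactly the linear-programming bookkeeping of the section: for the regime $v\leq\frac mn u$ one gets the exponent $u\alpha_1+v\beta_1-v$ (this is eq.\ \eqref{lambda-caseI}), for $u\leq\frac{n-1-\beta_2}{\alpha_2-1}v$ one gets $u\alpha_2+v\beta_2-u$ (eq.\ \eqref{lambda-caseIIa}), and for the intermediate slopes $\frac{n-1-\beta_2}{\alpha_2-1}v\leq u\leq\frac nm v$ one gets $v(n-1)$ (eq.\ \eqref{lambda-caseIIb}). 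Translating $t^u=|z|^2$, $t^v=|w|^2$ back, these exponents become $|z|^{2\alpha_1}|w|^{2(\beta_1-1)}$, $|z|^{2(\alpha_2-1)}|w|^{2\beta_2}$, and $|w|^{2(n-1)}$ respectively — precisely the three asserted asymptotics on $E_1$, $E_3$, $E_2$. One should note that the slopes $v/u$ ranging over $[0,m/n]$, over $[n/m,\infty)$, over $[(n-1-\beta_2)/(\alpha_2-1)^{-1}\dots]$ — more carefully, the ratio $|z|/|w|$ — exactly trace out these three regions, so the pointwise statement follows by letting $(u,v)$ vary and noting the implicit constants are uniform in $(u,v)$ (they depend only on $\#\Gamma$ and the degrees).

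The main technical point to be careful about — the ``hard part'' — is verifying that the case splits used to compute $m_{u,v}(\Gamma^{(1)})-m_{u,v}(\Gamma^{(2)})$ are exhaustive and consistent with the region boundaries. Concretely one must check the two chains of inequalities $\frac mn\leq\frac{m-1-\alpha_1}{\beta_1-1}$ and $\frac{n-1-\beta_2}{\alpha_2-1}\leq\frac nm$, both of which reduce via the homogeneity relation \eqref{model-segment} $n\alpha+m\beta=nm$ to the standing assumption $m\geq n$; these are the facts that guarantee the exponent formula is governed by the single vertex $(\alpha_1,\beta_1)$ on $E_1$ and splits into the $(\alpha_2,\beta_2)$-vertex vs.\ boundary-vertex dichotomy on $E_2\cup E_3$. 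I would also record the edge cases: when $\varphi_\Gamma$ is decoupled ($\sigma=\tau=0$, $\Gamma=\{(m,0),(0,n)\}$) the region $E_3$ degenerates and one checks the formulas still read correctly with $\alpha_1=m$, $\beta_1=0$, etc. Apart from these sign/convexity verifications, everything is bounded monomial arithmetic and the proof is complete once the three exponents are matched against the three regions.
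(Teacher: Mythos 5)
Your proposal is correct and follows essentially the same route as the paper: reduce $\lambda_\Gamma$ to the ratio $\det(H_{\varphi_\Gamma})/\mathrm{tr}(H_{\varphi_\Gamma}) \approx p_{\Gamma^{(1)}}/p_{\Gamma^{(2)}}$ via Proposition \ref{model-formulas}, parametrize by the curves $C_{u,v}$, compute the exponent $m_{u,v}(\Gamma^{(1)})-m_{u,v}(\Gamma^{(2)})$ through the same Case I / Case IIa / Case IIb linear-programming split, and verify the consistency inequalities $\frac{m}{n}\leq\frac{m-1-\alpha_1}{\beta_1-1}$ and $\frac{n-1-\beta_2}{\alpha_2-1}\leq\frac{n}{m}$ from the homogeneity relation \eqref{model-segment} and $m\geq n$. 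This is precisely the argument of Section \ref{lambda-sec}.
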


We completed the task of estimating $\lambda_\Gamma$, and now we can discuss our holomorphic uncertainty principle.

\section{A holomorphic uncertainty principle}\label{hol-sec}

In the next lemma, $D(z,r)$ denotes the disc of center $z\in\C$ and radius $r$.

\begin{lem}\label{hol-unc} Let $V:D(z,r)\rightarrow[0,+\infty)$ be a measurable function and define \be
c:=\min_{z'\in D(z,r)\setminus D\left(z,\frac{r}{2}\right)}V(z'). \ee
If $f\in L^2(D(z,r))$ is such that $\frac{\partial f}{\partial \overline{z}}\in L^2(D(z,r))$, then \bel\label{hol-ineq}
\int_{D(z,r)}\left|\frac{\partial f}{\partial \overline{z}}\right|^2+\int_{D(z,r)}V|f|^2\gtrsim \min\left\{c,\frac{1}{r^2}\right\}\int_{D(z,r)}|f|^2.
\eel
\end{lem}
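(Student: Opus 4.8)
The statement is essentially a Fefferman--Phong / holomorphic uncertainty inequality on a single disc, and the plan is to combine a standard Poincar\'e-type argument with the subharmonicity (or rather: the $L^2$ ellipticity) of $\dbar$. Writing $g:=\frac{\partial f}{\partial\overline z}$, I would split into the two regimes that the right-hand side $\min\{c,r^{-2}\}$ already suggests. If $c\leq r^{-2}$, the potential is ``small'' and the inequality should follow from the Poincar\'e inequality together with the lower bound on $V$ on the annulus $D(z,r)\setminus D(z,r/2)$; if $c\geq r^{-2}$, the potential is ``large'' on the annulus and a more direct argument using the mean value property of holomorphic functions applies. By rescaling and translating (replacing $f(z')$ by $f(z+rz')$) one may assume $z=0$ and $r=1$, which I would do at the outset to lighten notation; note that both sides of \eqref{hol-ineq} scale the same way under this substitution, so no generality is lost.

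The core estimate I would establish first is the following ``holomorphic Poincar\'e'' bound: there is an absolute constant $C$ such that for every $h\in L^2(D(0,1))$ with $\dbar h\in L^2(D(0,1))$,
\be
\int_{D(0,1)}|h|^2\leq C\left(\int_{D(0,1)}|\dbar h|^2+\int_{D(0,1)\setminus D(0,1/2)}|h|^2\right).
\ee
This is a compactness-and-contradiction statement (Rellich on the annulus plus the fact that $\dbar h=0$ and $h=0$ on the annulus force $h\equiv0$, since a holomorphic function vanishing on an open set vanishes identically), or alternatively can be proved by hand by writing $h$ as a Cauchy transform of $\dbar h$ plus a holomorphic correction. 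Granting it, in the regime $c\leq 1$ (after rescaling, $r=1$) I bound the annulus integral of $|f|^2$ by $c^{-1}\int_{D(0,1)\setminus D(0,1/2)}V|f|^2\leq c^{-1}\int_{D(0,1)}V|f|^2$, using the definition of $c$, so that
\be
\int_{D(0,1)}|f|^2\leq C\left(\int_{D(0,1)}|\dbar f|^2+c^{-1}\int_{D(0,1)}V|f|^2\right)\leq Cc^{-1}\left(\int_{D(0,1)}|\dbar f|^2+\int_{D(0,1)}V|f|^2\right),
\ee
which, since $c\leq 1=\min\{c,1/r^2\}$ here, is exactly \eqref{hol-ineq}. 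In the regime $c\geq 1$, I only need $\int_{D(0,1)}|f|^2\lesssim \int|\dbar f|^2+\int_{D(0,1)\setminus D(0,1/2)}V|f|^2\geq \int|\dbar f|^2+\int_{D(0,1)}V|f|^2$ with the factor $\min\{c,1\}=1$, which again is immediate from the holomorphic Poincar\'e bound since on the annulus $V\geq c\geq 1$ and hence $\int_{D(0,1)\setminus D(0,1/2)}|f|^2\leq\int_{D(0,1)}V|f|^2$.

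\textbf{Main obstacle.} The only nontrivial ingredient is the holomorphic Poincar\'e inequality above; everything else is bookkeeping. The compactness argument requires knowing that $\{h:\dbar h\in L^2, h\in L^2\}$ on the unit disc embeds compactly into $L^2$ of the annulus --- this is not quite the classical Rellich lemma because the graph norm of $\dbar$ alone does not control a full Sobolev norm. The clean fix is to use interior elliptic regularity: away from a neighborhood of $\partial D(0,1)$, $\|h\|_{H^1}\lesssim\|h\|_{L^2}+\|\dbar h\|_{L^2}$ (since $\dbar$ is elliptic and $\Delta=4\partial\dbar$), and the annulus $D(0,1)\setminus D(0,1/2)$ together with a slightly larger concentric annulus lies in the interior, so Rellich applies there; combined with the fact that $\dbar h\equiv0$ makes $h$ holomorphic on all of $D(0,1)$, the unique continuation then kills the contradiction sequence. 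I would spell this out carefully but expect no surprises. An alternative that avoids elliptic theory entirely is to represent $h = T(\dbar h) + h_0$, where $T$ is the Cauchy transform (bounded $L^2(D)\to L^2(D)$) and $h_0$ is holomorphic, then invoke the classical Poincar\'e inequality for the holomorphic function $h_0$ restricted to the disc versus the annulus (which is elementary via the mean value property / power series); I would present whichever of the two is shorter in the final writeup.
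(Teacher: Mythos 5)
Your proof is correct, but it reorganizes the argument in a genuinely different and somewhat cleaner way than the paper. You isolate a standalone ``holomorphic Poincar\'e'' inequality
\be
\int_D|h|^2\lesssim \int_D|\dbar h|^2+\int_{D\setminus\frac{1}{2}D}|h|^2,
\ee
prove it once, and then finish the lemma in three lines by the trivial $c\lessgtr r^{-2}$ dichotomy and the lower bound $V\geq c$ on the annulus. The paper never formulates this inequality as such; instead it keeps the potential term in play throughout and runs two nested dichotomies (first on the relative size of $f_e:=f\chi_{D\setminus\frac{1}{2}D}$ versus $f_i:=f\chi_{\frac{1}{2}D}$, then on the relative size of $\int_{D\setminus\frac{1}{2}D}|B(f_i)|^2$ against $\int_D|f_i|^2$, where $B$ is the unweighted Bergman projection), arriving at the same conclusion but in a less modular form. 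Both proofs ultimately rest on the same two facts: $L^2(D)$-solvability of $\dbar$ with an $L^2$ bound, giving $\|f-B(f)\|_{L^2(D)}\lesssim\|\dbar f\|_{L^2(D)}$, and the annulus-domination estimate $\int_D|h_0|^2\lesssim\int_{D\setminus\frac{1}{2}D}|h_0|^2$ for $h_0$ holomorphic; your Cauchy-transform derivation of the Poincar\'e inequality combines them in one pass, whereas the paper deploys them inside the case analysis. One small imprecision to flag in your alternative compactness route: the Rellich compactness cannot be applied on the annulus $D\setminus\frac{1}{2}D$, since it reaches the boundary $\partial D$ where the $\dbar$-graph norm does not control $H^1$; the correct way to close the contradiction argument is to invoke interior Rellich on the compactly embedded disc $\frac{1}{2}D$ (killing the mass of $h_n$ there), while the mass on the annulus tends to zero by hypothesis, contradicting $\|h_n\|_{L^2(D)}=1$. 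Since your Cauchy-transform derivation is correct and complete, this does not affect the validity of the proposal.
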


The proof is based on a Poincar\'e-type inequality related to the $\frac{\partial}{\partial \overline{z}}$ operator and an elementary consequence of the Cauchy formula, which we now discuss. 

Put $D:=D(0,1)$. It is well-known (cf., e.g., \cite{chen-shaw}) that the $\frac{\partial}{\partial \overline{z}}$ is solvable in $L^2(D)$, i.e., that if $g\in L^2(D)$ then there exists $f\in L^2(D)$ such that $\frac{\partial f}{\partial \overline{z}}=g$ and \be
\int_D|f|^2\lesssim \int_D|g|^2.
\ee
If $f\in L^2(D)$ is such that $\frac{\partial f}{\partial \overline{z}}\in L^2(D)$, the above solvability result yields $\widetilde{f}\in L^2(D)$ such that $f-\widetilde{f}$ is holomorphic and $\int_D|\widetilde{f}|^2\lesssim \int_D|f|^2$. In particular, denoting by $B:L^2(D)\rightarrow L^2(D)$ the orthogonal projection onto the space of $L^2$ holomorphic functions (i.e., the unweighted Bergman operator), we have \bel\label{hol-poincare}
\int_D|f-B(f)|^2\leq \int_D|f-(f-\widetilde{f})|^2\lesssim \int_D|f|^2.
\eel
This is the inequality we need. One should compare it with the usual Poincar\'e inequality in which $\frac{\partial}{\partial \overline{z}}$ is to be replaced by $\nabla$ and $B$ with $\frac{1}{|D|}\int_D$. Of course one could rescale the estimate to apply it to an arbitrary disc.

The second ingredient is the following inequality, which holds for every holomorphic function $h:D\rightarrow\C$: \bel\label{hol-cauchy}
\int_D|h|^2\lesssim \int_{D\setminus\frac{1}{2}D}|h|^2,
\eel which follows easily from the Cauchy integral formula.

\begin{proof} By a trivial rescaling it is enough to prove the lemma for $z=0$ and $r=1$. Let then $V:D\rightarrow[0,+\infty)$ be such that $V\geq c$ on $D\setminus \frac{1}{2}D$, and $f\in L^2(D)$ be such that $\frac{\partial f}{\partial \overline{z}}\in L^2(D)$. If $\eps>0$ is a parameter to be fixed later and we write $f=f_i+f_e$, where $f_e$ is zero on $\frac{1}{2}D$ and $f_i$ is zero on $D\setminus \frac{1}{2}D$, we have the following dichotomy:\begin{enumerate}
\item either $\int_D|f_e|^2\geq \eps\int_D|f_i|^2$, 
\item or $\int_D|f_e|^2< \eps\int_D|f_i|^2$.
\end{enumerate}

If 1. happens, a significant portion of the $L^2$ mass of $f$ is contained in the corona $D\setminus \frac{1}{2}D$ and \be
\int_D|f_e|^2\geq \frac{\eps}{2}\int_D|f_i|^2+\frac{1}{2}\int_D|f_e|^2\geq  \frac{\eps}{2}\int_D|f|^2.
\ee
Therefore\be
\int_D\left|\frac{\partial f}{\partial \overline{z}}\right|^2+\int_DV|f|^2\geq \int_DV|f|^2\geq c\int_D|f_e|^2\geq c\frac{\eps}{2}\int_D|f|^2,
\ee and \eqref{hol-ineq} holds.

If 2. happens, we use \eqref{hol-poincare}:\be
\int_D\left|\frac{\partial f}{\partial \overline{z}}\right|^2+\int_DV|f|^2\gtrsim \int_D|f-B(f)|^2.\ee
By the linearity of $B$ and condition (2), we have
\bee
\int_D|f-B(f)|^2&\geq&\frac{1}{2}\int_D|f_i-B(f_i)|^2-\int_D|f_e-B(f_e)|^2\\
&\geq&  \frac{1}{2}\int_D|f_i-B(f_i)|^2-\int_D|f_e|^2\\
&\geq&  \frac{1}{2}\int_D|f_i-B(f_i)|^2-\eps\int_D|f_i|^2.
\eee In the second line we used the fact that $1-B$ is an orthogonal projection. 

We claim that \bel\label{hol-orthogonal}
\int_D|f_i-B(f_i)|^2\geq a\int_D|f_i|^2,
\eel where $a$ is some small absolute constant.

Inequality \eqref{hol-orthogonal} immediately implies, choosing $\eps= \frac{a}{4}$, that \be
\int_D\left|\frac{\partial f}{\partial \overline{z}}\right|^2+\int_DV|f|^2\gtrsim\int_D|f_i|^2\gtrsim\int_D|f|^2.
\ee 
We are reduced to proving \eqref{hol-orthogonal}. This follows separating the two cases (for a new parameter $\delta$):\begin{enumerate}
\item either $\int_{D\setminus \frac{1}{2}D}|B(f_i)|^2\geq\delta\int_D|f_i|^2$,
\item or $\int_{D\setminus \frac{1}{2}D}|B(f_i)|^2<\delta\int_D|f_i|^2$.
\end{enumerate}

If 1. holds, \be
\int_D|f_i-B(f_i)|^2\geq\int_{D\setminus \frac{1}{2}D}|f_i-B(f_i)|^2=\int_{D\setminus \frac{1}{2}D}|B(f_i)|^2\geq\delta\int_D|f_i|^2.
\ee

If 2. holds instead, we apply \eqref{hol-cauchy} to the holomorphic function $B(f_i)$ to deduce that $\int_D|B(f_i)|^2\lesssim \delta\int_D|f_i|^2$. If we choose $\delta$ small enough we can write\be
\int_D|f_i-B(f_i)|^2\geq \frac{1}{2}\int_D|f_i|^2-\int_D|B(f_i)|^2\geq \frac{1}{4}\int_D|f_i|^2.
\ee
This concludes the proof of \eqref{hol-orthogonal}.
\end{proof}

Notice how the nature of uncertainty principle of the previous result is revealed by its proof: it shows that a function $f$ defined on a disc cannot be concentrated on a strictly smaller disc without having a large ``holomorphic kinetic energy" $\int_{D(z,r)}\left|\frac{\partial f}{\partial \overline{z}}\right|^2$.

\section{Estimation of the energy functional}\label{est-sec}

Let $u=u_1d\overline{z}+u_2d\overline{w}$ be a $(0,1)$-form with smooth compactly supported coefficients. We introduce the notation\be
\mathcal{E}_\Omega(u):=\sum_{j=1}^2\int_\Omega\left(\left|\frac{\partial u_j}{\partial \overline{z}}\right|^2+\left|\frac{\partial u_j}{\partial \overline{w}}\right|^2\right)e^{-2\varphi_\Gamma}+2\int_\Omega(H_{\varphi_\Gamma} u,u)e^{-2\varphi_\Gamma},
\ee where $\Omega\subseteq \C^2$. Denoting by $\lambda_\Gamma(z,w)$ the minimal eigenvalue of $H_{\varphi_\Gamma}(z,w)$, we have \bel\label{est-min}
\mathcal{E}_\Omega(u)\geq\sum_{j=1}^2\left(\int_\Omega\left(\left|\frac{\partial u_j}{\partial \overline{z}}\right|^2+\left|\frac{\partial u_j}{\partial \overline{w}}\right|^2\right)e^{-2\varphi_\Gamma}+2\int_\Omega\lambda_\Gamma |u_j|^2e^{-2\varphi_\Gamma}\right).
\eel
We introduce the \emph{uncertainty regions} $U_0:=\{0\leq |z|, |w|\leq 2\}$,\be
U_r:=\{|z|> 1, 0\leq |w|\leq |z|^{-\sigma}\} \quad  \text{and}\quad U_u:=\{|w|> 1, 0\leq|z|\leq |w|^{-\tau}\}.
\ee

\begin{center}

\begin{tikzpicture} 
\small
\begin{axis}[
axis x line=bottom, axis y line=left, 
xmin=0, xmax=3, ymin=0, ymax=3, 
xtick={1,3},
ytick={1,3},
xticklabels={1,$x$},
yticklabels={1,$y$},
enlargelimits=false]

\addplot[dashed, domain=1:3, samples=100]{1/x^3}
[xshift= 80pt,yshift=10pt]
node[pos=1]{$U_r$}
;
\addlegendentry{$y=x^{-\sigma}$}

\addplot[dotted, domain=0:1, samples=100]{1/x}
[xshift= 20pt,yshift=100pt]
node[pos=1]{$U_u$}
;
\addlegendentry{$x=y^{-\tau}$}

\addplot[] coordinates{(1,0)(1,1)};
\addplot[] coordinates{(0,1)(1,1)};



\end{axis}

\end{tikzpicture}

\end{center}

\begin{prop}\label{est-prop}
\be
\sup_{(z,w)\in U_0}\varphi_\Gamma(z,w)\lesssim 1,\ \sup_{(z,w)\in U_r}\varphi_{\Gamma_r}(z,w)\lesssim 1\text{ and }\sup_{(z,w)\in U_u}\varphi_{\Gamma_l}(z,w)\lesssim 1,\ee
\end{prop}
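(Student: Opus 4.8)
The claim is a purely elementary estimate about the model weight $\varphi_\Gamma$ on the three uncertainty regions, so the strategy is to reduce everything to the polynomial $p_\Gamma$ and its sub-polynomials via the substitution $(x,y)=(|z|^2,|w|^2)$, and then bound monomials by monomials. Recall that $\varphi_\Gamma(z,w)=p_\Gamma(|z|^2,|w|^2)=\sum_{(\alpha,\beta)\in\Gamma}x^\alpha y^\beta$, and similarly $\varphi_{\Gamma_r}$, $\varphi_{\Gamma_u}$ correspond to the sub-sums over $\Gamma_r=\{(\alpha,\beta)\in\Gamma:\alpha\neq0\}$ and $\Gamma_u=\{(\alpha,\beta)\in\Gamma:\beta\neq0\}$. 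I would treat the three regions one at a time.

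On $U_0=\{|z|,|w|\leq 2\}$ the estimate is immediate: $x,y\leq 4$, and since $\Gamma$ is finite, $\varphi_\Gamma(z,w)=\sum_{(\alpha,\beta)\in\Gamma}x^\alpha y^\beta\leq \sum_{(\alpha,\beta)\in\Gamma}4^{\alpha+\beta}$, a constant depending only on $\Gamma$. For $U_r=\{|z|>1,\ 0\leq|w|\leq|z|^{-\sigma}\}$, so that $x>1$ and $0\le y\le x^{-\sigma}$, I need to bound each monomial $x^\alpha y^\beta$ with $(\alpha,\beta)\in\Gamma_r$, i.e.\ $\alpha\geq 1$. If $\beta=0$, then $x^\alpha y^\beta=x^\alpha$, which is \emph{not} bounded on $U_r$ — but note that $(\alpha,0)\in\Gamma_r$ forces $\alpha=m$ (by homogeneity $n\alpha+m\beta=nm$ gives $\alpha=m$ when $\beta=0$), and the point $(m,0)$ lies in $\Gamma$ but typically one excludes it; here however I should double-check: the statement asserts $\sup_{U_r}\varphi_{\Gamma_r}\lesssim 1$, and $(m,0)\in\Gamma_r$. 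So the key point must be that the definition of $\sigma$ and $U_r$ is arranged precisely so that this works — and indeed the relevant region in the final energy estimate should pair $U_r$ with $\varphi_{\Gamma_r\setminus\{(m,0)\}}$ or use $e^{-2\varphi_\Gamma}$ which already contains $|z|^{2m}$. \emph{This is the main subtlety:} I would recheck the intended statement, and most likely the correct reading is that on $U_r$ one bounds the mixed part $\sum_{(\alpha,\beta)\in\Gamma_r,\ \beta\geq1}x^\alpha y^\beta$. Granting that, for each such monomial with $\alpha\geq1,\beta\geq1$, the defining inequality $1/\sigma\leq\beta/\alpha$ gives $\alpha\leq\sigma\beta$, hence on $U_r$ where $y\leq x^{-\sigma}$,
\[
x^\alpha y^\beta \leq x^{\sigma\beta}y^\beta = (x^\sigma y)^\beta \leq 1,
\]
since $x^\sigma y\leq 1$ there. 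Summing over the finitely many $(\alpha,\beta)$ gives the bound $\lesssim 1$ with constant depending only on $\Gamma$.

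The region $U_u=\{|w|>1,\ 0\leq|z|\leq|w|^{-\tau}\}$ is handled symmetrically: here $y>1$, $0\le x\le y^{-\tau}$, and for $(\alpha,\beta)\in\Gamma_u$ with $\alpha\geq1$ the inequality $\beta/\alpha\leq\tau$ gives $\beta\leq\tau\alpha$, so $x^\alpha y^\beta\leq x^\alpha y^{\tau\alpha}=(x\,y^\tau)^\alpha\leq 1$ because $x\,y^\tau\leq 1$ on $U_u$; finiteness of $\Gamma$ again finishes it. So the whole proof is three short monomial comparisons, each exploiting the defining property of $\sigma$, resp.\ $\tau$, resp.\ the boundedness of the coordinates, and in each case the "hard part" — if any — is just correctly matching which sub-polynomial of $\varphi_\Gamma$ is paired with which region so that the pure powers $x^m$ and $y^n$ are absorbed by the Gaussian factor $e^{-2\varphi_\Gamma}$ in the subsequent energy estimate rather than needing to be bounded here; the inequalities themselves are trivial once the bookkeeping is right.
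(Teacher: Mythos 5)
Your proposal is essentially correct and follows exactly the paper's own argument: a monomial-by-monomial comparison using the defining inequalities for $\sigma,\tau$ together with the constraints defining $U_r,U_u$. Moreover, the ``subtlety'' you flagged is a genuine notational conflict in the paper, not something you need to be defensive about. Under the definitions of Section \ref{model-sec}, $\Gamma_r=\{(\alpha,\beta)\in\Gamma:\alpha\neq0\}$ contains $(m,0)$, and $|z|^{2m}$ is unbounded on $U_r$, so the printed statement $\sup_{U_r}\varphi_{\Gamma_r}\lesssim 1$ is literally false; the paper's own one-line proof implicitly assumes $\beta\neq0$ when it writes $|z^\alpha w^\beta|^2=(|z|^{\alpha/\beta}|w|)^{2\beta}$, and the third quantity $\varphi_{\Gamma_l}$ refers to an undefined set. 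The use made of this proposition a few lines later, via the identity $\varphi_\Gamma(z,w)=|z|^{2m}+\varphi_{\Gamma_r}(z,w)$, shows that in Section \ref{est-sec} the symbol $\Gamma_r$ actually means $\Gamma\setminus\{(m,0)\}$, which is what Section \ref{model-sec} calls $\Gamma_u$. So the intended reading has the two subscripts swapped: on $U_r$ one bounds $\varphi_{\Gamma\setminus\{(m,0)\}}$ and on $U_u$ one bounds $\varphi_{\Gamma\setminus\{(0,n)\}}$. Your argument handles the mixed monomials, which is the nontrivial part and coincides with the paper's computation; the only tiny gap is that the intended index set also carries one pure term, namely $|w|^{2n}$ on $U_r$ and $|z|^{2m}$ on $U_u$, but these are immediate since $|w|\leq|z|^{-\sigma}\leq1$ on $U_r$ and $|z|\leq|w|^{-\tau}\leq1$ on $U_u$.
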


\begin{proof} 
The first estimate follows from the compactness of $U_0$.

By the definition of $\sigma$, if $(z,w)\in U_r$ and $(\alpha,\beta)\in\Gamma_r$, then \be
|z^\alpha w^\beta|^2=\left(|z|^{\frac{\alpha}{\beta}}|w|\right)^{2\beta}\leq \left(|z|^\sigma|w|\right)^{2\beta}\leq 1,
\ee where in the first inequality we used the fact that $|z|\geq 1$. Summing over $(\alpha,\beta)\in\Gamma_r$, we obtain $\sup_{(z,w)\in U_r}\varphi_{\Gamma_r}(z,w)\lesssim 1$.

The last bound is proved similarly.
\end{proof}

The complement of $U_0\cup U_r\cup U_u$ will be denoted by $C$. Notice that \bel\label{est-dec}
\mathcal{E}_{\varphi_\Gamma}(u)\gtrsim \mathcal{E}_{U_0}(u)+\mathcal{E}_{U_r}(u)+\mathcal{E}_{U_u}(u)+\mathcal{E}_C(u).
\eel

We now deal with one term of this decomposition at a time.

\subsection*{$\mathcal{E}_{U_0}(u)$:}
By \eqref{est-min} and the first inequality of Proposition \ref{est-prop}, we have
\be
\mathcal{E}_{U_0}(u)\gtrsim\sum_{j=1}^2\left(\int_{U_0}\left(\left|\frac{\partial u_j}{\partial \overline{z}}\right|^2+\left|\frac{\partial u_j}{\partial \overline{w}}\right|^2\right)+\int_{U_0}\lambda_\Gamma |u_j|^2\right).
\ee Since $U_0=(2D)\times(2D)$, by Fubini's theorem and applying Lemma \ref{hol-unc} twice, we get\bee
&&\int_{(2D)\times(2D)}\left(\left|\frac{\partial u_j}{\partial \overline{z}}\right|^2+\left|\frac{\partial u_j}{\partial \overline{w}}\right|^2\right)+\int_{(2D)\times(2D)}\lambda_\Gamma |u_j|^2\\
&=& \int_{2D}\int_{2D}\left(\left|\frac{\partial u_j}{\partial \overline{z}}(z,w)\right|^2+\lambda_\Gamma(z,w) |u_j(z,w)|^2\right)d\mathcal{L}(z)d\mathcal{L}(w)\\
&+&\int_{(2D)\times(2D)}\left|\frac{\partial u_j}{\partial \overline{w}}\right|^2\\
&\gtrsim& \int_{2D}\left\{\left(\min_{1<|z'|\leq2}\lambda_\Gamma(z',w)\right)\int_{2D} |u_j(z,w)|^2d\mathcal{L}(z)\right\}d\mathcal{L}(w)\\
&+&\int_{(2D)\times(2D)}\left|\frac{\partial u_j}{\partial \overline{w}}\right|^2\\
&=&  \int_{2D}\int_{2D}\left(\left|\frac{\partial u_j}{\partial \overline{w}}(z,w)\right|^2+\left(\min_{1<|z'|\leq2}\lambda_\Gamma(z',w)\right) |u_j(z,w)|^2\right)d\mathcal{L}(w)d\mathcal{L}(z)\\
&\gtrsim&  \left(\min_{1<|z'|\leq2,1<|w'|\leq2}\lambda_\Gamma(z',w')\right) \int_{2D}\int_{2D}|u_j(z,w)|^2d\mathcal{L}(w)d\mathcal{L}(z).
\eee 
Notice that $\{1<|z'|\leq2,1<|w'|\leq2\}\subseteq E_1\cup E_2\cup E_3$ and hence Proposition \ref{lambda-prop} shows that the minimum above is $\approx1$. Thus \be
\mathcal{E}_{U_0}(u)\gtrsim \sum_{j=1}^2\int_{U_0}|u_j|^2\geq \int_{U_0}|u|^2e^{-2\varphi_\Gamma}\gtrsim\int_{U_0}(1+|z|^{2\sigma}+|w|^{2\sigma})|u|^2e^{-2\varphi_\Gamma},
\ee
where we used again the fact that $|z|,|w|\leq2$ on $U_0$.

\subsection*{$\mathcal{E}_{U_r}(u)$:}
\par Recall that $U_r=\{|z|\geq1,\quad |w|\leq |z|^{-\sigma}\}$.

By Fubini's theorem $\mathcal{E}_{U_r}(u)\geq \int_{|z|\geq1}I(z)e^{-2|z|^{2m}}d\mathcal{L}(z)$, where  
\be
I(z):=\sum_{j=1}^2\int_{D(0, |z|^{-\sigma})}\left(\left|\frac{\partial u_j}{\partial \overline{w}}(z,w)\right|^2+\lambda_\Gamma(z,w) |u_j(z,w)|^2\right)e^{-2\varphi_{\Gamma_r}(z,w)}d\mathcal{L}(w),
\ee
where we used the fact that $\varphi_\Gamma(z,w)=|z|^{2m}+\varphi_{\Gamma_r}(z,w)$. The second inequality of Proposition \ref{est-prop} and Lemma \ref{hol-unc} yield, for every $z$ of modulus greater than or equal to $1$,\bee
I(z)&\gtrsim&\sum_{j=1}^2\int_{D(0, |z|^{-\sigma})}\left(\left|\frac{\partial u_j}{\partial \overline{w}}(z,w)\right|^2+\lambda_\Gamma(z,w) |u_j(z,w)|^2\right)d\mathcal{L}(w)\\
&\gtrsim& \left(\min_{\frac{|z|^{-\sigma}}{2}<|w'|\leq |z|^{-\sigma}}\lambda_\Gamma(z,w')\right)\int_{D(0, |z|^{-\sigma})}|u(z,w)|^2d\mathcal{L}(w).
\eee
Since the region $U_r$ is contained in $E_1$, Proposition \ref{lambda-prop} gives\bee
\min_{\frac{|z|^{-\sigma}}{2}<|w'|\leq |z|^{-\sigma}}\lambda_\Gamma(z,w')&\approx& \min_{\frac{|z|^{-\sigma}}{2}<|w'|\leq |z|^{-\sigma}} |z|^{2\alpha_1}|w|^{2(\beta_1-1)}\\
&\approx& |z|^{2(\alpha_1-\sigma \beta_1+\sigma)}
\eee for every $|z|\geq1$. Since $\alpha_1/\beta_1=\sigma$, the last quantity equals $|z|^{2\sigma}$. Using again the boundedness of $\varphi_{\Gamma_r}$ in the region of integration, we have\bee
\mathcal{E}_{U_r}(u)&\gtrsim&\int_{|z|\geq1}\left(|z|^{2\sigma}\int_{D(0, |z|^{-\sigma})}|u(z,w)|^2d\mathcal{L}(w)\right)e^{-2|z|^{2m}}d\mathcal{L}(z)\\
&\gtrsim&\int_{|z|\geq1}\left(|z|^{2\sigma}\int_{D(0, |z|^{-\sigma})}|u(z,w)|^2e^{-2\varphi_{\Gamma_r}(z,w)}d\mathcal{L}(w)\right)e^{-2|z|^{2m}}d\mathcal{L}(z)\\
&\gtrsim&\int_{U_r}|z|^{2\sigma}|u(z,w)|^2e^{-2\varphi_\Gamma(z,w)}d\mathcal{L}(z,w)\\
&\gtrsim&\int_{U_r}(1+|z|^{2\sigma}+|w|^{2\tau})|u(z,w)|^2e^{-2\varphi_\Gamma(z,w)}d\mathcal{L}(z,w).
\eee
The last step follows from the inequalities $|w|\leq 1$ and $|z|\geq1$, which hold for any $(z,w)\in U_r$.

\subsection*{$\mathcal{E}_{U_u}(u)$:} This is done in complete analogy with the estimate of $\mathcal{E}_{U_r}(u)$, exchanging the role played by $z$ and $w$ and replacing $\sigma$ with $\tau$. In evaluating the appropriate minimum of $\lambda_\Gamma$ one has to use the fact that $U_u$ is contained in $E_3$ and the corresponding estimate of Proposition \ref{lambda-prop}. We omit the easy details that allow to prove the estimate \be
\mathcal{E}_{U_u}(u)\gtrsim \int_{U_u}(1+|z|^{2\sigma}+|w|^{2\tau})|u(z,w)|^2e^{-2\varphi_\Gamma(z,w)}d\mathcal{L}(z,w).
\ee

\subsection*{$\mathcal{E}_{\mathcal{C}}(u)$:} We use the trivial bound\be
\mathcal{E}_\mathcal{C}(u)\gtrsim \int_{\mathcal{C}}\lambda_\Gamma|u|^2e^{-2\varphi_\Gamma}.
\ee
Observe that $\mathcal{C}\subseteq E_1\cup E_2\cup E_3$ and that $\mathcal{C}\supseteq E_2$. Proposition \ref{lambda-prop} gives\bee
&&\int_{\mathcal{C}}\lambda_\Gamma|u|^2e^{-2\varphi_\Gamma}=\int_{\mathcal{C}\cap E_1}\lambda_\Gamma|u|^2e^{-2\varphi_\Gamma}+\int_{E_2}\lambda_\Gamma|u|^2e^{-2\varphi_\Gamma}+\int_{\mathcal{C}\cap E_3}\lambda_\Gamma|u|^2e^{-2\varphi_\Gamma}\\
&\gtrsim& \int_{\mathcal{C}\cap E_1}|z|^{2\alpha_1}|w|^{2(\beta_1-1)}|u|^2e^{-2\varphi_\Gamma}+\int_{E_2}|w|^{2(n-1)}|u|^2e^{-2\varphi_\Gamma}+\int_{\mathcal{C}\cap E_3}|z|^{2(\alpha_2-1)}|w|^{2\beta_2} |u|^2e^{-2\varphi_\Gamma}.
\eee
If $(z,w)\in \mathcal{C}\cap E_1$ then $|z|^\frac{m}{n}\geq|w|\geq |z|^{-\sigma}$ and therefore\be
|z|^{2\alpha_1}|w|^{2(\beta_1-1)}\geq |z|^{2(\alpha_1-\sigma\beta_1+\sigma)}=|z|^{2\sigma}
\ee and \be
|z|^{2\alpha_1}|w|^{2(\beta_1-1)}\geq |w|^{2(\frac{n}{m}\alpha_1+\beta_1-1)}=|w|^{2(n-1)}.
\ee In the first identity we used the definition of $(\alpha_1,\beta_1)$, while in the second one we used \eqref{model-segment}. Notice that $\tau=\frac{\beta_2}{\alpha_2}\leq \beta_2\leq n-1$, and hence\bel\label{est-E1}
|z|^{2\alpha_1}|w|^{2(\beta_1-1)}\gtrsim 1+|z|^{2\sigma}+|w|^{2\tau}\qquad\forall (z,w)\in \mathcal{C}\cap E_1.
\eel

If $(z,w)\in E_2$, in particular $|z|\leq |w|^\frac{n}{m}$ and $|w|^{2(n-1)}\geq |z|^{2\frac{m}{n}(n-1)}$. Notice that $|w|\geq1$ on $E_2$ and $\nu\geq0$ and thus $|z|$ is also $\geq1$. If we show that $\frac{m}{n}(n-1)\geq \sigma$, we can then deduce that $|w|^{2(n-1)}\geq |z|^{2\sigma}$. To prove the inequality above one can plug in the identity $\sigma=\frac{\alpha_1}{\beta_1}$ and use \eqref{model-segment}. This, together with the already observed fact that $\tau\leq (n-1)$ allows to write that\bel\label{est-E2}
|w|^{2(n-1)}\gtrsim 1+|z|^{2\sigma}+|w|^{2\tau}\qquad\forall (z,w)\in E_2.
\eel

Finally, if $(z,w)\in \mathcal{C}\cap E_3$ then in particular $|w|^\frac{n}{m}\geq|z|\geq |w|^{-\tau}$ and therefore\be
|z|^{2(\alpha_2-1)}|w|^{2\beta_2}\geq |w|^{2(-\tau\alpha_2+\tau+\beta_2)}=|w|^{2\tau}
\ee and \be
|z|^{2(\alpha_2-1)}|w|^{2\beta_2}\geq |z|^{2(\alpha_2-1+\frac{m}{n}\beta_2)}=|z|^{2(m-1)}.
\ee The last identity follows as previously from \eqref{model-segment}. Since $\sigma=\frac{\alpha_1}{\beta_1}\leq \alpha_1\leq (m-1)$, we have\bel\label{est-E3}
|z|^{2(\alpha_2-1)}|w|^{2\beta_2}\gtrsim 1+|z|^{2\sigma}+|w|^{2\tau}\qquad\forall (z,w)\in\mathcal{C}\cap E_3.
\eel

The bounds \eqref{est-E1}, \eqref{est-E2} and \eqref{est-E3} give
\be
\mathcal{E}_\mathcal{C}(u)\gtrsim \int_\mathcal{C}(1+|z|^{2\sigma}+|w|^{2\tau})|u|^2e^{-2\varphi_\Gamma}.
\ee
\ \newline

Recalling \eqref{est-dec}, we can put together all our estimates to obtain:
\be
\mathcal{E}_{\varphi_\Gamma}(u)\gtrsim \int_{\C^2}(1+|z|^{2\sigma}+|w|^{2\tau})|u|^2e^{-2\varphi_\Gamma}, 
\ee 
for every $(0,1)$-form $u$ with smooth compactly supported coefficients. Since these forms are dense in $\mathcal{D}(\mathcal{E}_{\varphi_\Gamma})$ by part (ii) of Proposition \ref{kohn-dbar-star}, this concludes the proof of Theorem \ref{model-thm}.

\bibliographystyle{amsalpha}
\bibliography{tesi}

\end{document}